\documentclass{article}
\usepackage[hmarginratio=1:1]{geometry}
\usepackage{titlesec}
\usepackage{pdfpages}
\titleformat{\section}  
  {\fontsize{14}{16}\bfseries} 
  {\thesection} 
  {1em} 
  {} 
  [] 

\titleformat{\subsection}  
  {\fontsize{12}{14}\bfseries} 
  {\thesubsection} 
  {1em} 
  {} 
  [] 
\definecolor{CycleDRB}{HTML}{4477AA}
\definecolor{CycleHodge}{HTML}{EE7733}
\definecolor{CycleAlg}{HTML}{228833}
\definecolor{ComparisonArrow}{HTML}{7547A5}

\usepackage[utf8]{inputenc}
\usepackage[utf8]{inputenc}
\usepackage{tikz, tikz-cd}
\usepackage{url, hyperref}
\usepackage{amsmath, amsthm, amssymb, xcolor, enumitem}
\usepackage{mathtools}
\usepackage{graphicx}
\usepackage{setspace}
\usepackage{fancyref}
\usepackage{hyperref}
\hypersetup{colorlinks,linkcolor={blue},citecolor={blue},urlcolor={red}}
\usepackage{biblatex}
\usepackage{comment}
\usepackage{breqn}
\newtheorem{theorem}{Theorem}[section]
\usepackage{mathtools}
\usepackage{floatrow}
\usepackage{newunicodechar}

\theoremstyle{definition}
\newtheorem{definition}[theorem]{Definition}
\newtheorem{remark}[theorem]{Remark}

\newtheorem{notation}[theorem]{Notation}

\newtheorem{setup}[theorem]{Setup}
\newtheorem{keyremark}[theorem]{Key Remark}

\theoremstyle{plain}
\newtheorem{proposition}[theorem]{Proposition}
\newtheorem{corollary}[theorem]{Corollary}
\newtheorem{lemma}[theorem]{Lemma}

\newtheorem{conjecture/theorem}[theorem]{Conjecture/Theorem}

\newtheorem{strategy}[theorem]{Strategy}
\newtheorem{proposition/definition}[theorem]{Proposition/Definition}
\newtheorem{tentative definition}[theorem]{Tentative Definition}

\newenvironment{explanation of the claim}{%
  \proof}{\endproof}

\newcommand{\gdrbmath}{\mathrm{G}_{\mathrm{dRB}}}

\newcommand{\resgmmath}{\mathrm{Res}_{E/\mathbb{Q}}\mathbb{G}_{\mathrm{m}}}
\newcommand{\gmmath}{\mathbb{G}_{\mathrm{m}} }

\newcommand{\gdrbhmath}{\mathrm{G}_{\mathrm{dRB}}^{\mathrm{h}} }

\newcommand{\absgalois}{\mathrm{Gal}(\overline{\mathbb{Q}}/\mathbb{Q})}
\newcommand{\galoisL}{\mathrm{Gal}(L/\mathbb{Q})}

\newcommand{\qbar}{\overline{\mathbb{Q}}}

\newcommand{\liegdrbmath}{\mathrm{g}_{\mathrm{dRB}}}
\newcommand{\liegdrbssmath}{\mathrm{g}_{\mathrm{dRB}}^{\mathrm{ss}}}
\newcommand{\liegdrbhmath}{\mathrm{g}_{\mathrm{dRB}}^{\mathrm{h}}}

\newcommand{\betti}{\mathrm{H}^1(A,\mathbb{Q})}

\newcommand{\sigmabar}{\overline{\sigma}}

\title{De Rham-Betti Groups of Type IV Abelian Fourfolds}
\author{Zekun Ji}

\date{}
\begin{document}
\maketitle
\begin{abstract}
We determine the de Rham-Betti (dRB) groups of several classes of abelian varieties over $\qbar$. We prove that $\gdrbmath(A)=\mathrm{MT}(A)$ for every simple abelian fourfold of type IV. At present, much of what is known concerning the dRB structures of abelian varieties derives from the Analytic Subgroup Theorem \cite{wustholz1989algebraische} by Wüstholz, whose applications primarily control the linear relations among periods of $\mathrm{H}^1(A)$ and divisor-class information. Compared to the theory of Hodge structures, our understanding of dRB structures is limited. Hence we adopt an approach different from Moonen-Zarhin's
method \cite{mz-4-folds} of determining the Mumford-Tate groups of these abelian varieties. Depending on the endomorphism type of these abelian fourfolds, we use Galois-theoretic analysis, half-twist construction \cite{van2001half} by van Geemen and positivity constraints arising from polarizations, as appropriate, to exclude reductive subgroups properly contained in Mumford-Tate groups as candidates for the dRB groups. We also use results on periods due to Gross \cite{gross1978periods} and Chudnovsky
\cite{chudnovsky1980algebraic}. This article is an expansion of the second part of the author's
PhD thesis \cite{ji2026thesis}; see also
\cite{ji2025rhambettigroupstypeiv}.
\end{abstract}
\tableofcontents
\section{Introduction}
The theory of de Rham-Betti (dRB) structures and groups was introduced by André as a framework for studying periods of smooth projective varieties defined over $\qbar$; see \cite[Section 7.6]{andre2004introduction}. In the setting of abelian varieties and hyper-Kähler varieties defined over $\qbar$, it is further studied by Bost-Charles in \cite{bost2016some} and Kreutz-Shen-Vial in \cite{ksv2026rhambetticlassescoefficients}. The main goal of this article, is to determine the de Rham-Betti groups of simple abelian fourfolds of type IV in the Albert classification defined over $\qbar$. 

\begin{theorem}[= Theorem
\ref{mainthmcm}, Theorem \ref{deg4theorem}, Theorem
\ref{imaginaryquadraticthm}, and Corollary \ref{corprimedim}]\label{introthm}
Let $A$ be an abelian variety defined over $\qbar$. Suppose $A$ belongs to one of the following classes, then the de Rham-Betti group of $A$ equals the Mumford-Tate group of $A$.
\begin{enumerate}
    \item A simple CM abelian fourfold;
    \item\label{casecmdeg4} An abelian fourfold whose endomorphism algebra is a quartic CM-field;
    \item\label{caseimaginaryquadratic} A simple abelian fourfold whose
    endomorphism algebra is an imaginary quadratic field;
    \item\label{casecmpprime} A simple CM abelian variety of prime dimension.
\end{enumerate} 
\end{theorem}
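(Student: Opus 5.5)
The plan starts from the two general facts available for any abelian variety $A$ over $\qbar$. First, $\gdrbmath(A)\subseteq \mathrm{MT}(A)$, because Hodge classes on powers of $A$ are de Rham–Betti (Deligne's absolute Hodge theorem). Second, $\gdrbmath(A)$ is a connected reductive group containing the homotheties. Wüstholz's Analytic Subgroup Theorem then gives more: the commutant of $\gdrbmath(A)$ in $\mathrm{End}(\betti)$ equals $\mathrm{End}^0(A)$, and the dRB classes in $\mathrm{H}^2$ are exactly the divisor classes.

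So $\gdrbmath(A)$ is a connected reductive subgroup $H\subseteq \mathrm{MT}(A)$ with $\gmmath\subseteq H$ and $\mathrm{End}_H(\betti)=\mathrm{End}^0(A)$, which preserves the polarization up to scalar. The proof in each case will classify all such $H$ properly contained in $\mathrm{MT}(A)$ and rule each one out.

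\textbf{CM cases (1), (2)-CM and (4).} Here $\mathrm{MT}(A)$ is a torus $T\subseteq \resgmmath$, where $E$ is the CM field, and $\gdrbmath(A)$ is a subtorus containing the weight torus. By duality, a proper subtorus corresponds to a nonzero character $\chi$ of $T$, trivial on $\gdrbmath(A)$, that is not a power of the norm. Such a $\chi$ yields a dRB class in some tensor space $\betti^{\otimes n}\otimes(\text{Tate twist})$, namely a $\qbar$-linear relation among products of periods $\int_{\gamma}\omega_\sigma$ for the CM types $\sigma$.

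To exclude such relations I will use two ingredients:
\begin{enumerate}
\item a Galois-theoretic analysis: the group $\galoisL$ of the Galois closure $L$ permutes the characters, and $\gdrbmath$ is stable under this action, so the character lattice of $T/\gdrbmath$ is a Galois submodule;
\item period results: the Chowla–Selberg-type formulas of Gross and the algebraic-independence results of Chudnovsky, which show that certain monomials in periods are transcendental or independent.
\end{enumerate}
In prime dimension $p$ (case (4)), the Galois module $X^*(T)\otimes\mathbb{Q}$ decomposes so rigidly that any Galois-stable proper submodule must be defined by a relation whose period monomial is forbidden by Wüstholz in degree one. Combined with the commutant condition, this forces equality. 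For simple CM fourfolds I will go through the finitely many possible Galois groups of the octic CM field and their submodules, the degenerate (non-Hodge-generic) cases included, and exclude each.

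\textbf{Non-CM cases (2) and (3).} Here $\mathrm{MT}(A)$ is given by Moonen–Zarhin, e.g.\ a unitary group $\mathrm{GU}$ over $E$, or its variants with exceptional Weil-type classes. First I pass to $\liegdrbmath\otimes\mathbb{C}$ and list the reductive subalgebras of $\mathrm{MT}$ that contain the center and act on $\betti\otimes\mathbb{C}$ with the commutant $\mathrm{End}^0(A)\otimes\mathbb{C}$. Typical candidates are:
\begin{enumerate}
\item $\mathrm{SL}_2\times\mathrm{SL}_2$ or $\mathrm{SL}_2^3$ acting on tensor decompositions of the $E$-eigenspaces;
\item $\mathrm{Sp}$-type subgroups inside $\mathrm{SU}(p,q)$;
\item smaller tori.
\end{enumerate}
To exclude these I will use two tools.
\begin{itemize}
\item A tensor-decomposition subgroup would make $A$ look like a half-twist of a product of lower-dimensional dRB structures. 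Van Geemen's half-twist construction lets me build an auxiliary object whose dRB group must contain a known group, and this contradicts Wüstholz's result on $\mathrm{H}^1$.
\item A candidate subgroup must also be compatible with the signature $(p,q)$ of the Hermitian form coming from the polarization. For signature $(3,1)$ or $(4,0)$, positivity rules out the orthogonal and symplectic forms that such subgroups would have to preserve.
\end{itemize}

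The main obstacle is the CM fourfold case, in particular octic CM fields with large Galois group where $T$ has rank around 5. There one must show that no nontrivial Galois-stable character lattice survives. Doing so requires translating each potential character relation into a concrete period relation and invoking the Gross/Chudnovsky transcendence statements, which only cover specific configurations, for instance relations reducing to products of CM elliptic curves or Fermat quotients. I would first try to reduce every candidate, using the Galois action, to a relation that factors through a quotient CM type of degree one over an imaginary quadratic field, where Wüstholz and Chudnovsky apply directly.
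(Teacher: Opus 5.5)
\textbf{There is a genuine gap, in the simple CM fourfold case.} Your toolbox is the right one: Galois-stable character lattices, the Bost--Charles divisor and commutant test, Gross plus Chudnovsky for Weil classes, a half-twist, and positivity. But the plan fails at the configuration that actually resists in case (1), and that configuration has a small Galois group, not a large one. Large Galois groups (image in $\mathrm{S}_4$ containing a $4$-cycle, or equal to $\mathrm{A}_4$) are killed by the divisor test alone. The hard case is when $\galoisL$ acts on $X^{*}(\mathrm{U}_E)$ through a group isomorphic to $\mathrm{D}_4$, which forces $E/\mathbb{Q}$ to be Galois with group $\mathrm{D}_4$.

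There one finds Galois-stable rank-two sublattices containing no element $\pm f_{\sigma_i}\pm f_{\sigma_j}$. An example is the saturation of the span of $(-x_1,x_1,-x_2,x_2)^{\mathrm t}$ and $(x_2,x_2,x_1,x_1)^{\mathrm t}$ with $x_1=1$, $x_2=2$. The corresponding two-dimensional tori therefore pass both the $\mathrm{H}^2$ test and the commutant test (Lemma \ref{p_0inpreimagelemma}).

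Your proposed escape is to reduce each surviving relation to one factoring through an imaginary quadratic subfield, where Gross and Chudnovsky apply. That is impossible here. Complex conjugation is the central element $a^2\in\mathrm{D}_4$, which lies in every index-two subgroup, so every quadratic subfield of $E$ is real and there are no Weil classes at all. Excluding these tori by periods would require control of higher-degree polynomial relations among CM periods, which no known transcendence result provides.

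The missing idea is to abandon periods here and use the simplicity of $A$. By Shimura's constraint (Lemma \ref{lm2011}), every quartic CM subfield of $\mathrm{End}^{\circ}(A)$ acts with multiplicities $\{2,0,1,1\}$. Checking the possible CM types against the two quartic CM subfields $E^{\langle x\rangle}$ and $E^{\langle ax\rangle}$ shows that no simple CM fourfold has such an $E$ (Lemma \ref{nosuchsimpleAV}).

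Your prime-dimension sketch also misplaces the transcendence input. There the rank-one lattices correspond to $\mathrm{SU}_{E/K}$. Their invariants are Weil classes in $\wedge^p_K\betti$, which are not detected by the degree-one or divisor consequences of W\"ustholz's theorem. They are excluded by Gross's period formula together with Chudnovsky's theorem, precisely because $p$ is odd.

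\textbf{Two further steps are missing in the non-CM cases.} First, you take for granted that the candidate subalgebras contain the centre of $\mathrm{mt}(A)$. Only $\mathrm{Z}(\gdrbmath(A))\subseteq\mathrm{Z}(\mathrm{MT}(A))$ is automatic. Equality (Proposition \ref{twocentressame}) is proved by computing the dRB groups of Weil structures, again via Gross and Chudnovsky: $\wedge^4_K\betti$ in the anti-Weil case, and $\wedge^2_K\betti$ together with $\wedge^4_k\betti$ for an imaginary quadratic $k\subset K$ in the quartic case.

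Second, a half-twist does nothing by itself, since W\"ustholz's theorem only concerns $\mathrm{H}^1$ of abelian varieties over $\qbar$. The paper's argument runs as follows.
\begin{itemize}
\item It half-twists the pieces $\wedge^2V_\sigma$ and $\wedge^2V_{\overline\sigma}$ of $\mathrm{H}^2(A)$ by a CM elliptic curve $E_K$. This gives a weight-one substructure $U_{1/2}$, cut out by an algebraic projector.
\item Deligne's absolute Hodge theorem plus a descent lemma then produce an abelian sixfold $B$ over $\qbar$ with $\mathrm{H}^1_{\mathrm{dRB}}(B,\mathbb{Q})\cong U_{1/2}$ (Proposition \ref{antiweilabelianrealization}).
\item Bost--Charles applied to $B$ forces the projection onto $W_1\otimes E_{\overline\sigma}$ to be $\mathrm{MT}$-equivariant. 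This contradicts the irreducibility of $\wedge^2V_\sigma$ under $\mathrm{mt}^{\mathrm{ss}}(A)_{\qbar}\cong\mathrm{sl}_4$.
\end{itemize}
This removes all three smaller candidates at once ($\mathrm{sl}_2$ on $V(3)$, $\mathrm{sl}_2\times\mathrm{sl}_2$, and $\mathrm{sp}_4$), since each makes $\wedge^2V_\sigma$ reducible. Your list of candidates also included $\mathrm{SL}_2^3$, which is impossible on a $4$-dimensional eigenspace, and omitted $\mathrm{sl}_2$ acting on $V(3)$.

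Positivity is what the paper uses in the quartic CM case. There the only proper candidate is a quaternion form $\mathbb{Q}(a,\lambda)^{\circ}$ of a diagonally embedded $\mathrm{sl}_2$. The argument needs to know how complex conjugation acts on this $\mathbb{Q}$-form, combined with the multiplicities $\{2,0,1,1\}$; a bare signature count is not enough.
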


The Mumford-Tate groups of the abelian varieties appearing in Theorem \ref{introthm} have been explicitly determined by Moonen and Zarhin in \cite{mz-4-folds} and Ribet in \cite{ribet1983hodge}.
\subsection{De Rham-Betti and Hodge Structures}
\label{introsection1}
Let $X$ be a smooth projective variety defined over $\qbar$. Its Betti and algebraic de Rham cohomology groups are related by Grothendieck's comparison isomorphism $\rho:\mathrm{H}^{n}(X,\mathbb{Q})\otimes_{\mathbb{Q}}\mathbb{C}\cong\mathrm{H}_{\mathrm{dR}}^{n}(X/\qbar)\otimes_{\qbar}\mathbb{C}$; see \cite{grothendieck1966rham}. The resulting triple $(\mathrm{H}_{\mathrm{dR}}^{n}(X/\qbar),\mathrm{H}^{n}(X,\mathbb{Q}),\rho)$, which we denote by $\mathrm{H}^{n}_{\mathrm{dRB}}(X,\mathbb{Q})$, is a de Rham-Betti (dRB) structure. Recall that a dRB structure is a triple $(V_{\mathrm{dR}},V_{\mathrm{B}},\rho)$ where $V_{\mathrm{dR}}$ is a finite-dimensional $\qbar$-vector space, $V_{\mathrm{B}}$ is a finite-dimensional $\mathbb{Q}$-vector space, and $\rho:V_{\mathrm{B}}\otimes_{\mathbb{Q}}\mathbb{C}\cong V_{\mathrm{dR}}\otimes_{\qbar}\mathbb{C}$ is a $\mathbb{C}$-linear isomorphism. See \cite[Section 7.6]{andre2004introduction} as well as \cite[Section 2]{bost2016some} and \cite[Section 2]{ksv2026rhambetticlassescoefficients} for a more detailed exposition.

Given a dRB structure $(V_{\mathrm{dR}},V_{\mathrm{B}},\rho)$, an element $\alpha\in V_{\mathrm{B}}$ is called a \textit{de Rham-Betti class} if $\rho(\alpha\otimes 1)\in V_{\mathrm{dR}}\subset V_{\mathrm{dR}}\otimes_{\qbar}\mathbb{C}$. The Grothendieck period conjecture (GPC) predicts that every dRB class in the Tannakian category $\langle\mathrm{H}^{n}_{\mathrm{dRB}}(X,\mathbb{Q})\rangle^{\otimes}$ is the image of a $\mathbb{Q}$-coefficient algebraic cycle on the suitable power of $X$ under the cycle class map; see \cite[Section 1.1.3]{bost2016some}. At present even basic questions about the location of dRB classes are open. For example, it is not known \textit{in general} whether $\mathrm{H}^{2}_{\mathrm{dRB}}(X,\mathbb{Q})\otimes\mathbb{Q}(2\pi \sqrt{-1})$ contains dRB classes other than those coming from algebraic cycles. Moreover, beyond degree one, it is not known in general whether
nonzero dRB classes can occur in odd-degree cohomology. Thus it is a major challenge to determine the de Rham-Betti classes in $\langle\mathrm{H}^{n}_{\mathrm{dRB}}(X,\mathbb{Q})\rangle^{\otimes}$.

\begin{samepage}
We now focus on the setting of abelian varieties.
Firstly, for an abelian variety $A$ defined over $\qbar$, the \textit{known} relations among de Rham-Betti classes, Hodge classes and algebraic classes are summarized in Figure~\ref{fig:classes-and-groups}. By results of Deligne and Andr\'e, Hodge classes and motivated classes coincide for abelian varieties, while motivated classes are dRB classes; see \cite{andre2004introduction}. Thus the motivated classes and Hodge classes occupy the same region. The Grothendieck period conjecture predicts that the outermost region coincides with the innermost one while the Hodge conjecture predicts that the middle region coincides with the innermost one. The central theme of this article is to show, for the abelian varieties specified in Theorem \ref{introthm}, that the outermost region collapses to the middle region.
\begin{figure}[H]
\centering
\begin{tikzpicture}[
    font=\footnotesize,
    every node/.style={align=center},
    ring/.style={line width=0.9pt},
    comparison/.style={
        ->,
        >=stealth,
        line width=0.95pt,
        draw=ComparisonArrow,
        line cap=round
    }
]
    \path[use as bounding box] (-3.9,-2.4) rectangle (3.9,2.4);

    \filldraw[
        ring,
        draw=CycleDRB,
        fill=CycleDRB!8,
        rounded corners=16pt
    ] (-3.4,-2.0) rectangle (3.4,2.0);

    \filldraw[
        ring,
        draw=CycleHodge,
        fill=CycleHodge!10,
        rounded corners=14pt
    ] (-2.55,-1.30) rectangle (2.55,1.30);

    \filldraw[
        ring,
        draw=CycleAlg,
        fill=CycleAlg!12,
        rounded corners=12pt
    ] (-1.45,-0.72) rectangle (1.45,0.30);

    \node[font=\scriptsize\bfseries] at (0,1.62)
        {de Rham--Betti classes};

    \node[font=\scriptsize\bfseries] at (0,0.88)
        {motivated classes\\[-1pt] $=$ Hodge classes};

    \node[font=\scriptsize\bfseries] at (0,-0.22)
        {algebraic cycle\\classes};

    \draw[comparison] (-3.02, 1.72) -- (-2.72, 1.42);
    \draw[comparison] ( 3.02, 1.72) -- ( 2.72, 1.42);
    \draw[comparison] (-3.02,-1.72) -- (-2.72,-1.42);
    \draw[comparison] ( 3.02,-1.72) -- ( 2.72,-1.42);
\end{tikzpicture}

\caption{The known relations among the relevant classes in
$\langle\mathrm{H}^{1}(A,\mathbb{Q})\rangle^{\otimes}$, based on our current knowledge. Arrows indicate the desired collapsing direction.}
\label{fig:classes-and-groups}
\end{figure}
\end{samepage}

Moreover, for abelian varieties defined over $\qbar$, the deep ``Analytic Subgroup Theorem" of W\"ustholz \cite{wustholz1989algebraische} provides a key
transcendence-theoretic input. Using W\"ustholz's theorem, in
\cite[Section 7.5.3]{andre2004introduction}, Andr\'e remarks that
$\mathrm{H}^{1}_{\mathrm{dRB}}(A,\mathbb{Q})$ is a semisimple dRB
structure; see also \cite[Theorem 5.4]{ksv2026rhambetticlassescoefficients}. By the Tannakian formalism, the dRB classes in
$\langle\mathrm{H}^{1}_{\mathrm{dRB}}(A,\mathbb{Q})\rangle^{\otimes}$
are precisely the invariant tensors of the \textit{reductive} algebraic group
$$
\gdrbmath(A):=
\gdrbmath(\mathrm{H}^{1}_{\mathrm{dRB}}(A,\mathbb{Q}));
$$
For the detailed definition, see \cite[Section 2]{ksv2026rhambetticlassescoefficients} or
\cite[Section 2]{ji_1}. Recall that $\mathrm{MT}(A)$ is the algebraic subgroup of $\mathrm{GL}(\mathrm{H}^{1}(A,\mathbb{Q}))$ that fixes all the $(0,0)$-Hodge classes in
$\langle\mathrm{H}^{1}(A,\mathbb{Q})\rangle^{\otimes}$. By Tannakian duality, Figure~\ref{fig:classes-and-groups} then implies the following diagram in Figure~\ref{fig:tannakian-groups} relating the two groups. Hence the theme of this article can also be interpreted as showing that $\gdrbmath(A)$ is as large as possible, namely that it coincides with $\mathrm{MT}(A)$, for the abelian varieties specified in Theorem \ref{introthm}. 
\begin{samepage}

\begin{figure}[H]
\centering
\begin{tikzpicture}[
    font=\footnotesize,
    every node/.style={align=center},
    ring/.style={line width=0.9pt},
    comparison/.style={
        ->,
        >=stealth,
        line width=0.95pt,
        draw=ComparisonArrow,
        line cap=round
    }
]
    \path[use as bounding box]
        (-2.45,-2.45) rectangle (2.45,2.45);

    \filldraw[
        ring,
        draw=CycleHodge,
        fill=CycleHodge!10
    ] (0,0) circle (2.05cm);

    \filldraw[
        ring,
        draw=CycleDRB,
        fill=CycleDRB!10
    ] (0,0) circle (1.05cm);

    \node[font=\small]
        at (0,0) {$\gdrbmath(A)$};

    \node[font=\small]
        at (0,-1.55) {$\mathrm{MT}(A)$};

    \draw[comparison] (18:1.22)  -- (18:1.73);
    \draw[comparison] (145:1.22) -- (145:1.73);
    \draw[comparison] (302:1.22) -- (302:1.73);
\end{tikzpicture}

\caption{The inclusion of Tannakian groups corresponding to Figure~\ref{fig:classes-and-groups}, based on our
current knowledge. Arrows indicate the desired growing direction.}
\label{fig:tannakian-groups}
\end{figure}
\end{samepage}

Another corollary of the Analytic Subgroup
Theorem concerns divisors. In \cite{bost2016some}, Bost and Charles prove the following result about dRB classes in degree two cohomology groups of abelian varieties.

\begin{theorem}[\cite{bost2016some}, Theorem 3.8]\label{bostoriginal}
Let $A$ be an abelian variety defined over $\qbar$. Every dRB class in $\mathrm{H}^2_{\mathrm{dRB}}(A,\mathbb{Q})\otimes \mathbb{Q}_{\mathrm{dRB}}(1)$ comes from a $\mathbb{Q}$-coefficient algebraic cycle on $A$. Moreover, we have that $\mathrm{End}(\mathrm{H}^1_{\mathrm{dRB}}(A,\mathbb{Q}))\cong\mathrm{End}^{\circ}(A)$.
\end{theorem}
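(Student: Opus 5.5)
The statement is Bost--Charles' theorem, and the plan is to derive both assertions from W\"ustholz's Analytic Subgroup Theorem, applied to suitable commutative algebraic groups over $\qbar$. We treat the endomorphism statement first and deduce the divisor statement from it.

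\textbf{Step 1: dRB endomorphisms to graphs.} A dRB endomorphism $\phi\in\mathrm{End}(\mathrm{H}^1_{\mathrm{dRB}}(A,\mathbb{Q}))$ is a $\mathbb{Q}$-linear map on $\mathrm{H}_1(A,\mathbb{Q})$ whose $\mathbb{C}$-linear extension preserves the $\qbar$-structure $\mathrm{H}^{\mathrm{dR}}_1(A/\qbar)$. Via the Hodge filtration, this homology is identified with $\mathrm{Lie}$ of the universal vector extension $E(A)$.
\begin{itemize}
\item Pass to the universal vector extension $E=E(A^{\vee})$, or equivalently $E(A)$, a commutative algebraic group over $\qbar$ of dimension $2g$.
\item Its Lie algebra is $\mathrm{H}^1_{\mathrm{dR}}(A/\qbar)^{\vee}$ (up to duality conventions), and its period lattice $\ker(\exp_E)$ spans $\mathrm{H}_1(A,\mathbb{Q})$ compatibly with $\rho$.
\item The graph $\Gamma_\phi\subset \mathrm{Lie}(E\times E)$ is then a $\qbar$-linear subspace. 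Since $\phi$ is rational on periods, $\Gamma_\phi\otimes\mathbb{C}$ contains a full-rank sublattice of periods of $E\times E$.
\end{itemize}

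\textbf{Step 2: the Analytic Subgroup Theorem.} W\"ustholz's theorem says that the smallest $\qbar$-subspace of $\mathrm{Lie}(G)$ containing a given period $u$ is the Lie algebra of an algebraic subgroup. Apply it to each period in $\Gamma_\phi$, and take $H$ to be the algebraic subgroup of $E\times E$ generated. Then $\mathrm{Lie}(H)\subset\Gamma_\phi$. Moreover $H$ contains enough periods that its projection to the first factor surjects onto $E$ with finite kernel on the second.

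Next we show that $H$ is the graph of an isogeny-type correspondence $E\to E$. Algebraic homomorphisms between universal vector extensions come from $\mathrm{Hom}(A,A)\otimes\mathbb{Q}$, using rigidity of vector extensions together with the fact that $\mathrm{Hom}(\mathbb{G}_a^g,\text{abelian part})=0$. This shows $\phi$ comes from $\mathrm{End}^\circ(A)$. The converse inclusion is standard.

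\textbf{Step 3: divisors.} A class in $\mathrm{H}^2_{\mathrm{dRB}}(A)(1)$ lying in the image of $\wedge^2 \mathrm{H}^1$ corresponds to an alternating form, hence to a homomorphism $\mathrm{H}^1(A)\to \mathrm{H}^1(A)^{\vee}(1)\cong \mathrm{H}^1(A^{\vee})$ that is dRB. Applying Step 1 to $A\times A^{\vee}$ shows this homomorphism comes from $\mathrm{Hom}(A,A^{\vee})\otimes\mathbb{Q}$. Skew-symmetry with respect to the Weil pairing then places it in $\mathrm{NS}(A)_{\mathbb{Q}}$, which is spanned by divisor classes, via the standard description of $\mathrm{NS}$ as symmetric homomorphisms.

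\textbf{Main obstacle.} The delicate point is Step 2. One must verify that the algebraic subgroup produced by W\"ustholz's theorem is exactly a graph, with no additional vector-group part, and that the twists and duality conventions in the $\mathrm{H}^2(1)$ identification match.
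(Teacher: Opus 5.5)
Your proposal is correct and follows essentially the route behind the cited result. The paper gives no proof of its own and simply invokes Bost--Charles, whose argument is the same as yours: W\"ustholz's Analytic Subgroup Theorem applied to universal vector extensions gives full faithfulness of $\mathrm{H}^1_{\mathrm{dRB}}$, and $\mathrm{NS}(A)_{\mathbb{Q}}$ is then identified with the symmetric part of $\mathrm{Hom}(A,A^{\vee})\otimes\mathbb{Q}$.

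The point you flag as the main obstacle in fact resolves itself. Since $\mathrm{Lie}(H)\subset\Gamma_\phi$ while $\mathrm{Lie}(H)\otimes\mathbb{C}$ contains a period lattice spanning $\Gamma_\phi\otimes\mathbb{C}$, one gets $\mathrm{Lie}(H)=\Gamma_\phi$ exactly, so there is no extra vector-group part. Hence $H\to E$ is an isogeny, and $\mathrm{Hom}(E(A),\mathbb{G}_{\mathrm{a}})=0$ forces the resulting endomorphism to be $E(f)$ for some $f\in\mathrm{End}(A)$. Finally, the identification $\mathrm{H}^1(A)(1)\cong\mathrm{H}_1(A^{\vee})$ is a morphism of dRB structures because it is induced by the algebraic Chern class of the Poincar\'e bundle, which settles your concern about twists and duality conventions.
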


\begin{remark}\label{astremark}
W\"ustholz's Analytic Subgroup Theorem is the principal
tool from transcendental number theory for studying de Rham-Betti structures of abelian varieties. Its applications in this setting, however, mainly concern $\mathrm{H}^1$ of an abelian variety or divisor classes (as in the results of Andr\'e and Bost-Charles recalled above), or more generally linear relations among periods of $1$-motives; see
\cite{huber2022transcendence}. Beyond such linear relations, very little is known about polynomial relations among periods, even for periods of CM abelian varieties; see the discussion in \cite[p.~2]{gao2026bi}. In the proof of Theorem \ref{introthm}, we encounter several scenarios where a direct period-theoretic argument would appear to require information about higher-degree polynomial relations among periods. See, for example Lemma \ref{p_0inpreimagelemma} and Section \ref{imaginaryquadraticsection}. We circumvent this difficulty by exploiting the geometry of the underlying abelian varieties; see for example Lemma~\ref{nosuchsimpleAV} and Proposition~\ref{antiweilabelianrealization}. More details about the outline of proof are given in Subsection \ref{proofoutline}.
\end{remark}

We now state a direct consequence of Theorem \ref{introthm}. In \cite[Table 1, p.~578]{mz-4-folds} and \cite[Theorem 0]{ribet1983hodge}, by performing invariant-theoretic computations, it was shown that the Hodge conjecture holds for all powers of certain abelian varieties listed in Theorem \ref{introthm}. We deduce:

\begin{corollary}
Let $A$ be either an abelian fourfold with quartic CM endomorphism field, or a simple CM abelian variety of prime dimension, or an anti-Weil type abelian fourfold defined over $\qbar$ (see Definition \ref{defnweiltype}). Then every dRB class in $\langle\mathrm{H}^{1}_{\mathrm{dRB}}(A,\mathbb{Q})\rangle^{\otimes}$ comes from a $\mathbb{Q}$-coefficient algebraic cycle on self-products of $A$.
\end{corollary}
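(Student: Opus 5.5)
The plan is to combine Theorem \ref{introthm} with the invariant-theoretic results of Moonen--Zarhin and Ribet through Tannakian duality. Fix $A$ in one of the three classes. The three cases need slightly different inputs. An abelian fourfold with quartic CM endomorphism field is case \ref{casecmdeg4}. A simple CM abelian variety of prime dimension is case \ref{casecmpprime}. An anti-Weil type fourfold has endomorphism algebra an imaginary quadratic field (Definition \ref{defnweiltype}), so it falls under case \ref{caseimaginaryquadratic}; here I would check that the anti-Weil hypothesis forces simplicity, or else that the definition already includes it. In every case, Theorem \ref{introthm} then gives $\gdrbmath(A)=\mathrm{MT}(A)$ as subgroups of $\mathrm{GL}(\betti)$.

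Next I pass from groups to classes. By the Tannakian formalism recalled above, the dRB classes in $\langle\mathrm{H}^{1}_{\mathrm{dRB}}(A,\mathbb{Q})\rangle^{\otimes}$ are exactly the $\gdrbmath(A)$-invariant tensors. This uses reductivity, which comes from André's semisimplicity. The Hodge classes in the same tensor category are exactly the $\mathrm{MT}(A)$-invariants. Since the two groups are equal, every dRB class is a Hodge class. One point needs care: the Betti realization of the relevant Tate twists $\mathbb{Q}_{\mathrm{dRB}}(n)$ must match the Hodge-theoretic twists under this identification, and I expect this to be built into the definitions of both groups.

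It remains to see that these Hodge classes are algebraic. Every dRB class lives in a finite direct sum of tensor constructions $\mathrm{H}^{1}(A)^{\otimes a}\otimes(\mathrm{H}^{1}(A)^{\vee})^{\otimes b}\otimes\mathbb{Q}(n)$. Using the polarization, $\mathrm{H}^1(A)^\vee\cong\mathrm{H}^1(A)(1)$, so each such space is a twisted piece of $\mathrm{H}^{*}(A^{m})$ by Künneth. A Hodge class there is therefore a Hodge class on some self-product $A^m$. Moonen--Zarhin \cite[Table 1]{mz-4-folds} and Ribet \cite[Theorem 0]{ribet1983hodge} show that for these $A$ the Hodge ring of every power $A^m$ is generated by divisor classes, so these Hodge classes are algebraic. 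Hence every dRB class is the class of a $\mathbb{Q}$-coefficient algebraic cycle on a self-product of $A$.

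The main obstacle is bookkeeping rather than depth. I must confirm that each of the three classes lands exactly in the rows of Moonen--Zarhin's table and in Ribet's hypothesis where the Hodge ring of every power is generated by divisors, so that no exceptional Weil classes occur. For anti-Weil type this is precisely what distinguishes it from Weil type. All transcendence input is already absorbed into Theorem \ref{introthm}.
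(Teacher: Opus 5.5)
Your proposal is correct and follows the paper's own route. The paper deduces the corollary in one line: Theorem \ref{introthm} gives $\gdrbmath(A)=\mathrm{MT}(A)$, so every dRB class is a Hodge class, and the Moonen--Zarhin table and Ribet's Theorem 0 say that for these $A$ the Hodge ring of every power is generated by divisor classes. The points you flag are harmless: Definition \ref{defnweiltype} already includes simplicity, and an abelian variety whose endomorphism algebra is a field is automatically simple.
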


We briefly recall what was known about de Rham-Betti groups of simple abelian varieties. For an arbitrary elliptic curve $E$ defined over $\qbar$, it is shown that $\gdrbmath(E)=\mathrm{MT}(E)$; see \cite[Theorem 5.13]{ksv2026rhambetticlassescoefficients} or \cite[Theorem 1]{kahn2025fullness}. It is also shown in \cite[Theorem 5.16]{ksv2026rhambetticlassescoefficients} that if $A$ is a simple abelian surface over $\qbar$ with $\mathrm{End}^{\circ}(A)\neq\mathbb{Q}$, then
$\gdrbmath(A)=\mathrm{MT}(A)$.

\subsection{Outline of the Proof}\label{proofoutline}
We now explain the strategy for proving Theorem \ref{introthm}. The group $\gdrbmath(A)$ contains the group of homotheties in
    $\mathrm{GL}(\betti)$; see \cite[Theorem 4.2]{ji_1}. Moreover, in
    \cite[Section 5]{ji_1}, a connected algebraic group
    $\gdrbhmath(A)\subset\mathrm{Hdg}(A)$ is constructed such that the
    following diagram commutes
$$\begin{tikzcd}
\gmmath \times \gdrbhmath(A) \arrow[r, two heads] \arrow[d, hook] & \gdrbmath(A) \arrow[d, hook] \\
\gmmath \times \mathrm{Hdg}(A) \arrow[r, two heads]               & \mathrm{MT}(A).              
\end{tikzcd}$$ Here the bottom arrow is the natural isogeny sending $(t,g)\in\gmmath\times\mathrm{Hdg}(A)$ to $t^{-1}g$. Therefore the problem is reduced to showing that $\gdrbhmath(A)=\mathrm{Hdg}(A)$.
\begin{strategy}\label{mainstrategy}
We analyze all the reductive $\mathbb{Q}$-algebraic subgroups properly contained in $\mathrm{Hdg}(A)$. For such a subgroup, if its invariants at degree two cohomology of $A$ do not agree with $(1,1)$-Hodge classes, then via Theorem \ref{bostoriginal}, we can rule out that subgroup as a candidate $\gdrbhmath(A)$. If a reductive subgroup passes the divisor test, we further exploit the geometry of the simple abelian variety $A$ in order to exclude this subgroup.
\end{strategy}
The proof of Theorem \ref{introthm} is organized into three cases: $A$ is a simple CM abelian variety; $A$ is an abelian fourfold with imaginary quadratic endomorphism field; $A$ is an abelian fourfold with quartic CM-endomorphism field. In each case, certain candidate reductive subgroups pass the divisor test and must therefore be excluded by distinctive arguments tailored to the geometry of $A$.

\refstepcounter{subsubsection}
\subsubsection*{\thesubsubsection\quad Simple CM Abelian Varieties} We start with the technical preparation for the case of simple CM abelian fourfolds in Section \ref{sectionprimecm}. For a simple CM abelian variety $A$ with endomorphism field $E$, we derive a useful criterion (Corollary \ref{divisortest}) for ruling out certain algebraic tori as candidates for $\gdrbhmath(A)\subset\mathrm{Hdg}(A)\subset\mathrm{U}_{E}$, in terms of their characters. Combined with the computation of periods of Weil structures by Gross in \cite{gross1978periods} and a theorem of Chudnovsky \cite{chudnovsky1980algebraic}, this leads to the equality $\gdrbmath(A)=\mathrm{MT}(A)$ for simple CM abelian varieties of prime dimension (see Corollary \ref{corprimedim}). This argument closely parallels Ribet's determination of the
Mumford-Tate groups of such abelian varieties
\cite[Theorem~2]{ribet1983hodge}, apart from the treatment of Weil
structures.

In Section \ref{cmgdrbsection}, we determine the de Rham-Betti groups of simple CM abelian fourfolds, see Theorem \ref{mainthmcm}. The Hodge groups of such abelian varieties were computed by Moonen and Zarhin in \cite{mz-4-folds}. Their method does not apply here. Crucially, Ribet's inequality (\cite[Section 3.5]{ribet1980division}) concerning the lower bounds for the dimension of Hodge groups is not known in the dRB setting; see Remark \ref{ribetinequalitu}. To get around this obstacle, in Subsection \ref{longcomp} and Subsection \ref{cmnonsimplicity}, we perform a thorough analysis of two-dimensional $\mathbb{Q}$-subtori of $\mathrm{U}_{E}$. For this, we will mainly use the divisor criterion from Corollary \ref{divisortest}.

In Case \ref{p0p2} and \ref{p0p3} of Lemma \ref{p_0inpreimagelemma}, certain two-dimensional $\mathbb Q$-tori pass the divisor test. Excluding these candidates for $\gdrbhmath(A)$ by a direct period-theoretic argument would require knowledge of higher-order polynomial relations among periods of the form specified in formula \eqref{p0p2eqs} and \eqref{p0p3eqs}. To get around this obstacle, we use the arithmetic properties of the endomorphism field $E$ and the fact that $A$ is a simple abelian variety to exclude these cases in Subsection \ref{cmnonsimplicity}. Finally we apply the argument from \cite[Section 7.6]{mz-4-folds} and the Weil structure result from Section \ref{sectionprimecm} to finish the proof.

\refstepcounter{subsubsection}
\subsubsection*{\thesubsubsection\quad Simple Abelian Fourfold with an Imaginary Quadratic Endomorphism Field}
\label{outline2}

In Section \ref{chaptercentre}, we consider simple type IV abelian fourfolds whose endomorphism field has degree two or four. For a Weil type abelian fourfold $A$, the same invariant-theoretic computation from \cite{mz-4-folds} can be applied to conclude that $\gdrbmath(A)=\mathrm{MT}(A)$.

The case of an anti-Weil type abelian fourfold $A$ (see Definition \ref{defnweiltype}) is more subtle: neither Ribet's method of determining $\mathrm{MT}(A)$ in \cite[Theorem 3]{ribet1983hodge} nor the straightforward invariant computation for divisors suffices. Indeed, the computations in Lemma \ref{wedgecandidateclassification} and Remark \ref{13remark} indicate that a direct period-theoretic argument would require knowledge of quadratic relations among periods of $A$, of which little is known; see discussion in \cite{gao2026bi}. Instead, in Proposition \ref{antiweilabelianrealization}, we use van Geemen's half-twist construction \cite{van2001half} to build a companion abelian sixfold $B$ over $\qbar$ associated with the anti-Weil type abelian fourfold $A$. This construction relates certain endomorphisms of $\mathrm{H}^2(A)$ to endomorphisms of $\mathrm{H}^1(B)$. We then apply Theorem \ref{bostoriginal} to $B$, which in turn excludes the existence of certain hypothetical de Rham-Betti endomorphisms of $\mathrm{H}^2_{\mathrm{dRB}}(A,\mathbb{Q})$ and thus exclude certain reductive subgroups of $\mathrm{Hdg}(A)$ as candidates for $\gdrbhmath(A)$. This proves Theorem \ref{imaginaryquadraticthm}. 
\refstepcounter{subsubsection}
\subsubsection*{\thesubsubsection\quad Simple Abelian Fourfold with a Quartic CM Endomorphism Field}

When the endomorphism algebra is a quartic CM-field, the method of determining the Mumford-Tate group in \cite{mz-4-folds} likewise does not carry over directly to the dRB setting. Instead, in the proof of Theorem~\ref{deg4theorem}, we exploit the positivity of the polarization form and the simplicity of $A$ to exclude the proper reductive subgroups of $\mathrm{Hdg}(A)$ as candidates for $\gdrbhmath(A)$. In more detail: if $\gdrbhmath(A)$ were one of the remaining proper $\mathbb{Q}$-algebraic reductive subgroups of $\mathrm{Hdg}(A)$---in this case, a $\qbar/\mathbb{Q}$-form of $\mathrm{SL}(2)$---then this assumption would force the values of any polarization form on $A$ to satisfy certain linear relations; see Lemma \ref{deg4Polarisation properties k merged}. We then show that no such bilinear form can polarize a weight-one Hodge structure associated with a simple abelian fourfold whose
endomorphism algebra is a quartic CM field.

\subsection{Acknowledgements}
This article expands on the second part of the author's PhD thesis written in University of Amsterdam (\cite{ji2026thesis}), supported by an NWO cluster grant with project number 613.009.153. I am very grateful to my advisor Mingmin Shen for introducing this topic to me and helpful discussions and suggestions. I have also benefited greatly from discussions with Charles Vial, Tobias Kreutz, Javier Fresán, Yichen Qin, and Mingyu Ni. Part of this work was carried out during a visit to the University of Science and Technology of China, and I would like to thank their hospitality.

\subsection{Notation and Conventions}
\begin{enumerate}
    \item We fix an algebraic closure $\qbar$ of $\mathbb{Q}$ and an embedding of fields $\qbar\xhookrightarrow{}\mathbb{C}$.
    \item Given an abelian variety $A$, we denote by $\mathrm{End}^{\circ}(A)$ its endomorphism algebra $\mathrm{End}(A)\otimes_{\mathbb{Z}}\mathbb{Q}$. 
    \item We fix a square root of $-1$ and denote it by $\sqrt{-1}$.
\end{enumerate}

\section{De Rham-Betti Groups of Simple CM Abelian Varieties}\label{sectionprimecm}
The main goal of this section is to set up notations and collect tools used in Section \ref{cmgdrbsection} and \ref{chaptercentre}. As a first application, we determine the dRB group of every simple CM abelian variety of prime dimension in Corollary \ref{corprimedim}. We first recall two results from \cite{ji_1}.
\begin{theorem}[\cite{ji_1}, Theorem 4.2 and Definition 5.1]\label{ji_1thmsummarize}
 Let $A$ be an abelian variety defined over $\qbar$. There is a connected and reductive $\mathbb{Q}$-algebraic group  $\gdrbhmath(A)\subset\mathrm{Hdg}(A)$, such that the following diagram commutes $$\begin{tikzcd}
\gmmath \times \gdrbhmath(A) \arrow[r, two heads] \arrow[d, hook] & \gdrbmath(A) \arrow[d, hook] \\
\gmmath \times \mathrm{Hdg}(A) \arrow[r, two heads]               & \mathrm{MT}(A)              
\end{tikzcd}.$$ The bottom arrow is the natural isogeny sending $(t,g)\in\gmmath\times\mathrm{Hdg}(A)$ to $t^{-1}g\in\mathrm{MT}(A)$.
\end{theorem}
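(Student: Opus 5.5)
The plan is to reconstruct the statement from three inputs: the Tannakian formalism for dRB structures, the semisimplicity of $\mathrm{H}^1_{\mathrm{dRB}}(A,\mathbb{Q})$ (André, via Wüstholz), and the inclusion $\gdrbmath(A)\subset\mathrm{MT}(A)$ coming from Deligne--André (Hodge classes on abelian varieties are motivated, hence dRB). These give that $\gdrbmath(A)$ is a reductive subgroup of $\mathrm{MT}(A)$. It is connected because the relevant fibre functor lands in an algebraically closed field situation: it is the $\mathbb{Q}$-Zariski closure of a torsor-type object over $\qbar$, and one can argue as in \cite{ji_1} that there are no nontrivial finite quotients.

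First I would show that the homotheties $\gmmath\subset\mathrm{GL}(\betti)$ lie in $\gdrbmath(A)$. A character of $\gdrbmath(A)$ that is trivial on its image in $\mathrm{GL}(\betti)$ but not on the scalars would have to be visible as a dRB line. So it suffices to show that no nonzero tensor of nonzero weight, in $\betti^{\otimes a}\otimes(\betti^\vee)^{\otimes b}$ with $a\neq b$, is a dRB class. Using the polarization one rewrites weights in terms of Tate twists $\mathbb{Q}_{\mathrm{dRB}}(n)$, so the question becomes whether $(2\pi\sqrt{-1})^n$ is algebraic. It is not, by the transcendence of $\pi$. Hence the central torus acting on weights injects, and $\gmmath\subset\gdrbmath(A)$.

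Next I would set $\gdrbhmath(A)$ to be the identity component of $\gdrbmath(A)\cap\mathrm{Hdg}(A)$, or equivalently of the kernel of the restriction to $\gdrbmath(A)$ of the determinant/similitude character $\nu$ of $\mathrm{MT}(A)$. Since $\mathrm{Hdg}(A)=\ker(\nu)^\circ$ in $\mathrm{MT}(A)$, this group is connected and contained in $\mathrm{Hdg}(A)$. It is reductive because it is a normal subgroup of the reductive group $\gdrbmath(A)$. For the surjection $\gmmath\times\gdrbhmath(A)\to\gdrbmath(A)$, note that $\nu$ restricted to $\gdrbmath(A)$ is nontrivial, since it is nontrivial on the homotheties. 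Therefore $\gdrbmath(A)=\gmmath\cdot\ker(\nu|_{\gdrbmath(A)})^\circ$ by a dimension count and connectedness. Commutativity of the diagram is then immediate, since all maps are restrictions of $(t,g)\mapsto t^{-1}g$.

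The main obstacle I expect is the step placing the homotheties in $\gdrbmath(A)$. It requires carefully identifying weight-graded tensors with Tate twists via the polarization, so that the transcendence of $\pi$ applies. The remaining steps are formal group theory.
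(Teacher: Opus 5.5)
Your architecture is the one the paper relies on. The paper only cites \cite{ji_1} here, but it re-runs the construction in the proof of Lemma \ref{deg4weil}. First, $\gdrbmath(A)$ is connected, which the paper takes from \cite[Theorem 2.6]{ksv2026rhambetticlassescoefficients}. Your ``torsor over $\qbar$'' remark points at that argument, but you should state or cite it, since without connectedness the top arrow cannot be surjective. Second, the homotheties lie in $\gdrbmath(A)$. Third, $\gdrbhmath(A)=\mathrm{pr}_2(\Psi^{-1}(\gdrbmath(A))^{\circ})$, where $\Psi$ is the bottom isogeny; this agrees with your $(\gdrbmath(A)\cap\mathrm{Hdg}(A))^{\circ}$ once the scalars are known to lie in $\gdrbmath(A)$. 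Your reductivity and dimension-count steps are fine.

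\textbf{The gap is the homothety step.} Transcendence of $\pi$ only shows that $\mathbb{Q}_{\mathrm{dRB}}(n)$ has no dRB class for $n\neq 0$, i.e.\ that the similitude character $\nu$ is nontrivial on $\gdrbmath(A)$. The polarization does not reduce your claim to this. The isomorphism $V^{\vee}\cong V\otimes\mathbb{Q}_{\mathrm{dRB}}(1)$ only turns $V^{\otimes a}\otimes V^{\vee\otimes b}$ into $V^{\otimes(a+b)}\otimes\mathbb{Q}_{\mathrm{dRB}}(b)$. That is a large object whose dRB classes are not governed by the single period $2\pi\sqrt{-1}$. When $a+b$ is odd there is not even a Tate twist to compare with: you would be asserting that odd-degree tensor spaces carry no dRB classes, which cannot follow from $\pi\notin\qbar$ alone. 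Group-theoretically, $\nu|_G\neq 1$ does not force a connected $G\subset\mathrm{MT}(A)$ to contain the scalars. For a generic abelian surface, $\mathrm{MT}(A)=\mathrm{GSp}_4$; in a symplectic basis pairing $e_1,e_2$ with $e_3,e_4$, the split torus $\{\mathrm{diag}(t,t,1,1)\}$ has $\nu=t$ but contains no nontrivial scalar.

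\textbf{What closes the gap is Wüstholz's theorem,} in the form of Theorem \ref{bostoriginal}: $\mathrm{End}(\mathrm{H}^1_{\mathrm{dRB}}(A,\mathbb{Q}))=\mathrm{End}^{\circ}(A)$. This is what gives $\mathrm{Z}(\gdrbmath(A))\subset\mathrm{Z}(\mathrm{MT}(A))$, i.e.\ \cite[Lemma 2.7]{ji_1}, used in Proposition \ref{twocentressame}. The argument then runs as follows.
\begin{enumerate}
\item Write $G:=\gdrbmath(A)$. The centre of $G$ commutes with $G$, so it lies in $\mathrm{End}^{\circ}(A)^{\times}\cap\mathrm{MT}(A)\subset\mathrm{Z}(\mathrm{MT}(A))$.
\item Since $G$ is connected reductive and $\nu$ is trivial on $G^{\mathrm{der}}$, $\nu$ is a nontrivial $\mathbb{Q}$-character of the torus $\mathrm{Z}(G)^{\circ}$. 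Hence $\mathrm{Z}(G)^{\circ}$ contains a nontrivial $\mathbb{Q}$-split subtorus.
\item We have $\mathrm{Z}(\mathrm{MT}(A))^{\circ}=\gmmath\cdot\mathrm{Z}(\mathrm{Hdg}(A))^{\circ}$. Here $\mathrm{Z}(\mathrm{Hdg}(A))^{\circ}$ lies in the torus $\{x\in\mathrm{Res}_{F/\mathbb{Q}}\gmmath : xx^{\dagger}=1\}$, where $F$ is the centre of $\mathrm{End}^{\circ}(A)$ and $\dagger$ the Rosati involution. Its real points are compact, so it is $\mathbb{Q}$-anisotropic.
\item So the maximal split subtorus of $\mathrm{Z}(\mathrm{MT}(A))^{\circ}$ is the one-dimensional group of homotheties. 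It therefore coincides with the split part of $\mathrm{Z}(G)^{\circ}$ and lies in $G$.
\end{enumerate}
With this in place of your transcendence-of-$\pi$ reduction, the rest of your proposal goes through.
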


\begin{lemma}[\cite{ji_1}, Corollary 5.5]\label{gdrbhinv}
Let $A$ be an abelian variety defined over $\qbar$. Denote $M:=\mathrm{H}^1_{\mathrm{dRB}}(A,\mathbb{Q})$ and let $m$ and $n$ be two non-negative integers such that $m-n$ is an even integer. Then
    $$(M^{\otimes m}\otimes M^{*\otimes n})^{\gdrbhmath(A)} \otimes \mathbb{Q}_{\mathrm{dRB}}({\frac{m-n}{2}}) = (M^{\otimes m}\otimes M^{*\otimes n}\otimes \mathbb{Q}_{\mathrm{dRB}}({\frac{m-n}{2}}))^{\gdrbmath(A)}.$$ 
\end{lemma}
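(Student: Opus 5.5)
The plan is to deduce the equality from Theorem \ref{ji_1thmsummarize} and the Tannakian description of $\gdrbmath(A)$ as the stabilizer of all dRB classes in $\langle M\rangle^{\otimes}$. First record what the commutative diagram gives. The image of $\gmmath\times\gdrbhmath(A)$ under $(t,g)\mapsto t^{-1}g$ is exactly $\gdrbmath(A)$. Moreover, $\gmmath$ acts on $M^{\otimes m}\otimes M^{*\otimes n}$ through the character $t\mapsto t^{-(m-n)}$. On the Tate twist $\mathbb{Q}_{\mathrm{dRB}}(\frac{m-n}{2})$ the homotheties in $\gdrbmath(A)$ act through a character, and $\gdrbhmath(A)\subset\mathrm{Hdg}(A)$ acts trivially there. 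I would fix this normalization as the one under which $\mathbb{Q}_{\mathrm{dRB}}(1)$ corresponds to the determinant-type character of weight $2$ on homotheties; this is the convention implicit in \cite{ji_1}.

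The core step is to compute invariants. Let $W=M^{\otimes m}\otimes M^{*\otimes n}$ and $W'=W\otimes\mathbb{Q}_{\mathrm{dRB}}(\frac{m-n}{2})$, both viewed as representations of $\gmmath\times\gdrbhmath(A)$ through the surjection onto $\gdrbmath(A)$. Because the map is surjective, $W'^{\gdrbmath(A)}=W'^{\gmmath\times\gdrbhmath(A)}$. In $W'$, the factor $\gmmath$ acts by the product of the weight character on $W$ and the character on the twist. With the correct normalization of the twist these cancel, so $\gmmath$ acts trivially on $W'$. Hence $W'^{\gmmath\times\gdrbhmath(A)}=W'^{\gdrbhmath(A)}$. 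Since $\gdrbhmath(A)$ acts trivially on the one-dimensional twist, this equals $W^{\gdrbhmath(A)}\otimes\mathbb{Q}_{\mathrm{dRB}}(\frac{m-n}{2})$, which is the claim. The parity condition $m-n$ even is needed only so that the half-integral twist $\frac{m-n}{2}$ is an integer and $\mathbb{Q}_{\mathrm{dRB}}(\frac{m-n}{2})$ is defined. To make sense of this, I would identify $\mathbb{Q}_{\mathrm{dRB}}(1)$ inside $\langle M\rangle^{\otimes}$ via $\bigwedge^{2g}M\cong\mathbb{Q}_{\mathrm{dRB}}(-g)$, or via the polarization pairing $M\otimes M\to\mathbb{Q}_{\mathrm{dRB}}(-1)$, which is a dRB morphism because it is algebraic.

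The main obstacle is the bookkeeping of how homotheties act on $\mathbb{Q}_{\mathrm{dRB}}(1)$. One has to check that the character by which the $\gmmath$-factor (equivalently, the central homotheties of $\gdrbmath(A)$) acts on the twist exactly cancels $t^{-(m-n)}$. It is also necessary that $\gdrbhmath(A)$ acts trivially on the twist. I would verify both with the polarization $\psi\colon M\otimes M\to\mathbb{Q}_{\mathrm{dRB}}(-1)$, a dRB morphism by Theorem \ref{bostoriginal}. The group $\mathrm{Hdg}(A)$, and hence $\gdrbhmath(A)$, preserves $\psi$ on the nose, so it acts trivially on $\mathbb{Q}_{\mathrm{dRB}}(-1)$. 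Meanwhile $(t,1)$ scales $\psi$ by $t^{-2}$, which pins down the character on $\mathbb{Q}_{\mathrm{dRB}}(1)$ as $t^{2}$. This gives the required cancellation $t^{-(m-n)}\cdot t^{m-n}=1$.
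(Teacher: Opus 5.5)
Your argument is correct: through $(t,g)\mapsto t^{-1}g$ the $\gmmath$-factor acts on $M^{\otimes m}\otimes M^{*\otimes n}\otimes\mathbb{Q}_{\mathrm{dRB}}(\frac{m-n}{2})$ by $t^{-(m-n)}\cdot t^{m-n}=1$, and $\gdrbhmath(A)\subset\mathrm{Hdg}(A)$ preserves the polarization and therefore acts trivially on the twist. The paper gives no proof of its own here (it cites \cite[Corollary 5.5]{ji_1}), but this is the same mechanism it uses in the proof of Lemma \ref{deg4weil}, where the twist is realized through a polarization pairing and \cite[Lemma 3.5]{ji_1} is invoked. One small slip: $\psi$ being a dRB morphism does not come from Theorem \ref{bostoriginal}, which is the converse direction; it follows from the algebraicity of the polarization class, as you also say. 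With that, the character on $\mathbb{Q}_{\mathrm{dRB}}(1)$ is forced by the Tannakian structure rather than being a convention.
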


\subsection{A Non-De Rham-Betti Group Criterion for Algebraic Tori}

For a $d$-dimensional simple abelian variety $A$ with complex multiplication by $E$, its Hodge group is a $\mathbb{Q}$-subtorus of $\mathrm{U}_{E}$. Thus so is $\gdrbhmath(A)$. We present a computational criterion which excludes certain (connected) subtorus of $\mathrm{U}_{E}$ as a candidate for $\gdrbhmath(A)$. We begin with the following set up of notations. 
\begin{setup}\label{torusinvsetup}
Denote $\betti$ by $V$. The morphism of $\mathbb{Q}$-algebras \begin{equation}\label{endoinclusion}E \xhookrightarrow{} \mathrm{End}(V)\end{equation} induces a homomorphism of $\mathbb{Q}$-algebraic groups $\mathrm{U}_E \xhookrightarrow{} \resgmmath\xhookrightarrow{}\mathrm{GL}(V)$. For any (connected) $\mathbb{Q}$-algebraic subtorus $T$ of $\mathrm{U}_E$, the basic correspondence between $\mathbb{Q}$-algebraic tori and their character groups (see \cite[Theorem 14.17]{milne2014algebraic}) yields a short exact sequence of $\absgalois$-modules 
\begin{equation}
\begin{tikzcd}\label{charseq}
0 \arrow[r] & \Delta \arrow[r, "\iota"] & X^{*}(\mathrm{U}_{E}) \arrow[r] & X^{*}(T) \arrow[r] & 0.
\end{tikzcd}
\end{equation} where $\Delta$ is the kernel of the natural surjective homomorphism $X^{*}(\mathrm{U}_{E})\twoheadrightarrow X^{*}(T)$.
The position of $\iota(\Delta)$ inside $X^{*}(\mathrm{U}_{E})$ records invariant tensors for the action of $T$; see Lemma \ref{torusinvariants}. Formula \eqref{endoinclusion} induces an eigenspace decomposition $$V_{\qbar}:=V\otimes_{\mathbb{Q}}\qbar=\bigoplus_{\sigma_{i}\in\mathrm{Hom}(E,\qbar)}V_{\sigma_{i}}$$ where each $V_{\sigma}$ is a one-dimensional $\qbar$-vector space and an element $e\in E$ acts on $V_{\sigma}$ via scalar multiplication by $\sigma(e)$. Choose a $\qbar$-basis $v_{\sigma}$ for each eigenspace $V_{\sigma}$. 
Let $S=\{\sigma_1,\ldots,\sigma_d\}$ be the CM-type of $A$, so that
\[
\mathrm{Hom}(E,\qbar)
=
S\coprod\overline{S},
\qquad
\overline{S}
=
\{\sigma_{d+1}:=\overline{\sigma_1},\ldots,
  \sigma_{2d}:=\overline{\sigma_d}\}.
\] Label the corresponding basis in $$X^{*}(\resgmmath)=\mathrm{Hom}(\resgmmath(\qbar),\gmmath(\qbar))=\mathrm{Hom}(\textstyle\prod_{\sigma_{i}\in\mathrm{Hom}(E,\qbar)}\qbar^{\times},\qbar^{\times})\cong \mathbb{Z}^{\mathrm{Hom}(E,\overline{\mathbb{Q}})}$$ by $\{e_{\sigma_1},\dots,e_{\sigma_{d}}\}$ and $\{e_{\sigma_{d+1}},\dots,e_{\sigma_{2d}}\}$ respectively. The kernel of the canonical surjective map $X^{*}(\resgmmath)\twoheadrightarrow X^{*}(\mathrm{U}_{E})$ is then spanned by $\{e_{\sigma_{i}}+e_{\sigma_{i+d}}|i\in\{1,\dots,d\}\}$, thus the image of $\{e_{\sigma_1},\dots,e_{\sigma_{d}}\}$, which we denote by $\{f_{\sigma_1},\dots,f_{\sigma_{d}}\}$, then forms a $\mathbb{Z}$-basis of $X^{*}(\mathrm{U}_{E})$. 
\end{setup}
\begin{lemma}\label{torusinvariants}
Keeping the same notation, suppose $\alpha:=\displaystyle\sum_{1\leq i\leq d} n_{i}f_{\sigma_i} \in \iota(\Delta),\, n_{i} \in \mathbb{Z}$. Then $\displaystyle v_{\alpha}:=\bigotimes_{1\leq i\leq d}v_{\sigma_i}^{\otimes n_i}$ is fixed by the induced action of $T({\qbar})$ on $\displaystyle V':=\bigotimes_{1\leq i\leq d}V_{\qbar}^{\otimes n_i}$. We adopt the convention that wherever $n<0$, $V^{\otimes n}:=V^{*\otimes (-n)}$ and $v_{\sigma_{i}}^{\otimes n}:=v_{\sigma_{i}}^{*\otimes (-n)}$.
\end{lemma}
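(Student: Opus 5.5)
The plan is to compute the action of a point $t\in T(\qbar)\subset \mathrm{U}_E(\qbar)\subset \resgmmath(\qbar)=\prod_{\sigma}\qbar^{\times}$ on $v_\alpha$ directly, and to read the result off as the value of the character $\alpha$ at $t$. First, the inclusion $\resgmmath\hookrightarrow\mathrm{GL}(V)$ is induced by $E\hookrightarrow\mathrm{End}(V)$. After base change to $\qbar$, a point $x=(x_\sigma)_\sigma\in\resgmmath(\qbar)$ acts on the eigenline $V_\sigma$ by the scalar $x_\sigma=e_\sigma(x)$. This holds because $E\otimes\qbar\cong\prod_\sigma\qbar$ and $e\otimes 1$ acts on $V_\sigma$ by $\sigma(e)$. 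So $x\cdot v_\sigma=e_\sigma(x)v_\sigma$. On the dual vector $v_\sigma^*\in V_{\qbar}^*$, with the contragredient action, we get $x\cdot v_\sigma^*=e_\sigma(x)^{-1}v_\sigma^*$.

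Next, restrict to $\mathrm{U}_E$. For $x\in\mathrm{U}_E(\qbar)$, the characters $e_\sigma$ and $f_\sigma$ agree on $x$ for $\sigma\in S$, since $f_{\sigma_i}$ is by definition the image of $e_{\sigma_i}$ under restriction $X^*(\resgmmath)\to X^*(\mathrm{U}_E)$. The action on the tensor product $V'$ is diagonal, and $v_\alpha$ is a pure tensor of eigenvectors. Multiplying the scalars factor by factor, with negative exponents handled by the dual convention, gives
$$t\cdot v_\alpha=\prod_{1\le i\le d} f_{\sigma_i}(t)^{n_i}\,v_\alpha=\alpha(t)\,v_\alpha,\qquad t\in T(\qbar),$$
where $\alpha$ is viewed as a character of $\mathrm{U}_E$ written additively.

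Finally, use exactness of \eqref{charseq}. Since $\alpha\in\iota(\Delta)$, its image in $X^*(T)$ is zero, which means $\alpha|_T$ is the trivial character. Hence $\alpha(t)=1$ for all $t\in T(\qbar)$, and $v_\alpha$ is fixed.

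The only point requiring care is the bookkeeping identification. Under the anti-equivalence of \cite[Theorem 14.17]{milne2014algebraic}, the map $X^*(\mathrm{U}_E)\to X^*(T)$ is restriction of characters along $T\hookrightarrow\mathrm{U}_E$. I also need the sign convention to be consistent: $e_\sigma$ must be the character through which $\resgmmath$ acts on $V_\sigma$, not on its dual. Both follow immediately from the definitions in Setup \ref{torusinvsetup}, so I do not expect a genuine obstacle.
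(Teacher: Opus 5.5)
Your proposal is correct and follows essentially the same route as the paper: you show that a point of $\mathrm{U}_E(\qbar)$ acts on the pure tensor $v_\alpha$ through the character $\alpha$, and then use that $\alpha$ lies in the kernel of restriction $X^{*}(\mathrm{U}_{E})\to X^{*}(T)$, so $\alpha(t)=1$ for all $t\in T(\qbar)$. Your extra bookkeeping makes explicit what the paper does in the coordinates $(z_{\sigma_1},\dots,z_{\sigma_d},z_{\sigma_1}^{-1},\dots,z_{\sigma_d}^{-1})$: the contragredient action on $v_{\sigma}^{*}$ and the identification of $f_{\sigma_i}$ with the restriction of $e_{\sigma_i}$ to $\mathrm{U}_E$.
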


\begin{proof}
An element $$(z_{\sigma_1},\dots,z_{\sigma_d},z_{\sigma_1}^{-1},\dots,z_{\sigma_d}^{-1})\in\mathrm{U}_{E}(\qbar)\subset\resgmmath(\qbar)=\prod_{\sigma_{i}\in\mathrm{Hom}(E,\qbar)}\qbar^{\times}$$ acts on $v_{\alpha}$ via scalar multiplication by $\displaystyle\prod_{1\leq i\leq d}z_{\sigma_{i}}^{n_{i}}$. 
Hence $v_{\alpha}$ is indeed an eigenvector of the induced action of $\mathrm{U}_{E}(\qbar)$ on $V'$ with eigencharacter $\alpha$. Since the representation of $T({\qbar})$ on $V'$ factors through $T({\qbar}) \xhookrightarrow{} \mathrm{U}_{E}(\qbar)$, $v_{\alpha}$ is also an eigenvector of $\alpha|_{T({\qbar})}$. Because $\alpha$ lies in the kernel of the map $X^{*}(\mathrm{U}_{E}) \twoheadrightarrow X^{*}(T)$, for any $t \in T({\qbar})$, we have $\alpha(t)=1$. Hence $T(\qbar)$ fixes $v_{\alpha}$.
\end{proof}

\begin{corollary} \label{divisortest}
With the same notations as Setup \ref{torusinvsetup}, if $\iota(\Delta)$ contains an element
$\pm f_{\sigma_i}\pm f_{\sigma_j}$ with $i\neq j$, then $T$ cannot be $\gdrbhmath(A)$.
\end{corollary}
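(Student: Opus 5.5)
We argue by contradiction: suppose $T=\gdrbhmath(A)$ and that $\iota(\Delta)$ contains $\alpha=\epsilon_i f_{\sigma_i}+\epsilon_j f_{\sigma_j}$ with $\epsilon_i,\epsilon_j\in\{\pm1\}$ and $i\neq j$. We will manufacture from $\alpha$ a dRB class in $\mathrm{H}^2_{\mathrm{dRB}}(A,\mathbb{Q})\otimes\mathbb{Q}_{\mathrm{dRB}}(1)$ (or its analogue in $\mathrm{End}$) that is not algebraic, contradicting Theorem \ref{bostoriginal}.

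First we rewrite the tensor attached to $\alpha$ as a tensor of total degree zero modulo twist, living in $V^{\otimes 2}$ or $V\otimes V^*$. In $X^*(\mathrm{U}_E)$ we have $-f_{\sigma}=f_{\overline{\sigma}}$, where $f_{\overline\sigma}$ denotes the image of $e_{\overline\sigma}$. Hence $-f_{\sigma_i}$ is realized by the eigenvector $v_{\overline{\sigma_i}}\in V_{\qbar}$; equivalently it is realized by $v_{\sigma_i}^*$, since $\mathrm{U}_E$ acts on $v_{\overline\sigma}$ and on $v_\sigma^*$ by the same character. Thus $\alpha$ is the $\mathrm{U}_E$-eigencharacter of $w:=v_{\tau_i}\otimes v_{\tau_j}\in V_{\qbar}^{\otimes 2}$, where $\tau_i\in\{\sigma_i,\overline{\sigma_i}\}$ and $\tau_j\in\{\sigma_j,\overline{\sigma_j}\}$ are chosen according to the signs. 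By Lemma \ref{torusinvariants}, adapted to this presentation by the same one-line computation, $w$ is fixed by $T(\qbar)$. Consequently the antisymmetrization $v_{\tau_i}\wedge v_{\tau_j}$ is a nonzero $T$-invariant in $\wedge^2 V_{\qbar}=\mathrm{H}^2(A,\qbar)$; it is nonzero because $\tau_i\neq\tau_j$, since $i\neq j$ and $\tau_i,\tau_j$ are not conjugate.

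Next we apply Lemma \ref{gdrbhinv} with $m=2,n=0$, using $\mathrm{H}^2=\wedge^2\mathrm{H}^1$ as a dRB direct summand of $M^{\otimes2}$. This shows that $(\wedge^2 V)^{T}\otimes\mathbb{Q}_{\mathrm{dRB}}(1)$ consists of dRB classes. Since $T$ is defined over $\mathbb{Q}$, $(\wedge^2V)^T$ is a $\mathbb{Q}$-subspace whose $\qbar$-span contains $v_{\tau_i}\wedge v_{\tau_j}$. By Theorem \ref{bostoriginal}, all of these classes would then be algebraic, i.e. $(1,1)$-classes, so $(\wedge^2V)^T_{\qbar}\subseteq \mathrm{H}^{1,1}\cap \mathrm{H}^2(A,\qbar)$, which is $\mathrm{Hdg}(A)$-invariant.

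The contradiction comes from types. The decomposition $\mathrm{H}^{1,0}\oplus \mathrm{H}^{0,1}$ is given by $S$ versus $\overline S$, so $v_{\tau_i}\wedge v_{\tau_j}$ has type $(2,0)$, $(0,2)$, or $(1,1)$ according to whether both $\tau$'s lie in $S$, both lie in $\overline S$, or the two are split. In the first two cases, which include $\alpha=f_{\sigma_i}+f_{\sigma_j}$ and its negative, we are done immediately, since a $(2,0)$-class cannot be algebraic. The mixed case $\alpha=f_{\sigma_i}-f_{\sigma_j}$ is the main obstacle, because $v_{\sigma_i}\wedge v_{\overline{\sigma_j}}$ is of type $(1,1)$. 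For this case we plan to use the endomorphism part of Theorem \ref{bostoriginal}: the invariant $v_{\sigma_i}\otimes v_{\sigma_j}^*\in\mathrm{End}(V_{\qbar})$ gives a $T$-invariant, hence dRB by Lemma \ref{gdrbhinv} with $m=n=1$. Such an element lies in $\mathrm{End}(\mathrm{H}^1_{\mathrm{dRB}})\otimes\qbar=E\otimes\qbar$. However, $E\otimes\qbar$ acts diagonally in the basis $v_\sigma$, so it cannot contain the off-diagonal operator $v_{\sigma_i}\otimes v_{\sigma_j}^*$ when $i\neq j$. This contradiction covers the mixed case, and together with the first two cases it shows that $T$ cannot be $\gdrbhmath(A)$.
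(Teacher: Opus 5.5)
Your proof is correct and follows the paper's argument. The mixed-sign case uses the off-diagonal $T$-invariant $v_{\sigma_i}\otimes v_{\sigma_j}^{*}$, which contradicts $\mathrm{End}(\mathrm{H}^1_{\mathrm{dRB}}(A,\mathbb{Q}))\otimes\qbar=E\otimes\qbar$; the same-sign case uses $v_{\sigma_i}\wedge v_{\sigma_j}$ together with Lemma \ref{gdrbhinv} and Theorem \ref{bostoriginal}. The only difference is minor: you exclude $v_{\sigma_i}\wedge v_{\sigma_j}$ by its Hodge type $(2,0)$, whereas the paper uses Picard rank $d$ to identify $(\wedge^2V)^{\mathrm{Hdg}(A)}\otimes\qbar$ with the span of the $v_{\sigma_i}\wedge v_{\sigma_{i+d}}$, so your route avoids the Picard-rank count.
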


\begin{proof}
Assume $T =\gdrbhmath(A)$. First suppose $\iota(\Delta)$ contains $f_{\sigma_{i}}-f_{\sigma_{j}}$ with $i\neq j$. Then by Lemma \ref{torusinvariants}, we would have $v_{\sigma_{i}} \otimes v^{-1}_{\sigma_{j}}\in(V_{\qbar} \otimes V_{\qbar}^{*})^{T(\qbar)} = \mathrm{End}(V_{\qbar})^{T(\qbar)} = \mathrm{End}_{\gdrbhmath(A)}(V) \otimes \qbar =\mathrm{End}_{\gdrbmath(A)}(V) \otimes \qbar = \mathrm{End}_{\mathrm{MT}(A)}(V) \otimes \qbar$, where the third equality follows from Lemma \ref{gdrbhinv} and the last equality follows from Theorem \ref{bostoriginal}. Note $\mathrm{End}_{\mathrm{MT}(A)}(V)=E$ and thus $\mathrm{End}_{\mathrm{MT}(A)}(V) \otimes \qbar$ is precisely spanned by $\{v_{\sigma_{i}} \otimes v^{-1}_{\sigma_{i}}|i \in \{1,\dots,2d\}\}$. Hence $v_{\sigma_{i}} \otimes v^{-1}_{\sigma_{j}}(i\neq j)$ cannot lie in it, a contradiction.

Now suppose $f_{\sigma_{i}}+f_{\sigma_{j}}\in \iota(\Delta)$ with $i\neq j$. Then by Lemma \ref{torusinvariants}, the element $v_{\sigma_{i}} \wedge v_{\sigma_{j}}$ would lie in $(\wedge^2V_{\qbar})^{T(\qbar)}$. Moreover, Lemma \ref{gdrbhinv} and Theorem \ref{bostoriginal} give that $(\wedge^2 V)^{\gdrbhmath(A)} \otimes_{\mathbb{Q}}\qbar=(\wedge^2 V)^{\mathrm{Hdg}(A)} \otimes_{\mathbb{Q}} \qbar$. Since the Picard rank of the simple CM abelian variety $A$ is $d$ and the CM-type $S$ of $A$ is precisely $\{\sigma_1,\dots,\sigma_d\}$, it follows that $(\wedge^2 V)^{\mathrm{Hdg}(A)}\otimes_{\mathbb{Q}}\qbar$ is precisely the $\qbar$-span of $\{v_{\sigma_{i}} \wedge v_{\sigma_{i+d}}|1 \leq i \leq d\}$.
Hence for $i,j\in \{1,\dots,d\}$ where $i\neq j$, the element $v_{\sigma_{i}} \wedge v_{\sigma_{j}}$ does not lie in $(\wedge^2 V)^{\mathrm{Hdg}(A)} \otimes_{\mathbb{Q}}\qbar$. Again this poses a contradiction.
\end{proof}
\subsection{Subtori of \texorpdfstring{$\mathrm{U}_{E}$}{UE}}
In this section we perform a preliminary study of subtori of \texorpdfstring{$\mathrm{U}_{E}$}{UE} and also set up the relevant notations.
\begin{setup}\label{setupcontinued}
Denote the Galois closure of $E/\mathbb{Q}$ in $\qbar$ by $L$. Recall $X^{*}(\resgmmath)$ admits a $\galoisL$-module structure; see \cite[Chapter 14]{milne2014algebraic}. In Setup \ref{torusinvsetup} we fix a basis of $X^{*}(\resgmmath)$ labeled by elements in $\mathrm{Hom}(E,L)$ and $\galoisL$ acts on $\mathrm{Hom}(E,L)$ by permutation. Thus the image of $\galoisL$ in $\mathrm{Aut}_{\mathbb{Z}}(X^{*}(\resgmmath))$ consists of permutation matrices. Denote this image by $G'$. By \cite[Proposition 1.1.4]{milne2006complex}, the complex conjugation in $\galoisL$ commutes with every other element of $\galoisL$. Because the kernel of the surjection from $X^{*}(\resgmmath)$ to $X^{*}(\mathrm{U}_{E})$ is spanned by $\{e_{\sigma_{i}}+e_{\sigma_{i+d}}|i\in\{1,\dots,d\}\}$, we have a well defined group homomorphism from $G'$ to $\mathrm{Aut}_{\mathbb{Z}}(X^{*}(\mathrm{U}_{E}))$. Denote the image of $G'$ in $\mathrm{Aut}_{\mathbb{Z}}(X^{*}(\mathrm{U}_{E}))$ by $G''$. With respect to the basis $\{f_{\sigma_1},\dots,f_{\sigma_{d}}\}$ of $X^{*}(\mathrm{U}_{E})$ constructed in Setup \ref{torusinvsetup}, elements of $G''$ are signed permutation matrices where both 1 and -1 are allowed as non-zero entries. Also note that $G' \cong G''$. For each element of $G''$, flipping -1 to 1, we obtain a forgetful group homomorphism $F: G'' \rightarrow \mathrm{S}_d$, the symmetric group on $d$ elements. The above construction can be summarized as follows.
 \begin{lemma} \label{galois_permutation}
     We have the following composition of group homomorphisms:
\begin{equation}\label{g'g''seq}
\begin{tikzcd}
\galoisL \arrow[r] & G' \arrow[r, "\cong"'] \arrow[d, hook]       & G'' \arrow[r,"F"] \arrow[d, hook]                    & H \arrow[d, hook] \\
                   & \mathrm{Aut}_{\mathbb{Z}}(X^{*}(\resgmmath)) & \mathrm{Aut}_{\mathbb{Z}}(X^{*}(\mathrm{U}_{E})) & \mathrm{S}_d     
\end{tikzcd}   
\end{equation}    
and H is a transitive subgroup of $\mathrm{S}_d$.
\end{lemma}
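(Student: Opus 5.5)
The lemma asserts two things: that the diagram consists of well-defined group homomorphisms, and that $H$ is transitive. I will establish them in that order.

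\textbf{Well-definedness of the homomorphisms.} The map $\galoisL\to G'$ is, by definition, the permutation action of $\galoisL$ on the basis $\{e_\sigma\}_{\sigma\in\mathrm{Hom}(E,L)}$ of $X^{*}(\resgmmath)$, given by $\tau\cdot e_\sigma=e_{\tau\circ\sigma}$. For the passage $G'\to G''$, I will check that the kernel $K=\langle e_{\sigma}+e_{\overline{\sigma}}\rangle$ of $X^{*}(\resgmmath)\twoheadrightarrow X^{*}(\mathrm{U}_{E})$ is $G'$-stable. Write $\iota_c$ for complex conjugation in $\galoisL$. Since $E$ is a CM field, $\overline{\sigma}=\iota_c\circ\sigma$, and $\iota_c$ is central by \cite[Proposition 1.1.4]{milne2006complex}. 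Hence
\[
\tau(e_\sigma+e_{\iota_c\sigma})=e_{\tau\sigma}+e_{\iota_c\tau\sigma}\in K .
\]
So the action descends to the quotient $X^{*}(\mathrm{U}_{E})$, which gives $G'\to G''$.

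Next I will compute the descended action in the basis $f_{\sigma_1},\dots,f_{\sigma_d}$. In the quotient, $f_{\overline{\sigma}_i}=-f_{\sigma_i}$. Therefore $\tau$ sends $f_{\sigma_i}$ to $+f_{\sigma_j}$ if $\tau\sigma_i=\sigma_j\in S$, and to $-f_{\sigma_j}$ if $\tau\sigma_i=\overline{\sigma_j}$. This shows that every element of $G''$ is a signed permutation matrix.

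For the isomorphism $G'\cong G''$, I will show injectivity. A permutation $\tau$ acting trivially on the quotient must satisfy $\tau\sigma_i=\sigma_i$ for every $i$: the alternative $\tau\sigma_i=\overline{\sigma_i}$ would give $f_{\sigma_i}\mapsto -f_{\sigma_i}\neq f_{\sigma_i}$, since $X^{*}(\mathrm{U}_E)$ is torsion-free. Then also $\tau\overline{\sigma_i}=\iota_c\tau\sigma_i=\overline{\sigma_i}$, so $\tau$ is trivial on all of $\mathrm{Hom}(E,L)$.

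The map $F$ forgets signs. It is a homomorphism because the map from the hyperoctahedral group $\{\pm1\}^d\rtimes \mathrm{S}_d$ to $\mathrm{S}_d$ is one. Concretely, $F(\tau)$ is the permutation $i\mapsto j$, where $\tau\sigma_i\in\{\sigma_j,\overline{\sigma_j}\}$. We set $H:=F(G'')$.

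\textbf{Transitivity of $H$.} This is the only substantive point. Given indices $i,j$, I will use that $\galoisL$ acts transitively on $\mathrm{Hom}(E,L)$. This holds because $L$ is the Galois closure of $E$: every embedding $E\to L$ extends to an automorphism of $L$. Choose $\tau$ with $\tau\circ\sigma_i=\sigma_j$. Then $F(\tau)(i)=j$, so $H$ is transitive.

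I expect no real obstacle here. The only care needed is the centrality of complex conjugation, which guarantees that $G'$ preserves the pairing $\{\sigma,\overline{\sigma}\}$. That pairing is what makes $G''$ and $F$ well defined.
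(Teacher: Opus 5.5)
Your proposal is correct and follows essentially the same route as the paper. The paper's proof reduces to the single observation that $\galoisL$ acts transitively on $\mathrm{Hom}(E,L)$, with the well-definedness of $G'\to G''$ via centrality of complex conjugation handled in Setup~\ref{setupcontinued}; you simply spell out the details the paper leaves implicit (the signed-permutation form, injectivity of $G'\to G''$, and that $F$ is the sign-forgetting homomorphism).
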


 \begin{proof}
     We only need to verify that $H$ is a transitive subgroup. But this is because the action of $\galoisL$ on $\mathrm{Hom}(E,L)$ is already transitive. 
 \end{proof}

\begin{remark}
We use the convention that the Galois action on $X^{*}(\resgmmath)$ is on the left and write elements in $X^{*}(\resgmmath)\otimes_{\mathbb{Z}}\mathbb{Q}$ as column vectors.
\end{remark}
\end{setup}
\begin{corollary}\label{biggerthan-one}Let $A$ be a simple abelian variety with complex multiplication by $E$. Suppose its dimension is at least two. Then the algebraic group $\gdrbhmath(A)$ has dimension greater than one.  
\end{corollary}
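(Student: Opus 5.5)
We argue by contradiction, using the divisor criterion of Corollary \ref{divisortest} together with the transitivity statement of Lemma \ref{galois_permutation}. The group $\gdrbhmath(A)$ is a connected subtorus $T$ of $\mathrm{U}_E$ (it lies in $\mathrm{Hdg}(A)\subset\mathrm{U}_E$, and it is connected by Theorem \ref{ji_1thmsummarize}). Hence it suffices to exclude $\dim T=0$ and $\dim T=1$.

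\textbf{Dimension zero.} If $\dim T=0$, then $T$ is trivial and $\iota(\Delta)=X^{*}(\mathrm{U}_E)$. Since $d\geq 2$, this lattice contains $f_{\sigma_1}+f_{\sigma_2}$, and Corollary \ref{divisortest} rules this case out. (Alternatively, $\gdrbmath(A)$ would reduce to homotheties, contradicting Theorem \ref{bostoriginal}, since then $\mathrm{End}$ of $\mathrm{H}^1_{\mathrm{dRB}}$ would be all of $\mathrm{End}(V)$.)

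\textbf{Dimension one: setting up.} Suppose $\dim T=1$. Then $X^{*}(T)\cong\mathbb{Z}$ and $\Delta$ has rank $d-1$. Moreover, $\iota(\Delta)$ is stable under $\absgalois$, which acts on $X^{*}(\mathrm{U}_E)$ through $G''$. The Galois action on $X^{*}(T)\cong\mathbb{Z}$ is given by a character $\epsilon:\galoisL\to\{\pm1\}$. Let $\lambda\colon X^{*}(\mathrm{U}_E)\to\mathbb{Z}$ be the surjection, and write $a_i:=\lambda(f_{\sigma_i})\in\mathbb{Z}$. Equivariance says that for every $g\in\galoisL$ acting by $g f_{\sigma_i}=s_{g,i}f_{\sigma_{F(g)(i)}}$, we have
\[
s_{g,i}\,a_{F(g)(i)}=\epsilon(g)\,a_i .
\]
Consequently $|a_{F(g)(i)}|=|a_i|$. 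Since $H=F(G'')$ is transitive by Lemma \ref{galois_permutation}, all the $|a_i|$ are equal to a common value $a$.

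\textbf{Dimension one: conclusion.} If $a=0$, then $\lambda=0$, contradicting surjectivity. So $a\neq 0$, and $a_1=\pm a_2$. Then one of $f_{\sigma_1}-f_{\sigma_2}$ or $f_{\sigma_1}+f_{\sigma_2}$ lies in $\ker\lambda=\iota(\Delta)$, because $d\geq 2$ gives two distinct indices. Corollary \ref{divisortest} then says that $T$ cannot be $\gdrbhmath(A)$, a contradiction. Therefore $\dim\gdrbhmath(A)\geq 2$.

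The main point needing care is the middle step: checking that the Galois action on the rank-one quotient $X^{*}(T)$ really is by $\pm1$, and that the signed-permutation description of $G''$ yields the equality of the $|a_i|$. The first holds because $\mathrm{Aut}(\mathbb{Z})=\{\pm1\}$. The second is the computation displayed above: the signs only affect the $a_i$ up to sign, which is all that Corollary \ref{divisortest} needs.
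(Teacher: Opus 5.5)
Your proof is correct and follows the same route as the paper. You rule out dimension zero by applying Corollary \ref{divisortest} to $\iota(\Delta)=X^{*}(\mathrm{U}_E)$, and in dimension one you use that Galois acts on $X^{*}(T)\cong\mathbb{Z}$ by $\pm1$ together with the transitivity in Lemma \ref{galois_permutation} to force some $f_{\sigma_i}\pm f_{\sigma_j}$ into the kernel. Your explicit tracking of the signs $s_{g,i}$ is slightly more careful than the paper, which writes $g\circ x=y$ where only $g\circ x=\pm y$ holds for signed permutations.
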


\begin{proof}
Denote $\gdrbhmath
(A)$ by $T$. First assume $T$ is zero-dimensional. Then because $T$ is connected by construction, we have $\iota(\Delta)=X^{*}(\mathrm{U}_{E})$, which clearly contradicts Corollary \ref{divisortest}. Suppose now $T$ is one-dimensional. Then $X^{*}(T)$ is a rank one free $\mathbb{Z}$-module and thus any $g \in G:=\galoisL$ acts on it as scalar multiplication by 1 or -1. Note that $\phi: X^{*}(\mathrm{U}_{E}) \rightarrow X^{*}(T)$ is a $G$-equivariant surjective map, we therefore have $\phi(g \circ e)=g \circ \phi(e)=\pm\phi(e)$ for any $e\in X^{*}(\mathrm{U}_{E})$. Also by Lemma \ref{galois_permutation}, given any two elements $x, y$ from the basis $\{f_{\sigma_1},\dots,f_{\sigma_d}\}$ of $X^{*}(\mathrm{U}_{E})$ fixed in Setup \ref{torusinvsetup}, there exists $g \in G$ such that $g \circ x=y$.  Hence for any pair $x,y\in \{f_{\sigma_1},\dots,f_{\sigma_d}\}$ with $x\neq y$, either $x-y $ or $x+y$ lies in $\mathrm{Ker}(\phi)=\iota(\Delta)$. But both cases violate Corollary \ref{divisortest}.   
\end{proof}
Now suppose $d=p$ is a prime number. Because $H$ acts transitively on a set of $p$-elements, it follows that $p||H|$. By Sylow's theorem, there exists a permutation cycle $\alpha\in H$ where $\alpha$ is of order $p$. Choose a signed permutation matrix $M\in G''$ with $F(M)=\alpha$. 
\begin{lemma}\label{lemmaprimecmclassification}
Suppose $p>2$. Then any non-zero $M$-stable $\mathbb{Q}$-vector subspace of $X:=X^{*}(\mathrm{U}_E)_{\mathbb{Q}}$ has dimension 1 or $p-1$ or is the entire space $X$.
\end{lemma}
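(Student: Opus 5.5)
Since $M$ is a signed permutation matrix whose underlying permutation $F(M)=\alpha$ is a $p$-cycle, I would compute its minimal polynomial and then read off the $M$-stable subspaces from the factorization. Order the basis $f_{\sigma_1},\dots,f_{\sigma_p}$ of $X$ along the cycle $\alpha$. Then $M$ sends $f_{\sigma_{k}}$ to $\epsilon_k f_{\sigma_{\alpha(k)}}$ with $\epsilon_k\in\{\pm1\}$. Iterating $p$ times gives $M^p=\epsilon\cdot\mathrm{Id}$, where $\epsilon=\prod_k\epsilon_k\in\{\pm1\}$. The vector $f_{\sigma_1}$ is a cyclic vector, because its iterates $M^j f_{\sigma_1}$ for $0\le j\le p-1$ are, up to sign, the $p$ distinct basis vectors. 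Hence the minimal polynomial and the characteristic polynomial of $M$ both equal $x^p-\epsilon$, and $X\cong\mathbb{Q}[x]/(x^p-\epsilon)$ as a $\mathbb{Q}[M]$-module.

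Next I would factor over $\mathbb{Q}$. If $\epsilon=1$, then $x^p-1=(x-1)\Phi_p(x)$. If $\epsilon=-1$, then $x^p+1=(x+1)\Phi_p(-x)$, using that $p$ is odd. In both cases the factors are irreducible, since $\Phi_p$ is irreducible by Eisenstein after a shift. The two factors have degrees $1$ and $p-1$, and they are distinct: $\Phi_p(1)=p\neq0$ and $\Phi_p(-(-1))=p\neq 0$. This is where $p>2$ is used. For $p=2$, $x^2+1$ would be irreducible and the shape of the factorization would change.

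Finally, since $X$ is cyclic, its $M$-stable subspaces correspond bijectively to the monic divisors of $x^p-\epsilon$: a divisor $g$ gives the subspace $\ker g(M)$, equivalently the submodule $(h)/(x^p-\epsilon)$ for the complementary factor $h$. Because the polynomial is squarefree with exactly two irreducible factors of degrees $1$ and $p-1$, the nonzero stable subspaces have dimension $1$, $p-1$, or $p$, and the last is $X$ itself. Equivalently, $X$ is a direct sum of two non-isomorphic simple $\mathbb{Q}[M]$-modules of dimensions $1$ and $p-1$, so every submodule is a sum of some of them.

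The main point needing care is the cyclic-vector and minimal-polynomial computation for a signed permutation. I expect this to be routine once the basis is ordered along $\alpha$.
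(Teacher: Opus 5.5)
Your proof is correct, and it takes a different route from the paper's.

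\textbf{The paper's route.} The paper works over $\qbar$. From $M^p=\pm I$ it gets that $M$ is semisimple with characteristic polynomial $x^p\mp1$, so $W_{\qbar}$ is spanned by eigenvectors of $M$. If one of these eigenvectors has an eigenvalue other than $\pm1$, then Galois-stability of $W_{\qbar}$ forces the whole Galois orbit of $p-1$ eigenvectors into $W_{\qbar}$. Otherwise $W_{\qbar}$ is the line of the rational eigenvalue.

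\textbf{Your route.} You stay over $\mathbb{Q}$. You identify $X\cong\mathbb{Q}[x]/(x^p-\epsilon)$ through the cyclic vector $f_{\sigma_1}$, and then read off the stable subspaces from the squarefree factorization into irreducible factors of degrees $1$ and $p-1$.

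\textbf{Comparison.} The arithmetic input is the same: irreducibility of $\Phi_p$ is exactly Galois-transitivity on primitive $p$-th roots of unity. Your version has two advantages.
\begin{itemize}
\item The cyclic-vector step cleanly justifies that the characteristic polynomial is exactly $x^p-\epsilon$. The paper asserts this right after noting $M^p=\pm I$, but that relation alone does not determine it, and exactness is what makes the $\pm1$-eigenspace one-dimensional.
\item You get the sharper conclusion that there is exactly one stable subspace of each dimension $1$, $p-1$, $p$.
\end{itemize}
The paper's eigenvector formulation, for its part, is the template reused later: for $x^4+1$ in Lemma \ref{nolength4cycle}, and throughout Strategy \ref{strategylongcomp}, where eigenvectors in $X_{\qbar}$ are combined with Galois-stability and saturation.

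\textbf{A minor correction.} The check $\Phi_p(1)=p\neq 0$ does not use $p>2$. Oddness of $p$ enters only through the factorization $x^p+1=(x+1)\Phi_p(-x)$. The conclusion of the lemma also holds for $p=2$.
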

\begin{proof}
The idea of proof is essentially the same as \cite[Theorem 2]{ribet1983hodge}. We recall it for later usage. Since $F(M)$ is a $p$-cycle, after reordering the fixed basis of
$X^*(\mathrm U_E)$ we may write
\begin{equation}\label{Maction}
 M\circ f_i=\varepsilon_i f_{i+1}\quad(1\leq i<p),
\qquad
M\circ f_p=\varepsilon_p f_1;
\end{equation}
where $\varepsilon_i\in\{\pm1\}$ and $M^{p}=\pm I$. Thus $M$ is semisimple. Therefore the characteristic polynomial of the $p\times p$-matrix $M$ equals $x^{p}-1$ or $x^{p}+1$.
Let $W\subset X$ be a $\mathbb{Q}$-vector subspace stable under $M$. Then $W_{\qbar}$ is spanned by the eigenvectors. First assume
$W_{\qbar}$ contains an eigenvector $w$ whose
eigenvalue is not $1$ or $-1$, i.e., it is a root of the polynomial $x^{p-1}+\dots+1$ or $x^{p-1}-\dots+1$. Then because $W_{\overline{\mathbb{Q}}}$ is Galois-stable, the Galois orbit of $w$ also lies in $W_{\qbar}$. Since $p$ is prime number, this orbit has size $p-1$, thus 
$\dim_{\overline{\mathbb{Q}}} W_{\overline{\mathbb{Q}}}\ge p-1$.
Otherwise, $W_{\overline{\mathbb{Q}}}$ is spanned only by the eigenvector
corresponding to the rational eigenvalue $1$ or $-1$ and therefore can only be of dimension $1$.
\end{proof}

\begin{corollary}\label{corprimdim}
Let $A$ be a simple CM abelian variety whose dimension is a prime number $p$. Then we have $\mathrm{dim}(\gdrbhmath(A))\geq p-1$.
\end{corollary}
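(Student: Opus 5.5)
We have to show $\dim \gdrbhmath(A)\ge p-1$. Put $T:=\gdrbhmath(A)$. It is a connected $\mathbb{Q}$-subtorus of $\mathrm{U}_E$, so we are in Setup~\ref{torusinvsetup}. The strategy is to use the Galois-stability of the kernel $\iota(\Delta)$ together with Lemma~\ref{lemmaprimecmclassification}, and then rule out the small cases with Corollary~\ref{divisortest} and Corollary~\ref{biggerthan-one}.

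First dispose of $p=2$. Then $p-1=1$, and Corollary~\ref{biggerthan-one} already gives $\dim T>1$, so the inequality holds. From now on assume $p>2$.

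The sequence \eqref{charseq} is a sequence of $\absgalois$-modules, and the Galois action factors through $\galoisL$. Hence $\iota(\Delta)$ is stable under $G''$, and in particular under the signed permutation matrix $M$ chosen above. Let $W:=\iota(\Delta)\otimes\mathbb{Q}\subset X=X^*(\mathrm{U}_E)_{\mathbb{Q}}$. This is an $M$-stable subspace with $\dim W = p-\dim T$, since $\dim X = p$ and the rank of $X^*(T)$ is $\dim T$. By Lemma~\ref{lemmaprimecmclassification}, $W$ is $0$, or has dimension $1$ or $p-1$, or equals $X$.
\begin{itemize}
\item If $W=0$ or $\dim W=1$, then $\dim T\ge p-1$ and we are done.
\item If $W=X$ or $\dim W=p-1$, then $\dim T\le 1$, which contradicts Corollary~\ref{biggerthan-one} because $p\ge 3>1$.
\end{itemize}

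The only real work is the case analysis above; everything else is bookkeeping. Two points need checking.
\begin{enumerate}
\item $\iota(\Delta)$ is stable under $M$. This holds because $M$ is the image of an element of $\galoisL$ and the surjection $X^*(\mathrm{U}_E)\to X^*(T)$ is Galois-equivariant.
\item The rank count $\mathrm{rk}\,\iota(\Delta)=p-\dim T$. This follows from the exactness of \eqref{charseq} and the fact that a torus has character group of rank equal to its dimension.
\end{enumerate}
Alternatively, the case $\dim W = p-1$ can be excluded directly. In that case $W_{\qbar}$ is the span of the eigenvectors of $M$ with eigenvalue $\neq\pm1$. One could then exhibit a vector of the form $f_i\pm f_{i+1}$ in $W$ and invoke Corollary~\ref{divisortest}. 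Appealing to Corollary~\ref{biggerthan-one} avoids this computation.
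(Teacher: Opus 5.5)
Your proposal is correct and follows the paper's proof essentially verbatim. You apply Lemma~\ref{lemmaprimecmclassification} to the $G''$-stable space $\iota(\Delta)\otimes\mathbb{Q}$ and use Corollary~\ref{biggerthan-one} to exclude ranks $p-1$ and $p$, giving $\dim T=p-\mathrm{rank}\,\iota(\Delta)\geq p-1$; for $p=2$, citing Corollary~\ref{biggerthan-one} directly is just as valid as the paper's remark that the same argument excludes the zero-dimensional torus.
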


\begin{proof}
Swap $T=\gdrbhmath(A)$ into \eqref{charseq}. If $p=2$, the same argument as Corollary \ref{biggerthan-one} can be used to rule out zero-dimensional algebraic torus as $\gdrbhmath(A)$. Assume
$p>2$. The space
$\iota(\Delta)\otimes_{\mathbb{Z}}\mathbb{Q}$ is $G''$-stable. Lemma~\ref{lemmaprimecmclassification} and Corollary \ref{biggerthan-one} imply that $\mathrm{rank}(\iota(\Delta))$ is either $0$ or $1$. Thus $\dim(T)=p-\mathrm{rank}(\iota(\Delta))\geq p-1$.

\end{proof}

\subsection{Weil Structures}
In this section we recall a well-known linear-algebraic construction first made by Weil (for example see \cite{Weil1979AbelianVA}); in particular its de Rham-Betti realization.

Let $A$ be an abelian variety defined over $\qbar$ with $V:=\betti$ and let $K$ be a number field acting on $V$ through a homomorphism of $\mathbb{Q}$-algebras $i: K \rightarrow \mathrm{End}_{\mathrm{Hdg}}(V)$. Over $\qbar$ this action gives an eigenspace decomposition
\begin{equation}\label{eigendecomp}V\otimes_{\mathbb{Q}} \qbar=\bigoplus_{\sigma \in \mathrm{Hom}(K,\qbar)}V_{\sigma}\end{equation} All eigenspaces share the same dimension $n=\frac{\mathrm{dim}(V)}{\mathrm{deg}(K/\mathbb{Q})}$. We now define the following $\qbar$-vector subspace of $\wedge^n V_{\qbar}$
\begin{equation*}
    W:=\bigoplus_{\sigma \in \mathrm{Hom}(K,\qbar)}\wedge^n V_{\sigma} \xhookrightarrow{} \wedge^n V\otimes\qbar.
\end{equation*} The natural action of $\absgalois$ on $V_{\qbar}$ permutes the eigenspaces $V_{\sigma}$. Therefore, the $\qbar$-subspace $W$ is $\absgalois$-stable. Thus $W$ descends to a $\mathbb{Q}$-subspace $W'$ of $\wedge^n V$. In the sequel, we will denote $W'$ by $\wedge_{K}^n V$.

\begin{definition} \label{weilstructure}
 We call the descended subspace, denoted by $W'=\wedge_{K}^n V$, the \textit{Weil structure} associated with the $K$-action on $V$.
\end{definition}

If $K$ preserves the Hodge or dRB structure on $V$, then $\wedge_{K}^n V$ is a Hodge substructure or a dRB substructure of $\wedge^n V$. This is because each eigenspace $V_{\sigma}$ from the decomposition (\ref{eigendecomp}) is preserved by the Mumford-Tate group or the de Rham-Betti group. More explicitly, in the
Hodge setting, for $\sigma \in \mathrm{Hom}(K,\qbar)$, denote by $m_{\sigma}$ the dimension of the $\mathbb{C}$-vector space $V_{\sigma}^{1,0}:=V_{\sigma}\otimes_{\qbar}\mathbb{C}\cap V^{1,0}$, then
\begin{equation}\label{formulaweilhodgenum}
\wedge_{K}^n \mathrm{H}^1(A,\mathbb{Q}) \otimes_{\mathbb{Q}} {\mathbb{C}}=\bigoplus_{\sigma \in \mathrm{Hom}(K,\qbar)}\wedge^n(V_{\sigma}^{1,0} \oplus V_{\sigma}^{0,1})=\bigoplus_{\sigma \in \mathrm{Hom}(K,\qbar)}\wedge^{m_{\sigma}}V_{\sigma}^{1,0}\otimes\wedge^{n-m_{\sigma}}V_{\sigma}^{0,1}.
\end{equation}

In particular, the summand indexed by $\sigma$ has Hodge type $(m_{\sigma},n-m_{\sigma})$.

\begin{definition}
\label{multiplicitydefn}
With the same notation as the above paragraph, we call the integer $m_{\sigma}$ the \textit{multiplicity} associated with the embedding $\sigma\in\mathrm{Hom}(K,\qbar)$. 
\end{definition}
In the dRB setting, in the case where $K$ is an imaginary quadratic field, Gross explicitly computed the comparison isomorphism for
the corresponding Weil dRB structures. 

\begin{lemma}[\cite{gross1978periods}, Theorem 3]\label{gross}
Let $A$ be an abelian variety over $\qbar$ of dimension $n$ with an imaginary quadratic field $K\xhookrightarrow{} \mathrm{End}^{\circ}(A)$. Suppose the (multi)set of multiplicities associated with $\mathrm{Hom}(K,\qbar)$ is $\{m,n-m\}$. Then with respect to suitable $\qbar$-basis, the comparison matrix of the Weil dRB structure $\wedge_{K}^{n}\mathrm{H}^1_{\mathrm{dRB}}(A,\mathbb{Q})$ is \begin{equation}\label{grosscomparison}\begin{pmatrix} b_{K}^{m} (2\pi \sqrt{-1}/b_{K})^{n-m} & 0 \\0 & b_{K}^{
n-m} (2\pi \sqrt{-1}/b_{K})^{m} \end{pmatrix}\end{equation} where $b_{K}$ is the Chowla-Selberg constant associated with the imaginary quadratic field $K$.
\end{lemma}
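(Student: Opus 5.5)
The plan is to transport the problem, through a motivated (hence de Rham--Betti) isomorphism, to a Weil structure built from a CM elliptic curve, whose periods are known. First, $W':=\wedge_K^n V$ is a $\mathbb{Q}$-vector space of dimension $[K:\mathbb{Q}]=2$ carrying a $K$-action, so it is a rank-one $K$-module. Since $K\hookrightarrow \mathrm{End}^{\circ}(A)$ acts on $\mathrm{H}^1_{\mathrm{dRB}}(A,\mathbb{Q})$ by dRB endomorphisms (Theorem \ref{bostoriginal}), $K$ also acts by dRB endomorphisms on $W'$. Therefore the two $K$-eigenlines $\wedge^n V_{\sigma}$ and $\wedge^n V_{\sigmabar}$ are defined over $\qbar$ on both the Betti side and the de Rham side, and the comparison $\rho$ respects them. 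Choosing $\qbar$-bases adapted to these eigenlines, the comparison matrix is diagonal. By \eqref{formulaweilhodgenum} the eigenlines have Hodge types $(m,n-m)$ and $(n-m,m)$, and each diagonal entry is well defined up to $\qbar^{\times}$. It remains to identify these two entries.

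Fix an elliptic curve $E_0$ over $\qbar$ with CM by $\mathcal{O}_K$. Let $E_1$ denote $E_0$ with the $K$-action composed with complex conjugation. Put $B:=E_0^{m}\times E_1^{n-m}$, with $K$ acting diagonally. Then $\wedge_K^n\mathrm{H}^1(B,\mathbb{Q})$ is again a rank-one $K$-Hodge structure, and by construction its eigenline multiplicities are $\{m,n-m\}$, matched to the same embeddings as for $A$. Consequently
\[
\mathrm{Hom}_K\bigl(\wedge_K^n \mathrm{H}^1(A,\mathbb{Q}),\,\wedge_K^n\mathrm{H}^1(B,\mathbb{Q})\bigr)
\]
is a rank-one $K$-module that is pure of type $(0,0)$, so all of its elements are Hodge classes on $A\times B$ in $\langle \mathrm{H}^1(A\times B)\rangle^{\otimes}$. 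By Deligne and André these classes are motivated, hence dRB classes. A nonzero element therefore gives an isomorphism of dRB structures $\wedge_K^n\mathrm{H}^1_{\mathrm{dRB}}(A,\mathbb{Q})\cong \wedge_K^n\mathrm{H}^1_{\mathrm{dRB}}(B,\mathbb{Q})$ that respects the $K$-eigenlines. This means the two comparison matrices agree up to $\qbar^{\times}$ on each diagonal entry.

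For $B$, the eigenline $\wedge^n \mathrm{H}^1(B)_{\sigma}$ is the tensor product of $m$ copies of the type $(1,0)$ eigenline of $\mathrm{H}^1(E_0)$ and $n-m$ copies of the type $(0,1)$ eigenline of $\mathrm{H}^1(E_1)$. Its period is therefore the product of the corresponding elliptic periods. By Chowla--Selberg, the $(1,0)$-eigenline of a CM elliptic curve has period $b_K$ up to $\qbar^{\times}$. The Legendre relation, i.e.\ the fact that $\det$ of the comparison matrix of $\mathrm{H}^1(E_0)$ is $2\pi\sqrt{-1}$ up to $\qbar^{\times}$, then forces the $(0,1)$-eigenline to have period $2\pi\sqrt{-1}/b_K$. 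Multiplying gives $b_K^{m}(2\pi\sqrt{-1}/b_K)^{n-m}$ for the first entry, and symmetrically $b_K^{n-m}(2\pi\sqrt{-1}/b_K)^{m}$ for the second, which yields \eqref{grosscomparison} after rescaling the bases by elements of $\qbar^{\times}$.

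The main obstacle is the middle step, namely making the motivated isomorphism precise and $K$-linear. One must check that the Hodge classes in $\mathrm{Hom}_K$ genuinely lie in the tensor category generated by $\mathrm{H}^1(A\times B)$, so that André's theorem applies. One must also track which eigenline corresponds to which embedding, so that the multiplicities match and no spurious swap of $m$ and $n-m$ occurs. The Chowla--Selberg input is classical and is used as a black box.
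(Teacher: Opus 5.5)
The paper gives no proof of this lemma; it is quoted from Gross. So there is no internal argument to compare with, and your proposal is a correct self-contained derivation.

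Label $\sigma$ so that $m_\sigma=m$, and choose the $K$-action on $E_0$ so that its $\sigma$-eigenline in $\mathrm{H}^1$ has type $(1,0)$. With these choices:
\begin{itemize}
\item the space $\mathrm{Hom}_K(\wedge^n_K\mathrm{H}^1(A,\mathbb{Q}),\wedge^n_K\mathrm{H}^1(B,\mathbb{Q}))$ is two-dimensional and purely of type $(0,0)$;
\item any nonzero element of it is a $K$-linear isomorphism preserving eigenlines;
\item Chowla--Selberg for the $(1,0)$-period, together with the Legendre relation for the determinant, gives exactly the two entries of \eqref{grosscomparison}.
\end{itemize}

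The step you flag as the main obstacle is routine. $\wedge^n_K$ of each side is a direct summand of $\wedge^n\mathrm{H}^1$, stable under the Mumford--Tate and dRB groups. Hence this $\mathrm{Hom}_K$ sits inside $(\wedge^n\mathrm{H}^1(A))^{\vee}\otimes\wedge^n\mathrm{H}^1(B)\in\langle\mathrm{H}^1(A\times B)\rangle^{\otimes}$. Its elements are $\mathrm{MT}(A\times B)$-invariant, and therefore $\gdrbmath(A\times B)$-invariant since $\gdrbmath(A\times B)\subset\mathrm{MT}(A\times B)$.

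You also do not need Bost--Charles to know that $K$ acts by dRB endomorphisms. That is just functoriality of the comparison isomorphism for algebraic endomorphisms; Theorem \ref{bostoriginal} is the converse direction.

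Your route makes the lemma a consequence of an input the paper already relies on. That input is that Hodge classes on products of abelian varieties over $\qbar$ are dRB, as used in the proof of Proposition \ref{antiweilabelianrealization}. Beyond it, you need only the single elliptic period $b_K$ and $2\pi\sqrt{-1}$, which is exactly what the later transcendence arguments with Theorem \ref{thmchud} require. The citation instead buys brevity and Gross's normalizations. Recording $\rho$ versus $\rho^{-1}$ replaces the diagonal entries by their reciprocals, which is harmless for every use of the lemma in the paper.
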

We shall need the following famous theorem by Chudnovsky (\cite{chudnovsky1980algebraic}).
\begin{theorem}[\cite{chudnovsky1980algebraic}]\label{thmchud}
 Given any imaginary quadratic field $K$, the Chowla-Selberg constant $b_{K}$ is a transcendental number and it is \textit{algebraically} independent from $2\pi \sqrt{-1}$.    
\end{theorem}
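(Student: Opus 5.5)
The plan is to reduce the statement to Chudnovsky's theorem on periods and quasi-periods of an elliptic curve, via the Chowla--Selberg formula. Fix an elliptic curve $E$ over $\qbar$ with complex multiplication by (an order in) $K$, and a Néron differential $\omega$ and second-kind differential $\eta$ on $E$, both defined over $\qbar$. Let $\omega_1,\omega_2$ be a basis of periods and $\eta_1,\eta_2$ the corresponding quasi-periods.

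First I will identify $b_K$ up to $\qbar^{\times}$ with $\omega_1$. By the Chowla--Selberg formula, every period of a CM elliptic curve with CM by $K$ is an algebraic multiple of a fixed product of $\Gamma$-values. That product is precisely how $b_K$ is normalized, possibly up to a power of $\pi$ that I will track explicitly. Equivalently, the comparison matrix of $\mathrm{H}^1_{\mathrm{dRB}}(E)$, decomposed under $K$, has diagonal entries $b_K$ and $2\pi\sqrt{-1}/b_K$ up to $\qbar^{\times}$; this is the one-dimensional case $n=1$ of Lemma \ref{gross}.

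Next I will use the CM structure to reduce the number of unknowns among the periods. The CM action gives $\omega_2=\tau\omega_1$ with $\tau\in K$ algebraic. The Hodge decomposition is defined over $\qbar$ compatibly with the $K$-eigenspaces, so the eigenvector of $K$ in $\mathrm{H}^1_{\mathrm{dR}}$ complementary to $\omega$ pairs with the Betti lattice through $2\pi\sqrt{-1}/\omega_1$. Concretely, this is Masser's relation: $\eta_1$ is a $\qbar$-linear combination of $\omega_1$ and $\pi/\omega_1$. Together with the Legendre relation $\omega_1\eta_2-\omega_2\eta_1=2\pi\sqrt{-1}$, these show that the field $\qbar(\omega_1,\omega_2,\eta_1,\eta_2)$ is algebraic over $\qbar(\omega_1,\pi)$, and hence also over $\qbar(b_K,2\pi\sqrt{-1})$.

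The core input, and the main obstacle, is Chudnovsky's theorem that for any elliptic curve with algebraic invariants $g_2,g_3$ one has $\mathrm{trdeg}_{\mathbb{Q}}\,\mathbb{Q}(\omega_1,\omega_2,\eta_1,\eta_2)\ge 2$. Granting it, the previous step gives $\mathrm{trdeg}\,\mathbb{Q}(b_K,2\pi\sqrt{-1})\ge 2$. So $b_K$ and $2\pi\sqrt{-1}$ are algebraically independent, and in particular $b_K$ is transcendental.

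To prove the core estimate I would follow the Gelfond--Schneider--Lang method, working with the functions $z$, $\wp(z)$, $\zeta(z)$ evaluated at integer combinations of $\omega_1,\omega_2$. These functions satisfy algebraic differential equations over $\qbar(\omega_i,\eta_i)$, and they take values in a finitely generated field at the lattice points, where $\zeta$ is quasi-periodic with constants $\eta_i$. Assuming the transcendence degree is at most $1$, I would build an auxiliary polynomial in these functions with coefficients from a field of transcendence degree at most $1$. I would then bound its size and show it vanishes to high order at many lattice points, and extrapolate. The final step needs a zero estimate for polynomials in $z,\wp,\zeta$ (Masser--Wüstholz type) together with Gelfond's transcendence criterion for sequences of polynomials taking small values at a point. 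The step I expect to be hardest is obtaining a sharp enough zero estimate while keeping the small-value bounds compatible with Gelfond's criterion, since this is where transcendence degree $\ge 2$ is won rather than $\ge 1$. In practice I would simply cite \cite{chudnovsky1980algebraic} for this step.
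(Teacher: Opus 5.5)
Your proposal is correct and consistent with the paper, which gives no argument and simply cites Chudnovsky. You additionally make explicit the standard bridge from Chudnovsky's elliptic formulation to the $b_K$ formulation, and each step of that bridge checks out:
\begin{itemize}
\item The case $n=1$ of Lemma \ref{gross} identifies $b_K$ with a period of a CM elliptic curve modulo $\qbar^{\times}$.
\item Legendre's relation together with the $K$-eigendecomposition gives Masser's relation, which places $\omega_1,\omega_2,\eta_1,\eta_2$ inside $\qbar(b_K,\pi)$.
\item Chudnovsky's bound $\mathrm{trdeg}_{\mathbb{Q}}\,\mathbb{Q}(\omega_1,\omega_2,\eta_1,\eta_2)\geq 2$ then forces $b_K$ and $2\pi\sqrt{-1}$ to be algebraically independent.
\end{itemize}
Any normalization ambiguity in $b_K$ is harmless, since rescaling by $\qbar^{\times}$ or by a power of $\pi$ (including $\pi^{1/2}$) does not affect algebraic independence from $2\pi\sqrt{-1}$. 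Your last paragraph sketching the proof of Chudnovsky's estimate is not needed once you cite it.
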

The following lemma classifies codimension 1 subtori of $\mathrm{U}_{E}$.
\begin{lemma}[\cite{mz-4-folds}, Key Lemma 7.3]\label{keylemmamz}
Let $E$ be a CM-field. Suppose $T$ is a (connected) codimension 1 subtorus of $\mathrm{U}_{E}$. Then there exists an imaginary quadratic subfield $K\xhookrightarrow{}E$ such that $T=\mathrm{SU}_{E/K}$, where $\mathrm{SU}_{E/K}$ is an algebraic group whose $\mathbb{Q}$-points are $\{x\in E^{\times}|\mathrm{det}_{K}(E\xrightarrow{\cdot x} E)=1\}.$ 
\end{lemma}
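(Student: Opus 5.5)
We prove the statement by working with character groups, as in Setup \ref{torusinvsetup}. A connected codimension-one subtorus $T\subset\mathrm{U}_E$ corresponds to a short exact sequence \eqref{charseq} in which $\Delta$ has rank one and $X^*(T)$ is torsion-free. Hence $\iota(\Delta)$ is a saturated rank-one $\absgalois$-submodule of $X^*(\mathrm{U}_E)$. Let $\chi$ be a primitive generator of $\iota(\Delta)$. Since the Galois group acts on $\iota(\Delta)\cong\mathbb{Z}$, every $g\in\galoisL$ satisfies $g\chi=\epsilon(g)\chi$ for a sign character $\epsilon:\galoisL\to\{\pm1\}$.

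\textbf{Step 1: $\epsilon$ is nontrivial.} Complex conjugation $c$ acts on $X^*(\mathrm{U}_E)$ as $-1$, because $e_{\sigma}+e_{\overline{\sigma}}$ is killed in $X^*(\mathrm{U}_E)$. It follows that $\epsilon(c)=-1$. Let $\mathcal{H}=\ker\epsilon$, an index-two subgroup not containing $c$. Its fixed field is a quadratic field $K_0\subset L$. Since $c$ acts nontrivially on $K_0$, the field $K_0$ is imaginary quadratic.

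\textbf{Step 2: $K_0$ lies in $E$.} We lift $\chi$ to $\tilde\chi=\sum_{\sigma\in\mathrm{Hom}(E,L)}a_\sigma e_\sigma\in X^*(\resgmmath)$. By adding multiples of the elements $e_\sigma+e_{\overline\sigma}$ we may symmetrize and work rationally, so that $\tilde\chi$ satisfies $g\tilde\chi=\epsilon(g)\tilde\chi$ in $X^*(\resgmmath)_{\mathbb{Q}}$; averaging over the Galois group achieves this. Concretely, we take $\tilde\chi\otimes\mathbb{Q}$ in the $(-1)$-eigenspace of $c$, that is, $a_{\overline\sigma}=-a_\sigma$. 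This eigenspace maps isomorphically to $X^*(\mathrm{U}_E)_{\mathbb{Q}}$ compatibly with the Galois action, because $c$ is central.

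The function $\sigma\mapsto a_\sigma$ on $\mathrm{Hom}(E,L)=\galoisL/\mathrm{Gal}(L/E)$ then satisfies $a_{g\sigma}=\epsilon(g)a_\sigma$. Since $\tilde\chi\neq0$, some $a_\sigma$ is nonzero. Take $\sigma$ to be the identity coset. For $h\in\mathrm{Gal}(L/E)$ we have $h\sigma=\sigma$, which forces $\epsilon(h)=1$. Hence $\mathrm{Gal}(L/E)\subset\mathcal{H}$, and therefore $K_0\subset E$. Call this field $K$.

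\textbf{Step 3: identify $\chi$.} Since $a_\sigma=\epsilon(g)a_{\mathrm{id}}$ whenever $\sigma=g|_E$, the vector $\tilde\chi$ is a multiple of the character
$$\chi_K:=\sum_{\sigma}\epsilon(\sigma)e_\sigma=\sum_{\sigma|_K=\tau}e_\sigma-\sum_{\sigma|_K=\overline\tau}e_\sigma,$$
where $\tau$ is a fixed embedding of $K$. Under the identification $X^*(\mathrm{U}_E)_{\mathbb{Q}}\cong$ the $(-1)$-eigenspace, this character is $2\cdot\bigl(\tau\circ\det_K\bigr)$ restricted to $\mathrm{U}_E$. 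Indeed, $\tau(\det_K(x))=\prod_{\sigma|_K=\tau}\sigma(x)$, and on $\mathrm{U}_E$ the product over the conjugate embeddings is the inverse of this.

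\textbf{Step 4: conclude.} Any connected subtorus is determined by the saturation of its $\Delta$. The relevant lattice here is the saturation of $\mathbb{Z}\cdot(\tau\circ\det_K)|_{\mathrm{U}_E}$. The torus $\mathrm{SU}_{E/K}$ is the kernel of $\det_K:\mathrm{U}_E\to\mathrm{U}_K$, which is one-dimensional. Its identity component has character lattice $X^*(\mathrm{U}_E)$ modulo the saturation of $\mathbb{Z}\cdot\tau\circ\det_K$. Hence $T=\mathrm{SU}_{E/K}^{\circ}$.

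Checking that $\mathrm{SU}_{E/K}$ is connected is routine: the map $\det_K$ on the character level is injective with saturated image, since $\tau\circ\det_K$ maps to $\sum_{\sigma|_K=\tau}f_\sigma$ up to sign, and this is primitive. This gives $T=\mathrm{SU}_{E/K}$.

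The main obstacle is the bookkeeping in Steps 2 and 3. One must pass from $X^*(\mathrm{U}_E)$ to $X^*(\resgmmath)$ without losing Galois equivariance, and the centrality of complex conjugation is exactly what makes this passage work. One must also check the primitivity of the image of $\tau\circ\det_K$ in terms of the basis $f_\sigma$, which holds because the embeddings restricting to $\tau$ contain exactly one of each conjugate pair.
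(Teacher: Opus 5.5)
Your proof is correct. The paper gives no argument of its own: it imports the statement from Moonen--Zarhin and uses it as a black box (in Corollary~\ref{corprimedim}, Theorem~\ref{mainthmcm} and Lemma~\ref{deg4weil}). So your proposal is not a rival route but a self-contained replacement for the citation, written in the character-lattice language of Setup~\ref{torusinvsetup}, which is how the paper applies the lemma.

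The chain of reasoning is sound:
\begin{enumerate}
\item A connected codimension-one subtorus gives a saturated, Galois-stable rank-one sublattice $\mathbb{Z}\chi\subset X^{*}(\mathrm{U}_E)$, and hence a sign character $\epsilon$.
\item $\epsilon(c)=-1$ because $c$ acts by $-1$ on $X^{*}(\mathrm{U}_E)$, so $L^{\ker\epsilon}$ is imaginary quadratic.
\item This field lies in $E$ because $\mathrm{Gal}(L/E)$ fixes an embedding with nonzero coefficient in the Galois-equivariant lift.
\item The lifted coefficients are $\pm a$ according to whether the embedding restricts to $\tau$ or $\overline{\tau}$. This forces $\chi=\pm(\tau\circ\det_K)|_{\mathrm{U}_E}$.
\end{enumerate}
Compared with the citation, your route makes one thing explicit: $\mathrm{SU}_{E/K}$ is connected. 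This holds because $\{\sigma:\sigma|_K=\tau\}$ is itself a CM type, so $\sum_{\sigma|_K=\tau}e_\sigma$ maps to a primitive vector in $X^{*}(\mathrm{U}_E)$. The paper needs this silently whenever it identifies the connected group $\gdrbhmath(A)$ with $\mathrm{SU}_{E/K}$. The citation, in turn, buys brevity.

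Two small points to tighten.
\begin{itemize}
\item In Step~2, the move from ``some $a_\sigma\neq 0$'' to ``take $\sigma$ to be the identity coset'' needs one line. Since $a_{g\sigma}=\pm a_\sigma$ and $\galoisL$ acts transitively on $\mathrm{Hom}(E,L)$, every $a_\sigma$ is nonzero. Alternatively, the stabilizer of any $\sigma$ is a conjugate of $\mathrm{Gal}(L/E)$, and $\ker\epsilon$ is normal.
\item You read $\mathrm{SU}_{E/K}$ as $\ker(\det_K\colon \mathrm{U}_E\to\mathrm{U}_K)$. This is the right object, and it agrees with the paper's later description $\mathrm{U}_E\cap\mathrm{SL}_K(V)$. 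However, the description of $\mathbb{Q}$-points in the statement omits the condition $x\overline{x}=1$. Taken literally, it defines a group of dimension $2d-2$ rather than $d-1$. You should say explicitly that you intersect with $\mathrm{U}_E$.
\end{itemize}
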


\begin{corollary}\label{corprimedim}
For any simple CM abelian variety $A$ with endomorphism field $E$, whose dimension is a prime number $p$, we have that $\gdrbmath(A)=\mathrm{MT}(A)$. 
\end{corollary}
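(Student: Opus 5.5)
By Theorem \ref{ji_1thmsummarize} it suffices to show $T:=\gdrbhmath(A)=\mathrm{Hdg}(A)$. Recall Ribet's result \cite[Theorem 2]{ribet1983hodge}: for a simple CM abelian variety of prime dimension $p$, $\mathrm{Hdg}(A)$ is either all of $\mathrm{U}_E$ (dimension $p$), or a codimension one subtorus, and the latter happens only when $A$ has Weil type. Corollary \ref{corprimdim} gives $\dim T\geq p-1$. So if $\mathrm{Hdg}(A)$ has dimension $p-1$ we are done, since $T\subset\mathrm{Hdg}(A)$ are both connected. It remains to treat the case $\mathrm{Hdg}(A)=\mathrm{U}_E$ and $\dim T=p-1$.

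The case $p=2$ will be handled by the same argument once we know $\dim T\ge 1$, which is Corollary \ref{biggerthan-one}. So assume $\dim T=p-1$ and $\mathrm{Hdg}(A)=\mathrm{U}_E$. By Lemma \ref{keylemmamz}, $T=\mathrm{SU}_{E/K}$ for some imaginary quadratic subfield $K\subset E$. Since $E$ has degree $2p$, the inclusion $K\subset E\subset\mathrm{End}^\circ(A)$ makes $V=\betti$ a $K$-vector space of dimension $n=p$. The torus $\mathrm{SU}_{E/K}$ acts trivially on $\det_K V$. On each eigenspace $V_\tau$, $\tau\in\mathrm{Hom}(K,\qbar)$, the torus acts through $\det_K$, so it fixes $\wedge^p V_\tau$. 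Hence the whole Weil structure $\wedge^p_K V$ is fixed by $T$. By Lemma \ref{gdrbhinv}, $\wedge^p_K\mathrm{H}^1_{\mathrm{dRB}}(A,\mathbb{Q})\otimes\mathbb{Q}_{\mathrm{dRB}}(p/2)$ then consists of dRB classes when $p$ is even. For odd $p$ we work with $\gdrbhmath$-invariants directly, meaning the comparison matrix is a scalar multiple of the identity up to algebraic factors. In either formulation, the two diagonal entries of Gross's matrix \eqref{grosscomparison} must have algebraic ratio.

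Let $\{m,p-m\}$ be the multiplicities. Their ratio is $b_K^{2m-p}(2\pi\sqrt{-1})^{p-2m}\cdot b_K^{\,p-2m}\cdots$; concretely it equals $\bigl(b_K^2/2\pi\sqrt{-1}\bigr)^{2m-p}$. If $2m\neq p$, which is automatic for odd $p$, algebraicity of this ratio gives a nontrivial polynomial relation between $b_K$ and $2\pi\sqrt{-1}$. This contradicts Chudnovsky's Theorem \ref{thmchud}. If $p=2$ and $m=1$, the $K$-action has multiplicities $\{1,1\}$, so $A$ is of Weil type relative to $K$. Then Ribet's classification gives $\mathrm{Hdg}(A)\subset\mathrm{SU}_{E/K}$, contrary to $\mathrm{Hdg}(A)=\mathrm{U}_E$. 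Alternatively, a simple CM surface is never of Weil type for a subfield $K$, since the CM type is primitive.

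The main obstacle is making the Weil-structure step rigorous. We must check that $T$-invariance of the $K$-eigencomponents of $\wedge^p_K V$ really translates into the statement that the dRB comparison matrix, in Gross's basis, has algebraic ratio of entries. We also need to track the Tate twist carefully when $p$ is odd. I would handle this by passing to $\gdrbmath(A)=\mathbb{G}_m\cdot T$: it acts on the two lines through characters that agree. This forces the one-dimensional dRB structures $\wedge^pV_\tau$ to be isomorphic, and comparing their periods gives exactly the algebraic-ratio condition.
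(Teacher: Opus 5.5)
Your proposal is correct and follows essentially the paper's proof. Corollary~\ref{corprimdim} and Lemma~\ref{keylemmamz} force $\gdrbhmath(A)=\mathrm{SU}_{E/K}$, which fixes $\wedge^p_K V$; Lemma~\ref{gdrbhinv} turns this into dRB classes, whose periods (computed by Gross) contradict Chudnovsky's theorem. The differences are cosmetic. You use $\mathrm{End}(\wedge^p_K V)$, which is Lemma~\ref{gdrbhinv} with $m=n=p$ and so removes your Tate-twist worry, together with the ratio of the two periods, whereas the paper uses $W'\otimes W'\otimes\mathbb{Q}_{\mathrm{dRB}}(p)$. Since the two periods multiply to $(2\pi\sqrt{-1})^{p}$, both give the same number $b_K^{4m-2p}(2\pi\sqrt{-1})^{p-2m}$. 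For $p=2$, note that Corollary~\ref{biggerthan-one} in fact gives $\dim T\geq 2$ (not just $\geq 1$), which settles that case at once; the paper instead cites Kreutz--Shen--Vial.
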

\begin{proof}
The case where $\dim(A)=2$ follows from
\cite[Theorem~5.16, Case~(d)]{ksv2026rhambetticlassescoefficients},
using the method of \cite[Section~2.2]{moonen1999hodge}. Now assume $p$ is an odd prime number. By \cite[Theorem 2]{ribet1983hodge} and Theorem \ref{ji_1thmsummarize}, it suffices to show that $\gdrbhmath(A)=\mathrm{U}_{E}$. Corollary \ref{corprimdim} leaves only the possibility that $\gdrbhmath(A)$ has dimension $p-1$. In that case, Lemma \ref{keylemmamz} implies that $\gdrbhmath(A)=\mathrm{SU}_{E/K}=\mathrm{U}_E\cap \mathrm{SL}_{K}(V)$ where $K$ is an imaginary quadratic subfield of $E$.  By construction, elements in $W':=\wedge^{p}_{K}\mathrm{H}^1_{\mathrm{dRB}}(A,\mathbb{Q})$ are then fixed by the induced action of $\gdrbhmath(A)$. Thus by Lemma \ref{gdrbhinv}, every element in the dRB structure $$W'\otimes W'\otimes\mathbb{Q}_{\mathrm{dRB}}(p)\subset \mathrm{H}^{p}_{\mathrm{dRB}}(A,\mathbb{Q})\otimes \mathrm{H}^{p}_{\mathrm{dRB}}(A,\mathbb{Q})\otimes\mathbb{Q}_{\mathrm{dRB}}(p)$$ would be a dRB class. Unwinding the definitions, Gross's comparison formula
\eqref{grosscomparison} would imply  $$(b_{K}^{m} (2\pi \sqrt{-1}/b_{K})^{p-m})^2(2\pi \sqrt{-1})^{-p}=b_{K}^{4m-2p}(2\pi \sqrt{-1})^{p-2m}\in\qbar$$ for some $m\in\mathbb{Z}_{\geq 0}$. Since $p$ is odd, this contradicts Theorem \ref{thmchud}. Hence $\gdrbhmath(A)=\mathrm{U}_E$.
\end{proof}

\section{Determination of De Rham-Betti Groups of Simple CM Abelian Fourfolds}\label{cmgdrbsection}
The main result of this section is that for a simple CM abelian fourfold, its de Rham-Betti group is equal to its Mumford-Tate group (see Theorem \ref{mainthmcm}). 
\begin{remark}\label{ribetinequalitu}
The standard method of computing the Mumford-Tate group of such an abelian variety, as in \cite{mz-4-folds} or \cite{ribet1983hodge}, does not directly apply to the de Rham-Betti setting. In the Hodge theory setting, Ribet's inequality states that the dimension of the Mumford-Tate group of a simple CM abelian variety of dimension $d$ is bigger than or equal $2+\mathrm{log}_2(d)$; see \cite[Section 3.5]{ribet1980division}. Its proof uses, in an essential way, the definition of Mumford-Tate group as the smallest algebraic group defined over $\mathbb{Q}$ whose set of $\mathbb{R}$-points contains the image of the Deligne torus. Presently we have limited knowledge of the transcendence of periods or location of dRB classes and we lack such an equality for the lower bound of dRB groups for simple CM abelian varieties. To overcome this obstacle, we apply a delicate group-theoretical analysis in this section.
\end{remark}

\subsection{Classification of Two-Dimensional Subtori of \texorpdfstring{$\mathrm{U}_{E}$}{UE}}\label{longcomp}
We retain the notation of Section \ref{sectionprimecm}. Recall $A$ is a simple CM abelian fourfold with endomorphism field $E$, a CM-field of degree 8, and $\gdrbhmath(A)\subset\mathrm{Hdg}(A)\subset\mathrm{U}_{E}$. In this section we will study two-dimensional subtori of $\mathrm{U}_{E}$ and rule out \textit{most of them} as possible candidates for $\gdrbhmath(A)$ (see Lemma \ref{nolength4cycle}, Proposition \ref{noklein4} and Lemma \ref{A_4notpossible}). The main tools are the Galois-theoretic analysis of the short exact sequence (\ref{charseq}) from Setup \ref{torusinvsetup} and the criterion of Corollary \ref{divisortest}. Throughout this section, we will assume that $\gdrbhmath(A)$ is a two-dimensional subtorus of $\mathrm{U}_{E}$ and thus $\iota(\Delta)$ is a rank two submodule of $X^{*}(\mathrm{U}_{E})$ stable under the action of $G''$. As explained in Proposition \ref{noklein4} and Remark \ref{keyremarktotakeaway}, under certain arithmetic conditions on the endomorphism field, Case \ref{p0p2} and Case \ref{p0p3} of Lemma \ref{p_0inpreimagelemma} manifest the existence of two-dimensional $\mathbb{Q}$-subtori which cannot be excluded using a purely period-theoretic argument: their space of invariants goes beyond the reach of a direct application of Analytic Subgroup Theorem. We will deal with these cases in Subsection \ref{cmnonsimplicity}.

Recall from Lemma \ref{galois_permutation} that $H$, the image of $G''$, is a transitive subgroup of $\mathrm{S}_4$. Below we have a classification of all transitive subgroups of $\mathrm{S}_4$.

\begin{lemma}[\cite{rotman2012introduction}, Exercise 3.51]\label{transitiveclassification}
The possible transitive subgroups of $\mathrm{S}_4$ are: $\mathrm{S}_4$; $\mathrm{A}_4$; $V_4:=\{\mathrm{id},\,(12)(34),\,(13)(24),\,(14)(23)\}$; $\langle g \rangle \cong\mathbb{Z}/4\mathbb{Z}$; $\langle g,a \rangle\cong\mathrm{D}_4$, where $a\in V_4-\{\mathrm{id}\}$ and $g$ is a permutation cycle of order 4.
\end{lemma}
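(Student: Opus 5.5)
The plan is to classify transitive subgroups $H\subseteq \mathrm{S}_4$ by their orders and then identify each one by its Sylow structure and its elements. A transitive subgroup acting on $\{1,2,3,4\}$ has an orbit of size $4$, so by orbit–stabilizer $4\mid |H|$. Since $|H|$ divides $24$, we get $|H|\in\{4,8,12,24\}$.

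For $|H|=24$ we have $H=\mathrm{S}_4$. For $|H|=12$, $H$ has index two and is therefore normal. It then contains the square of every element of $\mathrm{S}_4$, in particular all $3$-cycles, which generate $\mathrm{A}_4$; hence $H=\mathrm{A}_4$.

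For $|H|=8$, $H$ is a Sylow $2$-subgroup of $\mathrm{S}_4$. The three Sylow $2$-subgroups are all conjugate to $\langle (1234),(13)\rangle\cong \mathrm{D}_4$, and each is transitive. Each contains $V_4$ and a $4$-cycle $g$, and is generated by $g$ together with a double transposition $a\in V_4$ not equal to $g^2$. This gives the form $\langle g,a\rangle$.

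The main step is $|H|=4$, where the regular-action constraint does the work. With orbit size $4=|H|$, the stabilizers are trivial, so no nonidentity element of $H$ fixes a point. This rules out transpositions and $3$-cycles, so the nonidentity elements of $H$ are double transpositions or $4$-cycles. If $H$ contains a $4$-cycle $g$, then $H=\langle g\rangle\cong\mathbb{Z}/4\mathbb{Z}$. Otherwise $H$ consists of the identity and three double transpositions, and since there are exactly three of them, $H=V_4$. Finally, transitivity of each listed group can be checked directly, which confirms that the list is complete and exact.
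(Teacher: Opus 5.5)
Your proof is correct and complete. The paper gives no argument of its own here; it only cites Rotman's Exercise 3.51. Your proof is therefore a self-contained replacement for that citation. It has four steps: the orbit--stabilizer count giving $4\mid |H|$ and $|H|\in\{4,8,12,24\}$; the index-two step via $3$-cycles being squares; the Sylow step for order $8$; and the regularity step for order $4$, where no nonidentity element may fix a point.

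You also correctly insist on $a\neq g^{2}$ in the $\mathrm{D}_4$ case, which the statement as written omits. If $a=g^{2}$, then $\langle g,a\rangle=\langle g\rangle$ is cyclic of order $4$.

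For the paper's purposes, only the trichotomy of Corollary \ref{transitiveins4} is used: $H$ contains a $4$-cycle, or $H=V_4$, or $H=\mathrm{A}_4$. Your case analysis yields this immediately, since $\mathrm{S}_4$, the order-$8$ subgroups and the cyclic subgroups all contain a $4$-cycle.
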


\begin{corollary}\label{transitiveins4}
A transitive subgroup $H$ of $\mathrm{S}_4$ satisfies at least one of the following conditions.
\begin{enumerate}
    \item\label{ginH} $H$ contains $4$-cycle in $\mathrm{S}_4$.
    \item\label{klein4inH} $H$ is equal to the Klein four-group $V_4$. 
    \item\label{A4inH} $H$ is equal to $\mathrm{A}_4$.
\end{enumerate}   
\end{corollary}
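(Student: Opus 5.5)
The corollary follows directly from the classification in Lemma \ref{transitiveclassification}. I will go through the five possibilities listed there and check which of the conditions \ref{ginH}, \ref{klein4inH}, \ref{A4inH} each one satisfies.

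First, suppose $H=\langle g\rangle\cong\mathbb{Z}/4\mathbb{Z}$, where $g$ is a $4$-cycle. Then $H$ contains a $4$-cycle, so condition \ref{ginH} holds. Likewise, if $H=\langle g,a\rangle\cong\mathrm{D}_4$, it contains the $4$-cycle $g$ by construction, and again condition \ref{ginH} holds.

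If $H=\mathrm{S}_4$, it contains every $4$-cycle, for instance $(1234)$, so condition \ref{ginH} holds as well.

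The remaining two cases are immediate. If $H=V_4$, condition \ref{klein4inH} holds. If $H=\mathrm{A}_4$, condition \ref{A4inH} holds. Note that $\mathrm{A}_4$ contains no $4$-cycle, since a $4$-cycle is an odd permutation, and $V_4$ contains no $4$-cycle either. So these two cases really do need separate conditions; they cannot be absorbed into \ref{ginH}.

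Since Lemma \ref{transitiveclassification} lists every transitive subgroup of $\mathrm{S}_4$, this covers all cases. There is no real obstacle here. The only point that needs care is that the classification is exhaustive up to the conjugacy already built into its parametrization, namely "$g$ a $4$-cycle" and "$a\in V_4-\{\mathrm{id}\}$". Both $\mathrm{A}_4$ and $V_4$ are normal in $\mathrm{S}_4$, so conjugation does not affect the equalities in conditions \ref{klein4inH} and \ref{A4inH}. Conjugation also does not affect whether $H$ contains a $4$-cycle, because conjugates of $4$-cycles are $4$-cycles.
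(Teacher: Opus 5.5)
Your proof is correct and matches the paper's argument: the paper states the corollary without proof, as an immediate reading-off of the five cases in Lemma \ref{transitiveclassification}, where the cyclic, dihedral and full symmetric cases each contain a $4$-cycle. Your extra remarks are correct but not needed for the statement: $V_4$ and $\mathrm{A}_4$ contain no $4$-cycle, and since both are normal in $\mathrm{S}_4$, the case split is unaffected by conjugation.
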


Using $\iota(\Delta)$ is stable under the action of $F^{-1}(H)=G''$, we can analyze properties of elements in $\iota(\Delta)$ case by case based on the above classification of $H$. And Case (\ref{ginH}) is dealt with in Lemma \ref{nolength4cycle}, Case (\ref{klein4inH}) is discussed in Proposition \ref{noklein4} and Case (\ref{A4inH}) is dealt with in Lemma \ref{A_4notpossible}. 

The following lemma will be used frequently.
\begin{lemma}\label{integralelement}
    Given $\iota:\Delta \xhookrightarrow{} X$ an inclusion of finite rank free $\mathbb{Z}$-modules, suppose $X/\iota(\Delta)$ is torsion free. Then we have $\iota(\Delta)=\iota(\Delta)\otimes_{\mathbb{Z}}\mathbb{Q}\cap X$.
\end{lemma}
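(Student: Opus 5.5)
The plan is to prove the two inclusions directly. The inclusion $\iota(\Delta)\subseteq(\iota(\Delta)\otimes_{\mathbb{Z}}\mathbb{Q})\cap X$ is immediate. Here we identify $\iota(\Delta)\otimes_{\mathbb{Z}}\mathbb{Q}$ with the $\mathbb{Q}$-span of $\iota(\Delta)$ inside $X\otimes_{\mathbb{Z}}\mathbb{Q}$. This identification is legitimate because $\mathbb{Q}$ is flat over $\mathbb{Z}$, so $\iota\otimes\mathbb{Q}$ stays injective. We also view $X$ inside $X\otimes_{\mathbb{Z}}\mathbb{Q}$, which is valid since $X$ is free and hence torsion free. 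All the content therefore lies in the reverse inclusion.

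For the reverse inclusion, take $x\in X$ lying in the $\mathbb{Q}$-span of $\iota(\Delta)$. Clearing the denominators of the rational coefficients, we find a positive integer $N$ with $Nx\in\iota(\Delta)$. Now pass to the quotient: the class $\bar x\in X/\iota(\Delta)$ satisfies $N\bar x=0$. Since $X/\iota(\Delta)$ is torsion free by hypothesis, $\bar x=0$, that is, $x\in\iota(\Delta)$. This gives the equality.

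There is no real obstacle here. The only point needing care is the identification of $\iota(\Delta)\otimes\mathbb{Q}$ with a subspace of $X\otimes\mathbb{Q}$ so that the intersection with $X$ makes sense, and flatness of $\mathbb{Q}$ handles it. In the applications, the torsion-freeness hypothesis holds because $X^{*}(T)$ is a free $\mathbb{Z}$-module for a $\mathbb{Q}$-torus $T$, so \eqref{charseq} exhibits $X^{*}(\mathrm{U}_E)/\iota(\Delta)\cong X^{*}(T)$ as torsion free. Consequently one may determine $\iota(\Delta)$ as the intersection of a $G''$-stable $\mathbb{Q}$-subspace with the lattice $X^{*}(\mathrm{U}_E)$.
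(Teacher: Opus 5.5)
Your proof is correct: the nontrivial inclusion follows, as you say, by clearing denominators to get $Nx\in\iota(\Delta)$ and then using that $X/\iota(\Delta)$ has no torsion. The paper states this lemma without proof as a standard fact, so your argument supplies the omitted justification; your closing remark also matches how the paper applies it, with torsion-freeness of $X^{*}(\mathrm{U}_{E})/\iota(\Delta)\cong X^{*}(T)$ coming from the connectedness of $\gdrbhmath(A)$ (Strategy \ref{strategylongcomp}).
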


\begin{strategy}\label{strategylongcomp}
We study rank two submodules of $X:=X^{*}(\mathrm{U}_E)$ stable under the action of $G''$. Pick a semisimple element $P\in G''$, then the candidate for $\iota(\Delta)\otimes_{\mathbb{Z}}\qbar$ is spanned by eigenvectors of $P$. It must also
be $G''$-stable, $\absgalois$-stable, saturated in $X$ (because $\gdrbhmath(A)$ is connected). Moreover, the candidate for $\iota(\Delta)$ cannot contain any element forbidden by Corollary \ref{divisortest}.
\end{strategy}

We treat Case (\ref{ginH}) from Corollary \ref{transitiveins4}.
\begin{lemma} \label{nolength4cycle}
  With notations from Setup \ref{setupcontinued} and Lemma \ref{galois_permutation}, if $T=\gdrbhmath(A)$ is a dimension 2 (connected) subtorus of $\mathrm{U}_{E}$, then $H\subset\mathrm{S}_4$ does not contain a $4$-cycle.
\end{lemma}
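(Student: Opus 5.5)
Suppose, for contradiction, that $H$ contains a $4$-cycle, and choose $M\in G''$ with $F(M)$ that $4$-cycle. After reordering the basis $\{f_{\sigma_1},\dots,f_{\sigma_4}\}$, as in formula \eqref{Maction}, we may write $M f_i=\varepsilon_i f_{i+1}$ (indices mod $4$) with $\varepsilon_i\in\{\pm1\}$. Then $M^4=\varepsilon I$ with $\varepsilon=\prod\varepsilon_i$, so $M$ is semisimple with characteristic polynomial $x^4-\varepsilon$.

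Following Strategy \ref{strategylongcomp}, $\iota(\Delta)\otimes\qbar$ is a two-dimensional $M$-stable and $\absgalois$-stable subspace, so it is spanned by $M$-eigenvectors. Its $\mathbb{Q}$-form $\iota(\Delta)_{\mathbb{Q}}$ is therefore a rational $M$-invariant plane, and its characteristic polynomial is a degree-two factor of $x^4-\varepsilon$ over $\mathbb{Q}$. I treat the two signs separately.
\begin{enumerate}
\item If $\varepsilon=1$, then $x^4-1=(x-1)(x+1)(x^2+1)$, so the plane is one of $\ker(M-1)\oplus\ker(M+1)$, $\ker(M^2+1)$, or (a priori) a plane coming from repeated factors. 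Since the eigenvalues are distinct, the only rational $M$-stable planes are $\ker(M^2-1)$ and $\ker(M^2+1)$.
\item If $\varepsilon=-1$, then $x^4+1$ is irreducible over $\mathbb{Q}$, so there is no rational invariant plane at all. This already gives a contradiction.
\end{enumerate}

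In the case $\varepsilon=1$, I will rescale the basis by signs to make all $\varepsilon_i=1$. Replacing $f_i$ by $\pm f_i$ does not affect the set of vectors forbidden by Corollary \ref{divisortest}, and here the signs can be normalized because the product is $+1$. Then $M$ is the cyclic shift, and $M^2$ swaps $f_1\leftrightarrow f_3$ and $f_2\leftrightarrow f_4$. We get
$$\ker(M^2-1)=\langle f_1+f_3,\; f_2+f_4\rangle_{\mathbb{Q}},\qquad \ker(M^2+1)=\langle f_1-f_3,\; f_2-f_4\rangle_{\mathbb{Q}}.$$
Both planes contain an element of the form $\pm f_{\sigma_i}\pm f_{\sigma_j}$ with $i\neq j$, namely $f_1\pm f_3$, which lies in $X$. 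By Lemma \ref{integralelement}, since $\gdrbhmath(A)$ is connected and so $X/\iota(\Delta)$ is torsion free, this element lies in $\iota(\Delta)$. Corollary \ref{divisortest} then forbids it, a contradiction.

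The main point to verify carefully is the claim that the only rational $M$-invariant planes are these eigenspace sums. This follows from semisimplicity of $M$ together with the factorization of $x^4\mp1$ over $\mathbb{Q}$ into pairwise coprime irreducibles: an $M$-invariant rational subspace is a sum of its intersections with the primary components $\ker p(M)$ for the irreducible factors $p$, and each such component is irreducible as a $\mathbb{Q}[M]$-module, of dimension $\deg p$. In particular, when $\varepsilon=-1$ the only invariant subspaces are $0$ and $X$. Once this is granted, the contradiction is immediate in every case, so $H$ cannot contain a $4$-cycle.
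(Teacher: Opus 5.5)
Your proposal is correct and takes essentially the same route as the paper: it analyzes the rational invariant planes of a signed lift $M$ of the $4$-cycle, finds none when $M^4=-I$, and otherwise exhibits some $\pm f_{\sigma_i}\pm f_{\sigma_j}$ in each candidate plane, which saturation (Lemma \ref{integralelement}) and Corollary \ref{divisortest} forbid. The only difference is bookkeeping: you split by the sign product $\varepsilon$ and normalize with a diagonal sign change, whereas the paper splits by the number of $-1$ entries and reduces to the ordinary cyclic permutation using $-1\in G''$ and conjugation by a signed permutation matrix.
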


\begin{proof}
By symmetry we may assume $g=(1432)\in H$. Since $-1\in G''$, it suffices to
consider cases where $F^{-1}(g)$ contains a signed permutation matrix which has exactly \textit{zero}, \textit{one} or \textit{two} entries equal to $-1$.
\begin{enumerate}
\item \label{computations case_1M_1} Suppose $F^{-1}(g)$ contains the ordinary permutation matrix $M_1$ corresponding to the cycle $(1432)$. We now follow Strategy \ref{strategylongcomp}. Diagonalizing $M_1$ it has four distinct eigenvalues with the following eigenbasis: $\lambda_1=-1,\,v_1=(-1,1,-1,1)^{\mathrm{t}};\: \lambda_2=i,\,v_2=(i,-1,-i,1)^{\mathrm{t}};\:\lambda_3=-i,\,v_3=(-i,-1,i,1)^{\mathrm{t}};\:\lambda_4=1,\,v_4=(1,1,1,1)^{\mathrm{t}}$. Note that the eigenvalues are distinct, hence $\iota(\Delta)_{\qbar}$ is spanned by two eigenvectors of different eigenvalues. Also $\iota(\Delta)_{\qbar}$ is stable under the action of $\mathrm{Gal}(\mathbb{Q}(i)/\mathbb{Q})$. Therefore $\iota(\Delta)_{\mathbb{Q}}$ equals one of the following two scenarios 
\begin{itemize}
    \item $\mathrm{span}_{\mathbb{Q}}(\{v_{1},v_{4}\})=\mathrm{span}_{\mathbb{Q}}(\{(0,1,0,1)^{\mathrm{t}},(1,0,1,0)^{\mathrm{t}}\})$
    \item $\mathrm{span}_{\mathbb{Q}}(\{ v_2+v_3,\frac{v_2-v_{3}}{2i}\})=\mathrm{span}_{\mathbb{Q}}(\{(0,-1,0,1)^{\mathrm{t}}, 
(1,0,-1,0)^{\mathrm{t}}\}).$
\end{itemize}
Recall that we have fixed an integral basis for $X^{*}(\mathrm{U}_{\mathrm{E}})$ in Setup \ref{torusinvsetup}. Hence by Lemma \ref{integralelement}, we have that either $(1,0,1,0)^{\mathrm{t}} \in \iota(\Delta)$ or $(0,-1,0,1)^{\mathrm{t}}\in \iota(\Delta)$. And both cases contradict Corollary \ref{divisortest}.

\item \label{computations for Case1M2}
Suppose $F^{-1}(g)$ contains a signed permutation matrix $M_2$ with exactly \textit{one} $-1$ among its entries. Then $M_{2}^4+I=0$. The minimal polynomial of $M_2$ is $x^4+1$, which implies that $M_2$ has 4 distinct eigenvalues, on which $\absgalois$ acts transitively. Hence for the same reasoning as Lemma \ref{lemmaprimecmclassification}, no two-dimensional $\mathbb{Q}$-vector space can be stable under $M_2$.
\item \label{computations for case1m3}
Suppose $F^{-1}(g)$ contains a signed permutation matrix $M_3$ with exactly \textit{two} $-1$ among its entries. Then one can check $TM_1T^{-1}=M_3$ for some signed permutation matrix $T$. Hence any $M_3$-stable rank two submodule gives rise to an $M_1$-stable submodule $T\circ \iota(\Delta)$. By the analysis of Case \ref{computations case_1M_1} from this lemma and because $T$ is a signed permutation matrix, this contradicts Corollary \ref{divisortest}.
\end{enumerate}

\end{proof}
We now treat Case (\ref{klein4inH}) from Corollary \ref{transitiveins4}, i.e., the subgroup $H$ of $\mathrm{S}_4$ is isomorphic to the Klein four-group. Then different from Lemma \ref{nolength4cycle}, we do not immediately have a conclusive result.
\begin{proposition} \label{noklein4}
We use notations from Setup \ref{setupcontinued} and Lemma \ref{galois_permutation}. Suppose $T=\gdrbhmath(A)$ is a two-dimensional (connected) algebraic torus and the image $H$ of $G''$ in $\mathrm{S}_4$ is isomorphic to the Klein four-group. Then $G''$ is necessarily isomorphic to the dihedral group $\mathrm{D}_4$.
\end{proposition}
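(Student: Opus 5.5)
The plan is to first pin down the order of $G''$ and then identify its isomorphism type, using only Strategy \ref{strategylongcomp} and Corollary \ref{divisortest}. Recall $G''\cong G'\cong\galoisL$. The forgetful map $F\colon G''\to H\cong V_4$ is surjective, and its kernel $K$ consists of diagonal sign matrices in $G''$. Complex conjugation is central in $\galoisL$ and acts on $X^{*}(\mathrm U_E)$ by $-I$, so $-I\in K$. Hence $|G''|=4|K|$ with $|K|\in\{2,4,8,16\}$. Also $|G''|\geq[E:\mathbb Q]=8$, since $G''$ acts transitively on $\mathrm{Hom}(E,L)$.

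\emph{Step 1: $K=\{\pm I\}$.} Suppose $D\in K$ is a diagonal sign matrix other than $\pm I$. Then $\iota(\Delta)_{\mathbb Q}$, being $D$-stable, splits as a sum of its intersections with the two eigenspaces of $D$. These eigenspaces are coordinate subspaces, and at least one of them has dimension at most $2$. I therefore expect to find a nonzero vector of $\iota(\Delta)_{\mathbb Q}$ supported on at most two coordinates. To get this, intersect $\iota(\Delta)_{\mathbb Q}$ with the smaller eigenspace if they meet; otherwise $\iota(\Delta)_{\mathbb Q}$ projects isomorphically onto a subspace of the larger eigenspace, and I would use a dimension count to produce such a vector.

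\begin{itemize}
\item If the vector is supported on one coordinate, say it is a multiple of $f_i$, then transitivity of $H$ (Lemma \ref{galois_permutation}) puts every $f_j$ in $\iota(\Delta)_{\mathbb Q}$. This contradicts rank $2$.
\item If it is $af_i+bf_j$ with $ab\neq0$, apply an element of $G''$ lying over the element of $V_4$ that swaps $i$ and $j$. This produces $\pm af_j\pm bf_i\in\iota(\Delta)_{\mathbb Q}$.
\item If $|a|\neq|b|$, these two vectors span $\mathbb Q f_i\oplus\mathbb Q f_j$, and we are back in the one-coordinate case.
\item If $|a|=|b|$, saturation (Lemma \ref{integralelement}) gives $\pm f_i\pm f_j\in\iota(\Delta)$, contradicting Corollary \ref{divisortest}.
\end{itemize}

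Hence $|G''|=8$. In this case $E=L$ is Galois over $\mathbb Q$ and $G''$ acts simply transitively on the embeddings.

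\emph{Step 2: excluding the other groups of order $8$.} A group of order $8$ with a central subgroup $\{\pm I\}$ and quotient $V_4$ is one of $(\mathbb Z/2)^3$, $\mathbb Z/4\times\mathbb Z/2$, $Q_8$, $\mathrm D_4$. On $X_{\mathbb Q}:=X^{*}(\mathrm U_E)_{\mathbb Q}$, the element $-I$ acts by $-1$, so only representations with this central character occur.

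\begin{itemize}
\item For $Q_8$, the only such $\mathbb Q$-representation is the $4$-dimensional irreducible quaternion representation, because the Schur index is $2$. Then $X_{\mathbb Q}$ is irreducible and has no $2$-dimensional stable subspace.
\item For $(\mathbb Z/2)^3$, $X_{\qbar}$ splits into four rational character lines. Since the lifts of $V_4$ are signed permutations, these lines are spanned by vectors of the form $(1,\pm1,\pm1,\pm1)^{\mathrm t}$. Then $\iota(\Delta)_{\mathbb Q}$ is spanned by two such vectors, and their sum or difference has the form $\pm2f_i\pm2f_j$. Saturation then puts $\pm f_i\pm f_j$ in $\iota(\Delta)$, contradicting Corollary \ref{divisortest}.
\item For $\mathbb Z/4\times\mathbb Z/2$, some lift $M$ of a double transposition satisfies $M^2=-I$. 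The characters with $\chi(-I)=-1$ pair into two $2$-dimensional rational irreducibles, one of which must be $\iota(\Delta)_{\mathbb Q}$. I would write these down explicitly in the basis $f_{\sigma_i}$, in the style of Lemma \ref{nolength4cycle}. I expect each of them to contain a vector $\pm f_i\pm f_j$, which again is ruled out by Corollary \ref{divisortest}.
\end{itemize}

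This leaves $G''\cong\mathrm D_4$.

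The main obstacle I anticipate is Step 1 in the subcase where $D$ has two entries $-1$ and $\iota(\Delta)_{\mathbb Q}$ meets neither eigenspace. There the "support on two coordinates" argument needs the additional $V_4$-symmetry to be carried out carefully. The $\mathbb Z/4\times\mathbb Z/2$ computation is the second place where explicit sign bookkeeping is needed.
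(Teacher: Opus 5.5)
Your argument is correct, and it takes a different route from the paper's.

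\textbf{The paper's route.} It fixes a lift of $g_1=(12)(34)$ and splits according to its sign pattern: exactly one $-1$, none, or two. It then enumerates the possible signed lifts of $g_2$ and $g_3$ one matrix at a time (Lemmas \ref{bigcase1lemma}, \ref{p_0inpreimagelemma}, \ref{bigcase3lemma} and Remark \ref{bigcase3lemmapatch}), computing eigenvectors and stability conditions in each branch. The group $\mathrm{D}_4$ appears as $\langle P_0,P_2\rangle$ or $\langle P_0,P_3\rangle$.

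\textbf{Your route.} You first show $\ker F=\{\pm I\}$ by a single support argument. A nontrivial diagonal sign matrix forces a vector of support at most two, and the regular action of $V_4$ then yields either some $f_i$ or some $\pm f_i\pm f_j$. This fixes $|G''|=8$. The classification of central extensions of $V_4$ by $\mathbb Z/2$, together with the rational representation theory of the four resulting groups, finishes the proof.

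\textbf{Comparison.} Your proof is shorter and more conceptual, and it avoids the symmetry reductions in the paper's case tree. The paper's enumeration does buy something: it produces the explicit $\mathrm{D}_4$-stable lattices \eqref{p0p2eqs} and \eqref{p0p3eqs} that pass the divisor test. These motivate Subsection \ref{cmnonsimplicity}, although the proposition itself does not need them.

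\textbf{Small remarks.}
\begin{itemize}
\item The obstacle you anticipate in Step 1 does not arise. Since $D$ is diagonalizable over $\mathbb Q$, $\iota(\Delta)_{\mathbb Q}$ is the direct sum of its intersections with the two coordinate eigenspaces of $D$, so it always meets one of them nontrivially.
\item The $\mathbb Z/4\times\mathbb Z/2$ check is immediate. The two rational isotypic pieces are exactly the $(+1)$- and $(-1)$-eigenspaces of the order-two lift $N$ of the remaining double transposition $(ab)(cd)$. Writing $Nf_a=\delta f_b$ and $Nf_c=\delta' f_d$, these eigenspaces are spanned by $f_a+\delta f_b,\ f_c+\delta' f_d$ and by $f_a-\delta f_b,\ f_c-\delta' f_d$ respectively. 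Each therefore contains a vector forbidden by Corollary \ref{divisortest}.
\item In the $(\mathbb Z/2)^3$ case, the four sign vectors are pairwise orthogonal: signed permutations are orthogonal and the four lines carry distinct characters. Hence any two of them agree in exactly one of the coordinates $2,3,4$, and your sum-or-difference claim holds as stated.
\end{itemize}
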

We make the following preparations for the proof of Proposition \ref{noklein4}.
\begin{setup}\label{klein4setup}
Let
\[
H=V_4=\{\mathrm{id},(12)(34),(13)(24),(14)(23)\};
\qquad
 g_1=(12)(34),\quad g_2=(14)(23),\quad g_3=(13)(24).
\]
Since $g_1,g_2,g_3$ are symmetric in $H$, we may relabel them if necessary. Because the element $-1$ lies in $G'' \subset \mathrm{Aut}_{\mathbb{Z}}(X^{*}(\mathrm{U}_{E}))$, it suffices to consider the following scenarios, which are not mutually exclusive.

\begin{enumerate}[label=(\Alph*)]\label{bigcases}
    \item\label{bigcase1} One element in $F^{-1}(g_1) \subset G''$ has exactly \textit{one} $-1$ among its entries.
    \item\label{bigcase2} One element in $F^{-1}(g_1) \subset G''$ is an ordinary permutation matrix.
    \item\label{bigcase3} One element in $F^{-1}(g_1)\subset G''$ has exactly \textit{two} $-1$ among its entries.
\end{enumerate}
\end{setup}
We have the following lemma dealing with Scenario \ref{bigcase1} from Setup \ref{klein4setup}.
\begin{lemma}\label{bigcase1lemma}
 In the setting of Proposition \ref{noklein4} and Setup \ref{klein4setup}, if there is a signed permutation matrix in $F^{-1}(g_1) \subset G''$ with exactly \textit{one} $-1$ among its entries, then $\gdrbhmath(A)$ cannot be an (connected) algebraic torus of dimensional two.
\end{lemma}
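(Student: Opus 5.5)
Let $M\in F^{-1}(g_1)\subset G''$ be the signed permutation matrix with exactly one entry equal to $-1$. The plan follows Strategy \ref{strategylongcomp}, exactly as in Case \ref{computations for Case1M2} of Lemma \ref{nolength4cycle}. We show that no rank two saturated $G''$-stable submodule $\iota(\Delta)$ survives Corollary \ref{divisortest}. In fact we expect to show that no two-dimensional $\mathbb{Q}$-subspace of $X_{\mathbb{Q}}$ is $M$-stable in a compatible way.

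First we compute $M$. Since $F(M)=(12)(34)$, up to relabeling and replacing $M$ by $-M$ (allowed since $-1\in G''$), we may write
\[
M f_1=-f_2,\quad M f_2=f_1,\quad M f_3=f_4,\quad M f_4=f_3 .
\]
The $-1$ could sit in either transposition block; by the symmetry of $g_1$ we may put it in the block $\{1,2\}$. On $\mathrm{span}(f_1,f_2)$, $M$ acts with $M^2=-1$, so its eigenvalues are $\pm i$. On $\mathrm{span}(f_3,f_4)$, $M$ is a swap with eigenvalues $\pm1$ and eigenvectors $f_3\pm f_4$. Thus $M$ is semisimple with four distinct eigenvalues $i,-i,1,-1$, and every $M$-stable subspace is a sum of eigenlines.

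Next we impose Galois stability. A $\mathbb{Q}$-rational two-dimensional $M$-stable subspace must contain both the $i$- and $-i$-eigenlines, or neither. This leaves two options:
\[
\iota(\Delta)_{\mathbb{Q}}=\mathrm{span}_{\mathbb{Q}}(f_1,f_2)
\quad\text{or}\quad
\iota(\Delta)_{\mathbb{Q}}=\mathrm{span}_{\mathbb{Q}}(f_3+f_4,\; f_3-f_4)=\mathrm{span}_{\mathbb{Q}}(f_3,f_4).
\]
In both cases, Lemma \ref{integralelement} (saturation, since $\gdrbhmath(A)$ is connected) gives $f_1+f_2\in\iota(\Delta)$ or $f_3+f_4\in\iota(\Delta)$, respectively. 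Either element is forbidden by Corollary \ref{divisortest}, so $\gdrbhmath(A)$ cannot be a two-dimensional torus.

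The only step needing care is the normalization: checking that all placements of the single $-1$, and the sign flip by $-1$ (which turns one $-1$ into three), reduce to the displayed form. An alternative placement, such as $Mf_1=f_2,\ Mf_2=-f_1$, is just the transpose form and gives the same eigenvalue structure. I expect this bookkeeping to be routine, so the main obstacle is minimal. Note that we do not even need the other elements $g_2,g_3$ of $H$.
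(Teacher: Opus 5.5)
Your argument is correct and is essentially the paper's proof: the lifted matrix has distinct eigenvalues $\pm\sqrt{-1},\pm 1$, Galois stability forces $\iota(\Delta)_{\mathbb{Q}}$ to be one of the two coordinate blocks, and saturation (Lemma \ref{integralelement}) then puts $f_1+f_2$ or $f_3+f_4$ into $\iota(\Delta)$, contradicting Corollary \ref{divisortest}; like the paper, you handle the other placements of the single $-1$ by conjugating with a permutation matrix. One small framing correction: two-dimensional $M$-stable $\mathbb{Q}$-subspaces do exist here, so the right model is Case \ref{computations case_1M_1} of Lemma \ref{nolength4cycle}, where the divisor test does the work, rather than Case \ref{computations for Case1M2}, where no stable plane exists at all.
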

\begin{proof}
We follow Strategy \ref{strategylongcomp} as well. Suppose we have that $$M:=\scalebox{0.8}{$\begin{pmatrix} 0 & -1 & 0 & 0\\1 & 0 & 0 & 0\\0 & 0 & 0 & 1\\0 & 0 & 1 & 0 \end{pmatrix}$} \in F^{-1}(g_1).$$ Diagonalizing the matrix $M$, it has four distinct eigenvalues $\lambda_1=i,\,\lambda_2=-i,\,\lambda_3=1,\,\lambda_4=-1$ and the corresponding eigenvectors are $v_1=(1,-i,0,0)^{\mathrm{t}},\,v_2=(1,i,0,0)^{\mathrm{t}},\,v_3=(0,0,1,1)^{\mathrm{t}},\,v_4=(0,0,-1,1)^{\mathrm{t}}$. Then by a similar analysis to Lemma \ref{nolength4cycle}, either $(1,1,0,0)^{\mathrm{t}}$ or $(0,0,1,1)^{\mathrm{t}}$ is contained in $\iota(\Delta)$ and both cases violate Corollary \ref{divisortest}. Note any other signed permutation matrix which contains exactly \textit{one} $-1$ and whose image under $F$ is equal to $g_1$ is conjugate by a permutation matrix to $M$. Therefore they can be excluded in a similar manner.     
\end{proof}
The following lemma deals with Scenario \ref{bigcase2} from Setup \ref{klein4setup}.
\begin{lemma}\label{p_0inpreimagelemma}
We keep the assumptions of Proposition \ref{noklein4} and Setup \ref{klein4setup}. Furthermore, if one of the elements in $F^{-1}(g_1) \subset G''$ is an ordinary permutation matrix and if $\gdrbhmath(A)$ is a (connected) two-dimensional subtorus of $\mathrm{U}_{E}$, then $G''$ is necessarily isomorphic to $\mathrm{D}_4:=\langle a,x |a^4=1;\,x^2=1;\,axa=x\rangle$.
\end{lemma}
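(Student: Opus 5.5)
Let $P_0$ denote the ordinary permutation matrix of $g_1=(12)(34)$ lying in $F^{-1}(g_1)\subset G''$. I follow Strategy \ref{strategylongcomp} with $P=P_0$, and then use the remaining elements of $G''$ to pin down the group.

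\textbf{Step 1: eigenspaces of $P_0$.} The eigenvalues of $P_0$ are $1$ (eigenvectors $(1,1,0,0)^{\mathrm{t}}$, $(0,0,1,1)^{\mathrm{t}}$) and $-1$ (eigenvectors $(1,-1,0,0)^{\mathrm{t}}$, $(0,0,1,-1)^{\mathrm{t}}$), each with multiplicity two. Since $P_0$ is rational with rational eigenvalues, $\iota(\Delta)_{\mathbb{Q}}$ splits as $(\iota(\Delta)_{\mathbb{Q}}\cap X_{+})\oplus(\iota(\Delta)_{\mathbb{Q}}\cap X_{-})$. This leaves three possibilities: the whole $(+1)$-eigenspace, the whole $(-1)$-eigenspace, or a line $\mathbb{Q}(a,a,b,b)^{\mathrm{t}}\oplus\mathbb{Q}(c,-c,e,-e)^{\mathrm{t}}$. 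The first two contain $f_1\pm f_2$ and are excluded by Corollary \ref{divisortest}, together with Lemma \ref{integralelement}. In the mixed case, Corollary \ref{divisortest} forces $ab\neq0$ and $ce\neq 0$, after using saturation.

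\textbf{Step 2: use $g_2$ and $g_3$.} Choose lifts $M_2\in F^{-1}(g_2)$ and $M_3\in F^{-1}(g_3)$ in $G''$. By Lemma \ref{bigcase1lemma} applied to these elements after relabeling, each lift has an even number of $-1$ entries; up to $-1\in G''$ it has either zero or two. Impose $M_2$-stability on the mixed subspace.

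If $M_2$ were an ordinary permutation matrix, $M_2$ would commute with $P_0$ and send $(a,a,b,b)^{\mathrm{t}}$ to $(b,b,a,a)^{\mathrm{t}}$. Stability would then force $a=\pm b$, so $f_1+f_2\pm(f_3+f_4)$ lies in $\iota(\Delta)$. The analogous constraint on the $(-1)$-part forces $c=\pm e$. I would then show these lead to $\iota(\Delta)$ containing a forbidden element such as $f_1\pm f_3$ by adding or subtracting the two generators, for instance $(1,1,1,1)+(1,-1,1,-1)=2(1,0,1,0)$, and invoking saturation. Sign choices have to be tracked case by case.

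This leaves the case where the lifts of $g_2$ (and $g_3$) carry exactly two $-1$ entries in a pattern that makes them fail to commute with $P_0$, namely $M_2P_0M_2^{-1}=-P_0$ or a sign-twisted version. In that case the group generated by $P_0$, $M_2$ and $-1$ contains an element of order $4$ (such as $P_0M_2$), together with the involution $P_0$ satisfying $xax=a^{-1}$. The goal is then to show $G''\cong \mathrm{D}_4$ and not a larger group inside $F^{-1}(V_4)$, such as one containing $-1$ with order-$16$ structure.

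\textbf{Main obstacle.} The difficulty is the last step: systematically enumerating the sign patterns of the lifts of $g_2$ and $g_3$ compatible with a mixed $\iota(\Delta)$, and computing the group they generate. I would organize this by noting that $\ker(F|_{G''})$ consists of diagonal sign matrices fixing $\iota(\Delta)$. For each such kernel element I would check, via eigenvector analysis as in Step 1, whether it forces a forbidden vector $f_i\pm f_j$. The aim is to show the kernel is $\{\pm I\}$, so that $|G''|=8$. Given $|G''|=8$, with $H\cong V_4$ and $-1$ central, the existence of the order-$4$ element $P_0M_2$ together with a non-central involution identifies $G''\cong \mathrm{D}_4$, rather than $(\mathbb{Z}/2)^3$, $\mathbb{Z}/4\times\mathbb{Z}/2$ or $Q_8$.
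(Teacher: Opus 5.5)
Your route differs from the paper's. The paper lists the four lifts $P_1,\dots,P_4$ of $g_2$, derives $x_1y_1=\pm x_2y_2$, and then rules out extra lifts of $g_3$ case by case. You instead bound $\ker(F|_{G''})$ to get $|G''|=8$, which is a legitimate alternative, but two steps fail as written. \emph{First, the dichotomy in Step 2 is incomplete.} Write a lift of $g_2$ as $f_j\mapsto s_jf_{\pi(j)}$ with $s_1s_2s_3s_4=1$. It commutes with $P_0$ iff $s_1=s_2$ and $s_3=s_4$, and anticommutes otherwise. The commuting lifts are $\pm$ the ordinary permutation matrix \emph{and} $\pm N$, where $N$ is given by $f_1\mapsto f_4$, $f_2\mapsto f_3$, $f_3\mapsto -f_2$, $f_4\mapsto -f_1$; this $N$ has two $-1$ entries. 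You only exclude the ordinary lift, so ``this leaves the case where the lifts fail to commute with $P_0$'' is not yet justified. The lift $N$ is easy to exclude: it sends $(a,a,b,b)^{\mathrm t}$ to $(-b,-b,a,a)^{\mathrm t}$, so it acts on the $(+1)$-eigenspace of $P_0$ with eigenvalues $\pm\sqrt{-1}$ and preserves no rational line there.

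\emph{Second, the kernel step fails if done element by element.} Checking each diagonal sign matrix against $P_0$ and Corollary \ref{divisortest} cannot exclude $\pm\mathrm{diag}(1,-1,1,-1)$ or $\pm\mathrm{diag}(1,-1,-1,1)$. For instance, the span of $(1,1,2,2)^{\mathrm t}$ and $(1,-1,2,-2)^{\mathrm t}$ is stable under $P_0$ and $\mathrm{diag}(1,-1,1,-1)$ (stability only needs $ae=bc$), and its saturation contains no $\pm f_i\pm f_j$; the same happens for $\mathrm{diag}(1,-1,-1,1)$ with $ae=-bc$. The missing idea is to combine a kernel element $D$ with the surviving lift $M_2$. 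Both of these $D$ anticommute with $P_0$, as $M_2$ does, so $DM_2\in G''$ is a lift of $g_2$ that commutes with $P_0$. It is therefore $\pm$ ordinary or $\pm N$, both already excluded. The other kernel candidates are direct:
\begin{itemize}
\item $\pm\mathrm{diag}(1,1,-1,-1)$ commutes with $P_0$ and preserves $\mathbb{Q}(a,a,b,b)^{\mathrm t}$ only if $ab=0$;
\item a $D$ with an odd number of $-1$ entries makes $DP_0$ a lift of $g_1$ excluded by Lemma \ref{bigcase1lemma}.
\end{itemize}
With these repairs you get $\ker(F|_{G''})=\{\pm I\}$, hence $|G''|=8$. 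Then $G''=\langle P_0,M_2\rangle$ is non-abelian with $P_0$ a non-central involution, which rules out $Q_8$ and gives $\mathrm{D}_4$. Note that $P_0M_2$ need not have order $4$: if $M_2^2=-I$, then $P_0M_2$ is an involution and $M_2$ itself has order $4$. Non-commutativity plus the non-central involution suffices anyway. Once repaired, your argument avoids the relations $x_1y_1=\pm x_2y_2$ and the branch analysis of $F^{-1}(g_3)$. It does not, however, produce the explicit surviving families \eqref{p0p2eqs} and \eqref{p0p3eqs} that the paper records.
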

\begin{proof}
By symmetry and Lemma \ref{bigcase1lemma}, it suffices to study a
rank-two submodule stable under the an ordinary permutation matrix $$P_0:=\scalebox{0.8}{$\begin{pmatrix} 0 & 1 & 0 & 0\\1 & 0 & 0 & 0\\0 & 0 & 0 & 1\\0 & 0 & 1 & 0 \end{pmatrix}$}\in F^{-1}(g_1)$$ and one of the following preimages of $g_2$:
$$
(P_1,P_2,P_3,P_4)=\left(\scalebox{0.8}{$
\begin{pmatrix} 0 & 0 & 0 & 1\\0 & 0 & 1 & 0\\0 & 1 & 0 & 0\\1 & 0 & 0 & 0 \end{pmatrix},\,
\begin{pmatrix} 0 & 0 & 0 & 1\\0 & 0 & -1 & 0\\0 & 1 & 0 & 0\\-1 & 0 & 0 & 0 \end{pmatrix},\,
\begin{pmatrix} 0 & 0 & 0 & 1\\0 & 0 & -1 & 0\\0 & -1 & 0 & 0\\1 & 0 & 0 & 0 \end{pmatrix},\,
\begin{pmatrix} 0 & 0 & 0 & -1\\0 & 0 & -1 & 0\\0 & 1 & 0 & 0\\1 & 0 & 0 & 0 \end{pmatrix}$}\right).$$ Diagonalizing $P_{0}$, viewed as a $\mathbb{Q}$-linear transformation on $X^{*}({\mathrm{U}_{E}})_{\mathbb{Q}}$, its eigenvalues and eigenvectors are: $\lambda_1=-1,\,v_1=(0,0,-1,1)^{\mathrm{t}};\:\lambda_2=-1,\,v_2=(-1,1,0,0)^{\mathrm{t}};\:\lambda_3=1,\,v_3=(0,0,1,1)^{\mathrm{t}};\:\lambda_4=1,\,v_4=(1,1,0,0)^{\mathrm{t}}$.  By Corollary \ref{divisortest}, the set of eigenvalues of $P_0|_{\iota(\Delta)_{\mathbb{Q}}}$ can only be $\{-1,1\}$, i.e.,  \begin{equation}\label{p0module}\iota(\Delta)_{\mathbb{Q}}=\mathrm{span}_{\mathbb{Q}}(\{x_1v_1+x_2v_2,y_1v_3+y_2v_4\})=\mathrm{span}_{\mathbb{Q}}(\{(-x_1,x_1,-x_2,x_2)^{\mathrm{t}},(y_1,y_1,y_2,y_2)^{\mathrm{t}}\})\end{equation}
for some $x_{i},y_{i} \in \mathbb{Z}$.
\begin{enumerate}[label=(\alph*)]
\item\label{itemtwoordinary} Suppose $P_0, P_1$ are contained in $G''$. Then any vector space of the form (\ref{p0module}) is also preserved by $P_1$. Hence there exist $\alpha, \beta, \alpha^{'}, \beta^{'} \in \mathbb{Q}$ such that  
\begin{equation*}
\begin{split}
&P_1\circ(-x_1,x_1,-x_2,x_2)^{\mathrm{t}}=\alpha(-x_1,x_1,-x_2,x_2)^{\mathrm{t}}+\beta(y_1,y_1,y_2,y_2)^{\mathrm{t}}\\
&P_1\circ(y_1,y_1,y_2,y_2)^{\mathrm{t}}=\alpha'(-x_1,x_1,-x_2,x_2)^{\mathrm{t}}+\beta'(y_1,y_1,y_2,y_2)^{\mathrm{t}}.
\end{split}
\end{equation*}

We deduce that $x_1=\pm x_2$ and $y_1=\pm y_2$. Then Lemma \ref{integralelement} implies $(0,1,0,1)^{\mathrm{t}}$ or $(0,1,-1,0)^{\mathrm{t}}$ or $(0,1,1,0)^{\mathrm{t}}$ or $(0,1,0,-1)^{\mathrm{t}}$ lies in $\iota(\Delta)$. This gives a contradiction to Corollary \ref{divisortest}. 
\item\label{p0p2} Suppose $P_0, P_2$ are contained in $G''$. Then any vector space of the form (\ref{p0module}) must also be preserved by $P_2$, i.e., \begin{equation*}
\begin{split}
&P_2\circ(-x_1,x_1,-x_2,x_2)^{\mathrm{t}}=\alpha(-x_1,x_1,-x_2,x_2)^{\mathrm{t}}+\beta(y_1,y_1,y_2,y_2)^{\mathrm{t}}\\
&P_2\circ(y_1,y_1,y_2,y_2)^{\mathrm{t}}=\alpha'(-x_1,x_1,-x_2,x_2)^{\mathrm{t}}+\beta'(y_1,y_1,y_2,y_2)^{\mathrm{t}}.
\end{split}
\end{equation*} This puts the following condition on coordinates.  
\begin{equation}\label{p2}
x_1y_1=x_2y_2
\end{equation}
Then rank two submodules spanned by \begin{equation}\label{p0p2eqs}\{(-x_1,x_1,-x_2,x_2)^{\mathrm{t}},(x_2,x_2,x_1,x_1)^{\mathrm{t}}\}\end{equation} where $x_1,x_2\in\mathbb{Z}$ are stable under the action of $P_0$ and $P_2$. Upon close inspections, for suitably chosen $x_1,x_2\in\mathbb{Z}$, such modules \textit{cannot} be excluded using Corollary \ref{divisortest}. However, if $G''$ is exactly generated by $P_0$ and $P_2$, which is isomorphic to the dihedral group $\mathrm{D}_4=\langle a,x |a^4=1;\,x^2=1;\,axa=x\rangle$, via the group isomorphism \begin{equation}\label{p0p2d4}x=P_{0};\: a=P_2,\end{equation} then this satisfies the condition stated in the lemma.
\item\label{p0p3} Similarly, suppose that $P_0, P_3$ are contained in $G''$, then the same computation imposes the following condition on vector spaces of the form (\ref{p0module}):
\begin{equation}\label{p3}
 x_1y_1=-x_2y_2.
\end{equation} Then the rank two submodules spanned by \begin{equation}\label{p0p3eqs}\{(-x_1,x_1,-x_2,x_2)^{\mathrm{t}},(x_2,x_2,-x_1,-x_1)^{\mathrm{t}}\}\end{equation} where $x_1,x_2\in\mathbb{Z}$ are stable under the action of $P_0$ and $P_3$. For suitably chosen $x_1,x_2\in\mathbb{Z}$, they cannot be excluded using Corollary \ref{divisortest} either. Nevertheless, if $G''$ is exactly generated by $P_0$ and $P_3$, it is isomorphic to $\mathrm{D}_4$ via the group isomorphism \begin{equation}\label{p0p3d4}
    a=P_3P_0;\: x=P_0.
\end{equation} 
\item Suppose $P_0, P_4$ are contained in $G''$. This puts the condition $x_1^2+x_2^2=0$. Because $x_1,x_2\in\mathbb{Q}$, this contradicts that $\iota(\Delta)$ is of rank two.  
    
\end{enumerate}
In Case \ref{p0p2} and \ref{p0p3} discussed above, if the preimage of $g_3$ under $F$ 
contains elements other than $\pm P_0P_2$ and $\pm P_0P_3$, then $G''$ is not isomorphic to $\mathrm{D}_4$. We now deal with this scenario. Lemma \ref{bigcase1lemma}
excludes matrices with exactly one or three $-1$ entries in $F^{-1}(g_3)$, and Case
\ref{itemtwoordinary} of this lemma excludes ordinary permutation matrices in $F^{-1}(g_3)$. Hence only the following candidates can lie in $F^{-1}(g_3)$:
\[
(Q_1,Q_2)=\scalebox{.8}{$
\left(
\begin{pmatrix}0&0&-1&0\\0&0&0&-1\\1&0&0&0\\0&1&0&0\end{pmatrix},
\begin{pmatrix}0&0&-1&0\\0&0&0& 1\\1&0&0&0\\0&-1&0&0\end{pmatrix}
\right)$},
\]
for Case \ref{p0p2}, and
\[
(Q'_1,Q'_2)=\scalebox{.8}{$
\left(
\begin{pmatrix}0&0&-1&0\\0&0&0&-1\\1&0&0&0\\0&1&0&0\end{pmatrix},
\begin{pmatrix}0&0&-1&0\\0&0&0& 1\\-1&0&0&0\\0&1&0&0\end{pmatrix}
\right)$},
\]
for Case \ref{p0p3}. The computations of these branch cases are summarized in Table \ref{p_0table2}.
\begin{table}[ht]
    \centering
    \begin{tabular}{|c|c|c|}
    \hline
    Elements in $G''$  & Properties of $\iota(\Delta)$ & Can we rule out such $T$ \\
    \hline
    $P_0, P_2, Q_1$ & Can only be of rank 0 & Yes, See (\ref{p_0p_2q_1}) below\\
    \hline
    $P_0, P_2, Q_2$ & Contradicts Corollary \ref{divisortest} & Yes, See (\ref{p_0p_2q_2}) below\\
    \hline
    $P_0, P_3, Q'_1$ & Can only be of rank 0 & Yes, See (\ref{p_0p_3q'_1}) below\\
    \hline  
    $P_0, P_3, Q'_2$ & Contradicts Corollary \ref{divisortest} & Yes, See (\ref{p_0p_3q'_2}) below\\
    \hline
    \end{tabular}
    \caption{Branch Cases of \ref{p0p2} and \ref{p0p3}}
    \label{p_0table2}
\end{table}

\begin{enumerate}
\item\label{p_0p_2q_1} Suppose $F^{-1}(g_3)$ contains $Q_{1}$. Then $Q_{1}$ has to preserve a $\mathbb{Q}$-vector space of the form (\ref{p0module}). This gives the linear condition
$x_1^2+x_2^2=0$. Since $x_1,x_2\in\mathbb{Q}$, this contradicts that $\iota(\Delta)$ is of rank two, which is a contradiction. 

\item\label{p_0p_2q_2} If $F^{-1}(g_3)$ contains $Q_{2}$. This imposes the condition $x_1y_1=-x_2y_2$ for the $\mathbb{Q}$-vector space of the form (\ref{p0module}). But Case \ref{p0p2} of this lemma already imposes the condition $x_1y_1=x_2y_2$ (see formula (\ref{p2})). Hence this implies that $x_1y_1=x_2y_2=0$. Since $\iota(\Delta)$ has rank two, we deduce either $x_1=y_2=0$ or $x_2=y_1=0$. Lemma \ref{integralelement} then gives
$(0,0,-1,1)^{\mathrm t}\in \iota(\Delta)$ or $(-1,1,0,0)^{\mathrm t}\in \iota(\Delta)$, which contradicts Corollary \ref{divisortest}.   
\item\label{p_0p_3q'_1} Suppose $F^{-1}(g_3)$ contains $Q'_{1}:=Q_{1}$. This is identical to Case \eqref{p_0p_2q_1} above.
\item\label{p_0p_3q'_2}If $F^{-1}(g_3)$ contains $Q'_{2}$. Then we deduce that  $x_1y_1=x_2y_2$. But Case \ref{p0p3} of this lemma already imposes the linear condition $x_1y_1=-x_2y_2$ (see formula (\ref{p3})). Then by the same reasoning as Case (\ref{p_0p_2q_2}) above, this gives a contradiction to Corollary \ref{divisortest}. 
\end{enumerate}

\end{proof}
The following lemma deals with Scenario \ref{bigcase3} from Setup \ref{klein4setup}.
\begin{lemma}\label{bigcase3lemma}
 We keep the assumptions from Proposition \ref{noklein4} and Setup \ref{klein4setup}. If every signed permutation matrix in $F^{-1}(g_1)$, $F^{-1}(g_2)$ and $F^{-1}(g_3)$ contains exactly \textit{two} $-1$ among their entries, then $\gdrbhmath(A)$ cannot be a two-dimensional (connected) algebraic torus.
\end{lemma}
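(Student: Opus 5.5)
The plan is to follow Strategy \ref{strategylongcomp}, classifying the lifts of $g_1,g_2,g_3$ up to conjugation by signed diagonal matrices. Write $T=\gdrbhmath(A)$ and assume it is a two-dimensional torus, so $\iota(\Delta)_{\mathbb Q}\subset X:=X^*(\mathrm U_E)_{\mathbb Q}$ is a $2$-dimensional $G''$-stable saturated subspace.

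\textbf{Step 1: the two types of lift.} A signed permutation matrix lifting $g_1=(12)(34)$ with exactly two $-1$ entries is of one of two types.
\begin{enumerate}[label=(\roman*)]
\item Type (i): both $-1$'s lie in the same $2\times 2$ block. Then $M^2=I$, and $M=DP_0D^{-1}$ for a signed diagonal matrix $D$.
\item Type (ii): each block carries one $-1$, i.e. each block is $\pm\left(\begin{smallmatrix}0&-1\\1&0\end{smallmatrix}\right)$. Then $M^2=-I$.
\end{enumerate}
The same dichotomy holds for the lifts of $g_2$ and $g_3$. Conjugation by a signed diagonal $D$ preserves the set of vectors forbidden by Corollary \ref{divisortest}, namely $\pm f_{\sigma_i}\pm f_{\sigma_j}$. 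It also preserves saturation. I would therefore normalise by such a $D$ whenever convenient, bearing in mind that $D$ may change the number of $-1$ entries in the other generators.

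\textbf{Step 2: a type (ii) lift.} Suppose some lift $M_1$ of $g_1$ is of type (ii). Then $M_1^2=-I$, so $X$ becomes a $2$-dimensional $\mathbb Q(i)$-vector space, and $\iota(\Delta)_{\mathbb Q}$ is a $\mathbb Q(i)$-line. Concretely, $\iota(\Delta)_{\qbar}=\mathrm{span}(w,\overline w)$ with $w$ in the $i$-eigenspace of $M_1$. That eigenspace is spanned by $(1,-i,0,0)^{\mathrm t}$ and $(0,0,1,\mp i)^{\mathrm t}$, and $w$ is a $\qbar$-combination of these.

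Next take lifts $M_2,M_3=\pm M_1M_2$ of $g_2,g_3$. The behaviour depends on whether $M_1,M_2$ commute or anticommute.
\begin{itemize}
\item If they anticommute and $M_2^2=-I$, then $M_1,M_2$ generate a quaternion group $Q_8$. Its $4$-dimensional rational representation is the Hamilton quaternion algebra acting on itself, which is irreducible over $\mathbb Q$. Hence no $2$-dimensional invariant subspace exists, a contradiction.
\item In the remaining configurations, $M_2$ or $M_1M_2$ satisfies $M^2=I$ and is therefore of type (i). I would reduce this to Step 3.
\end{itemize}

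\textbf{Step 3: type (i) lifts.} If some lift is of type (i), conjugate by $D$ so that it becomes $P_0$. The eigen-analysis of Lemma \ref{p_0inpreimagelemma} then forces $\iota(\Delta)_{\mathbb Q}$ to have the shape \eqref{p0module}. The other generators, conjugated by $D$, give the conditions of Cases \ref{itemtwoordinary}--(d) of that lemma. All of these are either contradictory or give one of the families \eqref{p0p2eqs}, \eqref{p0p3eqs}.

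In the surviving families, the hypothesis that every lift has exactly two $-1$ entries remains usable. Back in the original coordinates, $g_2$ and $g_3$ also have type (ii) lifts, or type (i) lifts of a different sign pattern than $P_2,P_3$ after conjugation. I would plug these extra matrices into the families. I expect either a condition $x_1^2+x_2^2=0$, as in \eqref{p_0p_2q_1}, or a combination of $x_1y_1=x_2y_2$ with $x_1y_1=-x_2y_2$, as in \eqref{p_0p_2q_2}. Both contradict rank two or Corollary \ref{divisortest} via Lemma \ref{integralelement}.

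\textbf{Main obstacle.} The hard part is the bookkeeping in Step 3. After conjugating by $D$ the hypothesis ``exactly two $-1$'s'' is no longer visible, and one must check that the configuration really differs from the genuine $\mathrm{D}_4$ configurations of Lemma \ref{p_0inpreimagelemma}, which cannot be excluded. I would first enumerate the lifts of $g_2,g_3$ with two $-1$'s compatible with a fixed type (i) or type (ii) lift of $g_1$; using $-1\in G''$ and the relabelling symmetry of Setup \ref{klein4setup}, this is a small list. I would then check the stability condition on \eqref{p0module} for each, exactly as in Table \ref{p_0table2}.
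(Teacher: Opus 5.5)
Your Steps 1 and 2 are sound. The $Q_8$ argument is a neat replacement for the paper's descent computations in the cases $(P_1',Q_2')$ and $(P_2',Q_1')$. One small point: two type (ii) lifts commute or anticommute only because their product, a lift of $g_3$, has exactly two $-1$ entries and hence squares to $\pm I$; in general a commutator of lifts is merely a signed diagonal matrix.

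The gap is at the end of Step 3. After normalising a type (i) lift to $P_0$, you land in the computations of Lemma~\ref{p_0inpreimagelemma}. You correctly note that Cases~\ref{p0p2} and~\ref{p0p3} produce the families \eqref{p0p2eqs} and \eqref{p0p3eqs}, which pass Corollary~\ref{divisortest}. You then only \emph{expect} to kill them by plugging in ``extra matrices''. That mechanism is not available: $G''$ may be just $\{\pm I,\pm M_1,\pm M_2,\pm M_1M_2\}$. Then there is no stability condition beyond those already imposed, and the $\mathrm{D}_4$ families satisfy those. Your phrase ``type (ii) lifts \ldots\ of a different sign pattern than $P_2$'' also separates nothing, since $P_2$ is itself of type (ii). As written, the argument does not close.

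The missing observation is that Cases~\ref{p0p2} and~\ref{p0p3} contradict the hypothesis itself once you track the conjugation.
\begin{itemize}
\item For a lift $M$ of an involution $(ab)(cd)$ with $M^2=I$, put $t(M):=M_{ab}M_{cd}\in\{\pm1\}$. Then $t(M)=-1$ exactly when $M$ has two $-1$ entries, and $t(M)=1$ exactly when $\pm M$ is an ordinary permutation matrix.
\item Conjugation by relabelling permutations preserves $t$. For a signed diagonal $D$ one has $t(DMD^{-1})=\det(D)\,t(M)$, uniformly in $M$.
\item The hypothesis gives $t=-1$ for every type (i) element of $G''$ over $g_1,g_2,g_3$. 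Since $t(P_0)=1$, your normalising $D$ has $\det D=-1$. Hence every type (i) element of $DG''D^{-1}$ over $g_1,g_2,g_3$ is $\pm$ an ordinary permutation matrix.
\item Every lift with exactly two $-1$ entries satisfies $M_{ab}M_{ba}=M_{cd}M_{dc}$, and both products are conjugation invariant. So the lift of $g_2$ in $DG''D^{-1}$ is $\pm P_j$ for some $j\in\{1,2,3,4\}$.
\item But $P_3$ is a type (i) lift of $g_2$ with $-1$ entries exactly at $(2,3),(3,2)$. Likewise $P_0P_2$ is a type (i) lift of $g_3$ with $-1$ entries exactly at $(1,3),(3,1)$. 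So Cases~\ref{p0p2} and~\ref{p0p3} cannot arise.
\end{itemize}
Only Case~\ref{itemtwoordinary} and Case (d) remain, and the computations in Lemma~\ref{p_0inpreimagelemma} exclude both outright.

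With this added, your route is complete and more conceptual than the paper's. The paper enumerates the nine pairs $(P_i',Q_j')$, discards the three with ordinary product, and treats the other six by explicit eigenvector computations and ad hoc conjugations (including the non-rational matrix $S$).
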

\begin{remark}\label{bigcase3lemmapatch}
Together, Lemma \ref{bigcase1lemma} and Lemma \ref{p_0inpreimagelemma} reduce
Scenario \ref{bigcase3} from Setup \ref{klein4setup} to the case described in Lemma
\ref{bigcase3lemma}. Indeed, Lemma \ref{bigcase1lemma} excludes signed permutation matrices with exactly one or three $-1$ entries, while Lemma \ref{p_0inpreimagelemma} handles ordinary permutation matrices. By
the symmetry of $g_1,g_2,g_3$ in $H$, the same reductions apply to every
$F^{-1}(g_i)$.
\end{remark}

\begin{proof}[Proof of Lemma \ref{bigcase3lemma}]
Label the following potential elements in $F^{-1}(g_1)$ which contain \textit{two} $-1$ among their entries by $$(P_0',P_1',P_2')=\scalebox{0.8}{$\left(\begin{pmatrix} 0 & -1 & 0 & 0\\-1 & 0 & 0 & 0\\0 & 0 & 0 & 1\\0 & 0 & 1 & 0 \end{pmatrix},\, \begin{pmatrix} 0 & -1 & 0 & 0\\1 & 0 & 0 & 0\\0 & 0 & 0 & -1\\0 & 0 & 1 & 0 \end{pmatrix},\,\begin{pmatrix} 0 & -1 & 0 & 0\\1 & 0 & 0 & 0\\0 & 0 & 0 & 1\\0 & 0 & -1 & 0 \end{pmatrix}\right).$}$$ Also label the following potential elements in $F^{-1}(g_2)$ which contain \textit{two} $-1$ among their entries by  $$(Q_0',Q_1',Q_2')=\scalebox{0.8}{$\left(\begin{pmatrix} 0 & 0 & 0 & 1\\0 & 0 & -1 & 0\\0 & -1 & 0 & 0\\1 & 0 & 0 & 0 \end{pmatrix},\, \begin{pmatrix} 0 & 0 & 0 & 1\\0 & 0 & -1 & 0\\0 & 1 & 0 & 0\\-1 & 0 & 0 & 0 \end{pmatrix},\,\begin{pmatrix} 0 & 0 & 0 & -1\\0 & 0 & -1 & 0\\0 & 1 & 0 & 0\\1 & 0 & 0 & 0 \end{pmatrix}\right).$}$$ By the assumptions of the lemma, preimages of $g_3$ consist only of matrices with exactly two $-1$ among their entries and the products $P_0'Q_2', P_1'Q_1', P_2'Q_0'$ are ordinary permutation matrices over $g_3$. Therefore we only need to check the six cases summarized in Table \ref{tableeachtwo-1}.  

\begin{table}[ht]
    \centering
    \begin{tabular}{|c|c|c|}
    \hline
    Elements in $G''$  & Properties of $\iota(\Delta)$ & Can we rule out such $T$ \\
    \hline
    $P_{0}', Q_{0}'$ & Contradicts Corollary \ref{divisortest} & Yes, See (\ref{p'_0q'_0})\\
    \hline
    $P_{0}', Q_{1}'$ & Can only be of rank 0 & Yes, See (\ref{p'_0q'_1})\\
    \hline
    $P_{1}', Q_{0}'$ & Can only be of rank 0 & Yes, See (\ref{p'_1q'_0})\\
    \hline
    $P_1',Q_2'$ &Can only be of rank 0& Yes, see (\ref{p'_1q'_2})\\
    \hline
    $P_2',Q_2'$ &Contradicts Corollary \ref{divisortest} & Yes, see (\ref{p'_2q'_2})\\
    \hline
    $P_2',Q_1'$ &Can only be of rank 0& Yes, see (\ref{p'_2q'_1})\\
    \hline
    \end{tabular}
    \caption{Summary of Computations for Lemma \ref{bigcase3lemma}}
    \label{tableeachtwo-1}
\end{table}
\begin{enumerate}
\item\label{p'_0q'_0} Suppose $P_{0}'\in F^{-1}(g_1)$ and $Q_{0}'\in F^{-1}(g_2)$. Then $P_0'$ has eigenvalue $\lambda_1=1$ whose eigenspace is spanned by $(0,0,1,1)^{\mathrm{t}}$ and $(-1,1,0,0)^{\mathrm{t}}$ and eigenvalue $\lambda_2=-1$ whose eigenspace is spanned by $(0,0,-1,1)^{\mathrm{t}}$ and $(1,1,0,0)^{\mathrm{t}}$. If $\iota(\Delta)_{\mathbb{Q}}$ equals the eigenspace associated with $\lambda_1$ or $\lambda_2$, this contradicts Corollary \ref{divisortest}. Thus \begin{equation}\label{p_0'space}\iota(\Delta)_{\mathbb{Q}}=\mathrm{span}_{\mathbb{Q}}(\{(-x_1,x_1,x_2,x_2)^{\mathrm{t}},(y_1,y_1,-y_2,y_2)^{\mathrm{t}}\})\end{equation} for some $x_{i},y_{i} \in \mathbb{Z}$. The condition that  $\iota(\Delta)_{\mathbb{Q}}$ is stable under $Q_{0}'$ indicates that $x_1^2=x_2^2$ and $y_1^2=y_2^2$. Then the same argument for Lemma \ref{bigcase1lemma} can be applied to exclude this case.
\item\label{p'_0q'_1} Suppose $P_{0}'\in F^{-1}(g_1)$ and $Q_{1}'\in F^{-1}(g_2)$. Then by the previous Case (\ref{p'_0q'_0}), it suffices to consider subspaces of the form (\ref{p_0'space}) which are also stable under $Q_1'$. This implies $x_1^2+x_2^2=0$. Because $x_{i}\in\mathbb{Q}$, this holds only if $x_1=x_2=0$, which gives a contradiction. 
\item\label{p'_1q'_0}Suppose $P_{1}'\in F^{-1}(g_1)$ and $Q_{0}'\in F^{-1}(g_2)$. The permutation matrix $R$ corresponding to the cycle $(1432)$ satisfies $RP'_0R^{-1}=-Q'_0$ and $RQ'_1R^{-1}=P'_1$. Hence if $\iota(\Delta)_{\mathbb{Q}}$ were stable under $P_{1}'$ and $Q_{0}'$, then $R^{-1}(\iota(\Delta)_{\mathbb{Q}})$ would be stable under $P_{0}'$ and $Q_{1}'$, contradicting Case \eqref{p'_0q'_1} of this lemma.
\item\label{p'_1q'_2}  
Suppose $P_1'\in F^{-1}(g_1)$ and $Q_2'\in F^{-1}(g_2)$. Then $Q_2'$ has eigenvalue $\lambda_1=\sqrt{-1}$ with eigenbasis equal to $\{(\sqrt{-1},0,0,1)^{\mathrm{t}},(0,\sqrt{-1},1,0)^{\mathrm{t}}\}$ and eigenvalue $\lambda_2=-\sqrt{-1}$ with eigenbasis equal to $\{(-\sqrt{-1},0,0,1)^{\mathrm{t}},(0,-\sqrt{-1},1,0)^{\mathrm{t}}\}$. The two eigenspaces do not descend to $\mathbb{Q}$. Therefore it suffices to consider $\mathbb{Q}(\sqrt{-1})$-vector subspaces of the form \begin{equation}\label{spaceq2'}\mathrm{span}_{\mathbb{Q}(\sqrt{-1})}(\{(\sqrt{-1}x_1,\sqrt{-1}x_2,x_2,x_1)^{\mathrm{t}},(-\sqrt{-1}y_1,-\sqrt{-1}y_2,y_2,y_1)^{\mathrm{t}}\});x_1,x_2,y_1,y_2\in \mathbb{Q}(\sqrt{-1})\end{equation} as candidates for $\iota(\Delta)_{\mathbb{Q}(\sqrt{-1})}$. Because $\iota(\Delta)_{\mathbb{Q}(\sqrt{-1})}$ is stable under the action of $P_1'$, in this case we deduce that \begin{equation}\label{p1'q2'}x_1y_1=-x_2y_2.\end{equation} Since we cannot have $x_1=x_2=0$ or $y_1=y_2=0$, $\iota(\Delta)_{\mathbb{Q}(\sqrt{
-1
})}$ is spanned by $$\{(\sqrt{-1}x_1,\sqrt{-1}x_2,x_2,x_1)^{\mathrm{t}},(\sqrt{-1}x_2,-\sqrt{-1}x_1,x_1,-x_2)^{\mathrm{t}}\};x_1,x_2\in \mathbb{Q}(\sqrt{-1})$$ We claim such $\mathbb{Q}(\sqrt{-1})$-vector spaces cannot descend to $\mathbb{Q}$. Suppose it were defined over $\mathbb{Q}$, then it would be stable under the complex conjugation. Thus for some $m,n\in \mathbb{Q}(\sqrt{-1})$ 
\begin{equation*}
m(\sqrt{-1}x_1,\sqrt{-1}x_2,x_2,x_1)^{\mathrm{t}}+n(\sqrt{-1}x_2,-\sqrt{-1}x_1,x_1,-x_2)^{\mathrm{t}}=(-\sqrt{-1}\overline{x_1},-\sqrt{-1}\overline{x_2},\overline{x_2},\overline{x_1})^{\mathrm{t}}.
\end{equation*} We thus deduce that $nx_1=\overline{x_2}, -nx_2=\overline{x_1}$. Because $\iota(\Delta)_{\mathbb{Q}(\sqrt{-1})}$ is two-dimensional, this leads to $x_1\overline{x_1}+x_2\overline{x_2}=0$ and hence $x_1=x_2=0$, which poses a contradiction.
\item\label{p'_2q'_2} Suppose $\iota(\Delta)_{\mathbb{Q}}$ is stable under $P_2'\in F^{-1}(g_1)$ and $Q_2'\in F^{-1}(g_2)$. Note there exists $$S=\scalebox{0.8}{$\begin{pmatrix} 0 & 1 & 0 & 0\\0 & 0 & -\sqrt{-1} & 0\\0 & 0 & 0 & 1\\-\sqrt{-1} & 0 & 0 & 0 \end{pmatrix}$}\in\mathrm{GL}(X_{\mathbb{Q}(\sqrt{-1})})$$ such that $SP_2'S^{-1}=\sqrt{-1}Q_0'$ and $SQ_2'S^{-1}=\sqrt{-1}P_0'$. Hence suppose the two-dimensional $\mathbb{Q}$-vector space $\iota(\Delta)$ is stable under $P_2'$ and $Q_2'$, then $S\circ \iota(\Delta)$ is a $\mathbb{Q}(\sqrt{-1})$-vector space stable under $P_0'$ and $Q_0'$. By Case
\eqref{p'_0q'_0} of this lemma, it contains either $(0,1,0,1)^{\mathrm t}$ or
$(1,0,1,0)^{\mathrm t}$. Applying $S^{-1}$ and clearing the $\sqrt{-1}$ factor, this gives a contradiction to Corollary \ref{divisortest}.

\item\label{p'_2q'_1} Finally suppose $\iota(\Delta)_{\mathbb{Q}}$ is stable under $P_2'$ and $Q_1'$. Then the permutation matrix $T$ corresponding to the cycle $(1432)$ satisfies $TP'_2T^{-1}=-Q_2'$ and $TQ_1'T^{-1}=P_1'$. Thus any two-dimensional $\mathbb Q$-space stable under $P_2'$ and
$Q_1'$ would be carried by $T$ to one stable under $P_1'$ and $Q_2'$, which is impossible by Case \eqref{p'_1q'_2} of this lemma.
\end{enumerate}
All possible cases lead to contradictions. Hence $\gdrbhmath(A)$ cannot be a two-dimensional algebraic torus under the assumptions of the lemma.
\end{proof}

\begin{proof}[Proof of Proposition \ref{noklein4}]
 The three scenarios from Setup \ref{klein4setup} are handled by Lemma \ref{bigcase1lemma}, Lemma \ref{p_0inpreimagelemma}, Lemma \ref{bigcase3lemma} as well as Remark \ref{bigcase3lemmapatch}. The only
case not excluded is the one described in Lemma
\ref{p_0inpreimagelemma}, where $G''\cong \mathrm D_4$.  This proves
the proposition.
\end{proof}
We now treat Case (\ref{A4inH}) from Corollary \ref{transitiveins4} where $H$ is equal to $\mathrm{A}_4$.
\begin{lemma} \label{A_4notpossible}
We use notations from Setup \ref{setupcontinued} and Lemma \ref{galois_permutation}. If $T=\gdrbhmath(A)$ is a two-dimensional (connected) algebraic torus, then the image $H$ of $G''$ in $\mathrm{S}_4$ cannot be $\mathrm{A}_4$.  
\end{lemma}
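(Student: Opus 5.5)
The plan is to avoid the Klein-four analysis of Proposition \ref{noklein4} entirely and use only a single element of $G''$ lying over a $3$-cycle in $\mathrm{A}_4$, following Strategy \ref{strategylongcomp}. Since $H=\mathrm{A}_4$ contains a $3$-cycle, after relabelling the basis $\{f_{\sigma_1},\dots,f_{\sigma_4}\}$ we may assume $(123)\in H$. Choose a signed permutation matrix $M\in G''$ with $F(M)=(123)$. Then
\[
M f_1=\varepsilon_1 f_2,\quad M f_2=\varepsilon_2 f_3,\quad M f_3=\varepsilon_3 f_1,\quad M f_4=\varepsilon_4 f_4,
\]
with all $\varepsilon_i\in\{\pm1\}$. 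Put $s=\varepsilon_1\varepsilon_2\varepsilon_3$. On $\mathrm{span}(f_1,f_2,f_3)$ we have $M^3=s$, so $M$ is semisimple with eigenvalues $s,\,s\omega,\,s\omega^2$, where $\omega$ is a primitive cube root of unity. On $f_4$ it has eigenvalue $\varepsilon_4$.

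Now $\iota(\Delta)_{\mathbb Q}$ is a $2$-dimensional $M$-stable $\mathbb Q$-subspace, so over $\qbar$ it is spanned by eigenvectors of $M$ and is $\absgalois$-stable. Galois swaps $s\omega$ and $s\omega^2$, and these are the only non-rational eigenvalues. Hence exactly one of the following holds, and I would treat the two cases in turn.

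\begin{enumerate}
\item $\iota(\Delta)_{\mathbb Q}$ is the sum $W$ of the $s\omega$- and $s\omega^2$-eigenspaces.
\item $\iota(\Delta)_{\mathbb Q}$ is spanned by one eigenvector for the eigenvalue $s$ and one for $\varepsilon_4$.
\end{enumerate}

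In the second case one must check that each rational eigenvalue gives a single line. If $s=\varepsilon_4$, the corresponding eigenspace is $2$-dimensional, so this case would instead give the whole eigenspace $\mathrm{span}\bigl((1,\pm1,\pm1,0)^{\mathrm t},f_4\bigr)$. In every sub-case $f_4\in\iota(\Delta)_{\mathbb Q}$, hence $f_4\in\iota(\Delta)$ by Lemma \ref{integralelement}. But $\iota(\Delta)$ is $G''$-stable, and by Lemma \ref{galois_permutation} $H$ is transitive with $G''$ acting by signed permutations. So $\pm f_i\in\iota(\Delta)$ for every $i$, forcing $\iota(\Delta)$ to have rank $4$. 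This contradicts $\dim T=2$; alternatively, $f_1+f_2\in\iota(\Delta)$ contradicts Corollary \ref{divisortest} directly.

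In the first case, $W$ is the $2$-dimensional subspace of $\{x_4=0\}$ cut out by the linear form defining the complement of the $s$-eigenline, namely $x_1+\varepsilon_1 x_2+\varepsilon_1\varepsilon_2 x_3=0$ up to sign conventions. Intersecting with $\{x_3=0\}$ yields a nonzero rational vector of the form $f_1\pm f_2$ in $W$. This vector is primitive, so by Lemma \ref{integralelement} and saturation of $\iota(\Delta)$ (which holds since $T$ is connected), $f_1\pm f_2\in\iota(\Delta)$. That contradicts Corollary \ref{divisortest}.

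The main point requiring care is the eigenvector bookkeeping in the second case, in particular the coincidence $s=\varepsilon_4$, where the rational eigenspace is $2$-dimensional and could itself be $\iota(\Delta)_{\mathbb Q}$. As shown above, however, any such space contains $f_4$, so the same transitivity argument disposes of it. I would also verify explicitly that the sign choices $\varepsilon_i$ only change signs in the forbidden element $\pm f_i\pm f_j$, and never its support.
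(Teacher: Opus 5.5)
Your proof is correct. It starts as the paper's does: you take $M\in F^{-1}((123))$ and get the same dichotomy. Either $\iota(\Delta)_{\mathbb Q}$ is spanned by the $M$-eigenvectors with rational eigenvalues, or it is the two-dimensional space $W$ carrying the non-rational eigenvalues. Where you differ is in how the two branches are killed.

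The paper handles both branches with one extra element $N'\in F^{-1}((14)(23))$, which uses $V_4\subset\mathrm{A}_4$.
\begin{itemize}
\item In the rational branch, $N'$ sends $f_4$ to $\pm f_1\notin\mathrm{span}(f_4,v_2)$.
\item In the $W$ branch, $N'$ sends the non-real eigenvectors out of $\{x_4=0\}$. Those eigenvectors have zero fourth coordinate but nonzero first coordinate.
\end{itemize}

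You handle the branches separately.
\begin{itemize}
\item For the rational branch, you only need transitivity of $H$ to move $f_4\in\iota(\Delta)$ to every $\pm f_i$. This is essentially the paper's idea.
\item For the $W$ branch, you use the divisor test alone. Since $M$ is orthogonal, $W$ is the orthogonal complement, inside $\{x_4=0\}$, of the $s$-eigenvector $(1,s\varepsilon_1,\varepsilon_1\varepsilon_2,0)^{\mathrm t}$. So $W$ is cut out by $x_1+s\varepsilon_1x_2+\varepsilon_1\varepsilon_2x_3=0$; your form omitted the factor $s$, which is harmless. Setting $x_3=0$ gives the primitive vector $f_1-s\varepsilon_1f_2$, and Corollary \ref{divisortest} forbids it.
\end{itemize}

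What your route buys: it uses nothing about $H$ beyond containing a $3$-cycle, since transitivity is automatic by Lemma \ref{galois_permutation}. So it would also rule out $H=\mathrm{S}_4$ without Lemma \ref{nolength4cycle}. You also make explicit the coincidence $s=\varepsilon_4$, which the paper's dichotomy covers only implicitly.

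What the paper's route buys: one element disposes of both branches uniformly, and the divisor criterion is not needed at this step.
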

\begin{proof}

Suppose $H=\mathrm{A}_4$. 
Then $H$ contains all 3-cycles in $\mathrm{S}_4$ and the Klein four-group. Choose an element $M\in F^{-1}((123))\subset G''$. In the
basis fixed in Setup \ref{torusinvsetup}, $M$ has the form $$\scalebox{0.8}{$\begin{pmatrix} 0 & 0 & * & 0\\ * & 0 & 0 & 0\\0 & * & 0 & 0\\0 & 0 & 0 & * \end{pmatrix}$}$$ with $*=\pm 1$. Note that $M$ has the eigenspace decomposition of the following form: $v_1=(0,0,0,1)^{\mathrm{t}}$ with eigenvalue $\pm 1$; $v_2=(a,b,c,0)^{\mathrm{t}}$ with eigenvalue $\pm 1$ where $a,b,c=\pm 1$; $v_3=(a',b',c',0)^{\mathrm{t}}$; $v_4=(a'',b'',c'',0)^{\mathrm{t}}$ with eigenvalues either equal to the roots of the irreducible polynomial $x^2-x+1$ or $x^2+x+1$. Hence the $G''$-stable two-dimensional vector space $\iota(\Delta)_{\mathbb{Q}}$ is either spanned by $v_1$ and $v_2$ or after extending scalars to $\qbar$, spanned by $v_3$ and $v_4$. However $H=\mathrm{A}_4$ contains $V_4$. If $\iota(\Delta)_{\mathbb{Q}}$ were spanned by $v_1$ and $v_2$, then for any $N'\in F^{-1}((14)(23))$, $N'\circ v_1$ is of the form $(\pm 1,0,0,0)$, which does not lie in $\mathrm{span}_{\mathbb{Q}}(\{v_1,v_2\})$. If $\iota(\Delta)_{\qbar}$ were spanned by $v_3$ and $v_4$, then $N'\circ v_3$ has a non-zero entry in the last coordinate, and clearly does not lie in $\iota(\Delta)_{\qbar}$.  Both possibilities lead to a contradiction.
\end{proof}
Therefore the three cases from Corollary \ref{transitiveins4} have been covered by Lemma \ref{nolength4cycle}, Proposition \ref{noklein4} and Lemma \ref{A_4notpossible}.
\begin{keyremark}\label{keyremarktotakeaway}
Let $A$ be a simple CM abelian fourfold. Suppose $\gdrbhmath(A)$ is a two-dimensional subtorus of $\mathrm{U}_E$, by the combined efforts of Lemma \ref{nolength4cycle}, Proposition \ref{noklein4} and Lemma \ref{A_4notpossible}, it suffices to consider the scenario where $G'\cong G''\cong \mathrm{D}_4$, i.e., the condition stated in Proposition \ref{noklein4}.   
\end{keyremark}


\subsection{Non-simplicity of Certain CM Abelian Fourfolds}
\label{cmnonsimplicity}
The goal of this section is to handle the scenario mentioned in Key Remark \ref{keyremarktotakeaway} by proving the following lemma. 
\begin{lemma}\label{nosuchsimpleAV}
Let $E/\mathbb{Q}$ be a CM-field of degree $8$, and let $L$ be its Galois
closure in $\qbar$.  Suppose the group homomorphism
\begin{equation}\label{formulanonsimplicity}
 f:\operatorname{Gal}(L/\mathbb Q)\longrightarrow
 \operatorname{Aut}_{\mathbb Z}(X^*(\resgmmath))
\end{equation}
coming from $L/E/\mathbb Q$ has image isomorphic to the dihedral group\[
\mathrm D_4=\langle a,x\mid a^4=x^2=1,\ axa=x\rangle .
\]
Then there does not exist a simple CM abelian fourfold with endomorphism field $E$.
\end{lemma}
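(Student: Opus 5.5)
The plan is to show that the hypothesis forces $E$ to be Galois over $\mathbb{Q}$ with group $\mathrm D_4$, and then to check that every CM-type on such a field is induced from a proper CM-subfield. The tool is the standard Shimura–Taniyama criterion: a CM abelian variety with CM-type $(E,\Phi)$ is simple, with endomorphism field $E$, if and only if $\Phi$ is primitive.

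\emph{Step 1: $E/\mathbb{Q}$ is Galois with group $\mathrm D_4$.} The group $\operatorname{Gal}(L/\mathbb Q)$ acts transitively on the $8$ elements of $\operatorname{Hom}(E,L)$. The homomorphism $f$ in \eqref{formulanonsimplicity} records exactly this permutation action, so its kernel is the subgroup fixing every embedding, namely $\operatorname{Gal}(L/L)=1$. Hence $f$ is injective and $|\operatorname{Gal}(L/\mathbb Q)|=|\mathrm D_4|=8=[E:\mathbb Q]$. It follows that $L=E$, that $E$ is Galois with $G:=\operatorname{Gal}(E/\mathbb Q)\cong \mathrm D_4$, and that the action on embeddings is the regular one. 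Complex conjugation $c$ is a nontrivial central element of $G$ by \cite[Proposition 1.1.4]{milne2006complex}. The centre of $\mathrm D_4$ is $\{1,a^2\}$, so $c=a^2$.

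\emph{Step 2: translate simplicity into a combinatorial condition.} Identify $\operatorname{Hom}(E,\qbar)$ with $G$. A CM-type is then a subset $\Phi\subset G$ with $G=\Phi\sqcup \Phi c$. For a Galois CM-field, $\Phi$ is primitive exactly when the stabilizer $\{h\in G:\Phi h=\Phi\}$ (right multiplication) is trivial. If this stabilizer $H'$ is nontrivial, then $\Phi$ is induced from the CM-subfield $E^{H'}$, and every abelian variety of type $(E,\Phi)$ is isogenous to a power of one of smaller dimension. Its endomorphism algebra is then not the field $E$. So it suffices to show that every CM-type $\Phi$ of $G$ has a nontrivial right stabilizer.

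\emph{Step 3: case check, which I expect to be the main work.} The $\langle c\rangle$-cosets are
\[
\{1,a^2\},\ \{a,a^3\},\ \{x,a^2x\},\ \{ax,a^3x\},
\]
so there are $16$ CM-types. Left translation by $g\in G$ and the automorphisms of $\mathrm D_4$ fixing $a^2$ preserve the property of having a nontrivial stabilizer. Using them, I would reduce to types containing $1$, then further, and verify each remaining type directly by testing right multiplication by the order-two elements $x, ax, a^2x, a^3x$ and by $a^2$. For example, $\Phi=\{1,a,x,ax\}$ satisfies $\Phi x=\Phi$, and $\Phi=\{1,a,x,a^3x\}$ is handled similarly by some reflection.

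The point to organize is why this always succeeds: the image of $\Phi$ in $G/\langle c\rangle\cong V_4$ is all of $V_4$, so it cannot distinguish elements, and the check reduces to the finitely many sign patterns. If the orbit reduction turns out messy, I would fall back to listing all $8$ types containing $1$ explicitly, each being a one-line verification.

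Once this is done, no CM-type on $E$ is primitive. Hence there is no simple CM abelian fourfold with endomorphism field $E$, which proves the lemma.
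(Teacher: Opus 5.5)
Your proposal is correct. After the common first step (the hypothesis forces $E=L$ to be Galois with group $\mathrm{D}_4$ and $c=a^2$), it takes a different route from the paper.

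\textbf{The paper's route.} The paper never uses primitivity of CM types. It applies its own Lemma~\ref{lm2011}: every quartic CM subfield of the endomorphism algebra of a simple type IV fourfold acts with multiplicities $\{2,0,1,1\}$, which is proved by counting $(1,1)$-classes. It uses only two subfields, $K_1=E^{\langle x\rangle}$ and $K_2=E^{\langle ax\rangle}$. The $K_1$-constraint cuts the eight CM types containing $1$ down to four, and each of these has $K_2$-multiplicities $(1,1,1,1)$, $(2,0,0,2)$ or $(2,2,0,0)$.

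\textbf{Your route.} You invoke Shimura--Taniyama and reduce to the purely group-theoretic claim that $\mathrm{D}_4$ with $c=a^2$ carries no primitive CM type. That claim is true. Your Step~3 can be finished without a case list:
\begin{enumerate}
\item For a reflection $r$, the CM types with $\Phi r=\Phi$ are exactly the unions of left $\langle r\rangle$-cosets. Since $c\notin\langle r\rangle$, the four cosets form two $c$-pairs, so there are exactly $4$ such types.
\item No CM type is fixed by two distinct reflections, because they generate a subgroup containing $a^2=c$.
\item Hence the four reflections account for all $4\cdot 4=16$ CM types.
\end{enumerate}
This also settles that the subfield $E^{H'}$ you induce from is a CM field, since $c\notin H'$.

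\textbf{What each approach buys.} Your route is more conceptual and independent of Hodge-class counting. It also proves more: every CM type on $E$ is induced from one of the four quartic CM subfields $E^{\langle r\rangle}$.

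The paper's route stays inside its own toolkit, since Lemma~\ref{lm2011} is reused in Section~\ref{chaptercentre}. Its constraint also rules out the pattern $(1,1,1,1)$, which is not an imprimitivity condition, and that is why two subfields suffice there. With primitivity alone two would not suffice. For example, $\{1,a,x,a^3x\}$ has right stabilizer $\{1,a^3x\}$, so neither $K_1$ nor $K_2$ witnesses its imprimitivity; the paper excludes it only through its $K_2$-multiplicities $(1,1,1,1)$.
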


We shall use the following standard multiplicity constraint from \cite{shimura1963analytic}.

\begin{lemma}\label{lm2011}
Let $A$ be a simple abelian fourfold of type IV, and let
$K\hookrightarrow \operatorname{End}^{\circ}(A)$ be a quartic CM-field inside its endomorphism algebra.
Then the (multi)set of multiplicities (see Definition \ref{multiplicitydefn}) associated with $\mathrm{Hom}(K,\mathbb{C})$ is $\{2,0,1,1\}$.
\end{lemma}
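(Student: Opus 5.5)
The plan is to reduce the statement to a count of eigenspace dimensions, and then to exclude the two unwanted multiplicity patterns using the simplicity of $A$. Put $V:=\mathrm{H}^1(A,\mathbb{Q})$, so $\dim V=8$ and $[K:\mathbb{Q}]=4$. The decomposition \eqref{eigendecomp} then gives four eigenspaces $V_\sigma$, each of dimension $n=2$. Complex conjugation interchanges $V_\sigma^{1,0}$ and $V_{\overline{\sigma}}^{0,1}$, so $m_\sigma+m_{\overline{\sigma}}=2$ for every $\sigma$. Write $\mathrm{Hom}(K,\mathbb{C})=\{\sigma_1,\overline{\sigma_1},\sigma_2,\overline{\sigma_2}\}$. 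Each conjugate pair then has multiplicities $(0,2)$, $(1,1)$ or $(2,0)$. Up to relabelling this leaves exactly three patterns: $\{2,0,2,0\}$, $\{1,1,1,1\}$ and $\{2,0,1,1\}$. The proof consists in ruling out the first two.

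\emph{The pattern $\{2,0,2,0\}$.} Every $V_\sigma\otimes_{\qbar}\mathbb{C}$ is then purely of type $(1,0)$ or purely of type $(0,1)$. Hence the Deligne torus acts on each $V_\sigma$ through a scalar character. I would conclude that $\mathrm{MT}(A)$ lies in the torus $\mathrm{Res}_{K/\mathbb{Q}}\mathbb{G}_{\mathrm m}$ acting through $K\subset\mathrm{End}(V)$. Consequently every $K$-linear endomorphism of $V$ is a Hodge endomorphism. Since $V$ is free of rank $2$ over $K$, this gives $M_2(K)\subset\mathrm{End}_{\mathrm{MT}(A)}(V)=\mathrm{End}^{\circ}(A)$. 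This is impossible, because the endomorphism algebra of a simple abelian variety is a division algebra. Equivalently, $A$ is isogenous to $B^2$, where $B$ is the CM surface of the CM type $(K,\{\sigma_1,\sigma_2\})$.

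\emph{The pattern $\{1,1,1,1\}$.} This is the step I expect to be the main obstacle, because the argument cannot be reduced to a torus. Here I would invoke Shimura's analysis of the moduli of abelian varieties with $K$-action \cite{shimura1963analytic}. When the signature $(r_\nu,s_\nu)$ satisfies $r_\nu=s_\nu$ at every place, every member of the family has an endomorphism algebra strictly larger than $K$. More precisely, $\mathrm{End}^{\circ}(A)$ contains $Q\otimes_{K_0}K$, where $Q$ is a quaternion algebra over the totally real subfield $K_0$. Concretely, the Hodge group lies in the unitary group of a $K$-hermitian form of signature $(1,1)$ at both real places of $K_0$. That unitary group is isogenous to $\mathrm{U}_K(1)$ times a $K_0$-form of $\mathrm{SL}_2$, and its commutant in $\mathrm{End}(V)$ contains a quaternion algebra over $K$ (or over a field containing $K$).

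I then plan to conclude with Albert's dimension constraints for type IV. If $\mathrm{End}^{\circ}(A)$ is a division algebra of index $d$ over a CM centre $F$ with $[F:\mathbb{Q}]=2e_0$, then $e_0d^2$ must divide $g=4$. A central simple algebra of dimension $4$ over a centre containing $K$ forces $e_0\geq 2$ and $d=2$, so $e_0d^2\geq 8>4$, a contradiction. If instead $Q\otimes_{K_0}K$ splits, it is $M_2(K)$, which already contradicts simplicity as in the previous case. The delicate point is to extract this quaternion structure cleanly from the signature $(1,1)$ condition. If a direct commutant computation becomes messy, I will cite Shimura's theorem for it.

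With both patterns excluded, the only remaining multiset of multiplicities is $\{2,0,1,1\}$.
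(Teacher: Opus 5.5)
Your argument for the pattern $\{2,0,2,0\}$ is correct, and it differs from the paper's. The paper first uses the Albert classification to know that $\mathrm{End}^{\circ}(A)$ is $K$ or an octic CM field. It then descends $V_\sigma\otimes V_{\overline{\sigma}}\oplus V_\tau\otimes V_{\overline{\tau}}$ to an $8$-dimensional space of rational $(1,1)$-classes and compares this with the Picard number ($2$ or $4$). Your observation $\mathrm{MT}(A)\subset\mathrm{Res}_{K/\mathbb{Q}}\mathbb{G}_{\mathrm m}$, hence $M_2(K)\subset\mathrm{End}^{\circ}(A)$, needs no classification at all.

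The pattern $\{1,1,1,1\}$, however, has a genuine gap.

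\emph{The unitary group gives nothing.} The Hodge group lies in $\mathrm{U}_K(V,\phi)$ for every signature, so this carries no information. Over $\qbar$ this group is $\mathrm{GL}(V_\sigma)\times\mathrm{GL}(V_\tau)$. The four representations $V_\sigma$, $V_{\overline{\sigma}}\cong V_\sigma^{*}$, $V_\tau$, $V_{\overline{\tau}}\cong V_\tau^{*}$ are pairwise non-isomorphic, since the centre acts through $z$ versus $z^{-1}$. Hence its commutant in $\mathrm{End}(V)$ is only $K$, not a quaternion algebra.

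\emph{Your structural claim is false.} $Q\otimes_{K_0}K$ is $16$-dimensional over $\mathbb{Q}$. To act faithfully on the $8$-dimensional $V$ it would have to be $M_2(K)$, so every member of the family would be non-simple. That is wrong: when the relevant quaternion algebra over $K_0$ is a division algebra, the generic member is a simple fourfold of type II whose endomorphism algebra is that $8$-dimensional quaternion algebra. Your final Albert step is built on the wrong algebra. It also assumes without justification that the centre of $\mathrm{End}^{\circ}(A)$ contains $K$.

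\emph{The missing input} is the one the paper uses. By \eqref{formulaweilhodgenum}, multiplicities $\{1,1,1,1\}$ make $\wedge^2_KV$ purely of type $(1,1)$, so it is a $4$-dimensional space of rational Hodge classes. The paper then concludes in one of two ways. If $\mathrm{End}^{\circ}(A)=K$, it uses a Picard-number count (the Picard number is $2$). If $\mathrm{End}^{\circ}(A)$ is an octic CM field, it compares eigenvectors: $v_{i_1}\wedge v_{\overline{i_2}},\dots$ against $v_{i_j}\wedge v_{\overline{i_j}}$.

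To stay closer to your plan, argue as follows.
\begin{enumerate}
\item The Hodge group fixes these Weil classes, so it acts trivially on each $\wedge^2V_\iota$. Hence it lies in $\mathrm{SU}_K(V,\phi)$, which over $\qbar$ is $\mathrm{SL}(V_\sigma)\times\mathrm{SL}(V_\tau)$.
\item Now $V_{\overline{\sigma}}\cong V_\sigma^{*}\cong V_\sigma$ as $\mathrm{SL}(V_\sigma)$-representations, and likewise for $\tau$. So the commutant is an $8$-dimensional quaternion algebra $Q$ over $K_0$ (not over $K$) containing $K$.
\item Then $Q\subset\mathrm{End}^{\circ}(A)$ is non-commutative. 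The constraint $e_0d^2\mid 4$ forces $d=2$ and $e_0=1$, so by dimension $\mathrm{End}^{\circ}(A)=Q$.
\item The centre of $Q$ is $K_0$, which is totally real, contradicting type IV.
\end{enumerate}
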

\begin{proof}
Denote $\mathrm{H}^1(A,\mathbb{Q})$ by $V$. Denote the four embeddings of $K$ into $\mathbb{C}$ by $\sigma,\overline{\sigma},\tau,\overline{\tau}$. Then the eigenspace decomposition induced by $K\xhookrightarrow{}\mathrm{End}(V)$ is $V_{\mathbb{C}}=V_{\sigma}\oplus V_{\overline{\sigma}}\oplus V_{\tau}\oplus V_{\overline{\tau}}$.

By the Albert classification analysis in \cite{mz-4-folds}, the endomorphism algebra of $A$ either is $K$ or is a CM-field of degree 8. Suppose the (multi)set of multiplicities associated with $\mathrm{Hom}(K,\mathbb{C})$ is $\{1,1,1,1\}$. Then by formula \eqref{formulaweilhodgenum}, the Weil structure satisfies $\wedge^2_{K}V \otimes_{\mathbb{Q}} \mathbb{C} \subseteq \mathrm{H}^{1,1}(A,\mathbb{C}).$ Hence every element in the 4-dimensional vector space $\wedge^2_{K}V$ would be a $(1,1)$-Hodge class. First assume $\mathrm{End}^{\circ}(A)$ equals the quartic CM-field $K$, then by the analysis in \cite[561]{mz-4-folds}, $\mathrm{dim}(\mathrm{H}^2(A,\mathbb{Q})\cap\mathrm{H}^{1,1}(A))=\frac{\mathrm{deg}(K/\mathbb{Q})}{2}=2$. Hence this gives a contradiction. Next assume $\mathrm{End}^{\circ}(A)=E$ is a degree 8 CM-field, i.e., $A$ is a simple CM abelian variety. Then because $K\xhookrightarrow{}E$ is a CM-subfield acting with multiplicities $\{1,1,1,1\}$, we can label elements in $\mathrm{Hom}(E,\mathbb{C})$ such that 

\begin{itemize}
    \item The CM type of $A$ is
$
\Phi=\{i_1,i_2,i_3,i_4\},
$
so that
$
\mathrm{Hom}(E,\mathbb{C})
=
\Phi\coprod\overline{\Phi}.
$
    \item The embeddings of $K$ satisfy $\sigma=i_1|_{K}=\overline{i_2}|_{K}; \tau=i_3|_{K}=\overline{i_4}|_{K}$.
\end{itemize} For each element $i\in\mathrm{Hom}(E,\mathbb{C})$, denote by $V_{i}$ the corresponding eigenspace with respect to the $E$-action and also fix a basis $v_{i}$ for each $V_{i}$. The $\mathbb{C}$-linear extension of $(1,1)$-Hodge classes in $\mathrm{H}^2(A,\mathbb{Q})\otimes\mathbb{C}$ is then spanned by $\{v_{i_1}\wedge v_{\overline{i_1}},v_{i_2}\wedge v_{\overline{i_2}},v_{i_3}\wedge v_{\overline{i_3}},v_{i_4}\wedge v_{\overline{i_4}}\}$. The construction of the Weil structures gives $\wedge^2_{K}V \otimes_{\mathbb{Q}} \mathbb{C}=\mathrm{span}_{\mathbb{C}}(\{v_{i_1}\wedge v_{\overline{i_2}},v_{i_2}\wedge v_{\overline{i_1}},v_{i_3}\wedge v_{\overline{i_4}},v_{i_4}\wedge v_{\overline{i_3}}\})$, which clearly has trivial intersection with the complexification of the space of rational Hodge classes. This poses a contradiction.

Next suppose the multiplicities associated with $\mathrm{Hom}(K,\mathbb{C})$ is $\{2,0,2,0\}$. Without loss of generality, we may assume $V_{\sigma}\subset\mathrm{H}^{1,0}$ and $V_{\tau}\subset\mathrm{H}^{1,0}$. Then $V_{\sigmabar}\subset\mathrm{H}^{0,1}$ and $V_{\overline{\tau}}\subset\mathrm{H}^{0,1}$. Consider the subspace $$W=V_{\sigma}\otimes V_{\overline{\sigma}} \oplus V_{\tau}\otimes V_{\overline{\tau}} \subset \wedge^2V\otimes_{\mathbb{Q}}\qbar=\mathrm{H}^2(A,\mathbb{Q})\otimes_{\mathbb{Q}}\qbar.$$ Because $K$ is a CM-field, one can see that $W$ is stable under the action of $\absgalois$, hence descends to an 8-dimensional $\mathbb{Q}$-vector subspace $W'$ of $\wedge^2V=\mathrm{H}^2(A,\mathbb{Q})$. Moreover, every element in $W'$ is in fact a $(1,1)$-Hodge class by the assumption that $V_{\sigma}\subset\mathrm{H}^{1,0}$ and $V_{\tau}\subset\mathrm{H}^{1,0}$. However, if $A$ is a simple CM-abelian variety, the space of $(1,1)$-Hodge classes in $\mathrm{H}^2(A,\mathbb{Q})$ is 4-dimensional and if $\mathrm{End}^{\circ}(A)=K$, it is 2-dimensional. In both scenarios, this poses a contradiction.

\end{proof}

\begin{proof}[Proof of Lemma \ref{nosuchsimpleAV}]
The hypothesis already forces $E/\mathbb Q$ to be Galois. Indeed, formula (\ref{formulanonsimplicity})  is induced by the faithful $\mathrm{Gal}(L/\mathbb{Q})$-action on
$\operatorname{Hom}(E,L)$; hence
$|\operatorname{Gal}(L/\mathbb Q)|=|\mathrm{D}_4|=8=[E:\mathbb Q]$, so $E\cong L$ is a Galois extension.  Thus
$$G:=\operatorname{Gal}(E/\mathbb Q)\cong \mathrm D_4.$$
The only order 2 element in $\mathrm{D}_4$ that commutes with every element is $a^2$ and thus is the complex conjugation. Put
\[
G_1=\{1,x\}, G_2=\{1,ax\};\qquad
K_1=E^{G_1}, K_2=E^{G_2}.
\]
Then $K_1$ and $K_2$ are quartic CM-subfields of $E$.

Assume that a simple CM abelian fourfold $A$ has
$\operatorname{End}^{\circ}(A)=E$.  Fix an embedding
$i_0:E\hookrightarrow L$. We then write $V_g$ for the eigen-subspace of
$\mathrm H^1(A,\mathbb Q)\otimes_{\mathbb Q}L$ attached to
$i_0\circ g$.  Thus
\[
\mathrm H^1(A,\mathbb Q)\otimes L
   =\bigoplus_{g\in G} V_g .
\]
Restriction from $E$ to $K_1$ gives eigenspace decomposition with respect to $K_1\xhookrightarrow{}\mathrm{End}(\betti)$:
\begin{equation}\label{eigenk1}
\mathrm{H}^1(A,\mathbb{Q}) \otimes_{\mathbb{Q}} L=(V_{1} \oplus V_{x}) \oplus(V_{a} \oplus V_{ax}) \oplus (V_{a^2} \oplus V_{a^2 x}) \oplus (V_{a^3} \oplus V_{a^3 x})
\end{equation} 
and restriction from $E$ to $K_2$ gives eigenspace decomposition with respect to $K_2\xhookrightarrow{}\mathrm{End}(\betti)$:
\begin{equation}\label{eigenk2}
\mathrm{H}^1(A,\mathbb{Q}) \otimes_{\mathbb{Q}} L=(V_{1} \oplus V_{ax}) \oplus( V_{a} \oplus V_{a^2 x}) \oplus (V_{a^2} \oplus V_{a^3 x}) \oplus (V_{a^3} \oplus V_{x}).
\end{equation} 

Let $\Phi$ denote one possible CM-type of $A$. Then by definition we have
$\mathrm H^{1,0}(A)=\bigoplus_{g\in\Phi} V_g$ and $\mathrm{Hom}(E,L)=\Phi\coprod\overline{\Phi}$. Since the complex
conjugation is $a^2$, $\Phi$ contains exactly one element from each pair
$\{g,a^2g\}$ and we may assume $\mathrm{id}\in\Phi$. Let $m_{K_1}(\Phi)$ and $m_{K_2}(\Phi)$ denote the multiplicities read
from \eqref{eigenk1} and \eqref{eigenk2} respectively. In Table \ref{cmtypetable}, we list all possible $\Phi$ using the multiplicity constraint from  Lemma \ref{lm2011} applied to
$K_1$.
\begin{table}[ht]
    \centering
    \begin{tabular}{|c|c|c|}
    \hline
    The possible CM-type $\Phi$  & $m_{K_1}(\Phi)$ & $m_{K_2}(\Phi)$ \\
    \hline
    $\{1,x,a,a^3x\}$ & $(2,1,0,1)$ & $(1,1,1,1)$\\
    \hline
    $\{1,x,a^3,ax\}$ & $(2,1,0,1)$ & $(2,0,0,2)$\\
    \hline
    $\{1,a^2x,a,ax\}$ & $(1,2,1,0)$ & $(2,2,0,0)$\\
    \hline
    $\{1,a^2x,a^3,a^3x\}$ &$(1,0,1,2)$& $(1,1,1,1)$\\
    \hline
    \end{tabular}
    \caption{Multiplicities for $K_1$ and $K_2$}
    \label{cmtypetable}
\end{table}
However, according to the table, for every possible CM-type $\Phi$, the multiplicities $m_{K_2}(\Phi)$ associated to $K_2\xhookrightarrow{}\mathrm{End}(\betti)$ is not the multiset
$\{2,0,1,1\}$ required by Lemma \ref{lm2011}. This gives the desired contradiction.
\end{proof}


\subsection{Proof of Theorem \ref{mainthmcm}}
In this section we prove that for a simple CM abelian fourfold $A$, its de Rham-Betti group equals its Mumford-Tate group (see Theorem \ref{mainthmcm}). 
\begin{theorem}[\cite{mz-4-folds}, Theorem 7.6]\label{mzcmthm}
Let $A$ be a simple CM abelian fourfold with complex multiplication by $E$. If there is an imaginary quadratic field $k\subset E$ acting with multiplicities $(2,2)$, then $\mathrm{Hdg}(A)=\mathrm{SU}_{E/k}$. Otherwise, $\mathrm{Hdg}(A)=\mathrm{U}_{E}$.
\end{theorem}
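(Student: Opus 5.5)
The plan is to work entirely on the Hodge side with three ingredients: a dimension bound for $\mathrm{Hdg}(A)$, the classification of codimension one subtori of $\mathrm{U}_E$ in Lemma \ref{keylemmamz}, and the Hodge-type formula \eqref{formulaweilhodgenum} for Weil structures. Write $V=\betti$, so $\mathrm{Hdg}(A)$ is a connected $\mathbb{Q}$-subtorus of $\mathrm{U}_E$, and $\dim \mathrm{U}_E=4$.

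\textbf{Step 1 (dimension bound).} Apply Ribet's inequality (\cite[Section 3.5]{ribet1980division}), recalled in Remark \ref{ribetinequalitu}. For $d=4$ it gives $\dim\mathrm{MT}(A)\geq 2+\log_2 4=4$. Since $\mathrm{MT}(A)$ is isogenous to $\gmmath\times\mathrm{Hdg}(A)$, this yields $\dim\mathrm{Hdg}(A)\geq 3$. So either $\mathrm{Hdg}(A)=\mathrm{U}_E$, or $\mathrm{Hdg}(A)$ is a connected codimension one subtorus. In the latter case Lemma \ref{keylemmamz} gives $\mathrm{Hdg}(A)=\mathrm{SU}_{E/k}$ for some imaginary quadratic subfield $k\subset E$.

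\textbf{Step 2 (the key equivalence).} For an imaginary quadratic $k\subset E$, I will show that $\mathrm{Hdg}(A)\subset\mathrm{SU}_{E/k}$ if and only if $k$ acts with multiplicities $(2,2)$.

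The Weil structure $\wedge^4_k V$ has two eigenlines $\wedge^4 V_\sigma$ and $\wedge^4 V_{\overline\sigma}$. By \eqref{formulaweilhodgenum}, these have Hodge types $(m,4-m)$ and $(4-m,m)$. So $\wedge^4_kV$ consists of Hodge classes, i.e.\ is fixed by $\mathrm{Hdg}(A)$, exactly when $m=2$.

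On the other hand, an element $t$ of $\mathrm{U}_E$ acts on $\wedge^4V_\sigma$ by $\det_k$ of multiplication by $t$, viewed through the embedding $\sigma$, and on $\wedge^4 V_{\overline\sigma}$ by the conjugate of that. Hence the subgroup of $\mathrm{U}_E$ fixing $\wedge^4_kV$ pointwise is $\mathrm{U}_E\cap\mathrm{SL}_k(V)=\mathrm{SU}_{E/k}$. This gives the equivalence.

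\textbf{Step 3 (conclusion).} Suppose some $k$ acts with multiplicities $(2,2)$. By Step 2, $\mathrm{Hdg}(A)\subset\mathrm{SU}_{E/k}$. By Step 1, $\mathrm{Hdg}(A)$ has dimension at least $3=\dim\mathrm{SU}_{E/k}$, and both groups are connected, so they are equal. Here $\mathrm{SU}_{E/k}$ is taken to be connected, as in Lemma \ref{keylemmamz}.

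Otherwise, no such $k$ exists. Then by Step 2, $\mathrm{Hdg}(A)$ is not contained in any $\mathrm{SU}_{E/k}$. Step 1 and Lemma \ref{keylemmamz} then leave only $\mathrm{Hdg}(A)=\mathrm{U}_E$.

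\textbf{Main obstacle.} The delicate point is Step 2. One must check that $\mathrm{SU}_{E/k}$ is exactly the pointwise stabilizer of $\wedge^4_kV$ inside $\mathrm{U}_E$, without any finite-index or connectedness discrepancy. I would verify this at the level of characters: $\mathrm{SU}_{E/k}$ is cut out in $X^*(\mathrm{U}_E)$ by the single character $\sum_{i} f_{\sigma_i}$, taken with signs according to which lift of $\sigma$ each $\sigma_i$ restricts to on $k$. This is precisely the character by which $\mathrm{U}_E$ acts on $\wedge^4V_\sigma$, and it is primitive in $X^*(\mathrm{U}_E)$. Primitivity means the quotient is torsion free, so Lemma \ref{integralelement} applies and the kernel is connected.
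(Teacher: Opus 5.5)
Your proposal is correct and follows essentially the same route as the Moonen–Zarhin argument that the paper cites, and which the paper replays in the proof of Theorem \ref{mainthmcm}, where Proposition \ref{biggerthan2} substitutes for Ribet's inequality: the bound $\dim\mathrm{Hdg}(A)\geq 3$, then Key Lemma \ref{keylemmamz}, then the Hodge types of the Weil structure $\wedge^4_kV$. Your check that the character cutting out $\mathrm{SU}_{E/k}$ has all coefficients $\pm1$ is also correct; since that character is primitive, its kernel is connected, which settles the one point the citation leaves implicit.
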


\begin{theorem} \label{mainthmcm}
    For any simple CM abelian fourfold $A$, we have that $\gdrbhmath(A)=\mathrm{Hdg}(A)$. Then by virtue of Theorem \ref{ji_1thmsummarize}, we obtain $\mathrm{MT}(A)=\gdrbmath(A)$.
\end{theorem}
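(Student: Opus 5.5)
We will argue by dimension of the torus $T:=\gdrbhmath(A)\subset\mathrm{Hdg}(A)\subset\mathrm{U}_{E}$, where $E$ is the degree-$8$ CM endomorphism field. Since $A$ is simple of dimension $d=4\geq 2$, Corollary \ref{biggerthan-one} gives $\dim T\geq 2$. Since $\dim \mathrm{U}_E=4$, the cases are $\dim T\in\{2,3,4\}$. If $\dim T=4$ then $T=\mathrm{U}_E\supset\mathrm{Hdg}(A)\supset T$, and we are done.

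To exclude $\dim T=2$, note that $\iota(\Delta)$ is then a saturated $G''$-stable rank-two submodule of $X^{*}(\mathrm{U}_E)$. By Corollary \ref{transitiveins4}, the transitive group $H=F(G'')$ contains a $4$-cycle, or equals $V_4$, or equals $\mathrm{A}_4$. Lemma \ref{nolength4cycle} rules out the first possibility and Lemma \ref{A_4notpossible} the third. In the $V_4$ case, Proposition \ref{noklein4} (Key Remark \ref{keyremarktotakeaway}) forces $G'\cong G''\cong\mathrm{D}_4$. But $G'$ is exactly the image of the map $f$ in Lemma \ref{nosuchsimpleAV}, and that lemma says no simple CM abelian fourfold with endomorphism field $E$ exists, which contradicts the simplicity of $A$. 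Hence $\dim T\geq 3$.

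For $\dim T=3$, Lemma \ref{keylemmamz} gives $T=\mathrm{SU}_{E/k}$ for some imaginary quadratic subfield $k\subset E$. We split according to the multiplicities $\{m,4-m\}$ of $k$ acting on $V=\mathrm{H}^1(A,\mathbb{Q})$.

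If these are $(2,2)$, Theorem \ref{mzcmthm} gives $\mathrm{Hdg}(A)=\mathrm{SU}_{E/k}=T$ (the subfield $k$ with multiplicities $(2,2)$ is forced to be this one, because the Hodge group has dimension $3$ and Lemma \ref{keylemmamz} is applied to it), and we are done.

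Otherwise $m\neq 2$, and we argue as in Corollary \ref{corprimedim}. The torus $T=\mathrm{U}_E\cap\mathrm{SL}_k(V)$ fixes the two-dimensional Weil structure $W'=\wedge^4_k V$. By Lemma \ref{gdrbhinv}, every element of $W'\otimes W'\otimes\mathbb{Q}_{\mathrm{dRB}}(4)$ is then a dRB class. Gross's formula \eqref{grosscomparison} with $n=4$ then shows that the diagonal entries, such as $b_k^{4m-8}(2\pi\sqrt{-1})^{4-2m}$, would be algebraic. Since $m\neq 2$, the exponents of $b_k$ and of $2\pi\sqrt{-1}$ are both nonzero, which contradicts Theorem \ref{thmchud}. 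It may be cleaner to use $W'\otimes W'^{*}$ or the self-tensor of a single eigenline, whichever Tannakian construction gives a nontrivial monomial. I must double-check that a \emph{single} entry (not the product of the two conjugate entries, which equals $(2\pi\sqrt{-1})^{4}$) is what appears. This works because the eigenlines of $k$ on $W'_{\qbar}$ are dRB-stable when $k$ acts by dRB endomorphisms, so each line carries its own period.

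In all cases we obtain $\gdrbhmath(A)=\mathrm{Hdg}(A)$, and Theorem \ref{ji_1thmsummarize} gives $\gdrbmath(A)=\mathrm{MT}(A)$. The main obstacle, the two-dimensional case, is already handled by the earlier lemmas. The step needing care is the $\dim T=3$, $m\neq 2$ argument: one must ensure that the dRB class produced yields a monomial in $b_k$ and $2\pi\sqrt{-1}$ with nonzero exponents. This mirrors the argument of \cite[Section 7.6]{mz-4-folds} and of Corollary \ref{corprimedim}, with the parity argument replaced by $m\neq 2$.
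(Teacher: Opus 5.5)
Your proposal is correct and is essentially the paper's proof. The same chain of lemmas rules out $\dim\gdrbhmath(A)\le 2$, and in dimension three Lemma~\ref{keylemmamz} followed by either Theorem~\ref{mzcmthm} (multiplicities $\{2,2\}$) or Lemma~\ref{gross} with Theorem~\ref{thmchud} (otherwise) finishes the argument; the only difference is that you organize the cases by $\dim\gdrbhmath(A)$ rather than by the imaginary quadratic subfields of $E$. Your hesitation about single entries is unnecessary: if every element of the twisted structure is a dRB class, then its entire comparison matrix is algebraic, and since $4$ is even you may work directly with $\wedge^4_k\mathrm{H}^1_{\mathrm{dRB}}(A,\mathbb{Q})\otimes\mathbb{Q}_{\mathrm{dRB}}(2)$, as the paper does.
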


Firstly, Section \ref{longcomp} and \ref{nosuchsimpleAV} already give a lower bound on the dimension of $\gdrbhmath(A)$.
\begin{proposition} \label{biggerthan2}
   For any simple CM abelian fourfold $A$ we have that $\mathrm{dim}(\gdrbhmath(A)) > 2$.
\end{proposition}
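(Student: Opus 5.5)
The plan is to combine the dimension bounds already established for small tori with the classification of transitive subgroups of $\mathrm{S}_4$ and the non-simplicity lemma. Set $T:=\gdrbhmath(A)$. By Theorem \ref{ji_1thmsummarize}, $T$ is a connected reductive subgroup of $\mathrm{Hdg}(A)\subset\mathrm{U}_E$. Since $\mathrm{U}_E$ is a torus, $T$ is a connected $\mathbb{Q}$-subtorus of $\mathrm{U}_E$, so the framework of Setup \ref{torusinvsetup} and Setup \ref{setupcontinued} applies with $d=4$. Corollary \ref{biggerthan-one} already rules out $\dim T\in\{0,1\}$, so it remains to exclude $\dim T=2$.

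Assume $\dim T=2$. Then $\iota(\Delta)$ is a rank two saturated $G''$-stable submodule of $X^{*}(\mathrm{U}_E)$, and by Lemma \ref{galois_permutation} the image $H=F(G'')$ is a transitive subgroup of $\mathrm{S}_4$. By Corollary \ref{transitiveins4}, $H$ contains a $4$-cycle, or $H=V_4$, or $H=\mathrm{A}_4$. The first case is excluded by Lemma \ref{nolength4cycle} and the third by Lemma \ref{A_4notpossible}. In the Klein four case, Proposition \ref{noklein4} shows that $G''$ must be isomorphic to $\mathrm{D}_4$.

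The remaining step, which is the only genuinely non-computational input, is to convert this back into a statement about $f$ in Lemma \ref{nosuchsimpleAV}. By Lemma \ref{galois_permutation}, $G'\cong G''$, and $G'$ is precisely the image of $\mathrm{Gal}(L/\mathbb{Q})$ in $\mathrm{Aut}_{\mathbb{Z}}(X^{*}(\resgmmath))$, so $f$ has image isomorphic to $\mathrm{D}_4$, as noted in Key Remark \ref{keyremarktotakeaway}. Lemma \ref{nosuchsimpleAV} then says that no simple CM abelian fourfold has endomorphism field $E$. This contradicts the hypothesis that $A$ is simple with $\mathrm{End}^{\circ}(A)=E$.

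Hence $\dim T\notin\{0,1,2\}$, so $\dim\gdrbhmath(A)>2$. I expect no real obstacle beyond checking that every case of Corollary \ref{transitiveins4} is covered by the cited lemmas. The substantive work lies in Proposition \ref{noklein4} and Lemma \ref{nosuchsimpleAV}, which we may use as given.
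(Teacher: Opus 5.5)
Your proposal is correct and follows the paper's proof essentially step by step. Corollary \ref{biggerthan-one} disposes of dimensions $0$ and $1$; Corollary \ref{transitiveins4}, together with Lemma \ref{nolength4cycle}, Lemma \ref{A_4notpossible} and Proposition \ref{noklein4}, reduces dimension two to the case $G'\cong G''\cong\mathrm{D}_4$; and Lemma \ref{nosuchsimpleAV} rules that case out, with your identification of $G'$ as the image of $f$ spelling out what the paper leaves implicit in Key Remark \ref{keyremarktotakeaway}.
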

\begin{proof}[Proof of Proposition \ref{biggerthan2}]
It is trivial to see that the dimension of $\gdrbhmath(A)$ cannot be 0. Corollary \ref{biggerthan-one} gives $\dim(\gdrbhmath(A))>1$. If
$\dim\gdrbhmath(A)=2$, then Corollary \ref{transitiveins4}, together
with Lemma \ref{nolength4cycle}, Proposition \ref{noklein4}, and Lemma
\ref{A_4notpossible}, force the case described in Key Remark
\ref{keyremarktotakeaway}.  Lemma \ref{nosuchsimpleAV} shows this is impossible for a simple CM abelian fourfold.  Hence the dimension of $\gdrbhmath(A)$ is strictly bigger than $2$.
\end{proof}

\begin{proof}[Proof of Theorem \ref{mainthmcm}]
With the above preparations, we are now ready to apply the argument from 7.6 of \cite{mz-4-folds} verbatim. First suppose the endomorphism field $E$ of $A$ does not contain an imaginary quadratic subfield $k$, then by Lemma \ref{keylemmamz}, there is no subtorus of codimension 1 inside $\mathrm{U}_{E}$. Then $\gdrbhmath(A)=\mathrm{U}_{E}=\mathrm{Hdg}(A)$ because Proposition \ref{biggerthan2} already gives $\mathrm{dim}(\gdrbhmath(A))>2$. 

Now suppose $E$ does contain an imaginary quadratic subfield. First suppose for each imaginary quadratic subfield $k\xhookrightarrow{}E$, the multiplicities associated with $\mathrm{Hom}(k,\mathbb{C})$ is $\{1,3\}$. Combining Lemma \ref{gross} and Theorem \ref{thmchud}, we obtain $\wedge_{k}^4\mathrm{H}^1_{\mathrm{dRB}}(A,\mathbb{Q})\otimes\mathbb{Q}_{\mathrm{dRB}}(2)$ supports no dRB class. Using Lemma \ref{gdrbhinv}, we obtain $\wedge_{k}^4\mathrm{H}^1_{\mathrm{dRB}}(A,\mathbb{Q})^{\gdrbhmath(A)}=\{0\}$. Now assume $\gdrbhmath(A)\subset\mathrm{U}_{E}$ is of dimension 3. Then by Lemma \ref{keylemmamz}, it equals $\mathrm{SU}_{E/k_{0}}$ for some imaginary quadratic field $k_{0}\xhookrightarrow{}E$. Every element in $\wedge_{k_{0}}^4\mathrm{H}^1(A,\mathbb{Q})$ would be fixed by $\mathrm{SU}_{E/k_{0}}=\gdrbhmath(A)$, which is a contradiction. Therefore $\gdrbhmath(A)=\mathrm{U}_{E}=\mathrm{Hdg}(A)$.

Suppose there exists an imaginary quadratic subfield $k_{0}\xhookrightarrow{}E$ acting with multiplicities $\{2,2\}$. Theorem~\ref{mzcmthm} gives
\(\mathrm{Hdg}(A)=\mathrm{SU}_{E/k_0}\). The Hodge group has dimension three.  Since $\gdrbhmath(A)$ has dimension greater than two by Proposition \ref{biggerthan2}, the inclusion
\(\gdrbhmath(A)\subseteq\mathrm{SU}_{E/k_0}\) is an equality. The equality \(\gdrbhmath(A)=\mathrm{Hdg}(A)\) now follows from
Theorem \ref{ji_1thmsummarize}.
\end{proof}

\section{De Rham-Betti Groups of Other Simple Type IV Abelian Fourfolds}\label{chaptercentre}
In this section, we determine the dRB groups of the remaining simple abelian fourfolds of type IV defined over $\qbar$, namely those whose endomorphism algebras are CM-fields of degree 2 or 4. In Subsection \ref{sectioncentre}, we show that the center of the dRB Lie algebra of a type IV abelian fourfold equals the center of its Mumford-Tate Lie algebra. This is achieved by the computation of the dRB groups of certain Weil structures. Thus we can focus on the semisimple part of the dRB Lie algebras. In Subsection \ref{sectiondecomp} and \ref{imaginaryquadraticsection}, we treat the case of an abelian fourfold $A$ with imaginary quadratic endomorphism field and prove Theorem \ref{imaginaryquadraticthm}. As summarized in \ref{outline2} of the Introduction, the main difficulty arises in anti-Weil type abelian fourfolds (see Definition \ref{defnweiltype}). The key input is the weight-one dRB structure constructed in \eqref{antiweilhalfstruct}, inspired by van Geemen's half-twist construction \cite{van2001half}.
In the final subsection, we treat the case of an abelian fourfold $A$ with a quartic CM-endomorphism field and prove Theorem \ref{deg4theorem}. The argument exploits the condition that $A$ is simple and the $\mathbb{Q}$-structure of the dRB group. If $\liegdrbssmath(A)$ were isomorphic to one of the remaining $\mathbb{Q}$-semisimple Lie algebras properly contained in $\mathrm{mt}^{\mathrm{ss}}(A)$, the resulting constraints on any polarization form would be incompatible with the positivity condition required of a
polarization.
 
\subsection{Centres of De Rham-Betti Groups of Simple Type IV Abelian Fourfolds}
\label{sectioncentre}
We start by showing that for a simple type IV abelian fourfold defined over $\qbar$, the centre of its de Rham-Betti Lie algebra equals that of its Mumford-Tate Lie algebra; see Proposition \ref{twocentressame}. The method of proof in this subsection is similar to that of the CM case.
\begin{notation}
For an abelian variety $A$ defined over $\qbar$, write $\liegdrbmath(A)$,
$\mathrm{mt}(A)$, and $\mathrm{hdg}(A)$ for the Lie algebras of its
respective de Rham-Betti, Mumford-Tate, and Hodge groups.
\end{notation}

By the Albert classification analysis in \cite[Section 2.9]{moonen1999hodge}, a non-CM simple type IV abelian fourfold has endomorphism algebra either a quartic CM-field or an imaginary quadratic field.
In the latter case, the multiplicities of the two embeddings of the imaginary quadratic field into $\qbar$ are either
$\{2,2\}$ or $\{1,3\}$ by Proposition 14 of \cite{shimura1963analytic}.

\begin{definition}\label{defnweiltype}
A simple abelian fourfold with imaginary quadratic endomorphism field
$K$ is of \textit{Weil type} if both embeddings of $K$ into $\qbar$ have
multiplicity $2$, and of \textit{anti-Weil type} if the multiplicities
are $\{1,3\}$.
\end{definition}

For a simple type IV abelian variety $A$, let $V=\mathrm H^1(A,\mathbb Q)$, and denote by
$K$ the centre of its Hodge endomorphism algebra. Fix a polarization form $\phi:V\times V\to \mathbb Q(-1)$. Then by \cite[201]{mumford1970abelian}, the Rosati involution associated with $\phi$ on $K$ coincides with the complex conjugation. Moreover, $\phi$ is a non-degenerate bilinear form. This leads to the following well-known fact.
\begin{lemma} \label{phicompatiblewithE}
With the same notation, consider the eigenspace decomposition with respect to the $\mathbb{Q}$-algebra morphism $K\xhookrightarrow{}\mathrm{End}(V)$: $V_{\qbar}=\bigoplus_{\sigma \in \mathrm{Hom}(K,\qbar)}V_{\sigma}$. Then $\phi_{\qbar}: V_{\qbar} \times  V_{\qbar} \rightarrow \qbar(-1)$ satisfies that $\phi_{\qbar}(V_{\sigma},V_{\sigma'})=0$ if $\sigma' \neq \overline{\sigma}$. Moreover, for each $\sigma\in \mathrm{Hom}(K,\qbar)$, we can find a $\qbar$-basis $\{e_{i}\}_{i\in I}$ for $V_{\sigma}$ and a $\qbar$-basis $\{f_{j}\}_{j\in J}$ for $V_{\overline{\sigma}}$ such that $\phi_{\qbar}(e_{i},f_{j})=0$ if $i \neq j$ and $\phi_{\qbar}(e_{i},f_{j})=1$ if $i=j$.
\end{lemma}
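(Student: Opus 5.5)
The plan is to derive both assertions from the compatibility of $\phi$ with the $K$-action, using that the Rosati involution induced by $\phi$ restricts to complex conjugation on $K$. Concretely, for every $k\in K$ and $v,w\in V$ we have $\phi(kv,w)=\phi(v,\bar{k}w)$. This identity extends $\qbar$-bilinearly to $V_{\qbar}$.

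For the orthogonality claim, I take $v\in V_{\sigma}$ and $w\in V_{\sigma'}$. Then for every $k\in K$,
\[
\sigma(k)\,\phi_{\qbar}(v,w)=\phi_{\qbar}(kv,w)=\phi_{\qbar}(v,\bar{k}w)=\sigma'(\bar{k})\,\phi_{\qbar}(v,w).
\]
Since $K$ is a CM-field, complex conjugation is central in the relevant Galois group, so $\sigma'(\bar{k})=\overline{\sigma'}(k)$. If $\sigma'\neq\overline{\sigma}$, then $\overline{\sigma'}\neq\sigma$, and because distinct embeddings differ on some $k$ (take a primitive element), we get $\phi_{\qbar}(v,w)=0$. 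This is a one-line character argument, and I expect no obstacle here.

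For the second claim, I use that $\phi_{\qbar}$ is non-degenerate and that $V_{\qbar}=\bigoplus_{\sigma}V_{\sigma}$. By the first part, the only eigenspace that can pair non-trivially with $V_{\sigma}$ is $V_{\overline{\sigma}}$, so the restricted pairing $V_{\sigma}\times V_{\overline{\sigma}}\to\qbar$ must be perfect. Indeed, any $v\in V_{\sigma}$ in its left kernel would be orthogonal to all of $V_{\qbar}$, and symmetrically on the right. In particular $\dim V_{\sigma}=\dim V_{\overline{\sigma}}$, which was already known since all eigenspaces have the same dimension. I then pick any $\qbar$-basis $\{e_i\}$ of $V_{\sigma}$ and let $\{f_j\}$ be the dual basis of $V_{\overline{\sigma}}\cong V_{\sigma}^{*}$ under this perfect pairing. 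By construction $\phi_{\qbar}(e_i,f_j)=\delta_{ij}$.

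The only point requiring care is the Rosati identity itself, and in particular the direction of the adjoint. If instead $\phi(kv,w)=\phi(v,k^{\dagger}w)$ with $k^{\dagger}=\bar{k}$, the computation above is unchanged, because $\bar{\bar{k}}=k$. The Tate twist $\qbar(-1)$ plays no role, since it only rescales the values by a fixed $\qbar$-line after base change. I would also remark that when $\sigma=\overline{\sigma}$ (which cannot happen for CM $K$), the argument would be vacuous. Since $\sigma\neq\overline{\sigma}$ always holds here, the bases $\{e_i\}$ and $\{f_j\}$ live in distinct eigenspaces and can be chosen independently as above.
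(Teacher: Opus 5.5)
Your proof is correct and follows the route the paper indicates. The paper gives no separate proof, calling the lemma a well-known consequence of two facts: the Rosati involution of $\phi$ restricts to complex conjugation on $K$ (citing Mumford), and $\phi$ is non-degenerate. Those are exactly your two inputs: the character comparison $\sigma(k)=\overline{\sigma'}(k)$ for orthogonality, then dual bases for the perfect pairing $V_{\sigma}\times V_{\overline{\sigma}}\to\overline{\mathbb{Q}}$.
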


The Hodge Lie algebra of a simple type IV abelian fourfold is summarized as follows. 
\begin{theorem}[\cite{mz-4-folds}, Table 1 in p.578]
\label{deg2mz4}
Let $A$ be a simple type IV abelian fourfold and denote $\mathrm{End}^{\circ}(A)$ by $K$ and $\betti$ by $V$. Fix a polarization form
$\phi:V\times V\to \mathbb Q(-1)$. 
\begin{itemize}
    \item If $\mathrm{deg}(K/\mathbb{Q})=4$ or if $A$ is of
    anti-Weil type, then $\mathrm{hdg}(A)$ is equal to
    \[
    \mathrm{u}_{K}(V,\phi):=
    \{M \in \mathrm{End}_{K}(V)\mid
    \phi(Mv,w)+\phi(v,Mw)=0;\ \forall v,w \in V\}.
    \]
    The centre of $\mathrm{hdg}(A)$ is the subspace
    $\{x \in K\mid x+\overline{x}=0\}$.
    
    \item If $A$ is a Weil type abelian fourfold, then
    $\mathrm{hdg}(A)=\{M \in \mathrm{u}_{K}(V,\phi)\mid
    \mathrm{tr}_{K}(M)=0\}$. The centre of $\mathrm{hdg}(A)$ in this
    case is trivial.
\end{itemize}

\end{theorem}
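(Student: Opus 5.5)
This is a cited result (\cite[Table 1, p.~578]{mz-4-folds}), so the plan reconstructs the Moonen--Zarhin argument in outline rather than reproving it from scratch.

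\textbf{Upper bound.} Since the elements of $K=\mathrm{End}^{\circ}(A)$ are Hodge endomorphisms, $\mathrm{Hdg}(A)$ commutes with $K$. It also preserves the polarization $\phi$ up to the trivial weight twist. Hence $\mathrm{hdg}(A)\subseteq \mathrm{u}_K(V,\phi)$.

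\textbf{Centre.} The centre of $\mathrm{u}_K(V,\phi)$ is the set of $x\in K$ that are skew for the Rosati involution, i.e.\ $\{x\in K\mid x+\overline{x}=0\}$. This uses Lemma \ref{phicompatiblewithE} and the fact that the Rosati involution restricts to complex conjugation on $K$.

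\textbf{Semisimple part.} By Lemma \ref{phicompatiblewithE}, over $\mathbb{C}$ the algebra $\mathrm{u}_K(V,\phi)$ becomes a product of $\mathfrak{gl}(V_\sigma)$, one for each pair $\{\sigma,\overline\sigma\}$. The proof then proceeds in two steps.
\begin{enumerate}
\item Show that $\mathrm{hdg}(A)^{\mathrm{ss}}$ is all of $\mathfrak{su}$. The representation $V_\sigma$ of $\mathrm{hdg}(A)_{\mathbb{C}}$ is irreducible because $\mathrm{End}_{\mathrm{Hdg}}(V)=K$. The Hodge cocharacter acts on $V_\sigma$ with weights $0$ and $1$, with multiplicities $m_\sigma$ and $n-m_\sigma$, so the representation is minuscule. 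The classification of irreducible minuscule representations of dimension $n=2$ or $4$, combined with $\mathrm{End}=K$ to exclude symplectic and orthogonal invariant forms, forces $\mathfrak{sl}(V_\sigma)$. The exception is the case $\dim V_\sigma=4$ with multiplicities $(2,2)$, where $\mathfrak{so}_6\cong\mathfrak{sl}_4$ causes no harm, or a forbidden invariant form arises. Galois-equivariance across the embeddings of $K$ then gives the full $\mathfrak{su}$, with factors identified via Galois descent.
\item Determine the centre of $\mathrm{hdg}(A)$. In the quartic case and the anti-Weil case, the trace character of $h$ on each $V_\sigma$ is nonzero, because $m_\sigma\neq n/2$. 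The centre of $\mathrm{hdg}(A)$ is generated by the images of the conjugates of $h$ projected to the centre, so it is the full $\{x+\overline{x}=0\}$. In the Weil case $m_\sigma=2=n/2$ for both embeddings, so $\det_K$ of the Hodge cocharacter (normalized to weight zero) is trivial. Hence $\mathrm{hdg}(A)\subseteq\ker\mathrm{tr}_K$, which gives equality and a trivial centre.
\end{enumerate}

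\textbf{Main obstacle.} The key difficulty is ruling out proper semisimple subalgebras in step 1, for instance $\mathfrak{sp}_4\subset\mathfrak{sl}_4$, or a diagonal embedding across Galois-conjugate factors. The intended route is the Moonen--Zarhin minuscule-weight classification together with $\mathrm{End}^{\circ}(A)=K$, which exactly excludes an invariant bilinear form on $V_\sigma$.
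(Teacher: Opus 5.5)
The paper gives no argument here (it only cites Moonen--Zarhin), so the question is whether your reconstruction works. Your upper bound and your computation of the centre of $\mathrm{u}_K(V,\phi)$ are fine. Step~1, however, has a genuine gap, and it sits in exactly the two cases where the statement claims all of $\mathrm{u}_K(V,\phi)$.

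\textbf{Anti-Weil case.} The centre of $\mathrm{hdg}(A)$ is nonzero and acts on $V_\sigma$ and $V_{\overline\sigma}$ by opposite nonzero scalars. So an $\mathfrak{sp}_4$- or $\mathfrak{sl}_2\times\mathfrak{sl}_2$-invariant bilinear form on $V_\sigma$ gives $V_\sigma\cong V_{\overline\sigma}$ only as $\mathrm{hdg}^{\mathrm{ss}}$-modules. It produces no new Hodge endomorphism and no new class in $\mathrm{H}^2$, so $\mathrm{End}^{\circ}(A)=K$ is not contradicted. Both representations are minuscule, so the minuscule classification does not exclude them either.

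\textbf{Quartic case.} A diagonally embedded $\mathbb{Q}$-form of $\mathfrak{sl}_2$ in $\mathfrak{sl}(V_\sigma)\times\mathfrak{sl}(V_\tau)$ is Galois-stable. The resulting isomorphism $V_\sigma\cong V_\tau$ of $\mathfrak{sl}_2$-modules is again broken by the centre. Hence neither Galois descent nor $\mathrm{End}=K$ excludes it.

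This is exactly what Remark~\ref{13remark} and Lemma~\ref{deg4repclassification} record. In the dRB setting, where $\mathrm{End}=K$ and the full centre are both available, these candidates survive, and the paper needs the half-twist and positivity arguments to remove them.

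\textbf{The missing input} is the Hodge cocharacter $\mu$ itself, i.e.\ the Deligne-torus property the paper says Moonen--Zarhin and Ribet use essentially. Write $d\mu=Z+X$ with $Z$ central (hence in $K\otimes\mathbb{C}$ and scalar on each $V_\iota$) and $X\in\mathrm{hdg}^{\mathrm{ss}}_{\mathbb{C}}$. Then $X|_{V_\iota}$ is the traceless part of $\mathrm{diag}(1^{m_\iota},0^{n-m_\iota})$.
\begin{itemize}
\item For anti-Weil type, $X|_{V_\sigma}$ has spectrum $\{3/4,-1/4,-1/4,-1/4\}$, which is not symmetric under $\lambda\mapsto-\lambda$. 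Every element acting in a self-dual representation ($\mathfrak{sp}_4$ standard, $V(1)\boxtimes V(1)$, $V(3)$) has symmetric spectrum, so these are excluded.
\item In the quartic case, the multiplicities $\{2,0,1,1\}$ give $X|_{V_\sigma}=0$ but $X|_{V_\tau}\neq0$. This is impossible if $\mathrm{hdg}^{\mathrm{ss}}_{\mathbb{C}}$ is the graph of an isomorphism $\mathfrak{sl}(V_\sigma)\cong\mathfrak{sl}(V_\tau)$.
\end{itemize}

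Your invariant-form argument via $\mathrm{End}=K$ is valid only in the Weil case. Even there it works only after Step~2 has shown $\mathrm{hdg}(A)$ is semisimple, so the two steps must be reordered.

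Finally, in Step~2 the claim that $m_\sigma\neq n/2$ for every $\sigma$ is false in the quartic case, since $m_\tau=m_{\overline\tau}=1=n/2$. The centre is nevertheless two-dimensional. The central part of the normalized cocharacter acts by $1/2$ on $V_\sigma$ and by $0$ on $V_\tau$. Every nonzero $x\in K$ with $x+\overline{x}=0$ acts nontrivially on both, so no one-dimensional centre can contain that central part.
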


To prove Proposition \ref{twocentressame}, we will use results about de Rham-Betti groups of certain Weil structures, which are computed in Lemma \ref{13group} and Lemma \ref{deg4weil}.
\begin{lemma} \label{13group}
Let $A$ be a simple anti-Weil type abelian fourfold defined over $\qbar$ with imaginary
quadratic endomorphism field $K$. Then the de Rham-Betti group of the Weil structure $W_{\mathrm{Weil}}:=\wedge_{K}^4\mathrm{H}^1_{\mathrm{dRB}}(A,\mathbb{Q})$ is $\mathrm{Res}_{K/\mathbb{Q}}\gmmath$.
\end{lemma}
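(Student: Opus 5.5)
Write $V=\mathrm{H}^1(A,\mathbb{Q})$ and $V_{\qbar}=V_{\sigma}\oplus V_{\sigmabar}$ for the eigenspace decomposition under $K$. Each summand is $4$-dimensional. Then $W_{\mathrm{Weil}}$ is the $2$-dimensional dRB structure whose $\qbar$-extension is $\wedge^4V_{\sigma}\oplus\wedge^4V_{\sigmabar}$. The plan is to pin its dRB group between an upper bound coming from the $K$-action and a lower bound coming from transcendence of periods.

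\textbf{Upper bound.} The group $\gdrbmath(A)$ commutes with $K$, because $K\subset\mathrm{End}(\mathrm{H}^1_{\mathrm{dRB}}(A,\mathbb{Q}))$ by Theorem \ref{bostoriginal}. So $\gdrbmath(A)$ preserves each $V_{\sigma}$ and acts on the line $\wedge^4V_{\sigma}$ through $\det$ of its restriction to $V_\sigma$. Consequently the image of $\gdrbmath(A)$ in $\mathrm{GL}(W_{\mathrm{Weil}})$ lies in the centralizer of the induced $K$-action on the $1$-dimensional $K$-vector space $W_{\mathrm{Weil}}$. That centralizer is $\mathrm{Res}_{K/\mathbb{Q}}\gmmath$. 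Since $\gdrbmath(W_{\mathrm{Weil}})$ is the image of $\gdrbmath(A)$, it is a $\mathbb{Q}$-subtorus of $\mathrm{Res}_{K/\mathbb{Q}}\gmmath$.

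\textbf{Lower bound.} The character group of $\mathrm{Res}_{K/\mathbb{Q}}\gmmath$ is $\mathbb{Z}e_\sigma\oplus\mathbb{Z}e_{\sigmabar}$, with complex conjugation swapping the two basis vectors. Its $\mathbb{Q}$-subtori are therefore only three: the trivial torus, the norm-one torus $\mathrm{U}_K$ (kernel spanned by $e_\sigma+e_{\sigmabar}$), and the homotheties $\gmmath$ (kernel spanned by $e_\sigma-e_{\sigmabar}$). Each proper case produces an invariant tensor, which is then a dRB class. It remains to show that Lemma \ref{gross} and Theorem \ref{thmchud} forbid this.
\begin{enumerate}
\item If the group were contained in $\mathrm{U}_K$, the line $\wedge^4V_\sigma\otimes\wedge^4V_{\sigmabar}$ descends to a rank-one $\mathbb{Q}$-sub-structure of $W_{\mathrm{Weil}}^{\otimes 2}$, and it would carry an invariant, hence a dRB class. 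By \eqref{grosscomparison} with $n=4$, $m=1$, its period is the product of the two diagonal entries, $b_K^{4}(2\pi\sqrt{-1}/b_K)^{4}=(2\pi\sqrt{-1})^4$. This lies in $\qbar\cdot(2\pi\sqrt{-1})^4$, so it is not a contradiction on its own. To get one, I would combine it with the homothety component. Concretely, $\gdrbmath(A)$ contains the homotheties (Theorem \ref{ji_1thmsummarize}), and scalars $t$ act on $W_{\mathrm{Weil}}$ by $t^4$, so the image contains $\gmmath$. This rules out both the trivial torus and $\mathrm{U}_K$.
\item If the group were exactly $\gmmath$, then $e_\sigma-e_{\sigmabar}$ is invariant. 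The element $v_\sigma\otimes v_{\sigmabar}^{*}$ of $\mathrm{End}(W_{\mathrm{Weil}})$ spans a Galois-stable line together with its conjugate, so $\mathrm{End}(W_{\mathrm{Weil}})$ would contain a $2$-dimensional space of dRB endomorphisms beyond $K$. In fact the endomorphism swapping the two summands would be dRB. In terms of periods, the ratio of the two diagonal entries of \eqref{grosscomparison} would have to be algebraic. With $n=4$, $m=1$ this ratio is $b_K^{1-3}(2\pi\sqrt{-1}/b_K)^{3-1}=b_K^{-4}(2\pi\sqrt{-1})^{2}$. Theorem \ref{thmchud} says this is transcendental, a contradiction.
\end{enumerate}
Hence the image is all of $\mathrm{Res}_{K/\mathbb{Q}}\gmmath$.

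\textbf{Main obstacle.} The delicate step is the precise translation from an invariant character to a dRB class on the right twist. One has to check carefully which Tate twist makes the invariant a genuine dRB class, and that the resulting period combination, here $b_K^{-4}(2\pi\sqrt{-1})^2$, is really the one obtained from Gross's basis rather than a power cancelled by $2\pi\sqrt{-1}$ twists. I would do this by working directly with $\mathrm{End}(W_{\mathrm{Weil}})$, which needs no twist, so the contradiction rests cleanly on the algebraic independence in Theorem \ref{thmchud}.
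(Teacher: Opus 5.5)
Your proof is correct. The upper bound is the paper's: $K$ acts by dRB endomorphisms on the rank-one $K$-module $W_{\mathrm{Weil}}$, so $\gdrbmath(W_{\mathrm{Weil}})$ lies in the centralizer $\mathrm{Res}_{K/\mathbb{Q}}\gmmath$.

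\textbf{Lower bound: a different route.} The paper gets $\dim\gdrbmath(W_{\mathrm{Weil}})\geq 2$ in one step. It applies the Bost--Charles inequality ($\dim$ of the dRB group $\geq$ transcendence degree of the periods) to the two diagonal entries $b_K^{-2}(2\pi\sqrt{-1})^3$ and $b_K^{2}(2\pi\sqrt{-1})$ of \eqref{grosscomparison}. These have transcendence degree $2$ by Theorem~\ref{thmchud}, and a $2$-dimensional closed subgroup of the connected $2$-dimensional torus is the whole torus.

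You instead list the three proper $\mathbb{Q}$-subtori of $\mathrm{Res}_{K/\mathbb{Q}}\gmmath$ and exclude each by an explicit invariant.
\begin{itemize}
\item The trivial torus and $\mathrm{U}_K$ are excluded because the homotheties of $\gdrbmath(A)$ act on $W_{\mathrm{Weil}}$ through a fourth power.
\item The homothety torus is excluded because an off-diagonal dRB endomorphism would force $(2\pi\sqrt{-1})^2b_K^{-4}\in\qbar$.
\end{itemize}

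\textbf{What each route buys.} Your route avoids the period-torsor dimension bound. It also uses Theorem~\ref{thmchud} only through the transcendence of a single monomial. The cost is two extra inputs: the homotheties from Theorem~\ref{ji_1thmsummarize}, and connectedness of $\gdrbmath(A)$, which you need so that the image is a subtorus rather than merely a subgroup of multiplicative type. The paper's route is shorter and needs neither.

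\textbf{A small correction in your first case.} Suppose $\gdrbmath(W_{\mathrm{Weil}})\subset\mathrm{U}_K$. Use the symmetrization of $\wedge^4V_\sigma\otimes\wedge^4V_{\sigmabar}$ inside $W_{\mathrm{Weil}}^{\otimes 2}$; it is Galois-stable, whereas the ordered tensor you wrote is not. This line would consist of untwisted dRB classes, since they are invariants of $\gdrbmath$ itself. Its period $(2\pi\sqrt{-1})^4$ is transcendental, so you already have a contradiction there. The Tate twist that made you say ``not a contradiction on its own'' only arises for $\gdrbhmath$-invariants. Your homothety argument expresses the same fact in group-theoretic form, so the proof is unaffected.
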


\begin{proof}
According to Lemma \ref{gross} the comparison matrix of $W_{\mathrm{Weil}} \otimes_{\mathbb{Q}} \qbar$ is diagonal with diagonal entries $b_{K}^{1} (2\pi \sqrt{-1}/b_{K})^{3}$ and $b_{K}^{3} (2\pi \sqrt{-1}/b_{K})^{1}$. By Theorem \ref{thmchud} and the fact that the dimension of the dRB group
is at least the transcendence degree of the periods (see \cite[Corollary 2.11]{bost2016some}), we obtain $\mathrm{dim}(\gdrbmath(W_{\mathrm{Weil}})) \geq 2$. On the other hand, there is a natural morphism of $\mathbb{Q}$-algebras $K \xhookrightarrow{} \mathrm{End}_{\gdrbmath(W_{\mathrm{Weil}})} (W_{\mathrm{Weil}})$: on each summand $\wedge^4 V_{\sigma}$ of $W_{\mathrm{Weil}}\otimes\qbar$, an element $k\in K$ acts as scalar multiplication by $\sigma(k)$. Clearly this action is Galois equivariant and moreover preserves the dRB structure on $W_{\mathrm{Weil}}$. This realizes $\mathrm{Res}_{K/\mathbb{Q}}\gmmath$ as a maximal torus inside $\mathrm{GL}(W_{\mathrm{Weil}})$. Therefore we have $\gdrbmath(W_{\mathrm{Weil}}) \xhookrightarrow{} \mathrm{Res}_{K/\mathbb{Q}}\gmmath$. As $\dim(\mathrm{Res}_{K/\mathbb{Q}}\gmmath)=2$, the equality follows.
\end{proof}
Now let $A$ be an abelian fourfold defined over $\qbar$ whose endomorphism field $K$ is a degree $4$ CM-field. We consider
\begin{equation}\label{deg4drbweildefn}
   W_{\mathrm{Weil}}^{\mathrm{dRB}}
   :=\wedge_K^2\mathrm{H}^1_{\mathrm{dRB}}(A,\mathbb{Q})
\end{equation}
as a dRB substructure of
$\wedge_{\mathbb{Q}}^2\mathrm{H}^1_{\mathrm{dRB}}(A,\mathbb{Q})$.
The computation of its dRB group is analogous to the CM
case, once the issue of weights is handled carefully. We start by showing the existence of a weight-labeling object in the Tannakian category $\langle W_{\mathrm{Weil}}^{\mathrm{dRB}} \rangle^{\otimes}$.

\begin{lemma} \label{evendegreeweildrb}The dRB structure $\mathbb{Q}_{\mathrm{dRB}}(-2)$ is an object in the Tannakian category $\langle W_{\mathrm{Weil}}^{\mathrm{dRB}} \rangle^{\otimes}$.
\end{lemma}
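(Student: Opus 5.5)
Let $V=\betti$ and let $K$ be the quartic CM endomorphism field, with embeddings $\sigma,\sigmabar,\tau,\overline{\tau}$. By Lemma \ref{lm2011}, after relabeling, the multiplicities are $m_\sigma=2$, $m_{\sigmabar}=0$, $m_\tau=m_{\overline{\tau}}=1$. Each eigenspace $V_\sigma$ is $2$-dimensional, so $W_{\mathrm{Weil}}^{\mathrm{dRB}}\otimes\qbar=\wedge^2V_\sigma\oplus\wedge^2V_{\sigmabar}\oplus\wedge^2V_\tau\oplus\wedge^2V_{\overline{\tau}}$ is $4$-dimensional, with Hodge types $(2,0),(0,2),(1,1),(1,1)$.

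The plan is to build $\mathbb{Q}_{\mathrm{dRB}}(-2)$ inside $W_{\mathrm{Weil}}^{\mathrm{dRB}}\otimes W_{\mathrm{Weil}}^{\mathrm{dRB}}$ (weight $4$, matching $\mathbb{Q}(-2)$). I would use the polarization. By Lemma \ref{phicompatiblewithE}, $\phi$ pairs $V_\sigma$ with $V_{\sigmabar}$ and $V_\tau$ with $V_{\overline\tau}$. Hence $\wedge^2\phi$ induces a pairing $\wedge^2V_\sigma\times\wedge^2V_{\sigmabar}\to\qbar(-2)$, and similarly for $\tau$. Their sum is a nondegenerate bilinear form $\psi\colon W_{\mathrm{Weil}}\times W_{\mathrm{Weil}}\to\mathbb{Q}(-2)$. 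It is the restriction of the form $\wedge^2\phi$ on $\wedge^2V$, which is defined over $\mathbb{Q}$ because $\phi$ is. Concretely, $\psi(v_1\wedge v_2,w_1\wedge w_2)=\det(\phi(v_i,w_j))$.

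Since $\phi\colon V\otimes V\to\mathbb{Q}_{\mathrm{dRB}}(-1)$ is a morphism of dRB structures (the polarization is a divisor class, hence algebraic, hence dRB by Theorem \ref{bostoriginal}), $\wedge^2\phi\colon\wedge^2V\otimes\wedge^2V\to\mathbb{Q}_{\mathrm{dRB}}(-2)$ is also a dRB morphism. Restricting it to the dRB substructure $W_{\mathrm{Weil}}^{\mathrm{dRB}}\otimes W_{\mathrm{Weil}}^{\mathrm{dRB}}$ gives a nonzero morphism $W\otimes W\to\mathbb{Q}_{\mathrm{dRB}}(-2)$. The category of dRB structures attached to abelian varieties is semisimple (André, via Wüstholz), and $\mathbb{Q}_{\mathrm{dRB}}(-2)$ is rank one, hence simple. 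A nonzero morphism from a semisimple object onto a simple object realizes it as a direct summand (a quotient, hence a subobject). Therefore $\mathbb{Q}_{\mathrm{dRB}}(-2)\in\langle W_{\mathrm{Weil}}^{\mathrm{dRB}}\rangle^{\otimes}$.

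The main obstacle is verifying that the restriction of $\wedge^2\phi$ to $W\times W$ is nonzero. This follows from the pairing computation above using the dual bases of Lemma \ref{phicompatiblewithE}: $\psi(e_1\wedge e_2,f_1\wedge f_2)=1$. Semisimplicity of $\langle W\rangle^{\otimes}$ should be justified as a subquotient of $\langle\mathrm{H}^1_{\mathrm{dRB}}(A)\rangle^{\otimes}$, whose dRB group is reductive. An alternative route avoids semisimplicity: use $\wedge^4_{\mathbb{Q}}$ of a suitable piece, or note that the determinant $\det W$ has weight $8$ and compare it with $\psi$.
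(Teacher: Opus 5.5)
Your proposal is correct and follows the paper's own proof. Both arguments restrict the dRB morphism $\wedge^2\phi\colon \wedge^2V\otimes\wedge^2V\to\mathbb{Q}_{\mathrm{dRB}}(-2)$ to $W_{\mathrm{Weil}}^{\mathrm{dRB}}\otimes W_{\mathrm{Weil}}^{\mathrm{dRB}}$, check it is nonzero on $e_1\wedge e_2\otimes f_1\wedge f_2$ using the dual bases of Lemma \ref{phicompatiblewithE}, and conclude that $\mathbb{Q}_{\mathrm{dRB}}(-2)$ is a quotient. The detour through semisimplicity is unnecessary, since $\langle W_{\mathrm{Weil}}^{\mathrm{dRB}}\rangle^{\otimes}$ is closed under subquotients by definition; also, the implication from algebraic classes to dRB classes is the elementary direction and does not need Theorem \ref{bostoriginal}.
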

\begin{proof}
The polarization form $\phi$ induces a symmetric bilinear form $\phi': \mathrm{H}^2(A,\mathbb{Q}) \times \mathrm{H}^2(A,\mathbb{Q})  \rightarrow \mathbb{Q}(-2)$. On basic tensors $v_1\wedge v_2\in\mathrm{H}^2(A,\mathbb{Q})$ and $v_3\wedge v_4\in\mathrm{H}^2(A,\mathbb{Q})$, this is defined by $$\phi'(v_1\wedge v_2,v_3\wedge v_4)=\phi(v_1,v_3)\phi(v_2,v_4)-\phi(v_1,v_4)\phi(v_2,v_3).$$ Since $\phi$ is a morphism of dRB structures, so is $\phi'$. Its restriction to the dRB substructure $W_{\mathrm{Weil}}^{\mathrm{dRB}} \xhookrightarrow{} \mathrm{H}^2_{\mathrm{dRB}}(A,\mathbb{Q})$ is nonzero: after
extending scalars to $\qbar$, choose basis
$\{e_1,e_2\}$ for $V_{\sigma_0}$ and
$\{f_1,f_2\}$ for $V_{\overline{\sigma_0}}$ as in Lemma
\ref{phicompatiblewithE}; then $\phi'_{\qbar}(e_1\wedge e_2,f_1\wedge f_2)=1$.
Thus
\begin{equation}\label{weilweilsurj}
\phi'|_{W_{\mathrm{Weil}}^{\mathrm{dRB}}}:
W_{\mathrm{Weil}}^{\mathrm{dRB}}\otimes
W_{\mathrm{Weil}}^{\mathrm{dRB}}\twoheadrightarrow
\mathbb Q_{\mathrm{dRB}}(-2)
\end{equation}
realizes $\mathbb Q_{\mathrm{dRB}}(-2)$ as a quotient in the Tannakian
category generated by $W_{\mathrm{Weil}}^{\mathrm{dRB}}$.
\end{proof}

\begin{lemma} \label{deg4weil}
The de Rham-Betti group of the Weil structure $W_{\mathrm{Weil}}^{\mathrm{dRB}}$ is isogenous to $\gmmath \times \mathrm{U}_{K}$.
\end{lemma}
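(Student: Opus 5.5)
We bound $\gdrbmath(W_{\mathrm{Weil}}^{\mathrm{dRB}})$ from above by a torus coming from the $K$-action and the weight, then show its rank is large enough to force an isogeny onto $\gmmath\times\mathrm{U}_{K}$.

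\textbf{Upper bound.} The $K$-action on $\mathrm{H}^1_{\mathrm{dRB}}(A,\mathbb{Q})$ is by dRB endomorphisms (Theorem \ref{bostoriginal}). Hence, as in Lemma \ref{13group}, $K$ acts on $W_{\mathrm{Weil}}^{\mathrm{dRB}}$ by dRB endomorphisms: on the line $\wedge^2V_{\sigma}$ an element $k$ acts by $\sigma(k)$. The four lines $\wedge^2 V_\sigma$, $\sigma\in\mathrm{Hom}(K,\qbar)$, are distinct eigenspaces, so $\gdrbmath(W_{\mathrm{Weil}}^{\mathrm{dRB}})$ commutes with $K$ and lies in the maximal torus $\mathrm{Res}_{K/\mathbb{Q}}\gmmath\subset\mathrm{GL}(W_{\mathrm{Weil}}^{\mathrm{dRB}})$.

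Next, the pairing \eqref{weilweilsurj} of Lemma \ref{evendegreeweildrb} is a dRB morphism. It pairs $\wedge^2V_\sigma$ with $\wedge^2V_{\overline\sigma}$ into $\mathbb{Q}_{\mathrm{dRB}}(-2)$. Therefore an element $(z_\sigma)$ of the group satisfies $z_\sigma z_{\overline\sigma}=\chi$ for all $\sigma$, where $\chi$ is the character by which the group acts on $\mathbb{Q}_{\mathrm{dRB}}(-2)$. This places $\gdrbmath(W_{\mathrm{Weil}}^{\mathrm{dRB}})$ inside the subtorus $\{z\in\mathrm{Res}_{K/\mathbb{Q}}\gmmath : z\overline z\in\gmmath\}$. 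That subtorus is the image of $\gmmath\times\mathrm{U}_K$ under an isogeny and has dimension $3$. It therefore suffices to show $\dim\gdrbmath(W_{\mathrm{Weil}}^{\mathrm{dRB}})\ge 3$.

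\textbf{Lower bound.} Write the group as $T_W$. It surjects onto $\gdrbmath(\mathbb{Q}_{\mathrm{dRB}}(-2))=\gmmath$, since $2\pi\sqrt{-1}$ is transcendental. It therefore suffices to show that the kernel $T_W^1:=T_W\cap \mathrm{U}'$ has dimension $\ge 2$, where $\mathrm{U}'$ is the norm-one part $\{z\overline z=1\}$. This is a quotient of $\mathrm{U}_K$, with character group of rank $2$. Its character lattice carries the $\mathrm{Gal}(L/\mathbb{Q})$-action through signed permutations of $\{f_{\sigma},f_{\tau}\}$, transitive on the two indices, exactly as in Setup \ref{setupcontinued} with $d=2$.

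So we run the argument of Corollary \ref{biggerthan-one}. If $\dim T_W^1\le 1$, the kernel of $X^*(\mathrm{U}_K)\to X^*(T_W^1)$ contains some $f_\sigma\pm f_\tau$. This would make a tensor such as $(\wedge^2V_\sigma)\otimes(\wedge^2V_\tau)^{\pm1}$, twisted by a suitable power of $\mathbb{Q}_{\mathrm{dRB}}(2)$, a dRB class in $\langle W_{\mathrm{Weil}}^{\mathrm{dRB}}\rangle^{\otimes}$. Since $W_{\mathrm{Weil}}^{\mathrm{dRB}}\subset \mathrm{H}^2$, such a class lies in $\langle\mathrm{H}^1_{\mathrm{dRB}}(A)\rangle^\otimes$. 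By Lemma \ref{gdrbhinv} it would be $\gdrbhmath(A)$-invariant, and hence would have to be $\mathrm{Hdg}(A)$-invariant.

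\textbf{Main obstacle.} The step I expect to be hardest is reaching a contradiction with this invariance. The tensor is not in $\mathrm{End}(V)$ or $\wedge^2V$, so Theorem \ref{bostoriginal} alone does not apply directly. I would first try to realize it inside $\mathrm{End}(\wedge^2 V)$ or $\mathrm{H}^4$ and use Theorem \ref{deg2mz4}. Under that theorem, $\mathrm{Hdg}(A)\supset$ the centre $\{x\in K: x+\overline x=0\}$, and this centre acts on $\wedge^2V_\sigma\otimes(\wedge^2V_\tau)^{\pm1}$ through the nontrivial character $2(f_\sigma\pm f_\tau)$. That gives a contradiction provided the dRB/Hodge comparison applies.

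Since the comparison in higher degree is not known, a cleaner route is needed. I would use the multiplicities $\{2,0,1,1\}$ from Lemma \ref{lm2011} to compute the periods of $W_{\mathrm{Weil}}^{\mathrm{dRB}}$ directly. With $V_\sigma\subset \mathrm{H}^{1,0}$, the lines $\wedge^2V_\sigma$ and $\wedge^2V_{\overline\sigma}$ are of type $(2,0)$ and $(0,2)$, while $\wedge^2V_\tau$ and $\wedge^2V_{\overline\tau}$ are of type $(1,1)$. The image of the multiplication map $\wedge^2_{K}\mathrm{H}^1\otimes\cdots$ intersected with the $(1,1)$-lines should produce divisor classes. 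Then Theorem \ref{bostoriginal} (dRB classes in $\mathrm{H}^2(1)$ are the $(1,1)$ Hodge classes, which number $2$) forbids the relations $f_\sigma\pm f_\tau$. I would try this divisor-based version first, falling back on a transcendence-degree bound via \cite[Corollary 2.11]{bost2016some} if needed.
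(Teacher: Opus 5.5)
Your upper bound via the $K$-action and the pairing \eqref{weilweilsurj} is correct, and it is a clean substitute for the paper's embedding into $\mathrm{MT}(\mathcal W_1)$ with $\mathrm{Hdg}(\mathcal W_1)\subset\mathrm U_K$. The reduction to $\dim T_W^1\ge 2$ is also fine. You should record that $T_W$ contains the homotheties, acting by $t^{-2}$ because $\gdrbmath(A)$ contains them. Then a character trivial on $T_W^1$ really gives a $T_W$-invariant after the right Tate twist. The subcase $\dim T_W^1=0$ is within reach of Theorem \ref{bostoriginal}: all of $W_{\mathrm{Weil}}^{\mathrm{dRB}}\otimes\mathbb Q_{\mathrm{dRB}}(1)$ would be dRB, but it is $4$-dimensional and contains lines of type $(2,0)$ and $(0,2)$.

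The genuine gap is the subcase $\dim T_W^1=1$, which is the heart of the lemma, and neither of your routes closes it. Passing from $\gdrbhmath(A)$-invariance to $\mathrm{Hdg}(A)$-invariance for a tensor in $\mathrm{End}(\mathrm H^2)$ or $\mathrm H^4(2)$ is exactly the unknown comparison, so your first route is circular. The divisor route cannot work either. A hypothetical $T_W=\gmmath\cdot\mathrm{SU}_{K/k}$ has no invariants in $W_{\mathrm{Weil}}^{\mathrm{dRB}}\otimes\mathbb Q_{\mathrm{dRB}}(1)$, and the relation $f_\sigma\pm f_\tau$ produces no class in $\mathrm H^2(A)(1)$ or $\mathrm{End}(\mathrm H^1)$, which is all Theorem \ref{bostoriginal} controls. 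A transcendence-degree bound from the computable periods (Gross's formula plus $2\pi\sqrt{-1}$) only yields $\dim T_W\ge 2$, which is precisely the case still to be excluded.

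The missing idea is to identify the surviving torus and its invariants, and then use transcendence of the Chowla--Selberg period.

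First, a Galois-stable rank-one relation lattice forces, by Lemma \ref{keylemmamz}, $(T_W^1)^\circ=\mathrm{SU}_{K/k}$ for an imaginary quadratic $k\subset K$. In particular this case only arises for biquadratic $K$.

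Second, let $\tau'\in\{\tau,\overline\tau\}$ be the embedding with $\tau'|_k=\sigma|_k$. The invariant line is $\wedge^2V_\sigma\otimes\wedge^2V_{\tau'}=\wedge^4(V_\sigma\oplus V_{\tau'})$. Its Galois orbit spans the Weil structure $\wedge_k^2W_{\mathrm{Weil}}^{\mathrm{dRB}}\cong\wedge_k^4\mathrm H^1_{\mathrm{dRB}}(A,\mathbb Q)$. So all of $\wedge_k^4\mathrm H^1_{\mathrm{dRB}}(A,\mathbb Q)\otimes\mathbb Q_{\mathrm{dRB}}(2)$ would consist of dRB classes.

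Third, since $K$ acts with multiplicities $\{2,0,1,1\}$ (Lemma \ref{lm2011}), $k$ acts with multiplicities $\{1,3\}$. By Lemma \ref{gross} the twisted periods are then $b_k^{-2}(2\pi\sqrt{-1})$ and $b_k^{2}(2\pi\sqrt{-1})^{-1}$, and neither is algebraic by Theorem \ref{thmchud}.

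This transcendence input, which lies beyond the Analytic Subgroup Theorem, is exactly what the paper's proof uses and what your proposal lacks.
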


\begin{proof}
The proof strategy is the same as Theorem \ref{mainthmcm}. Denote the Weil Hodge structure $W_{\mathrm{Weil}}^{\mathrm{Hodge}}=\wedge_{K}^2\mathrm{H}^1(A,\mathbb{Q})$ by $\mathcal{W}_1$ and write $\mathcal{W}_2:=W_{\mathrm{Weil}}^{\mathrm{dRB}}$. We have the following commutative diagram. 
$$\begin{tikzcd}
\gdrbmath(\mathcal{W}_2) \arrow[r, hook] \arrow[d, hook]    & \mathrm{MT}(\mathcal{W}_1) \arrow[ld, hook] \\
{\mathrm{GL}(\wedge_{K}^2\mathrm{H}^1(A,\mathbb{Q}))} &                 \end{tikzcd}$$ 
Similar to the proof of Lemma \ref{13group}, we obtain a morphism of $\mathbb{Q}$-algebras $K \xhookrightarrow{} \mathrm{End}_{\mathrm{Hdg}}(\mathcal{W}_1)$. Since $\mathrm{deg}(K/\mathbb{Q})=\mathrm{dim}(\wedge_{K}^2\mathrm{H}^1(A,\mathbb{Q}))$, we obtain $\mathrm{Hdg}(\mathcal{W}_1) \subset \mathrm {U}_{K}$. Denote the natural isogeny $\gmmath\times\mathrm{Hdg}(\mathcal{W}_1)\rightarrow\mathrm{MT}(\mathcal{W}_1)$ by $\Psi_1$.

By Tannakian formalism, $\gdrbmath(\mathcal{W}_2)\subset\mathrm{GL}(\wedge_{K}^2\mathrm{H}^1(A,\mathbb{Q}))$ is the natural restriction of the action of $\gdrbmath(A)$ on $\wedge^2\mathrm{H}^1(A,\mathbb{Q})$. Therefore the group of homotheties $\gmmath\xhookrightarrow{}\gdrbmath(A)$ acts on $\wedge_{K}^2\mathrm{H}^1(A,\mathbb{Q})$ via the scalar multiplication by $t^{-2}$. Since $\gdrbmath(\mathcal{W}_2)$ is an algebraic group, similar to \cite[Theorem 4.2]{ji_1}, we obtain $\gdrbmath(\mathcal{W}_2)$ contains the group of homotheties in $\mathrm{GL}(\wedge_{K}^2\mathrm{H}^1(A,\mathbb{Q}))$.

We set $$\gdrbhmath(\mathcal{W}_2):=\mathrm{pr}_2(\Psi_{1}^{-1}(\gdrbmath(\mathcal{W}_2))^{\circ})$$ where $\mathrm{pr}_2:\gmmath \times \mathrm{Hdg}(A)\rightarrow \mathrm{Hdg}(A)$ is the projection map. By \cite[Theorem 2.6]{ksv2026rhambetticlassescoefficients}, $\gdrbmath(\mathcal{W}_2)$ is a connected algebraic group. This gives the following commutative diagram \begin{equation}\label{weilgdrbhdiagram}\begin{tikzcd}
\gmmath \times \gdrbhmath(\mathcal{W}_2) \arrow[r, "\Psi_2",  two heads] \arrow[d, hook] & \gdrbmath(\mathcal{W}_2) \arrow[d, hook] \\
\gmmath \times \mathrm{Hdg}(\mathcal{W}_1) \arrow[r, "\Psi_1", two heads]               & \mathrm{MT}(\mathcal{W}_1)              
\end{tikzcd}\end{equation} We now show 
$\gdrbhmath(\mathcal W_2)=\mathrm U_E$.  If it had dimension
$<2$, Lemma \ref{keylemmamz} would give either
$\gdrbhmath(\mathcal W_2)=\{1\}$ or
$\gdrbhmath(\mathcal W_2)=\mathrm{SU}_{K/k}$ for some imaginary
quadratic subfield $k\subset K$.

If $\gdrbhmath(\mathcal W_2)=\{1\}$, then every element of
$\mathcal W_2\otimes\mathbb Q_{\mathrm{dRB}}(1)$ is a dRB class.  By
Theorem \ref{bostoriginal}, dRB classes in $\mathrm{H}^2$ of abelian varieties are Hodge classes;
however, the space of Hodge classes
in $\mathrm H^2(A,\mathbb Q)\otimes\mathbb Q(1)$ has
dimension $2$, whereas $\mathcal{W}_2$ has dimension 4, a contradiction.

Suppose $\gdrbhmath(\mathcal W_2)=\mathrm{SU}_{K/k}$. Consider the following two Weil dRB structures 
\begin{equation}\label{twoweildrb}
\wedge_k^2\mathcal W_2\cong\wedge_k^4\mathrm H^1_{\mathrm{dRB}}(A,\mathbb Q)
\end{equation} where the isomorphism of the two Weil dRB structures follows from Definition \ref{weilstructure}.  Each element of \eqref{twoweildrb} would then be fixed by $\gdrbhmath(\mathcal W_2)$. By formula \eqref{weilweilsurj}, the homothety subgroup of $\gdrbmath(\mathcal W_2)$ acts on $\mathbb{Q}_{\mathrm{dRB}}(2)$ via scalar multiplication by $t^{2}$ and $\gdrbhmath(\mathcal W_2)$ acts trivially on $\mathbb{Q}_{\mathrm{dRB}}(2)$ by \cite[Lemma 3.5]{ji_1}. Hence all
elements in the twisted dRB structure
$\wedge_k^2\mathcal W_2\otimes\mathbb Q_{\mathrm{dRB}}(2)
\cong\wedge_k^4\mathrm H^1_{\mathrm{dRB}}(A,\mathbb Q)
\otimes\mathbb Q_{\mathrm{dRB}}(2)$ would be dRB classes.  But Lemma \ref{lm2011} gives multiplicities $\{2,0,1,1\}$ for the $K$-action and since $k$ is a CM-subfield, the induced $k$-action has multiplicities $\{1,3\}$. Lemma \ref{gross}, together
with Theorem \ref{thmchud}, shows this twisted Weil
structure does not support any nonzero dRB classes. Thus
$\gdrbhmath(\mathcal W_2)=\mathrm U_{K}$ and the proof concludes by invoking the isogeny $\Psi_2$ in \eqref{weilgdrbhdiagram}.

\end{proof}

\begin{proposition} \label{twocentressame}
    For any simple type IV abelian fourfold $A$ defined over $\qbar$, the center of its de Rham-Betti Lie algebra is equal to the center of its Mumford-Tate Lie algebra. 
\end{proposition}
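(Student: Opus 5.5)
We split according to the endomorphism type, as in Theorem \ref{deg2mz4}: a simple type IV abelian fourfold is either CM, or has a quartic CM endomorphism field $K$, or has an imaginary quadratic endomorphism field $K$ and is of Weil or anti-Weil type. The CM case is already settled by Theorem \ref{mainthmcm}, since there $\gdrbmath(A)=\mathrm{MT}(A)$. Throughout we use Theorem \ref{ji_1thmsummarize}: $\liegdrbmath(A)=\mathbb{Q}\cdot\mathrm{id}\oplus\liegdrbhmath(A)$ with $\liegdrbhmath(A)\subset\mathrm{hdg}(A)$. So it suffices to compare the centre of $\liegdrbhmath(A)$ with that of $\mathrm{hdg}(A)$.

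First we show the centre of $\liegdrbhmath(A)$ lies in the centre of $\mathrm{hdg}(A)$. By Theorem \ref{bostoriginal} we have $\mathrm{End}_{\gdrbmath(A)}(V)=\mathrm{End}^{\circ}(A)=K$. A central element $z$ of the reductive Lie algebra $\liegdrbhmath(A)$ commutes with $\gdrbhmath(A)$ and with the homotheties, hence lies in $\mathrm{End}_{\gdrbmath(A)}(V)\otimes\mathbb{Q}=K$. Because $z\in\mathrm{hdg}(A)\subset\mathrm{u}_K(V,\phi)$ and the Rosati involution is complex conjugation (Lemma \ref{phicompatiblewithE}), we get $z+\overline{z}=0$. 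In the Weil case the trace condition $\mathrm{tr}_K(z)=0$ then forces $z=0$, which matches the trivial centre in Theorem \ref{deg2mz4}. In the other two cases this gives the inclusion of the centre of $\liegdrbhmath(A)$ into $\{x\in K\mid x+\overline{x}=0\}$, the centre of $\mathrm{hdg}(A)$.

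For the reverse inclusion in the two remaining cases we use the Weil structures, and we expect this to be the main obstacle. The torus $\mathrm{U}_K$ acts on $V$ through $K$. Restriction to a Weil structure $W$ ($W=\wedge^4_K V$ in the anti-Weil case, $W=\wedge^2_K V$ in the quartic case) gives a homomorphism $\mathrm{MT}(A)\to\mathrm{MT}(W)$. On $\mathrm{Hdg}(A)$ this map factors through the determinant over $K$, so the semisimple part $\mathrm{Hdg}(A)^{\mathrm{der}}$ acts trivially on $W$. Consequently the image of $\gdrbmath(A)$ in $\mathrm{GL}(W)$, which is $\gdrbmath(W)$, is the image of the homotheties times the image of the connected centre of $\gdrbhmath(A)$. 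By Lemma \ref{13group}, $\gdrbmath(W)=\mathrm{Res}_{K/\mathbb{Q}}\gmmath$ has dimension $2$; by Lemma \ref{deg4weil} it has dimension $3$. In either case this is $1$ plus the rank of $\mathrm{U}_K$.

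It remains to check that the determinant map $\mathrm{U}_K\to\mathrm{GL}(W)$ is an isogeny onto its image. On $\wedge^n V_\sigma$ the element $x$ acts by $\sigma(x)^n$, where $n=4$ or $2$, so the kernel is finite. Granting this, the connected centre $Z$ of $\gdrbhmath(A)$ is a subtorus of $\mathrm{U}_K$ whose image has the full dimension $\dim\mathrm{U}_K$. Hence $Z=\mathrm{U}_K$, and its Lie algebra is $\{x\in K\mid x+\overline{x}=0\}$. The delicate point is the bookkeeping of the homotheties, since they act on $W$ by $t^{-n}$; this is handled exactly as in the construction of $\gdrbhmath(\mathcal{W}_2)$ in Lemma \ref{deg4weil}. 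Adding back the homothety line in both $\liegdrbmath(A)$ and $\mathrm{mt}(A)$ gives equality of the centres.
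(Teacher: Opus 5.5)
Your proposal is correct and is essentially the paper's proof. It has the same two steps. First, the inclusion of centres, which the paper cites as \cite[Lemma 2.7]{ji_1} and you rederive from Theorem \ref{bostoriginal}. Second, the fact that the connected centre surjects onto $\gdrbmath(W_{\mathrm{Weil}})$, combined with the dimension count from Lemma \ref{13group} and Lemma \ref{deg4weil}.

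The isogeny check you flag as remaining is unnecessary. The argument only uses the trivial bound that the image of $\gmmath\times Z$ in $\mathrm{GL}(W)$ has dimension at most $1+\dim Z$.
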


\begin{proof}
We already have
$\mathrm{Z}(\gdrbmath(A))\hookrightarrow \mathrm{Z}(\mathrm{MT}(A))$ by \cite[Lemma 2.7]{ji_1}. Hence it suffices to compare dimensions. If $A$ is of Weil type, Theorem \ref{deg2mz4} gives
$\mathrm{Z}(\mathrm{MT}(A))^{\circ}=\gmmath$, while
\cite[Theorem 4.2]{ji_1} gives
$\gmmath\subset \mathrm{Z}(\gdrbmath(A))$; hence the centres of the corresponding Lie algebras agree. In the other two scenarios, by the Tannakian functoriality and the fact that $\gdrbmath(A)$ is reductive, we have 
\[
\mathrm{Z}(\gdrbmath(A))\twoheadrightarrow
\gdrbmath(W_{\mathrm{Weil}});
\]
see also \cite[Lemma 2.9]{ji_1}. If $A$ is of anti-Weil type, Lemma \ref{13group} implies that $\dim \mathrm{Z}(\gdrbmath(A))\geq 2$ and Theorem \ref{deg2mz4} gives
$\dim \mathrm{Z}(\mathrm{MT}(A))=2$, so $\mathrm{Z}(\liegdrbmath(A))=\mathrm{Z}(\mathrm{mt}(A))$.
If $\mathrm{deg}(K/\mathbb{Q})=4$, Lemma \ref{deg4weil} implies
$\dim \mathrm{Z}(\gdrbmath(A))\geq 3$, while Theorem \ref{deg2mz4} gives
$\dim \mathrm{Z}(\mathrm{MT}(A))=3$. Thus the equality follows.

\end{proof}

\subsection{Decomposition of the de Rham-Betti Representations}
\label{sectiondecomp}

Let $A$ be a simple abelian variety of type IV defined over $\qbar$ with
$\operatorname{End}^{\circ}(A)=K$, where $K$ is a CM-field of degree
$d$. Let $V=\mathrm H^1(A,\mathbb Q)$.  Since the image of the canonical
representation
$
\eta:\liegdrbmath(A)\longrightarrow \mathrm{gl}(V)
$
commutes with $K$, each eigenspace in the decomposition $V_{\qbar}=\bigoplus_{\sigma\in\operatorname{Hom}(K,\qbar)}V_\sigma$ for the action of $K$ is preserved by $\eta_{\qbar}$. 
For each $\sigma$, let $\eta_\sigma$ denote the subrepresentation of $\liegdrbmath(A)_{\qbar}$ on $V_\sigma$. Recall that $\liegdrbmath(A)$ is a reductive Lie algebra; see \cite[Corollary 5.11]{ksv2026rhambetticlassescoefficients}.
We then set
\[
\rho_\sigma:=\eta_\sigma^{\mathrm{ss}}:
\liegdrbssmath(A)_{\qbar}\longrightarrow \mathrm{gl}(V_\sigma),
\qquad
\liegdrbssmath(A):=[\liegdrbmath(A),\liegdrbmath(A)] .
\]

\begin{lemma}\label{lemmaeigenspaceirr}
Each $\rho_\sigma$ is an irreducible
representation. Moreover, a polarization induces an isomorphism of representations between $\rho_\sigma^{*}$ and $\rho_{\overline{\sigma}}$.  
\end{lemma}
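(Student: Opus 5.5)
Irreducibility will come from the endomorphism computation of Theorem \ref{bostoriginal}, and the duality from Lemma \ref{phicompatiblewithE}.

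\textbf{Irreducibility.} By Theorem \ref{bostoriginal}, $\mathrm{End}_{\gdrbmath(A)}(V)=\mathrm{End}^{\circ}(A)=K$. Since $\gdrbmath(A)$ is connected, its commutant agrees with that of $\liegdrbmath(A)$. So after extending scalars,
\[
\mathrm{End}_{\liegdrbmath(A)_{\qbar}}(V_{\qbar})=K\otimes_{\mathbb Q}\qbar\cong\prod_{\sigma}\qbar,
\]
with the factor indexed by $\sigma$ being the projector onto $V_\sigma$. It follows that each $V_\sigma$ has $\mathrm{End}_{\liegdrbmath(A)_{\qbar}}(V_\sigma)=\qbar$, and that $\mathrm{Hom}_{\liegdrbmath(A)_{\qbar}}(V_\sigma,V_{\sigma'})=0$ for $\sigma\neq\sigma'$. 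Since $\liegdrbmath(A)$ is reductive and its defining representation is faithful and semisimple (\cite[Corollary 5.11]{ksv2026rhambetticlassescoefficients}), $\eta_\sigma$ is semisimple. A semisimple module with scalar endomorphism ring is irreducible, so $\eta_\sigma$ is irreducible.

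To pass to $\rho_\sigma$, write $\liegdrbmath(A)=\mathfrak z\oplus\liegdrbssmath(A)$. The centre $\mathfrak z_{\qbar}$ commutes with the image, so by Schur it acts on the irreducible $V_\sigma$ by scalars. Hence any $\liegdrbssmath(A)_{\qbar}$-stable subspace of $V_\sigma$ is also $\liegdrbmath(A)_{\qbar}$-stable, and $\rho_\sigma$ is irreducible. The one point to watch is that the commutant statement from Theorem \ref{bostoriginal} holds for the group, and I pass to the Lie algebra using connectedness of $\gdrbmath(A)$, which is available from \cite{ksv2026rhambetticlassescoefficients}.

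\textbf{Duality.} Fix a polarization $\phi$. It is a morphism of dRB structures $V\otimes V\to\mathbb Q_{\mathrm{dRB}}(-1)$, so $\gdrbmath(A)$ preserves $\phi$ up to the character through which it acts on $\mathbb Q_{\mathrm{dRB}}(-1)$. On the Lie algebra this means
\[
\phi(Xv,w)+\phi(v,Xw)=\chi(X)\,\phi(v,w)
\]
for a character $\chi$ of $\liegdrbmath(A)$. Every character vanishes on the derived algebra $\liegdrbssmath(A)$, so $\liegdrbssmath(A)$ preserves $\phi$ exactly. By Lemma \ref{phicompatiblewithE}, $\phi_{\qbar}$ pairs $V_\sigma$ only with $V_{\overline\sigma}$, and non-degenerately. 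The induced map $V_{\overline{\sigma}}\to V_\sigma^{*}$, $w\mapsto\phi_{\qbar}(\cdot,w)$, is therefore a linear isomorphism. Invariance of $\phi$ under $\liegdrbssmath(A)$ makes it equivariant, giving $\rho_{\overline\sigma}\cong\rho_\sigma^{*}$.
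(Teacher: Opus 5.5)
Your proof is correct and follows essentially the same route as the paper. Irreducibility comes from the commutant $K\otimes\qbar$ of Theorem \ref{bostoriginal}, which forces scalar endomorphisms on each $V_\sigma$, together with the fact that the centre acts by scalars on eigenspaces. Duality comes from the infinitesimal invariance of $\phi$ combined with Lemma \ref{phicompatiblewithE}.

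The differences are cosmetic. You first prove $\eta_\sigma$ irreducible and then get the scalar action of the centre via Schur, whereas the paper cites \cite[Lemma 2.7]{ji_1} and argues by contradiction directly for $\liegdrbssmath(A)_{\qbar}$. You also justify, via characters vanishing on the derived algebra, why $\liegdrbssmath(A)$ preserves $\phi$ exactly; the paper simply asserts this.
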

\begin{proof}Suppose $V_{\sigma}$ as a $\liegdrbssmath(A)_{\qbar}$-representation decomposes into $V_1 \oplus V_2$. Denote the subspace $\bigoplus_{\sigma_{i}\in\mathrm{Hom}(K,\qbar),\sigma_{i}\neq\sigma} V_{\sigma_{i}}$ by $V'$. Then elements in $S:=\{\alpha_1|_{{V_1}}\oplus\alpha_2|_{{V_2}}\oplus\alpha_3|_{V'}:\alpha_i\in\qbar\}
   \subset \mathrm{End}(V_{\qbar})$
commute with $\liegdrbssmath(A)_{\qbar}$, where $\alpha|_{V}$ denotes scalar multiplication by $\alpha$ on $V$.  Moreover, \cite[Lemma 2.7]{ji_1} gives that $\mathrm{Z}(\liegdrbmath(A))_{\qbar}\subset K\otimes\qbar$, which acts as scalar multiplication on each eigenspace. Therefore elements in $S$ commute with $\liegdrbmath(A)_{\qbar}=\mathrm{Z}(\liegdrbmath(A))_{\qbar}\oplus\liegdrbssmath(A)_{\qbar}$. But $S$ does not lie in $K\otimes\qbar=\mathrm{End}_{\liegdrbmath(A)}(V)\otimes\qbar$, which gives the contradiction. For the second statement, fix a polarization form $\phi$ on $A$. By Lemma \ref{phicompatiblewithE}, 
$\phi_{\qbar}$ restricts to a non-degenerate pairing between conjugate eigenspaces $\phi_{\qbar}:V_\sigma\times V_{\bar\sigma}
       \longrightarrow \qbar(-1).$
Since $\liegdrbssmath(A)$ preserves the polarization form infinitesimally, for
$l\in\liegdrbssmath(A)_{\qbar}$, $v\in V_\sigma$, and
$w\in V_{\bar\sigma}$,
we have $
\phi_{\qbar}(\rho_\sigma(l)v,w)
+
\phi_{\qbar}(v,\rho_{\bar\sigma}(l)w)=0.
$
Therefore $\phi_{\qbar}$
identifies $V_\sigma$ equivariantly with $V_{\bar\sigma}^{*}$.
\end{proof}

Using the above analysis and the invariant theoretic methods to determine the Mumford-Tate group of $A$ from \cite[Section 7.4, p.~572]{mz-4-folds}, we obtain the following classification result; see also \cite[Proposition 5.2.1]{ji2026thesis} of the author's thesis.

\begin{lemma}
\label{imaginaryquadraticclassification}
Let $A$ be a simple abelian fourfold with imaginary quadratic
endomorphism field $K$. We keep the same notation as above. Then precisely one of the following possibilities occurs for the faithful irreducible representation
\[
\rho_\sigma:\liegdrbssmath(A)_{\qbar}
\longrightarrow \mathrm{gl}(V_\sigma).
\]
\begin{enumerate}
    \item $\liegdrbssmath(A)_{\qbar}\cong\mathrm{sl}(2)$ and
    $V_\sigma\cong V(3)$;
    \item $\liegdrbssmath(A)_{\qbar}\cong
    \mathrm{sl}(2)\times\mathrm{sl}(2)$ and
    $V_\sigma\cong V(1)\boxtimes V(1)$;
    \item $\liegdrbssmath(A)_{\qbar}\cong\mathrm{sp}(4)$ and
    $V_\sigma$ is its standard representation;
    \item $\liegdrbssmath(A)_{\qbar}\cong\mathrm{sl}(4)$ and
    $V_\sigma$ is its standard representation.
\end{enumerate}
\end{lemma}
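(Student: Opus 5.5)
The plan is to classify the pair $(\liegdrbssmath(A)_{\qbar},\rho_\sigma)$ as a faithful irreducible representation of a semisimple Lie algebra on the $4$-dimensional space $V_\sigma$, and then use the extra constraints coming from Lemma \ref{lemmaeigenspaceirr} and the ambient Hodge Lie algebra. First, faithfulness: $V_{\qbar}=V_\sigma\oplus V_{\overline{\sigma}}$ and $\rho_{\overline{\sigma}}\cong\rho_\sigma^{*}$ by Lemma \ref{lemmaeigenspaceirr}, so the kernel of $\rho_\sigma$ also acts trivially on $V_{\overline{\sigma}}$. Since $\liegdrbssmath(A)\subset\mathrm{gl}(V)$, this kernel is zero, and $\rho_\sigma$ is faithful. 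Irreducibility is given by Lemma \ref{lemmaeigenspaceirr}. Moreover $\liegdrbssmath(A)\subset\mathrm{hdg}(A)^{\mathrm{ss}}$, so $\liegdrbssmath(A)_{\qbar}\subset\mathrm{sl}(V_\sigma)\cong\mathrm{sl}(4)$, which follows from Theorem \ref{deg2mz4} together with the restriction $\mathrm{u}_K(V,\phi)_{\qbar}\cong\mathrm{gl}(V_\sigma)$.

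Next I would enumerate the semisimple subalgebras $\mathfrak g\subset\mathrm{sl}(4)$ acting irreducibly on $\qbar^4$. Write $\mathfrak g=\prod\mathfrak g_i$ with simple factors. An irreducible faithful representation is an external tensor product $\boxtimes W_i$ of nontrivial irreducibles, so $\prod\dim W_i=4$. Hence either there is a single simple factor with a $4$-dimensional faithful irreducible representation, or there are two factors each with a $2$-dimensional representation, which forces $\mathrm{sl}(2)\times\mathrm{sl}(2)$ acting by $V(1)\boxtimes V(1)$.

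In the single-factor case, consider the simple Lie algebras possessing a faithful irreducible representation of dimension $4$. These are $\mathrm{sl}(2)$ with $V(3)$, $\mathrm{sp}(4)\cong\mathrm{so}(5)$ with the standard (spin) representation, $\mathrm{sl}(4)$ with the standard representation or its dual, and $\mathrm{so}(4)$, which is not simple. To justify that this list is complete, I would use the dimension bound $\dim\mathfrak g\le 15$ and the list of minimal-dimensional representations: $G_2$ needs $7$, and $\mathrm{sl}(3)$ needs $3$ or $6$ and above. Dual representations give the same case up to relabeling $\sigma\leftrightarrow\overline{\sigma}$, since the statement only fixes the isomorphism type. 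The four cases are mutually exclusive because they differ in the dimension of the Lie algebra ($3,6,10,15$). This yields exactly the four listed possibilities, matching the invariant-theoretic analysis in \cite[Section 7.4]{mz-4-folds}.

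The main obstacle is making sure the list is neither missing nor including extra cases. The delicate point is the two-factor case: one must check that $\mathrm{sl}(2)\times\mathrm{sl}(2)$ cannot appear with some other irreducible $4$-dimensional representation, such as $V(3)\boxtimes V(0)$, which is not faithful, so faithfulness rules it out. The other point is that the $\mathbb{Q}$-structure does not impose anything at this stage. Here the statement is purely over $\qbar$, so the rational forms (e.g.\ a $\qbar/\mathbb{Q}$-form of $\mathrm{SL}(2)$) are deferred to later sections. I would therefore keep the argument entirely at the level of $\qbar$-representation theory, citing the classification of small irreducible representations of simple Lie algebras (via highest weights and the Weyl dimension formula).
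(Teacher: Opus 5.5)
Your argument is correct and is essentially the paper's: it combines Lemma \ref{lemmaeigenspaceirr} (irreducibility, with $\rho_{\overline{\sigma}}\cong\rho_\sigma^{*}$ giving faithfulness) with the classification of semisimple Lie algebras admitting a faithful irreducible $4$-dimensional representation, as in \cite[Section 7.4]{mz-4-folds} but without the Hodge-theoretic minuscule constraint, which is why the case $V(3)$ survives here. The relabeling $\sigma\leftrightarrow\overline{\sigma}$ for the dual of the standard representation is unnecessary: if $\liegdrbssmath(A)_{\qbar}$ is $15$-dimensional, then $\rho_\sigma$ is an isomorphism onto $\mathrm{sl}(V_\sigma)$ by a dimension count, so $V_\sigma$ is tautologically its standard representation.
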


Recall from Theorem \ref{deg2mz4}, that any abelian fourfold $A$ with imaginary quadratic endomorphism field has $\mathrm{mt}^{\mathrm{ss}}(A)_{\qbar}\cong\mathrm{sl}(4)$, which corresponds to the fourth case of Lemma \ref{imaginaryquadraticclassification}.  The following elementary representation-theoretic computation distinguishes this case from the other three.

\begin{lemma}\label{wedgecandidateclassification}
For each of the first three representations listed in Lemma \ref{imaginaryquadraticclassification}, the representation $\wedge^2\rho_\sigma$ is the direct sum of two non-isomorphic irreducible representations, whereas it is irreducible in the fourth case. More precisely, \begin{align*}
\wedge^2V(3)
  &\cong V(4)\oplus V(0),\\
\wedge^2\bigl(V(1)\boxtimes V(1)\bigr)
  &\cong\bigl(V(2)\boxtimes V(0)\bigr)
       \oplus\bigl(V(0)\boxtimes V(2)\bigr),\\
\wedge^2\mathrm{Std}_{\mathrm{sp}(4)}
  &\cong V(\varpi_2)\oplus V(0),\\
\wedge^2\mathrm{Std}_{\mathrm{sl}(4)}
  &\cong V(\varpi_2),
\end{align*}
Here $V(0)$ denotes the trivial representation.
\end{lemma}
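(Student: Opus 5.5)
We verify the four decompositions one at a time with weight computations, then read off irreducibility and non-isomorphism from the highest weights.

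\emph{Case $\mathrm{sl}(2)$, $V(3)$.} The weights of $V(3)$ are $3,1,-1,-3$. The weights of $\wedge^2$ are the sums of distinct pairs: $4,2,0,0,-2,-4$. By dimension count ($6=5+1$) this is $V(4)\oplus V(0)$. Alternatively, use the Clebsch--Gordan decomposition $V(3)\otimes V(3)\cong V(6)\oplus V(4)\oplus V(2)\oplus V(0)$ and the fact that the antisymmetric part consists of the summands $V(6-2j)$ with $j$ odd.

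\emph{Case $\mathrm{sl}(2)\times\mathrm{sl}(2)$, $V(1)\boxtimes V(1)$.} Use the formula
\[
\wedge^2(U\otimes W)\cong(\wedge^2U\otimes \mathrm{Sym}^2W)\oplus(\mathrm{Sym}^2U\otimes\wedge^2W).
\]
Since $\wedge^2V(1)=V(0)$ and $\mathrm{Sym}^2V(1)=V(2)$, this gives $(V(0)\boxtimes V(2))\oplus(V(2)\boxtimes V(0))$.

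\emph{Case $\mathrm{sp}(4)$.} The symplectic form gives an invariant vector in $\wedge^2\mathrm{Std}^*\cong\wedge^2\mathrm{Std}$, so contraction with it is a surjection $\wedge^2\mathrm{Std}\to V(0)$. Its kernel is the $5$-dimensional irreducible $V(\varpi_2)$. This is the standard fact, which one can check by noting that the highest weight $\epsilon_1+\epsilon_2$ appears with multiplicity one and that the Weyl dimension formula gives $5$.

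\emph{Case $\mathrm{sl}(4)$.} The exterior powers $\wedge^k\mathrm{Std}$ are the fundamental representations, hence irreducible; here $\wedge^2\mathrm{Std}=V(\varpi_2)$ has dimension $6$.

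In each of the first three cases the two summands have different dimensions ($5$ versus $1$, or $3$ versus $3$ but with distinct highest weights $(2,0)\neq(0,2)$, or $5$ versus $1$), so they are non-isomorphic. I expect no real obstacle. The only point needing care is the $\mathrm{sl}(2)\times\mathrm{sl}(2)$ case, where the two summands have the same dimension and non-isomorphism has to come from the highest weights $(2,0)$ and $(0,2)$ rather than from a dimension count.
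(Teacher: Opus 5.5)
Your proposal is correct: the weight count for $\wedge^2 V(3)$, the identity $\wedge^2(U\otimes W)\cong(\wedge^2U\otimes\mathrm{Sym}^2W)\oplus(\mathrm{Sym}^2U\otimes\wedge^2W)$, the contraction with the symplectic form for $\mathrm{sp}(4)$, and the irreducibility of the fundamental representation $\wedge^2\mathrm{Std}_{\mathrm{sl}(4)}$ all check out, and so does your highest-weight argument for non-isomorphism in the $\mathrm{sl}(2)\times\mathrm{sl}(2)$ case. The paper gives no separate proof and calls the lemma an elementary representation-theoretic computation; your argument is exactly that standard verification, and it works over $\qbar$ because it only uses that the field is algebraically closed of characteristic zero.
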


\begin{corollary}\label{weiltypedrb}
Let $A$ be a simple abelian fourfold of Weil type. Then
$\gdrbmath(A)=\mathrm{MT}(A)$.
\end{corollary}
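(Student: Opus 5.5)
The plan is to reduce to the semisimple part and then apply the divisor test of Theorem \ref{bostoriginal} to $\wedge^2 V$. By Proposition \ref{twocentressame}, the centres of $\liegdrbmath(A)$ and $\mathrm{mt}(A)$ coincide. Since both Lie algebras are reductive and $\liegdrbmath(A)\subset\mathrm{mt}(A)$, it therefore suffices to show $\liegdrbssmath(A)=\mathrm{mt}^{\mathrm{ss}}(A)$. As $\mathrm{mt}^{\mathrm{ss}}(A)_{\qbar}\cong\mathrm{sl}(4)$, this amounts to ruling out the first three cases of Lemma \ref{imaginaryquadraticclassification}.

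For the reduction I will work with $\mathrm{H}^2(A,\mathbb{Q})=\wedge^2 V$. Over $\qbar$ it decomposes as
\[
\wedge^2 V_{\qbar}=\wedge^2 V_\sigma\oplus\wedge^2 V_{\overline{\sigma}}\oplus (V_\sigma\otimes V_{\overline{\sigma}}).
\]
Suppose we are in one of the first three cases. By Lemma \ref{wedgecandidateclassification}, $\wedge^2\rho_\sigma$ then contains a trivial summand $V(0)$, and in case 2 it splits into two nontrivial pieces. Since $A$ is of Weil type, the multiplicities of $\sigma$ and $\overline{\sigma}$ are both $2$. Hence $\wedge^2 V_\sigma$ and $\wedge^2 V_{\overline{\sigma}}$ each have Hodge type $(1,1)$, and together they form the Weil structure $\wedge^4_K V$ after passing to $\wedge^2_K$; in fact $\wedge_K^2$-type pieces of $\mathrm{H}^2$ are the relevant ones here.

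The key step is to produce a dRB class in $\mathrm{H}^2\otimes\mathbb{Q}_{\mathrm{dRB}}(1)$ that is not a Hodge class, or equivalently a dRB endomorphism of $\mathrm{H}^2$ that is not Hodge.

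In cases 1 and 3 this is direct. The $V(0)$ inside $\wedge^2V_\sigma$, together with its conjugate inside $\wedge^2V_{\overline\sigma}$, spans a Galois-stable $2$-dimensional $\qbar$-space. This space is fixed by $\liegdrbssmath(A)$, and the centre acts on it by the trace-type character of $K\otimes\qbar$ restricted to the centre; in the Weil case the centre is just homotheties. So, via Lemma \ref{gdrbhinv} applied to the connected group $\gdrbhmath(A)$, these lines descend to a $2$-dimensional $\mathbb{Q}$-space of dRB classes in $\mathrm{H}^2\otimes\mathbb{Q}_{\mathrm{dRB}}(1)$. By Theorem \ref{bostoriginal} they would then be Hodge classes. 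However, $\mathrm{Hdg}(A)$ acts on $\wedge^2 V_\sigma$ through $\wedge^2$ of the standard representation of $\mathrm{sl}(4)$, which is irreducible and has no invariants. So the Hodge classes in $\mathrm{H}^2$ all lie in $V_\sigma\otimes V_{\overline\sigma}$, and this is a contradiction.

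Case 2 has no trivial summand, so there I use endomorphisms instead. The projectors onto the two non-isomorphic summands $V(2)\boxtimes V(0)$ and $V(0)\boxtimes V(2)$ of $\wedge^2V_\sigma$ commute with $\liegdrbmath(A)$. These projectors, and their conjugates, assemble into an element of $\mathrm{End}(\mathrm{H}^2)^{\gdrbhmath(A)}$. If the Galois action swaps the two factors, I take the Galois-invariant combinations instead. Such an element is not $\mathrm{MT}$-invariant, because the $\mathrm{sl}(4)$-representation $\wedge^2V_\sigma$ is irreducible. This yields a dRB class in $\mathrm{H}^2\otimes \mathrm{H}^{2*}$, and the obstacle is converting it into a degree-two divisor statement.

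I expect this conversion to be the main difficulty. My first attempt will be to use the polarization $\phi'$ from Lemma \ref{evendegreeweildrb} to identify $\wedge^2V_\sigma$ with the dual of $\wedge^2V_{\overline\sigma}$, landing in $V_\sigma^{\otimes 2}\otimes V_{\overline\sigma}^{\otimes 2}$. Alternatively, following the paper's hint that the Weil case uses the \cite{mz-4-folds} invariant computation, I will instead observe that in case 2 the representation $V_\sigma\otimes V_{\overline\sigma}\cong V_\sigma\otimes V_\sigma^*$ contains the trivial representation only once, but contains $V(2)\boxtimes V(0)\oplus V(0)\boxtimes V(2)\oplus\cdots$. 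Compared with $\mathrm{sl}(4)$, where $V_\sigma\otimes V_\sigma^*=V(0)\oplus\mathrm{ad}$, this produces too many $\gdrbmath$-stable substructures in $\mathrm{End}(V)$-type tensors. I will then bound the dimension of $\mathrm{End}_{\gdrbmath}(V_\sigma\otimes V_{\overline\sigma})$ against $\mathrm{End}(\mathrm{H}^1)$ via Theorem \ref{bostoriginal} applied to $A\times A$.
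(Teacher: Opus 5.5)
Your reduction to the semisimple part and your treatment of cases 1 and 3 of Lemma \ref{imaginaryquadraticclassification} agree with the paper. Since the centre is only $\gmmath$, the trivial summand of $\wedge^2V_\sigma$ is $\gdrbhmath(A)$-invariant. Lemma \ref{gdrbhinv} and Theorem \ref{bostoriginal} would then turn it into Hodge classes lying outside $V_\sigma\otimes V_{\overline\sigma}$, which is impossible. You should delete the aside that $\wedge^2V_\sigma$ has pure type $(1,1)$. With multiplicities $(2,2)$ it has types $(2,0)$, $(1,1)$ and $(0,2)$; only $\wedge^4V_\sigma$ is pure. You never use this claim, so the error is harmless.

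The genuine gap is case 2, where $\liegdrbssmath(A)_{\qbar}\cong\mathrm{sl}(2)\times\mathrm{sl}(2)$. Your plan produces a $\gdrbhmath(A)$-invariant endomorphism of $\mathrm{H}^2$, which is a degree-four tensor in $V$. Neither proposed conversion brings it back within reach of Theorem \ref{bostoriginal}:
\begin{enumerate}
\item Identifying $\wedge^2V_\sigma$ with $(\wedge^2V_{\overline\sigma})^*$ via $\phi'$ still leaves you with degree-four tensors.
\item Theorem \ref{bostoriginal} for $A\times A$ only controls tensors in $V^{\otimes 2}$, up to duals and twists. It says nothing about $\mathrm{End}_{\gdrbmath}(V_\sigma\otimes V_{\overline\sigma})$.
\end{enumerate}
This is exactly the obstruction that forces the half-twist construction in the anti-Weil case, and it does not disappear here.

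The missing idea is that no degree-four information is needed. The representation $V(1)\boxtimes V(1)$ is self-dual, so Lemma \ref{lemmaeigenspaceirr} gives $\rho_{\overline\sigma}\cong\rho_\sigma^*\cong\rho_\sigma$. Hence $\mathrm{Hom}(V_{\overline\sigma},V_\sigma)=V_\sigma\otimes V_{\overline\sigma}^*\subset\mathrm{End}(V_{\qbar})$ contains a nonzero $\liegdrbssmath(A)_{\qbar}$-invariant. Homotheties act trivially on $\mathrm{End}(V)$, and in the Weil case the centre is just $\gmmath$. So this element lies in $\mathrm{End}_{\gdrbmath(A)}(V)\otimes\qbar$. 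By Theorem \ref{bostoriginal} that space is $K\otimes\qbar\subset V_\sigma\otimes V_\sigma^*\oplus V_{\overline\sigma}\otimes V_{\overline\sigma}^*$, a contradiction. Under $\mathrm{sl}(4)$ no such invariant exists, because $\mathrm{Std}\otimes\mathrm{Std}=\mathrm{Sym}^2\oplus\wedge^2$.

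You were examining $V_\sigma\otimes V_{\overline\sigma}\cong V_\sigma\otimes V_\sigma^*$, where the trivial representation occurs exactly once in every case. The place to look is $V_\sigma\otimes V_{\overline\sigma}^*\cong V_\sigma\otimes V_\sigma$, which is degree-one information. Your idea of applying Theorem \ref{bostoriginal} to $A\times A$ would actually work if aimed at $V_\sigma\otimes V_\sigma\subset\mathrm{H}^1(A)\otimes\mathrm{H}^1(A)\subset\mathrm{H}^2(A\times A)$ rather than at endomorphisms of $\mathrm{H}^2$.
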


\begin{proof}
The method is identical to the one determining the Mumford-Tate groups of such abelian varieties in \cite[Section 7.4, p.~572]{mz-4-folds}. We briefly recall the key point. By Proposition \ref{twocentressame}, we have $\mathrm{Z}(\gdrbmath(A))=\mathrm{Z}(\mathrm{MT}(A))=\gmmath$. Therefore we obtain the equality of Lie algebras $\liegdrbssmath(A)=\liegdrbhmath(A)$. We suppress the Tate twists in the rest of the proof. Because the Picard rank of $A$ is $\frac{\mathrm{deg}(K/\mathbb{Q})}{2}=1$, the $\qbar$-span of Hodge classes in $\mathrm{H}^2(A,\mathbb{Q})$ necessarily lies in the summand $V_{\sigma}\otimes V_{\overline{\sigma}}\cong V_{\sigma}\otimes V_{\sigma}^{*}\subset \mathrm{H}^2(A,\mathbb{Q})\otimes\qbar$. However, Theorem \ref{bostoriginal} states that Hodge and de Rham-Betti classes coincide in degree two cohomology groups. By Lemma \ref{wedgecandidateclassification}, if $\liegdrbssmath(A)_{\qbar}=\liegdrbhmath(A)_{\qbar}$ were isomorphic to $\mathrm{sl}(2)$ or $\mathrm{sp}(4)$, the summand $\wedge^2 V_{\sigma}$ contains nonzero $\liegdrbhmath(A)_{\qbar}$-invariant classes, which is a contradiction. If $\liegdrbhmath(A)_{\qbar}$ were isomorphic to $\mathrm{sl}(2)\times\mathrm{sl}(2)$, then $V_{\sigma}\otimes V_{\sigma}\cong V_{\sigma}\otimes V_{\overline{\sigma}}^{*}$ contains nonzero $\liegdrbhmath(A)_{\qbar}$-invariant classes. However, the $\qbar$-span of dRB/Hodge classes in $\mathrm{End}(\betti)\otimes\qbar$ lies in the summand $V_{\sigma}\otimes V_{\sigma}^{*}\oplus V_{\overline{\sigma}}\otimes V_{\overline{\sigma}}^{*}$, which again gives the contradiction. Therefore $\liegdrbssmath(A)_{\qbar}\cong\mathrm{sl}(4)$. Combining this with the equality of centres gives
$\liegdrbmath(A)=\mathrm{mt}(A)$. Both groups are connected, hence
$\gdrbmath(A)=\mathrm{MT}(A)$.
\end{proof}
\begin{remark}\label{13remark}
If $A$ is of anti-Weil type with endomorphism field $K$, then Proposition \ref{twocentressame} combined with Theorem \ref{deg2mz4} show that $\mathrm{Z}(\gdrbhmath(A))$ is isogenous to $\mathrm{U}_{K}$. A direct computation then shows that the degree-two invariants constructed in the proof of the preceding corollary for the first three possibilities of $\rho_{\sigma}$ listed in Lemma \ref{imaginaryquadraticclassification} are not fixed by $\mathrm{Z}(\gdrbhmath(A))$. Hence the argument from the previous Corollary \ref{weiltypedrb} cannot be used to determine the de Rham-Betti group of $A$. The Mumford-Tate group of $A$, however, is determined by Ribet in \cite[Theorem 3]{ribet1983hodge}. His proof uses the Deligne torus definition of Mumford-Tate groups in a fundamental way, and thus cannot be transferred to the de Rham-Betti setting.
\end{remark}

\subsection{Anti-Weil Type Abelian Fourfolds}\label{imaginaryquadraticsection}

The goal of this subsection is to prove Theorem \ref{imaginaryquadraticthm}. The case of Weil type abelian fourfolds has already been treated in Corollary \ref{weiltypedrb}. The main point is therefore to treat the case of anti-Weil type abelian fourfolds, as explained in Remark \ref{13remark}.

\begin{theorem}\label{imaginaryquadraticthm}
Let $A$ be a simple abelian fourfold defined over $\qbar$ such that $\mathrm{End}^{\circ}(A)=K$, where $K$ is an imaginary quadratic field. Then $\gdrbmath(A)=\mathrm{MT}(A)$.
\end{theorem}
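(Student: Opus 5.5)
The Weil type case is Corollary \ref{weiltypedrb}, so I assume $A$ is of anti-Weil type, with the multiplicity of $\sigma$ equal to $1$ and that of $\overline{\sigma}$ equal to $3$. By Proposition \ref{twocentressame} the centres of $\liegdrbmath(A)$ and $\mathrm{mt}(A)$ agree. It therefore suffices to show $\liegdrbssmath(A)_{\qbar}\cong\mathrm{sl}(4)$; the equality $\gdrbmath(A)=\mathrm{MT}(A)$ then follows from connectedness, exactly as at the end of Corollary \ref{weiltypedrb}. By Lemma \ref{imaginaryquadraticclassification} I must exclude the first three cases. The plan is to turn the extra splitting of $\wedge^2\rho_\sigma$ from Lemma \ref{wedgecandidateclassification} into a de Rham--Betti endomorphism of a weight-one dRB structure, and then use Theorem \ref{bostoriginal}.

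\textbf{Step 1: an extra dRB endomorphism.} Consider the Weil-type substructure $W:=\bigl(\wedge^2 V_\sigma\oplus\wedge^2 V_{\overline{\sigma}}\bigr)$, which descends to a $\mathbb{Q}$-subspace of $\mathrm{H}^2(A,\mathbb{Q})$ stable under $K$; it is a dRB substructure of $\mathrm{H}^2_{\mathrm{dRB}}(A,\mathbb{Q})$, since $\gdrbmath(A)$ preserves each eigenspace. Suppose we are in one of the first three cases. Then $\wedge^2 V_\sigma=U_1\oplus U_2$ with non-isomorphic irreducible summands, and the same holds for the dual decomposition of $\wedge^2V_{\overline{\sigma}}$ by Lemma \ref{lemmaeigenspaceirr}. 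Because the two summands are non-isomorphic, the $\gdrbmath(A)_{\qbar}$-equivariant projectors are canonical. Hence the algebra $\mathrm{End}_{\gdrbmath(A)}(W)$, which is defined over $\mathbb{Q}$, has $\qbar$-dimension at least $4$, strictly bigger than $K\otimes\qbar$. This is strictly more than the Hodge endomorphisms allow: $\mathrm{mt}^{\mathrm{ss}}(A)_{\qbar}=\mathrm{sl}(4)$ acts irreducibly on $\wedge^2V_\sigma$, so $\mathrm{End}_{\mathrm{MT}(A)}(W)=K$.

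\textbf{Step 2: realize $W$ as $\mathrm{H}^1$ of an abelian variety.} This is the role of Proposition \ref{antiweilabelianrealization}. The Hodge types of $W$ are as follows: $\wedge^2V_\sigma$ has types $(1,1)^3,(0,2)^3$, and $\wedge^2V_{\overline{\sigma}}$ has types $(2,0)^3,(1,1)^3$. Van Geemen's half-twist with respect to $K$ shifts the $\sigma$-part and the $\overline{\sigma}$-part in opposite directions. This yields a polarizable weight-one Hodge structure with $h^{1,0}=h^{0,1}=6$, i.e.\ an abelian sixfold $B$. The dRB analogue I would use is to tensor $W$ over $K$ with $\mathrm{H}^1_{\mathrm{dRB}}$ of a CM elliptic curve $E_K$ with CM by $K$ (or an appropriate $K$-rank-one Tate-twisted piece of it). One then shows that the resulting weight-one dRB structure is isomorphic to $\mathrm{H}^1_{\mathrm{dRB}}(B,\mathbb{Q})$ for some abelian sixfold $B$ defined over $\qbar$. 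The periods of $E_K$ are controlled by Lemma \ref{gross}. The key point is that this isomorphism is compatible with the $K$-action, so that $K$-linear dRB endomorphisms of $W$ give dRB endomorphisms of $\mathrm{H}^1_{\mathrm{dRB}}(B)$.

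\textbf{Step 3: conclude.} The projector from Step 1 commutes with $K$, so it yields an element of $\mathrm{End}(\mathrm{H}^1_{\mathrm{dRB}}(B,\mathbb{Q}))$. By Theorem \ref{bostoriginal}, this endomorphism comes from $\mathrm{End}^{\circ}(B)$ and is therefore Hodge. Undoing the half-twist, it gives a Hodge endomorphism of $W$ outside $K$, which contradicts Step 1. Hence only the $\mathrm{sl}(4)$ case survives.

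I expect the main obstacle to be Step 2: building the half-twist on the dRB side, with $B$ genuinely defined over $\qbar$ and the comparison isomorphism matching $\mathrm{H}^1_{\mathrm{dRB}}(B)$. My first attempt would define $B$ via the polarizable weight-one Hodge structure, and show it is defined over $\qbar$ because it lies in the Tannakian category generated by $A$ and $E_K$, which are both defined over $\qbar$ (through a Kuga--Satake-style or Hodge-cycle/motivated-cycle argument using André's results). The dRB structure then comes from motivated correspondences, which are dRB.
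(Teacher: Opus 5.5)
Your proposal is correct and follows essentially the same route as the paper. The paper likewise forms the half-twist $(\wedge^2V_\sigma\otimes E_{\overline\sigma})\oplus(\wedge^2V_{\overline\sigma}\otimes E_\sigma)$ and realizes it as $\mathrm{H}^1_{\mathrm{dRB}}(B,\mathbb{Q})$ for an abelian sixfold $B$ over $\qbar$. It then uses Theorem \ref{bostoriginal} to make the $\gdrbmath$-equivariant projector Hodge, which contradicts the irreducibility of $\wedge^2V_\sigma$ under $\mathrm{sl}_4$. The step you flag is exactly Proposition \ref{antiweilabelianrealization}, and the paper proves it without any Kuga--Satake-type argument. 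It uses Deligne's absolute Hodge theorem, the algebraic projector $p_U$ defined over $\qbar$ (which gives $B^\tau\sim B$ for all $\tau\in\mathrm{Aut}(\mathbb{C}/\qbar)$), and the Patrikis--Voloch--Zarhin descent lemma; the Hodge class identifying $\mathrm{H}^1(B)$ with this half-twist is then dRB by Andr\'e.
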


We assume from now on that $A$ is of anti-Weil type (See Definition \ref{defnweiltype}) with endomorphism field $K$. Set
$V=\betti$ and write
$\mathrm{Hom}(K,\qbar)=\{\sigma,\overline\sigma\}$. We keep the same notation as Subsection \ref{sectiondecomp}. We may assume the Hodge decomposition satisfies
$\dim V_\sigma^{1,0}=1, \dim V_{\overline\sigma}^{1,0}=3$. Our goal is to exclude the first three possibilities for $\rho_{\sigma}$ from Lemma \ref{imaginaryquadraticclassification} using the decomposition result from Lemma \ref{wedgecandidateclassification}. However, the relevant decomposition of the representation $\wedge^2 \rho_{\sigma}$ arises at degree-two cohomology of $A$, and the resulting discrepancy between invariants can only be detected at $\mathrm{End}(\mathrm{H}^2_{\mathrm{dRB}}(A,\mathbb{Q}))$. This exceeds the scope of the Analytic Subgroup Theorem used in \cite{bost2016some} or \cite[Section 7.5.3]{andre2004introduction}. To get around this obstacle, we will use a construction first made by van Geemen in \cite[Proposition 2.8]{van2001half}. First, let $E_K$ be an elliptic curve defined over $\qbar$ with complex multiplication by $K$ and with the following CM-type: Consider the eigenspace decomposition with respect to the $K$-action
\begin{equation}\label{elliptichomologydecomp}
\mathrm H_1(E_K,\mathbb Q)\otimes_{\mathbb Q}\qbar
=E_\sigma\oplus E_{\overline\sigma},
\end{equation}
then the CM-type of $E_{K}$ satisfies $E_\sigma$ is of Hodge type $(-1,0)$ and
$E_{\overline{\sigma}}$ is of Hodge type $(0,-1)$. We also denote
\[
\mathrm H_{1,\mathrm{dRB}}(E_K,\mathbb Q)
:=\mathrm H^1_{\mathrm{dRB}}(E_K,\mathbb Q)^{\vee}
\]
for its corresponding dual dRB structure.

Consider the following $\qbar$-subspace
\begin{equation}\label{antiweilhalfstruct}
U_{1/2,\qbar}:=
\bigl(\wedge^2V_\sigma\otimes E_{\overline\sigma}\bigr)
\oplus
\bigl(\wedge^2V_{\overline\sigma}\otimes E_\sigma\bigr)
\subset
\mathrm H^2(A,\mathbb Q)\otimes
\mathrm H_1(E_K,\mathbb Q)\otimes\qbar.
\end{equation}
Note that $U_{1/2,\qbar}$ is preserved by the $\absgalois$-action. Hence it descends to a $\mathbb{Q}$-subspace $U_{1/2}$ of $\mathrm H^2(A,\mathbb Q)\otimes
\mathrm H_1(E_K,\mathbb Q)$.
\begin{lemma}\label{antiweilpolarizable}
The $\mathbb{Q}$-vector space $U_{1/2}$ is a polarizable weight-one Hodge
substructure of $\mathrm H^2(A,\mathbb Q)\otimes
\mathrm H_1(E_K,\mathbb Q)$, i.e., it only has Hodge types $(1,0)$ and $(0,1)$. Moreover, $U_{1/2}$ can also be viewed as a de Rham-Betti substructure of $\mathrm H^2_{\mathrm{dRB}}(A,\mathbb Q)\otimes
\mathrm H_{1,\mathrm{dRB}}(E_K,\mathbb Q)$.
\end{lemma}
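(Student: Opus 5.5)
Three things have to be checked: that $U_{1/2}$ is a Hodge substructure of weight one, that it is polarizable, and that it is a dRB substructure. All three come from the fact that $U_{1/2,\qbar}$ is a sum of isotypic pieces for the diagonal action of $K$.

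\textbf{Rationality and Hodge types.} Let $K$ act on $\mathrm H^2(A,\mathbb Q)\otimes\mathrm H_1(E_K,\mathbb Q)$ through the tensor product of the two given actions. Put $\tau:=\sigma^2\otimes\overline\sigma$, a $\qbar$-valued character of $K^\times$. Then $U_{1/2,\qbar}$ is exactly the sum of the eigenspaces on which the torus $\mathrm{Res}_{K/\mathbb Q}\gmmath$ acts through $\tau$ or through $\overline\tau$; here $\wedge^2V_\sigma$ has character $\sigma^2$ and $E_{\overline\sigma}$ has character $\overline\sigma$. This set of characters is $\absgalois$-stable, since Galois permutes $\{\sigma,\overline\sigma\}$. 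This gives a direct reason why $U_{1/2,\qbar}$ descends to $U_{1/2}$.

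Both the Mumford--Tate group of $A\times E_K$ and its dRB group commute with $K\otimes K$. So they preserve each eigenspace of $V_\qbar\otimes\mathrm H_1(E_K)_\qbar$, and hence also $\wedge^2V_\sigma\otimes E_{\overline\sigma}$ and its conjugate. Since $U_{1/2}$ is cut out inside $\langle\mathrm H^1(A\times E_K)\rangle^{\otimes}$ by subspaces stable under these groups, it is both a Hodge substructure and a dRB substructure. For the dRB side, I would express $U_{1/2}$ as the image of the idempotent in $K\otimes K\subset\mathrm{End}(\mathrm H^2_{\mathrm{dRB}}(A)\otimes\mathrm H_{1,\mathrm{dRB}}(E_K))$ attached to these characters. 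This idempotent is a dRB endomorphism, because $K$ acts through dRB endomorphisms by Theorem \ref{bostoriginal}.

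Next I compute the Hodge types. We have $V_\sigma=V_\sigma^{1,0}\oplus V_\sigma^{0,1}$ with dimensions $1$ and $3$. Hence $\wedge^2V_\sigma$ has only types $(1,1)$ (dimension $3$) and $(0,2)$ (dimension $3$); there is no $(2,0)$ part because $\dim V_\sigma^{1,0}=1$. Tensoring with $E_{\overline\sigma}$, which has type $(0,-1)$, gives types $(1,0)$ and $(0,1)$. The conjugate summand behaves symmetrically: $\wedge^2V_{\overline\sigma}$ has types $(2,0)$ and $(1,1)$, and $E_\sigma$ has type $(-1,0)$. So $U_{1/2}$ is pure of weight one with Hodge numbers $(6,6)$.

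\textbf{Polarizability.} Since $U_{1/2}$ is a Hodge substructure of the polarizable Hodge structure $\mathrm H^2(A)\otimes\mathrm H_1(E_K)$, it is polarizable, because polarizable Hodge structures form a semisimple category closed under sub-objects. To be explicit, take the form $\phi'$ from Lemma \ref{evendegreeweildrb} tensored with the polarization of $E_K$, and restrict it to $U_{1/2}$. The Riemann bilinear relations then follow from the standard fact that restricting a polarization to a sub-Hodge structure remains a polarization, after the appropriate Weil-operator sign twist for the weight. The only delicate point is bookkeeping the Tate twists: $U_{1/2}$ sits in weight $2-1=1$, so no twist is needed.

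\textbf{Main obstacle.} The step needing the most care is the dRB claim. One must check that $\wedge^2V_\sigma\otimes E_{\overline\sigma}$ really is the eigenspace of the $K\otimes K$-action on the dRB structure, rather than merely on the Hodge structure. This requires that the de Rham and Betti eigenspace decompositions match under the comparison isomorphism. They do, because the $K$-actions are induced by algebraic endomorphisms of $A$ and $E_K$, which are compatible with the comparison isomorphism $\rho$.
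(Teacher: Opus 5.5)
Your proposal is correct in substance and follows the paper's route. The Hodge types of $\wedge^2V_\sigma\otimes E_{\overline\sigma}$ and $\wedge^2V_{\overline\sigma}\otimes E_\sigma$ are computed exactly as in the paper's displayed decomposition. Polarizability comes from restricting a polarization of the polarizable weight-one structure $\mathrm H^2(A,\mathbb Q)\otimes\mathrm H_1(E_K,\mathbb Q)$; your explicit $\phi'$ is fine, being half the restriction of $\phi\otimes\phi$ to $\wedge^2V$. The dRB claim comes from stability under $\gdrbmath(A\times E_K)\subset\mathrm{MT}(A\times E_K)$, which is also what the paper uses.

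One assertion in your first paragraph is false, however. Under a single copy of $\mathrm{Res}_{K/\mathbb Q}\mathbb G_{\mathrm m}$ acting diagonally, the summand $(V_\sigma\wedge V_{\overline\sigma})\otimes E_\sigma$ also has character $\sigma\overline\sigma\cdot\sigma=\sigma^2\overline\sigma=\tau$. So the $\{\tau,\overline\tau\}$-isotypic part has dimension $2(6+16)=44$ and even contains Hodge type $(-1,2)$; it is not $U_{1/2,\qbar}$. You must let $K^\times\times K^\times$ act separately on the two tensor factors and use the character $(s,t)\mapsto\sigma(s)^2\overline\sigma(t)$ and its conjugate.

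Your idempotent has to be built from this product action. That is exactly why the paper's Lemma \ref{antiweilprojector} includes the factor $\frac12(1-\frac{R\otimes1}{d})$ with $R=\wedge^2J_1$. This factor kills $V_\sigma\otimes V_{\overline\sigma}\otimes\mathrm H_1(E_K,\mathbb Q)$, on which the factor built from $S\otimes J_2$ acts only by $\frac12$.

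The slip is not load-bearing. Descent follows directly from $\absgalois$ permuting $\{\sigma,\overline\sigma\}$. Stability follows from $\mathrm{MT}(A\times E_K)\subset\mathrm{MT}(A)\times\mathrm{MT}(E_K)$, which preserves $V_\sigma$ and $E_{\overline\sigma}$ individually.

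Finally, $K$ acts through dRB endomorphisms simply because it acts through algebraic endomorphisms. Theorem \ref{bostoriginal} is the converse direction and is not needed here.
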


\begin{proof}
This is already explained in Proposition 2.8 of \cite{van2001half}. We write down the Hodge decomposition explicitly.
\begin{equation}
\begin{aligned}
\mathrm{H}^{1,0}(U_{1/2,\mathbb{C}})
    &= V_{\sigma}^{1,0}\otimes V_{\sigma}^{0,1}
       \otimes E_{\overline{\sigma},\mathbb{C}}
       \oplus \wedge^2 V_{\overline{\sigma}}^{1,0}
       \otimes E_{\sigma,\mathbb{C}}, \\
\mathrm{H}^{0,1}(U_{1/2,\mathbb{C}})
    &=V_{\overline{\sigma}}^{1,0}\otimes V_{\overline{\sigma}}^{0,1}
       \otimes E_{\sigma,\mathbb{C}}
       \oplus \wedge^2 V_{\sigma}^{0,1}
       \otimes E_{\overline{\sigma},\mathbb{C}}.
\end{aligned}
\end{equation}
Moreover, both $\betti$ and $\mathrm{H}_1(E_{K},\mathbb{Q})$ admit a polarization and therefore the weight-one sub-Hodge structure $U_{1/2}$ is polarizable. By Theorem \ref{ji_1thmsummarize}, if $U_{1/2}$ is preserved by $\mathrm{MT}(A\times E_{K})$, it is also preserved by $\gdrbmath(A\times E_{K})$. Thus $U_{1/2}$ is also a natural dRB substructure.
\end{proof}
Relating to $\mathrm H^2(A,\mathbb Q)\otimes
\mathrm H_1(E_K,\mathbb Q)$, the structure $U_{1/2}$ satisfies stronger properties, as demonstrated in the following lemma.

\begin{lemma}\label{antiweilprojector}
There exists an idempotent endomorphism $p_U:\mathrm H^2(A,\mathbb Q)
\otimes\mathrm H_{1}(E_K,\mathbb Q)
\longrightarrow
\mathrm H^2(A,\mathbb Q)
\otimes\mathrm H_{1}(E_K,\mathbb Q)$ whose image is precisely
$U_{1/2}$. Furthermore, it is induced by a $\mathbb Q$-coefficient algebraic cycle defined over $\qbar$.
\end{lemma}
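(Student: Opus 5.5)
The idea is to write $p_U$ as a polynomial, with rational coefficients, in endomorphisms that are known to be algebraic. The ingredients are the actions of $K$ on $A$ and on $E_K$, together with the Hodge decomposition of $\mathrm{H}^2$. Everything is built in $\mathrm{End}(\mathrm H^2(A,\mathbb Q))\otimes\mathrm{End}(\mathrm H_1(E_K,\mathbb Q))$, which sits inside $\mathrm H^*(A^2\times E_K^2)$ via Künneth. Every operator constructed below is a correspondence on $A\times A\times E_K\times E_K$ defined over $\qbar$, since all the varieties and endomorphisms involved are defined over $\qbar$.

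\textbf{Step 1: idempotents attached to $K$.}
\begin{itemize}
\item $K=\mathrm{End}^\circ(A)$ acts on $V$ by algebraic correspondences, so $K\otimes K$ acts on $\wedge^2 V=\mathrm H^2(A,\mathbb Q)$ via $a\otimes b\mapsto (v\wedge w\mapsto av\wedge bw+bv\wedge aw)$, symmetrized appropriately. These are algebraic, because they are pullbacks and pushforwards along graphs of endomorphisms composed with the cup product.
\item The decomposition $\wedge^2V_{\qbar}=\wedge^2V_\sigma\oplus\wedge^2V_{\overline\sigma}\oplus(V_\sigma\otimes V_{\overline\sigma})$ is the eigenspace decomposition for the induced action of $K\otimes_{\mathbb Q}K\cong K\times K$, realized through the operators $x\mapsto \wedge^2(k)$ for $k\in K$.
\item Concretely, take $k=\sqrt{-D}\in K$. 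The operator $N:=k\otimes 1+1\otimes k$ acts by $2\sigma(k)$, $2\overline\sigma(k)=-2\sigma(k)$, and $0$ on the three summands. Hence $\wedge^2V_\sigma\oplus\wedge^2V_{\overline\sigma}$ is the image of $N^2/(4\sigma(k)^2)$, which is a rational polynomial in $N$. This gives an algebraic idempotent $e$ on $\mathrm H^2(A)$.
\item The operator $N$ itself separates $\wedge^2V_\sigma$ from $\wedge^2V_{\overline\sigma}$ by the eigenvalues $\pm 2\sigma(k)$. Likewise $k$ acts on $\mathrm H_1(E_K)$ with eigenvalues $\sigma(k)$ on $E_\sigma$ and $-\sigma(k)$ on $E_{\overline\sigma}$.
\end{itemize}

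\textbf{Step 2: the projector.} Consider the operator $M:=N\otimes 1+2\cdot 1\otimes k_{E}$ on $\mathrm H^2(A)\otimes\mathrm H_1(E_K)$.
\begin{itemize}
\item $M$ acts by $0$ on $\wedge^2V_\sigma\otimes E_{\overline\sigma}$ and on $\wedge^2V_{\overline\sigma}\otimes E_\sigma$.
\item $M$ acts by $\pm4\sigma(k)$ on the other two $\wedge^2$-pieces.
\item On $(V_\sigma\otimes V_{\overline\sigma})\otimes E$ it acts by $\pm 2\sigma(k)$.
\end{itemize}
So $U_{1/2,\qbar}$ is exactly the kernel of $M$ restricted to the image of $e\otimes 1$. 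Since $M$ is semisimple, with eigenvalues in $\{0,\pm2\sigma(k),\pm4\sigma(k)\}$, define
$$p_U:=(e\otimes 1)\cdot\Bigl(1-\tfrac{M^2}{16\sigma(k)^2}\Bigr).$$
The coefficient $\sigma(k)^2=-D$ is rational, so $p_U$ has rational coefficients. By construction it is an idempotent commuting with everything, and its image is $U_{1/2}$. (Using Lagrange interpolation on the finite spectrum of $M$ restricted to $\mathrm{im}(e\otimes1)$ also works.)

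\textbf{Step 3: algebraicity.} $p_U$ is a $\mathbb Q$-polynomial in $e\otimes1$, $N\otimes1$ and $1\otimes k_E$. Each of these is the cohomology class of a $\mathbb Q$-correspondence on the relevant products, and compositions and tensor products of correspondences remain algebraic. Under Künneth and Poincaré duality, the cycles are:
\begin{itemize}
\item graphs of elements of $\mathrm{End}(A)$ and $\mathrm{End}(E_K)$, all defined over $\qbar$;
\item the Künneth projector onto $\mathrm H^2(A)$, which is algebraic for abelian varieties by the Deninger–Murre / Beauville decomposition via multiplication-by-$n$ maps.
\end{itemize}

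\textbf{Expected obstacle.} The main care point is the bookkeeping: checking that the action of $K$ on $\mathrm H^2=\wedge^2\mathrm H^1$ given by $k\otimes1+1\otimes k$ is genuinely induced by a correspondence, and not merely by a Hodge endomorphism. This is handled by writing it as $\tfrac12\bigl((1+k)^*-1^*-k^*\bigr)$ composed appropriately. More simply, one can use the derivation action of $k\in\mathrm{End}(A)\otimes\mathbb Q$ on $\wedge^2$, expressed through pullbacks by $1+k$, $1$ and $k$ restricted to $\mathrm H^2$, which are polynomial in graph correspondences. One also has to check that the twist/dual conventions for $\mathrm H_1(E_K)$ match the eigenvalue assignment above.
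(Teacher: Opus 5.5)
Your proposal is correct and follows the paper's route: the paper likewise writes $p_U$ as a rational polynomial in the $K$-actions on $A$ and $E_K$, namely $\frac12(1-\frac{R\otimes 1}{d})\circ\frac12(1+\frac{S\otimes J_2}{2d})$, where $S$ is your $N$ and $R=\wedge^2 J_1$. Since $N^2=2R-2D$ on $\wedge^2 V$, your $e$ equals $\frac12(1-\frac{R}{D})$ and $1+\frac{M^2}{16D}$ equals $\frac12(1+\frac{N\otimes k_E}{2D})$ on the image of $e\otimes 1$, so your idempotent is literally the paper's; the only slip is the harmless factor $\tfrac12$ in Step 3, because $(1+k)^*-1^*-k^*$ already equals $N$ on $\mathrm{H}^2$.
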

\begin{proof}
    Let $K=\mathbb{Q}(\sqrt{-d})$ where $d$ is a positive integer. Denote by $J_1$ the image of $\sqrt{-d}$ in $\mathrm{End}(\betti)$ and $J_2$ the image of $\sqrt{-d}$ in $\mathrm{End}(\mathrm{H}_1(E_{K},\mathbb{Q}))$. Denote by $S$ the linear operator on $\wedge^2\betti$ which sends $x\wedge y$ to $J_1\circ x\wedge y+x\wedge J_1\circ y$. Then checking the eigenvalues of these linear operators, under the map $\frac{1}{2}(1+\frac{S\otimes J_2}{2d})$, only the subspace $\wedge^2V_{\sigma}\otimes E_{\overline\sigma}\oplus\wedge^2V_{\overline{\sigma}}\otimes E_{\sigma}\oplus V_{\sigma}\otimes V_{\overline{\sigma}}\otimes E_{\sigma}\oplus V_{\sigma}\otimes V_{\overline{\sigma}}\otimes E_{\overline{\sigma}}$ remains. Denote by $R$ the linear operator on $\wedge^2 \betti$ which sends $x\wedge y$ to $J_1\circ x\wedge J_1\circ y$, then under the idempotent projection map $\frac{1}{2}(1-\frac{R}{d})$, only the summand $\wedge^2V_{\sigma}\oplus \wedge^2V_{\overline{\sigma}}$ remain. Hence the algebraic endomorphism $\frac{1}{2}(1-\frac{R\otimes 1}{d})\circ\frac{1}{2}(1+\frac{S\otimes J_2}{2d})$ is the desired idempotent.
\end{proof}

By Lemma \ref{antiweilpolarizable}, there exists a complex abelian sixfold $B$ such that $\mathrm{H}^1(B,\mathbb{Q})$ is isomorphic to $U_{1/2}$ as Hodge structures. Moreover, the following is true.
\begin{proposition}\label{antiweilabelianrealization}
There exists an abelian sixfold $B$ defined over $\qbar$ and an
isomorphism of dRB structures
\begin{equation}\label{antiweildrbisomorphism}
\mathrm H^1_{\mathrm{dRB}}(B,\mathbb Q)
\xrightarrow{\ \sim\ }U_{1/2}.
\end{equation}
\end{proposition}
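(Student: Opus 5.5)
The plan is to realize $U_{1/2}$ motivically, as a direct summand of the cohomology of an abelian variety over $\qbar$ cut out by an algebraic projector, and then read off $B$ inside that variety.

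\emph{Step 1: a complex abelian variety with the right Hodge structure.} Lemma \ref{antiweilpolarizable} shows that $U_{1/2}$ is a polarizable weight-one $\mathbb{Q}$-Hodge structure. Its dimension is $2\cdot\binom{4}{2}=12$. By Riemann's theorem there is therefore a complex abelian sixfold $B$ with $\mathrm{H}^1(B,\mathbb Q)\cong U_{1/2}$ as Hodge structures, and this isomorphism is Hodge-theoretic.

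\emph{Step 2: descend to $\qbar$ and upgrade to a motivated morphism.} Consider $X:=A^{2}\times E_K^{m}$ for suitable $m$, so that $\mathrm H^2(A)\otimes\mathrm H_1(E_K)$ is a Tate twist of a direct summand of $\mathrm H^3(X)$. Here we use $\mathrm H_1(E_K)\cong \mathrm H^1(E_K)(1)$ via the polarization, and $\mathrm H^2(A)\subset \mathrm H^1(A)^{\otimes 2}$. By Lemma \ref{antiweilprojector}, $U_{1/2}$ is the image of an algebraic idempotent $p_U$ defined over $\qbar$. The isomorphism $\mathrm H^1(B)\cong U_{1/2}$ is a Hodge class in $\mathrm H^1(B)^{\vee}\otimes \mathrm H^3(X)(1)$ on $B\times X$. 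Since all varieties involved are abelian, Deligne--André's theorem makes this Hodge class motivated.

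First we must know that $B$ itself can be taken over $\qbar$. Rigidity gives this. The Hodge structure $U_{1/2}$ lies in $\langle \mathrm H^1(A\times E_K)\rangle^{\otimes}$. So $\mathrm{MT}(U_{1/2})$ is a quotient of $\mathrm{MT}(A\times E_K)$, and hence of $\mathrm{MT}(B)$. A standard argument then applies: the Kuga--Satake-type map, or more simply the fact that $B$ is the abelian variety attached to a motivated summand of an abelian variety over $\qbar$, shows that $B$ has a model over $\qbar$. Concretely, $B$ is isogenous to an abelian subvariety of a power of $A\times E_K$ up to the ``half-twist'' of van Geemen.

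The safer route is to construct $B$ directly as an abelian subvariety of $A^{k}\times E_K^{l}$ over $\qbar$. To do this, realize $U_{1/2}$ as the image of an algebraic idempotent acting on $\mathrm H^1$ of such a product; here one uses van Geemen's observation that the half-twisted structure embeds into a product of $\mathrm H^1$'s twisted by $\mathrm H_1(E_K)$. I expect this step to be the main obstacle. $U_{1/2}$ naturally sits in degree two of $A$, not in $\mathrm H^1$ of a product. So one must argue that any weight-one motivated summand of an abelian motive over $\qbar$ comes from an abelian variety over $\qbar$. This holds because motivated weight-one motives over $\qbar$ form a category equivalent to abelian varieties up to isogeny over $\qbar$; I would invoke André's results, using that the complex $B$ from Step 1 is a summand of a motive over $\qbar$ and that abelian varieties have no moduli once the Hodge structure and field of definition of the motive are fixed.

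\emph{Step 3: compatibility with dRB.} Once $B$ is over $\qbar$ and the isomorphism $\mathrm H^1(B)\cong U_{1/2}$ is induced by a motivated cycle on $B\times X$ over $\qbar$, we use that motivated classes are dRB classes (Figure~\ref{fig:classes-and-groups}). The realization of this cycle then intertwines the de Rham and Betti comparison isomorphisms. Hence it gives an isomorphism of dRB structures $\mathrm H^1_{\mathrm{dRB}}(B,\mathbb Q)\cong U_{1/2}$, where $U_{1/2}$ carries the dRB structure of Lemma \ref{antiweilpolarizable}.
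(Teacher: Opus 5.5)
\textbf{Genuine gap in Step 2.} Your Steps 1 and 3 agree with the paper. Step 2, however, is where all the content lies: you must produce a model of $B$ over $\qbar$. This is not established, and the concrete routes you propose fail.

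\emph{The ``safer route'' is impossible.} $B$ is not isogenous to an abelian subvariety of any $A^k\times E_K^l$, and $U_{1/2}$ cannot be cut out of $\mathrm{H}^1(A^k\times E_K^l,\mathbb{Q})$ by an idempotent. The derived Lie algebra of $\mathrm{mt}(A\times E_K)$ is, over $\qbar$, equal to $\mathrm{sl}(V_\sigma)$, since $E_K$ has CM. It acts on $U_{1/2,\qbar}$ through the six-dimensional irreducible representations $\wedge^2V_\sigma$ and $\wedge^2V_{\overline\sigma}$. By contrast, $\mathrm{H}^1(A^k\times E_K^l,\mathbb{Q})\otimes\qbar$ contains only copies of $V_\sigma$, of $V_{\overline\sigma}\cong V_\sigma^{*}$, and of trivial representations. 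This mismatch is exactly why the half-twist carries new information.

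\emph{The other justifications do not help either.} The Mumford--Tate remark says nothing about fields of definition; in any case $\mathrm{MT}(B)=\mathrm{MT}(U_{1/2})$. Your fallback, that weight-one motivated motives over $\qbar$ are $h^1$ of abelian varieties over $\qbar$, is asserted without proof or precise reference, and in this instance it is essentially the statement to be proved. ``Abelian varieties have no moduli once the Hodge structure and field of definition of the motive are fixed'' is a slogan, not an argument.

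\textbf{The missing mechanism.} What is needed is a conjugation-plus-countability argument, and you already have every ingredient for it.
\begin{enumerate}
\item Let $F:\mathrm{H}^1(B,\mathbb{Q})\to\mathrm{H}^2(A,\mathbb{Q})\otimes\mathrm{H}_1(E_K,\mathbb{Q})$ be the Hodge morphism with image $U_{1/2}$, so that $p_U\circ F=F$.
\item Its class $Z_F$ on $B\times A^{\vee}\times E_K^{\vee}$ is absolute Hodge, by Deligne, or because it is motivated, as you note. Hence for every $\tau\in\mathrm{Aut}(\mathbb{C}/\qbar)$ the conjugate $F^{\tau}:\mathrm{H}^1(B^{\tau},\mathbb{Q})\to\mathrm{H}^2(A,\mathbb{Q})\otimes\mathrm{H}_1(E_K,\mathbb{Q})$ is again a morphism of Hodge structures. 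The target is unchanged because $A$ and $E_K$ are defined over $\qbar$.
\item The projector $p_U$ comes from an algebraic cycle defined over $\qbar$ (Lemma \ref{antiweilprojector}), so $p_U^{\tau}=p_U$. Therefore $p_U\circ F^{\tau}=F^{\tau}$.
\item Conjugation preserves rank, so $\mathrm{Im}(F^{\tau})=U_{1/2}$. Thus $\mathrm{H}^1(B^{\tau},\mathbb{Q})\cong\mathrm{H}^1(B,\mathbb{Q})$ as Hodge structures, i.e.\ $B^{\tau}$ is isogenous to $B$ for every $\tau$.
\item The descent lemma of Patrikis--Voloch--Zarhin (Lemma \ref{pvzdescentlemma}) then yields a model of $B$ over $\qbar$. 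It rests on the countability of the isogeny class of $B$ and of the field $\qbar$.
\end{enumerate}
Only after this does your Step 3 apply. $Z_F$ is then a Hodge class on a product of abelian varieties over $\qbar$, hence a dRB class. So $F$ is a morphism of dRB structures and gives the isomorphism of $\mathrm{H}^1_{\mathrm{dRB}}(B,\mathbb{Q})$ onto $\mathrm{Im}(p_U)=U_{1/2}$.
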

We need two results to establish the above proposition. The first one is a renowned theorem by Deligne.
\begin{theorem}[\cite{deligne1982hodgecycles}, Theorem 2.11]
\label{deligneabsolutehodge}
Every Hodge class on a complex abelian variety is an absolute Hodge
class.
\end{theorem}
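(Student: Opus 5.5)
Since Theorem~\ref{deligneabsolutehodge} is Deligne's theorem, I would follow the architecture of \cite{deligne1982hodgecycles}. The first ingredient is \emph{Principle B}. Let $f:\mathcal{A}\to S$ be an abelian scheme over a smooth connected complex variety, and let $t$ be a global flat section of the relevant tensor construction on $R^{\bullet}f_{*}\mathbb{Q}$ whose fibres are Hodge classes. If $t_{s_0}$ is absolute Hodge for one point $s_0$, then $t_s$ is absolute Hodge for every $s\in S$.

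I would prove Principle B as follows. For each $\tau\in\mathrm{Aut}(\mathbb{C})$, transport $t$ to the conjugate family $\mathcal{A}^{\tau}\to S^{\tau}$. The flat sections form a local system, and the condition of lying in $F^{0}$ of de Rham cohomology cuts out an algebraic, hence closed, subvariety of the locus where the section is flat. By the theorem of the fixed part (Deligne's global invariant cycle theorem), the flat section $t_{\mathrm{dR}}$ is everywhere in $F^0$ once it is at one point. This lets one propagate rationality of the conjugated de Rham component from $s_0$ to all of $S$.

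With Principle B in hand, I would reduce to CM abelian varieties. Given $A$ and a Hodge class $t$, consider the moduli space of polarized abelian varieties with level structure carrying the tensors fixed by $\mathrm{MT}(A)$. This is a (connected component of a) Shimura variety $S$ parametrizing abelian varieties whose Mumford--Tate group lies in $\mathrm{MT}(A)$, and $t$ extends to a flat Hodge section over it. Every such Shimura variety contains special points, i.e.\ CM abelian varieties: one takes a maximal torus of $\mathrm{MT}(A)_{\mathbb{R}}$ containing the image of a suitable Deligne torus and defined over $\mathbb{Q}$. By Principle B it then suffices to show that Hodge classes on CM abelian varieties are absolute Hodge.

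For a CM abelian variety, I would use the reduction due to Deligne (and, in the form I would actually use, André): every Hodge class on a CM abelian variety $A$ is a sum of pullbacks, under homomorphisms $A\to B$ from powers of $A$, of \emph{Weil classes} on abelian varieties $B$ of Weil type with respect to an imaginary quadratic field $K$, of the kind in Definition~\ref{weilstructure} with multiplicities $\{n/2,n/2\}$. This is a character computation on the torus $\mathrm{MT}(A)$, in the same spirit as Setup~\ref{torusinvsetup}. It remains to show that Weil classes are absolute Hodge. For that I would take the connected family of polarized abelian varieties of Weil type for $(K,\text{hermitian form})$, in which $\wedge^{n}_{K}\mathrm{H}^1$ is a flat family of Hodge classes. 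The family contains a member $B_0=E_K^{n}$, or a product $B_1\times B_2$ with suitable split signature, on which the Weil classes are visibly algebraic, being generated by products of divisor classes, and hence absolute Hodge. Principle B then finishes the argument.

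The main obstacle is the existence of this good member. Arranging that the family of Weil-type varieties with a given discriminant of the hermitian form actually contains a point where the Weil classes are algebraic requires a careful choice of hermitian forms, and one uses that only finitely many discriminant classes have to be handled. I would also expect care to be needed in the CM-reduction lemma, in the bookkeeping of characters ensuring that all Hodge classes, and not merely enough of them, are generated by Weil classes.
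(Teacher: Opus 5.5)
The paper does not prove this statement. It imports it from \cite{deligne1982hodgecycles} and uses it as a black box in Proposition~\ref{antiweilabelianrealization}, so the relevant comparison is with Deligne's argument. You reproduce its overall architecture: Principle B, deformation to a CM point inside a Mumford--Tate/Shimura family, then Weil classes.

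\textbf{The gap is in the CM step.} The reduction lemma as you state it, with Weil classes relative to \emph{imaginary quadratic} fields only, is false. For $K$ imaginary quadratic and $B$ of Weil type of dimension $n$, the Weil classes $\wedge^{n}_{K}\mathrm{H}^1(B,\mathbb{Q})$ lie in $\mathrm{H}^{n}(B,\mathbb{Q})$. So a pullback along a homomorphism $f\colon A\to B$ lands in degree $\dim B$. Now take a simple CM abelian threefold $A$ and its polarization class $\theta\in\mathrm{H}^2(A,\mathbb{Q})$, which is a Hodge class. Writing $\theta$ as a sum of such pullbacks would require nonzero homomorphisms from $A$ to abelian surfaces, and a simple threefold has none.

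What is true, in Deligne's proof and in Andr\'e's refinement, uses Weil classes $\wedge^{2m}_{E}\mathrm{H}^1(B,\mathbb{Q})$ relative to an \emph{arbitrary} CM field $E$, on abelian varieties of \emph{split} Weil type. Deligne combines these classes with Principle A rather than asking for every Hodge class to be a sum of pullbacks. Already $\theta$ needs this generality. Let $E=\mathrm{End}^{\circ}(A)$ act on $A\times A$ by $(\nu,\nu\circ c)$; this is of Weil type for $E$ with Weil classes in degree $2$. Their pullbacks along the diagonal span exactly the divisor classes of $A$.

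\textbf{Consequence for your final step.} For a general CM field $E$, the CM members obtained by diagonalising the $E$-hermitian form are products of CM abelian varieties with CM by $E$. Algebraicity of the Weil classes on these is not known in general. This is exactly why Deligne restricts to split Weil type. There the connected family contains $A_0\otimes_{\mathbb{Q}}E$, and on it $\wedge^{2m}_{E}\bigl(\mathrm{H}^1(A_0,\mathbb{Q})\otimes_{\mathbb{Q}}E\bigr)\cong\mathrm{H}^{2\dim A_0}(A_0,\mathbb{Q})\otimes_{\mathbb{Q}}E$ consists of algebraic classes.

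Conversely, the obstacle you single out is not one in the imaginary quadratic setting. Diagonalising the hermitian form over $K$ shows that every such family, of any discriminant, contains a member isogenous to a power of $E_K$, where Hodge classes are generated by divisors. Note also that $\mathbb{Q}^{\times}/\mathrm{N}(K^{\times})$ is infinite, so there is no finiteness of discriminant classes to exploit.

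\textbf{Principle B.} Your sketch misplaces the key point. The $F^0$ condition is automatic, since $\tau$ respects the Hodge filtration. What matters is that the Gauss--Manin connection is algebraic. Hence $\tau$ carries the global horizontal section $t_{\mathrm{dR}}$, which is algebraic (for instance by the theorem of the fixed part), to a horizontal section over $S^{\tau}$. Rationality at $s_0^{\tau}$ then propagates by parallel transport in $R^{\bullet}f^{\tau}_{*}\mathbb{Q}$.
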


\begin{lemma}[\cite{patrikis2016anabelian}, Lemma 3.6]
\label{pvzdescentlemma}
Let $k\subset\mathbb C$ be a countable field, and let $\overline{k}$
be its algebraic closure in $\mathbb C$. Let $B$ be a complex abelian
variety such that $B^\tau$ is isogenous to $B$ for every
$\tau\in\mathrm{Aut}(\mathbb C/k)$. Then there exists an abelian
variety $B_0$ over $\overline{k}$ such that
$B_0\otimes_{\overline{k}}\mathbb C\cong B$.
\end{lemma}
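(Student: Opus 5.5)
The idea is to turn the hypothesis into a countability statement about a point of a moduli space, and then use that $\mathbb C$ has uncountable transcendence degree over the countable field $k$. Fix $g=\dim B$ and a polarization $\lambda$ on $B$ of some degree $d^2$. Fix an integer $n\geq 3$ and a full level-$n$ structure $\alpha$ on $B$. The moduli problem of $g$-dimensional abelian varieties with a polarization of degree $d^2$ and a level-$n$ structure is represented by a fine moduli scheme $\mathcal A_{g,d,n}$. It is quasi-projective of finite type over $\mathbb Q(\zeta_n)$ and carries a universal abelian scheme. The triple $(B,\lambda,\alpha)$ then defines a point $x\in\mathcal A_{g,d,n}(\mathbb C)$. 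We work over the countable field $k'=k(\zeta_n)$, which has the same algebraic closure $\overline{k}$. For $\tau\in\mathrm{Aut}(\mathbb C/k')$, the point $\tau(x)$ corresponds to the transported triple $(B^\tau,\lambda^\tau,\alpha^\tau)$.

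\textbf{Step 1: the orbit of $x$ is countable.} By hypothesis each $B^\tau$ is isogenous to $B$. So it suffices to show that, up to isomorphism, only countably many triples $(B',\lambda',\alpha')$ have $B'$ isogenous to $B$. First, an abelian variety isogenous to $B$ is a quotient $B/G$ by a finite subgroup $G$. The finite subgroups of $B(\mathbb C)$ are contained in the countable group $B(\mathbb C)_{\mathrm{tors}}$, so there are countably many such quotients. Second, on a fixed $B'$ the polarizations lie in the finitely generated group $\mathrm{NS}(B')$, hence are countably many. Third, there are finitely many level-$n$ structures. Since $\mathcal A_{g,d,n}$ is a fine moduli space, distinct points give non-isomorphic triples. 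Hence the orbit $\{\tau(x):\tau\in\mathrm{Aut}(\mathbb C/k')\}$ is countable.

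\textbf{Step 2: a non-algebraic point has an uncountable orbit.} Let $Z$ be the smallest closed subscheme of $\mathcal A_{g,d,n}\otimes k'$ defined over $k'$ whose $\mathbb C$-points contain $x$. Then $Z$ is irreducible over $k'$, and $x$ maps to its generic point. Suppose $x\notin\mathcal A_{g,d,n}(\overline{k})$. Then $\dim Z=r>0$ and the residue field $k'(x)\subset\mathbb C$ has transcendence degree $r$ over $k'$.

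Now let $y$ be any $\mathbb C$-point of $Z$ lying over the generic point. Its residue field is $k'$-isomorphic to $k'(x)$. Because $\mathbb C$ is algebraically closed of infinite transcendence degree over $k'(x)$ and over $k'(y)$, the standard extension-of-isomorphisms argument extends this $k'$-isomorphism to an automorphism $\tau\in\mathrm{Aut}(\mathbb C/k')$ with $\tau(x)=y$. The $\mathbb C$-points of $Z$ over its generic point are uncountable: the complement of the countably many proper $k'$-closed subsets of a positive-dimensional variety over $\mathbb C$ is uncountable. This contradicts Step 1, so $x\in\mathcal A_{g,d,n}(\overline{k})$.

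\textbf{Step 3: descent.} Pull back the universal abelian scheme along the $\overline{k}$-point $x$. This gives an abelian variety $B_0$ over $\overline{k}$, and fineness of the moduli space gives $B_0\otimes_{\overline{k}}\mathbb C\cong B$.

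The main obstacle is Step 2. One has to identify the generic points of $Z$ with a single $\mathrm{Aut}(\mathbb C/k')$-orbit and show this set is uncountable; both facts use that $k$ is countable. Step 1 also needs care, to make sure the countability really accounts for polarizations and level structures, so that the count is of points of the moduli space and not merely of isogeny classes.
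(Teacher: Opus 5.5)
Your proof is correct. The paper gives no argument of its own for this lemma and simply quotes it from \cite{patrikis2016anabelian}. Your argument is the standard countability proof behind that result: the $\mathrm{Aut}(\mathbb C/k')$-orbit of the moduli point of $(B,\lambda,\alpha)$ is countable because an isogeny class contains only countably many polarized, level-structured abelian varieties, whereas a point not defined over $\overline{k}$ has an uncountable orbit.

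The one point to tighten is in Step 2. What you need there is that $\mathbb C$ has the same transcendence degree $2^{\aleph_0}$ over the countable fields $k'(x)$ and $k'(y)$, not merely infinite transcendence degree over each.
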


\begin{proof}[Proof of Proposition \ref{antiweilabelianrealization}]
Let $B$ be a complex abelian sixfold such that $\mathrm{H}^1(B,\mathbb{Q})$ is isomorphic to $U_{1/2}$ as Hodge structures via
\[
\iota:\mathrm H^1(B,\mathbb Q)\xrightarrow{\ \sim\ }U_{1/2}.
\]
This induces a morphism of Hodge structures
\begin{equation}\label{antiweilhodgeembedding}
F:\mathrm H^1(B,\mathbb Q)\longrightarrow
\mathrm H^2(A,\mathbb Q)\otimes\mathrm H_1(E_K,\mathbb Q)
\end{equation}
whose image is $U_{1/2}$. In particular,
\begin{equation}\label{antiweilprojectorrelation}
p_U\circ F=F.
\end{equation}

The morphism $F$ is represented by a Hodge class
\[
Z_{F}\in\mathrm H^4(B\times A^{\vee}\times E_K^{\vee},\mathbb Q)(2).
\]
Theorem \ref{deligneabsolutehodge} implies that this class is absolute Hodge. 

Let $\tau\in\mathrm{Aut}(\mathbb C/\qbar)$. Note $A$ and $E_K$ are defined over $\qbar$. The conjugate class $Z_{F}^\tau$ is then a Hodge class in $\mathrm H^4(B^{\tau}\times A^{\vee}\times E_K^{\vee},\mathbb Q)(2)$.
This induces a morphism of Hodge structures
\[
F^\tau:\mathrm H^1(B^\tau,\mathbb Q)\longrightarrow
\mathrm H^2(A,\mathbb Q)\otimes\mathrm H_1(E_K,\mathbb Q).
\]
By the explicit construction in Lemma \ref{antiweilprojector}, $p_U$ is in fact defined over $\qbar$. We thus have $p_U^{\tau}=p_U$. Conjugating \eqref{antiweilprojectorrelation} gives
\[
p_U\circ F^{\tau}=p_U^{\tau}\circ F^\tau=F^\tau.
\]
Consequently
$\mathrm{Im}(F^\tau)\subseteq\mathrm{Im}(p_U)$ lies in $U_{1/2}$. Conjugation
does not change the rank of the map $F$, and both spaces have dimension $12$;
hence $\mathrm{Im}(F^\tau)=U_{1/2}$. Thus $\mathrm H^1(B^\tau,\mathbb Q)\cong U_{1/2}\cong
\mathrm H^1(B,\mathbb Q)$ as rational Hodge structures. Hence $B^\tau$ is isogenous to $B$. Applying Lemma \ref{pvzdescentlemma} with $k=\qbar$, we obtain a model of $B$ over $\qbar$, which we also denote by $B$. Since $Z_{F}$ is already a Hodge class on a product of abelian varieties, it is a de Rham-Betti class, because of the inclusion of the dRB group inside the Mumford-Tate group, as explained in Subsection \ref{introsection1}. Hence
\eqref{antiweilhodgeembedding} is a morphism of dRB structures. Its Betti component identifies $\mathrm H^1(B,\mathbb Q)$ with the image of
$p_U$, which induces the dRB isomorphism
\eqref{antiweildrbisomorphism}.
\end{proof}

Via the abelian sixfold $B$, Proposition \ref{antiweilabelianrealization} tells us that every dRB endomorphism of $U_{1/2}$ is automatically a Hodge endomorphism. 

\begin{corollary}\label{antiweilendomorphisms}
Under the isomorphism \eqref{antiweildrbisomorphism}, we have
\[
\mathrm{End}_{\mathrm{dRB}}(U_{1/2})
=\mathrm{End}_{\mathrm{Hodge}}(U_{1/2}).
\]
\end{corollary}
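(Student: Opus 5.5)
The plan is to transport the question to $B$ via the dRB isomorphism \eqref{antiweildrbisomorphism} and then invoke Theorem \ref{bostoriginal} for $B$.

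\textbf{Step 1: transport to $B$.} Let $\iota:\mathrm H^1_{\mathrm{dRB}}(B,\mathbb Q)\xrightarrow{\sim}U_{1/2}$ be the isomorphism of Proposition \ref{antiweilabelianrealization}. Its Betti component is the Hodge isomorphism $\mathrm H^1(B,\mathbb Q)\cong U_{1/2}$ fixed in that proof. Hence conjugation $f\mapsto \iota^{-1}\circ f\circ\iota$ identifies the following two pairs of $\mathbb{Q}$-subalgebras of $\mathrm{End}_{\mathbb Q}(U_{1/2})\cong\mathrm{End}_{\mathbb Q}(\mathrm H^1(B,\mathbb Q))$:
\begin{itemize}
\item $\mathrm{End}_{\mathrm{dRB}}(U_{1/2})$ with $\mathrm{End}(\mathrm H^1_{\mathrm{dRB}}(B,\mathbb Q))$;
\item $\mathrm{End}_{\mathrm{Hodge}}(U_{1/2})$ with $\mathrm{End}_{\mathrm{Hodge}}(\mathrm H^1(B,\mathbb Q))$.
\end{itemize}
The second identification uses that $\iota$ is simultaneously a morphism of Hodge structures, since it is realized by the Hodge class $Z_F$.

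\textbf{Step 2: apply Bost--Charles to $B$.} Since $B$ is defined over $\qbar$, Theorem \ref{bostoriginal} gives $\mathrm{End}(\mathrm H^1_{\mathrm{dRB}}(B,\mathbb Q))\cong\mathrm{End}^{\circ}(B)$. By Lefschetz $(1,1)$, equivalently the Hodge-theoretic description of homomorphisms of complex abelian varieties, we also have $\mathrm{End}^{\circ}(B)=\mathrm{End}_{\mathrm{Hodge}}(\mathrm H^1(B,\mathbb Q))$. Combining these with Step 1 yields the claimed equality.

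\textbf{Checking compatibility of the identifications.} The one point needing care is that the identification in Theorem \ref{bostoriginal} is the natural one. An endomorphism $\varphi\in\mathrm{End}^{\circ}(B)$ acts on $\mathrm H^1_{\mathrm{dRB}}$ by $\varphi^*$, and this action is simultaneously a Hodge and a dRB endomorphism. Bost--Charles assert that these exhaust the dRB endomorphisms. This yields the inclusion $\mathrm{End}_{\mathrm{dRB}}\subseteq\mathrm{End}_{\mathrm{Hodge}}$.

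The reverse inclusion $\mathrm{End}_{\mathrm{Hodge}}(U_{1/2})\subseteq\mathrm{End}_{\mathrm{dRB}}(U_{1/2})$ can be checked independently, as a sanity check. A Hodge endomorphism of $U_{1/2}$ is a Hodge class in $\mathrm{End}(\mathrm H^1(B))\subset\langle \mathrm H^1(B)\rangle^{\otimes}$. It is therefore fixed by $\mathrm{MT}(B)\supseteq\gdrbmath(B)$, and hence is a dRB class, as in Subsection \ref{introsection1}.

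\textbf{Main obstacle.} I do not expect a real obstacle here: all the substantive work lies in Proposition \ref{antiweilabelianrealization}, namely descending $B$ to $\qbar$ and checking that the isomorphism is dRB. The only subtlety is bookkeeping. One must use a single Betti isomorphism $\iota$ for both the dRB and the Hodge statements, so that the two subalgebras being compared live in the same ambient space $\mathrm{End}_{\mathbb Q}(U_{1/2})$.
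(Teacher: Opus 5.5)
Your proposal is correct and follows the paper's route: the paper's one-line proof applies Theorem \ref{bostoriginal} to the $\qbar$-model of $B$ and transports the result through the isomorphism of Proposition \ref{antiweilabelianrealization}, which is both dRB and Hodge. Your extra bookkeeping (a single Betti identification $\iota$, and the standard equality $\mathrm{End}^{\circ}(B)=\mathrm{End}_{\mathrm{Hodge}}(\mathrm H^1(B,\mathbb Q))$, using that endomorphisms over $\qbar$ and over $\mathbb{C}$ agree) makes explicit what the paper leaves implicit.
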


\begin{proof}
This follows from Theorem \ref{bostoriginal} and the proposition above.
\end{proof}

\begin{proposition}\label{antiweilexclusion}
For a simple anti-Weil type abelian fourfold $A$, the first three cases
of Lemma \ref{imaginaryquadraticclassification} do not occur.
\end{proposition}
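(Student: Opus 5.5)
We argue by contradiction. Suppose $\rho_\sigma$ falls into one of the first three cases of Lemma \ref{imaginaryquadraticclassification}. By Lemma \ref{wedgecandidateclassification}, $\wedge^2 V_\sigma$ then splits as a $\liegdrbssmath(A)_{\qbar}$-representation into two non-isomorphic irreducible pieces, $\wedge^2V_\sigma=X_1\oplus X_2$. Dually, via Lemma \ref{lemmaeigenspaceirr}, $\wedge^2V_{\overline\sigma}\cong(\wedge^2V_\sigma)^*$ splits as $X_1^*\oplus X_2^*$. The plan is to produce from this splitting a dRB endomorphism of $U_{1/2}$ that is not a Hodge endomorphism. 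This will contradict Corollary \ref{antiweilendomorphisms}.

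\textbf{Step 1: build the candidate endomorphism.} Take the projector $\pi_1$ of $\wedge^2V_\sigma$ onto $X_1$ along $X_2$, and the projector $\pi_1^*$ of $\wedge^2V_{\overline\sigma}$ onto $X_1^*$ along $X_2^*$. Let
\[
P:=(\pi_1\otimes\mathrm{id}_{E_{\overline\sigma}})\oplus(\pi_1^*\otimes \mathrm{id}_{E_\sigma})
\]
act on $U_{1/2,\qbar}$. We need $P$, or a suitable $\qbar$-combination of it, to be Galois-stable so that it descends to $\mathrm{End}(U_{1/2})$. Complex conjugation (any Galois element not fixing $K$) swaps $V_\sigma$ and $V_{\overline\sigma}$. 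Since $\liegdrbssmath(A)$ is defined over $\mathbb{Q}$, the decomposition of $\wedge^2V_\sigma$ into non-isomorphic isotypic pieces is canonical, and Galois carries it to the corresponding decomposition of $\wedge^2V_{\overline\sigma}$. I expect that the subspace spanned by $P$ and $1-P$ in $\mathrm{End}(U_{1/2,\qbar})$ is Galois-stable. Alternatively, one can simply use that the full commutant $\mathrm{End}_{\liegdrbmath(A\times E_K)}(U_{1/2})$ is a $\mathbb{Q}$-form whose $\qbar$-span contains $P$.

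\textbf{Step 2: show these endomorphisms are dRB.} The dRB group of $A\times E_K$ acts on $U_{1/2}$. Its semisimple part acts through $\liegdrbssmath(A)$, since the semisimple part of the dRB group of $E_K$ is trivial. Its centre acts by scalars on each of the two summands, by Proposition \ref{twocentressame} and the structure of $\gdrbmath(E_K)$. Hence $P$ commutes with $\liegdrbmath(A\times E_K)$, which we identify with the Lie algebra of the dRB group of $U_{1/2}$ by Tannakian functoriality. It follows that
\[
\dim\mathrm{End}_{\mathrm{dRB}}(U_{1/2})\ \ge\ 4,
\]
because $\mathrm{End}_{\mathrm{dRB}}(U_{1/2})\otimes\qbar$ contains independent projectors onto $X_1\otimes E_{\overline\sigma}$, $X_2\otimes E_{\overline\sigma}$, and their duals.

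\textbf{Step 3: compute the Hodge endomorphisms and conclude.} On the Hodge side, $\mathrm{MT}(A)$ has $\mathrm{hdg}^{\mathrm{ss}}_{\qbar}=\mathrm{sl}(4)$ by Theorem \ref{deg2mz4}. Hence each $\wedge^2V_\sigma$ is irreducible of dimension $6$, and $\mathrm{End}_{\mathrm{Hodge}}(U_{1/2})\otimes\qbar$ is at most spanned by the projectors onto the two $6$-dimensional summands, so it has dimension $\le 2$. This contradicts Step 2 together with Corollary \ref{antiweilendomorphisms}.

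The main obstacle is Step 2: checking that the centre of the dRB group of $A\times E_K$, rather than of $A$ alone, acts on $U_{1/2}$ compatibly, so that no coupling with $E_K$ breaks the commutation. I would handle this by noting that $\mathrm{Z}$ lies inside $K\otimes K$-scalars, which act as scalars on each eigen-summand $\wedge^2V_\sigma\otimes E_{\overline\sigma}$. The dimension count then avoids needing an explicit Galois descent of $P$ individually.
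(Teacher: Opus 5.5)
Your Steps 1 and 2 are essentially fine. The descent worry is unnecessary: commutants commute with base change, so any element of $\mathrm{End}(U_{1/2,\qbar})$ commuting with $\gdrbmath(A\times E_K)_{\qbar}$ already lies in $\mathrm{End}_{\mathrm{dRB}}(U_{1/2})\otimes\qbar$.

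\textbf{The gap is Step 3: the bound $\dim\mathrm{End}_{\mathrm{Hodge}}(U_{1/2})\otimes\qbar\le 2$ is false.} Put $U_a:=\wedge^2V_\sigma\otimes E_{\overline\sigma}$ and $U_b:=\wedge^2V_{\overline\sigma}\otimes E_\sigma$. You implicitly assume these two $6$-dimensional irreducible pieces are non-isomorphic under $\mathrm{MT}(A\times E_K)_{\qbar}$, but they are isomorphic.
\begin{itemize}
\item On the semisimple part, $\wedge^2V_{\overline\sigma}\cong(\wedge^2V_\sigma)^{*}\cong\wedge^2V_\sigma$ as $\mathrm{sl}(V_\sigma)$-modules. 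This is because the wedge pairing $\wedge^2V_\sigma\otimes\wedge^2V_\sigma\to\wedge^4V_\sigma$ is $\mathrm{sl}(V_\sigma)$-invariant.
\item On the centre, the Hodge decomposition in the proof of Lemma \ref{antiweilpolarizable} shows that each of $U_a,U_b$ has $h^{1,0}=h^{0,1}=3$. So the circle of the Hodge structure (acting by $z^{p-q}$ on type $(p,q)$) has determinant $1$ on each of them. Since $U_a,U_b$ are the two eigenspaces of the $K$-action on $U_{1/2}$, this gives $\mathrm{Hdg}(U_{1/2})\subset\mathrm{SL}_K(U_{1/2})$.
\item The central torus of $\mathrm{Hdg}(A\times E_K)$ maps into $\mathrm{Z}(\mathrm{Hdg}(A))\times\mathrm{Hdg}(E_K)$, so it acts by a scalar on each summand. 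By the previous point it therefore acts on $U_{1/2}$ through a finite group.
\end{itemize}
Hence $\mathrm{End}_{\mathrm{Hodge}}(U_{1/2})\otimes\qbar\cong M_2(\qbar)$ is $4$-dimensional. So $\mathrm{End}^{\circ}(B)$ is a quaternion algebra containing $K$, in line with $\mathrm{su}(1,3)\cong\mathrm{so}^{*}(6)$. Your comparison ``$\ge 4$ versus $\le 2$'' therefore gives no contradiction.

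\textbf{How to fix it.} What matters is not the size of the commutant but whether one specific element lies in it; this is how the paper argues.
\begin{itemize}
\item Take the single projector $\mathrm{pr}_1$ of $U_{1/2,\qbar}$ onto $X_1\otimes E_{\overline\sigma}$. Your Step 2 shows it commutes with $\gdrbmath(A\times E_K)_{\qbar}$.
\item By Corollary \ref{antiweilendomorphisms}, base-changed to $\qbar$, it then commutes with $\mathrm{MT}(A\times E_K)_{\qbar}$.
\item That group surjects onto $\mathrm{MT}(A)_{\qbar}$ and acts on the line $E_{\overline\sigma}$ by a character. So $X_1$ would be stable under $\mathrm{mt}^{\mathrm{ss}}(A)_{\qbar}=\mathrm{sl}(V_\sigma)$. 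This contradicts the irreducibility of $\wedge^2V_\sigma$ (Theorem \ref{deg2mz4} and Lemma \ref{wedgecandidateclassification}).
\end{itemize}
In your language: after identifying $U_a\cong U_b\cong\wedge^2V_\sigma$, the Hodge commutant is $\mathrm{id}\otimes M_2(\qbar)$. It does not contain $\pi_1\otimes e_{11}$, where $e_{11}$ is a diagonal matrix unit.

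A dimension count can also be rescued, but only with the correct Hodge side. $\mathrm{End}_{\mathrm{dRB}}(U_{1/2})\otimes\qbar$ would contain both your four diagonal projectors and this copy of $M_2(\qbar)$. These meet in a $2$-dimensional space, so the dRB side would have dimension at least $6>4$.
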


\begin{proof}
Suppose one of the first three cases occurs. By Lemma
\ref{wedgecandidateclassification}, there is a decomposition into
non-isomorphic irreducible representations of $\gdrbmath(A)_{\qbar}$
\[
\wedge^2V_\sigma=W_1\oplus W_2.
\]
The conjugate eigenspace has the corresponding decomposition
$\wedge^2V_{\overline\sigma}=W'_1\oplus W'_2$.

Note that $\gdrbmath(A\times E_K)\subset \gdrbmath(A)\times \gdrbmath(E_K)$. Hence the projection map
$$\mathrm{pr}_1: U_{1/2,\qbar}=
(W_1\otimes E_{\overline\sigma})
\oplus(W_2\otimes E_{\overline\sigma})
\oplus(W'_1\otimes E_\sigma)
\oplus(W'_2\otimes E_\sigma) \rightarrow W_1\otimes E_{\overline\sigma}
$$
is
$\gdrbmath(A\times E_K)_{\qbar}$-equivariant. 
By Corollary \ref{antiweilendomorphisms}, the same projection map is equivariant under $\mathrm{MT}(A\times E_K)_{\qbar}$, which acts via $\mathrm{MT}(A)_{\qbar}$ on $W_1$ and acts on $E_{\overline\sigma}$ via a character of $\mathrm{MT}(E_K)_{\qbar}$. Moreover, the canonical morphism from $\mathrm{MT}(A\times E_K)_{\qbar}$ to $\mathrm{MT}(A)_{\qbar}$ is surjective. This implies that
$W_1\subset\wedge^2V_\sigma$ is stable under
$\mathrm{MT}(A)_{\qbar}$ and hence stable under $\mathrm{mt}^{\mathrm{ss}}(A)_{\qbar}$.

On the other hand, Theorem \ref{deg2mz4} gives
$\mathrm{mt}^{\mathrm{ss}}(A)_{\qbar}
=\mathrm{sl}(V_\sigma)\cong \mathrm{sl}_4$, and $\wedge^2V_\sigma$ is an irreducible $\mathrm{sl}_4$-representation. This contradicts the existence of the nonzero proper invariant subspace $W_1$.
\end{proof}

\begin{proof}[Proof of Theorem \ref{imaginaryquadraticthm}]
If $A$ is of Weil type, the result is Corollary \ref{weiltypedrb}.
Suppose that $A$ is of anti-Weil type. Proposition
\ref{antiweilexclusion} excludes the first three cases of Lemma
\ref{imaginaryquadraticclassification}. Therefore
$\liegdrbssmath(A)_{\qbar}
=\mathrm{sl}(V_\sigma)
=\mathrm{mt}^{\mathrm{ss}}(A)_{\qbar}$. Proposition \ref{twocentressame} identifies the centres of
$\liegdrbmath(A)$ and $\mathrm{mt}(A)$. Hence
$\liegdrbmath(A)=\mathrm{mt}(A)$. Since both algebraic groups are connected, we conclude that $\gdrbmath(A)=\mathrm{MT}(A)$.
\end{proof}

\subsection{Abelian Fourfolds with Quartic CM Endomorphism Fields}\label{deg4section}
The goal of this section is to prove the following theorem.
\begin{theorem} \label{deg4theorem}
    Let $A$ be a simple abelian fourfold defined over $\qbar$ such that $\mathrm{End}^{\circ}(A)= K$ where $K$ is a quartic CM-field. Then $\gdrbmath(A)=\mathrm{MT}(A)$.
\end{theorem}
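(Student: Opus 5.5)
Since Proposition \ref{twocentressame} already identifies the centre of $\liegdrbmath(A)$ with that of $\mathrm{mt}(A)$, and both groups are connected, the plan is to prove $\liegdrbssmath(A)=\mathrm{mt}^{\mathrm{ss}}(A)$. By Theorem \ref{deg2mz4}, $\mathrm{hdg}(A)=\mathrm{u}_K(V,\phi)$, so $\mathrm{mt}^{\mathrm{ss}}(A)$ is a $\mathbb{Q}$-form of $\mathrm{su}$ attached to the two-dimensional hermitian $K$-space $V$. Over $\qbar$ it becomes $\mathrm{sl}(V_{\sigma_0})\times\mathrm{sl}(V_{\tau_0})\cong\mathrm{sl}(2)\times\mathrm{sl}(2)$, where $\mathrm{Hom}(K,\qbar)=\{\sigma_0,\overline{\sigma_0},\tau_0,\overline{\tau_0}\}$. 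Moreover $\mathrm{Gal}(\qbar/\mathbb{Q})$ permutes the two factors transitively, so $\mathrm{mt}^{\mathrm{ss}}(A)=\mathrm{Res}_{F/\mathbb{Q}}$ of an $F$-form of $\mathrm{sl}_2$, with $F$ the real quadratic subfield of $K$.

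\textbf{Classifying the candidates.} Lemma \ref{lemmaeigenspaceirr} makes each $\rho_\sigma$ an irreducible two-dimensional representation of $\liegdrbssmath(A)_{\qbar}$. Hence $\liegdrbssmath(A)_{\qbar}$ is a semisimple subalgebra of $\mathrm{sl}(2)\times\mathrm{sl}(2)$ projecting onto both factors. It is therefore either all of $\mathrm{sl}(2)\times\mathrm{sl}(2)$ or the graph of an isomorphism $\mathrm{sl}(V_{\sigma_0})\cong\mathrm{sl}(V_{\tau_0})$, i.e.\ a diagonal $\mathrm{sl}(2)$. In the diagonal case, Galois-stability of $\liegdrbssmath(A)$ forces it to be a $\mathbb{Q}$-form of $\mathrm{SL}_2$, either a $\qbar/\mathbb{Q}$-form or one descending from $F$. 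It is then enough to exclude the diagonal case.

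\textbf{Extracting invariants in the diagonal case.} There is a $\liegdrbssmath$-equivariant isomorphism $V_{\sigma_0}\cong V_{\tau_0}$ or $V_{\sigma_0}\cong V_{\tau_0}^{*}\cong V_{\overline{\tau_0}}$, and this yields invariants. The first place to look is $V_{\sigma_0}\otimes V_{\tau_0}$ or $V_{\sigma_0}\otimes V_{\overline{\tau_0}}\subset\mathrm{H}^2(A)_{\qbar}$. There one gets the two-dimensional pieces $\wedge^2$-type invariants in $\mathrm{Sym}$/$\wedge$ components, and in $\mathrm{End}(V)$ one gets invariants in $V_{\sigma_0}\otimes V_{\tau_0}^{*}$.

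The central torus $\mathrm{Z}\cong\mathrm{U}_K$ (up to homotheties) acts on these pieces by nontrivial characters. So, as in Remark \ref{13remark}, they are not $\gdrbmath(A)$-invariant and Theorem \ref{bostoriginal} alone cannot be applied. Instead, following the strategy announced in the introduction, I would argue through the polarization.

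Suppose $\liegdrbssmath(A)$ is a form of $\mathrm{SL}_2$. Then the $\mathbb{Q}$-algebra $\mathrm{End}_{\liegdrbssmath}(V)$ is strictly larger than $K$; it contains, for instance, an intertwiner $J$ between the $\sigma_0$- and $\tau_0$-isotypic parts, defined over $\mathbb{Q}$ up to Galois twist. Although $J$ is not dRB (it does not commute with the centre), the dRB/Hodge classes in $\mathrm{H}^2(A)(1)$ are constrained: they must be $\gdrbmath$-invariant, which pins them to the span of $\phi$-type forms $V_\sigma\otimes V_{\overline\sigma}$. I would then show that every polarization form $\phi$, which lies in this span by Theorem \ref{bostoriginal}, satisfies the extra linear relations that make $\phi(\cdot,J\cdot)$ compatible. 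In practice this means the hermitian form $H_\phi$ on the $K$-space $V$ has Gram matrix proportional to one fixed by the $\mathrm{SL}_2$-form, and this is the analogue of Lemma ``deg4 Polarisation properties'' in the introduction.

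\textbf{Contradiction with positivity, the main obstacle.} The last step shows that such a $\phi$ cannot polarize the Hodge structure of $A$. Lemma \ref{lm2011} gives multiplicities $\{2,0,1,1\}$, so the signature of $H_\phi$ at one archimedean place of $F$ is definite, $(2,0)$, and at the other it is $(1,1)$.

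I expect the diagonal $\mathrm{SL}_2$ to force a relation identifying the hermitian forms at the two real places, via the Galois-twisted isomorphism $V_{\sigma_0}\cong V_{\tau_0}^{(*)}$. That identification would force equal signatures, contradicting $(2,0)$ versus $(1,1)$. An alternative is to show that it produces a $\mathbb{Q}$-endomorphism of $A$ outside $K$, or a splitting of $A$, contradicting simplicity.

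Making this signature comparison rigorous is the main difficulty. It needs a careful descent computation for the two kinds of $\mathbb{Q}$-forms, and it needs to track how the comparison isomorphism interacts with the complex conjugation on $\mathrm{Hom}(K,\qbar)$. Once the diagonal case is excluded, $\liegdrbssmath(A)=\mathrm{mt}^{\mathrm{ss}}(A)$, and together with Proposition \ref{twocentressame} and connectedness this gives $\gdrbmath(A)=\mathrm{MT}(A)$.
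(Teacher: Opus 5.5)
Your overall architecture matches the paper's. You reduce to the semisimple part via Proposition \ref{twocentressame}, use irreducibility of $\rho_\sigma,\rho_\tau$ and Goursat to get the dichotomy (all of $\mathrm{sl}(2)\times\mathrm{sl}(2)$ versus a diagonal $\mathrm{sl}(2)$), and kill the diagonal case using positivity of a polarization. The gap is that the decisive step, an actual contradiction in the diagonal case, is never carried out. You only say you ``expect'' a signature mismatch and call making it rigorous the main difficulty. Beyond Lemma \ref{deg4repclassification}, that step is essentially the whole content of the theorem, so the proposal is incomplete.

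The path you sketch toward that step is also muddled. Excluding the diagonal case needs neither Theorem \ref{bostoriginal}, nor $\mathrm{H}^2$-invariants, nor the intertwiner $J$. There is also no ``form descending from $F$'': since $\liegdrbssmath(A)_{\qbar}\cong\mathrm{sl}(2)$, $\liegdrbssmath(A)$ is just a $3$-dimensional $\mathbb{Q}$-form of $\mathrm{sl}_2$. The only input needed is $\liegdrbssmath(A)\subset\mathrm{hdg}(A)=\mathrm{u}_K(V,\phi)$: it consists of rational endomorphisms commuting with $K$ and killing $\phi$ infinitesimally.

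The paper closes the gap in coordinates.
\begin{enumerate}
\item It writes $\liegdrbssmath(A)\cong\mathbb{Q}(a,\lambda)^{\circ}$ and fixes the explicit triple \eqref{sl2explicitiso}.
\item It uses how complex conjugation acts on $h,x,y$, according to the sign of $a$, to get linear relations among $\phi$-values on weight vectors of $V_\iota$ and $V_{\overline\iota}$.
\item Positivity on $V_\sigma=V_\sigma^{1,0}$ rules out $a>0$ and forces $\lambda<0$ when $a<0$.
\item Positivity on $V_\tau^{1,0}$ and $V_{\overline\tau}^{1,0}$ then gives inequalities of opposite sign.
\end{enumerate}

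Your signature idea can be made rigorous more quickly, with no descent and no case split.
\begin{enumerate}
\item Take $l\in\liegdrbssmath(A)_{\mathbb{R}}$. It preserves each $V_\iota$ and is skew for the hermitian form $h(v,w)=-2\sqrt{-1}\phi_{\mathbb{C}}(v,\overline{w})$ on $V_\iota$, because $l$ is real and kills $\phi$.
\item By Lemma \ref{positivitylemma} and the Riemann relation $\phi(V^{1,0},V^{1,0})=0$, $h$ is positive definite on $V_\sigma$. On $V_\tau=V_\tau^{1,0}\oplus\overline{V_{\overline\tau}^{1,0}}$ it has signature $(1,1)$.
\item In the diagonal case $\rho_\sigma$ and $\rho_\tau$ are isomorphisms onto $\mathrm{sl}(V_\sigma)$ and $\mathrm{sl}(V_\tau)$. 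By dimension, $\liegdrbssmath(A)_{\mathbb{R}}$ would then be isomorphic both to $\mathrm{su}(2)$, which is compact, and to $\mathrm{su}(1,1)\cong\mathrm{sl}_2(\mathbb{R})$, which is not. This is impossible.
\end{enumerate}
This is the conceptual content of the paper's computation: $\mathbb{Q}(a,\lambda)\otimes\mathbb{R}$ is Hamilton's quaternions exactly when $a,\lambda<0$. But you have to actually supply it. ``Identifying the hermitian forms at the two places via a Galois-twisted isomorphism'' is not the mechanism; the mechanism is that one real place forces the real form of $\liegdrbssmath(A)$ to be compact and the other forces it to be split.
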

\begin{remark}
Our method of proving Theorem \ref{deg4theorem} is different from the method of computing the Mumford-Tate group of such an abelian fourfold from Section 7.5 of \cite{mz-4-folds}. The authors of \cite{mz-4-folds} use the definition of the Mumford-Tate group as the smallest $\mathbb{Q}$-algebraic subgroup of $\mathrm{GL}(V)$ whose set of $\mathbb{R}$-points contains the image of the Deligne torus in an essential way. However, in the current state of art, de Rham-Betti groups are only defined in the Tannakian formalism.  
\end{remark}
\begin{remark}
By Proposition \ref{twocentressame}, we have $\mathrm{Z}(\gdrbhmath(A))=\mathrm{Z}(\mathrm{Hdg}(A))$ which is isogenous to $\mathrm{U}_{K}$. Consequently, as in the case of anti-Weil type abelian fourfold (see Remark \ref{13remark}), a straightforward invariant-theoretic computation at degree-two cohomology groups does not suffice. Moreover, it is not clear to the author whether the method of using van Geemen's half-twist construction in the proof of Theorem \ref{imaginaryquadraticthm} extends directly in this setting. Hence we adopt a different approach to proving Theorem \ref{deg4theorem}.    
\end{remark}
Proposition \ref{twocentressame} already gives $\mathrm{Z}(\liegdrbmath(A))=\mathrm{Z}(\mathrm{mt}(A))$. Hence it suffices to study the semisimple part $\liegdrbssmath(A)$. We use notations from Section \ref{sectiondecomp}. We set 
$\mathrm{Hom}(K,\qbar)=\{\sigma,\sigmabar,\tau,\overline{\tau}\}$ and denote $\betti$ by $V$. Then $V_{\qbar}=V_{\sigma}\oplus V_{\overline{\sigma}}\oplus V_{\tau}\oplus V_{\overline{\tau}}$ is the eigenspace decomposition with respect to the action of $K$ on $V$.

\begin{lemma}\label{deg4repclassification}
Using the above notation, the images of $\rho_{\sigma}:\liegdrbssmath(A)_{\qbar} \rightarrow \mathrm{gl}(V_{\sigma})$ and $\rho_{\tau}: \liegdrbssmath(A)_{\qbar} \rightarrow \mathrm{gl}(V_{\tau})$ are $\mathrm{sl}(V_{\sigma})$ and $\mathrm{sl}(V_{\tau})$. Moreover the representation $\liegdrbssmath(A)_{\qbar} \rightarrow \mathrm{gl}(V_{\qbar})$ has only one of the following two possibilities.
  \begin{enumerate}
      \item\label{case1} $\liegdrbssmath(A)_{\qbar}\cong\mathrm{sl}(2)$ and its image under $\rho_{\sigma} \oplus \rho_{\tau}$ in $\mathrm{sl}(V_{\sigma}) \times \mathrm{sl}(V_{\tau})$ is the graph of a Lie algebra isomorphism between $\mathrm{sl}(V_{\sigma})$ and $\mathrm{sl}(V_{\tau})$.
      \item $\liegdrbssmath(A)_{\qbar}\cong\mathrm{sl}(2) \times \mathrm{sl}(2)$ and its image under $\rho_{\sigma} \oplus \rho_{\tau}$ is the entire $\mathrm{sl}(V_{\sigma}) \times \mathrm{sl}(V_{\tau})$.
  \end{enumerate}
  
\end{lemma}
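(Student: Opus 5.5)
Each $V_\sigma$ is two-dimensional here, since $\dim V=8$ and $[K:\mathbb{Q}]=4$. The plan is first to identify the image of each $\rho_\sigma$, and then to study the image of $\rho_\sigma\oplus\rho_\tau$ by a Goursat-type argument.

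\emph{Image of $\rho_\sigma$.} By Lemma \ref{lemmaeigenspaceirr}, $\rho_\sigma$ is an irreducible representation of the semisimple Lie algebra $\liegdrbssmath(A)_{\qbar}$ on $V_\sigma$. Its image is therefore a semisimple subalgebra of $\mathrm{gl}(V_\sigma)$. Being the image of a semisimple algebra, it is contained in $\mathrm{sl}(V_\sigma)\cong\mathrm{sl}(2)$. The only semisimple subalgebras of $\mathrm{sl}(2)$ are $0$ and $\mathrm{sl}(2)$. The zero image would make $V_\sigma$ a direct sum of two trivial lines, contradicting irreducibility. Hence $\rho_\sigma(\liegdrbssmath(A)_{\qbar})=\mathrm{sl}(V_\sigma)$, and the same argument applies to $\tau$. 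Lemma \ref{lemmaeigenspaceirr} also gives $\rho_{\overline\sigma}\cong\rho_\sigma^*$ and $\rho_{\overline\tau}\cong\rho_\tau^*$. Consequently the kernel of $\rho_\sigma\oplus\rho_\tau$ acts trivially on all of $V_{\qbar}$. Since $\liegdrbssmath(A)\subset\mathrm{gl}(V)$ is faithful, $\rho_\sigma\oplus\rho_\tau$ is injective.

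\emph{Image of $\rho_\sigma\oplus\rho_\tau$.} Identify $\liegdrbssmath(A)_{\qbar}$ with its image $\mathfrak h\subset\mathrm{sl}(V_\sigma)\times\mathrm{sl}(V_\tau)$; both projections of $\mathfrak h$ are surjective. Apply Goursat's lemma for Lie algebras. The kernels $\mathfrak h\cap(\mathrm{sl}(V_\sigma)\times 0)$ and $\mathfrak h\cap(0\times\mathrm{sl}(V_\tau))$ are ideals of the simple factors, so each is either $0$ or the whole factor. If both are zero, $\mathfrak h$ is the graph of an isomorphism $\mathrm{sl}(V_\sigma)\cong\mathrm{sl}(V_\tau)$, which is case (1). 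If one of them is the whole factor, surjectivity of the other projection forces $\mathfrak h$ to be the full product, which is case (2). The mixed configurations are impossible by simplicity of $\mathrm{sl}(2)$. The dimensions $3$ and $6$ show the two cases are mutually exclusive.

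I expect no serious obstacle. The only point requiring care is the faithfulness reduction: the eigenspaces $V_{\overline\sigma}$ and $V_{\overline\tau}$ must contribute nothing new. This follows from the duality $\rho_{\overline\sigma}\cong\rho_\sigma^*$ of Lemma \ref{lemmaeigenspaceirr}, because an element acting trivially on $V_\sigma$ also acts trivially on its dual.
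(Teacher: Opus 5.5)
Your proposal is correct and follows essentially the same route as the paper: irreducibility from Lemma \ref{lemmaeigenspaceirr} forces each image to be all of $\mathrm{sl}(2)$, the duality $\rho_{\overline\sigma}\cong\rho_\sigma^*$ makes $\rho_\sigma\oplus\rho_\tau$ faithful, and Goursat's lemma together with simplicity of $\mathrm{sl}(2)$ yields the two cases. Your explicit kernel argument for faithfulness spells out what the paper states more briefly, namely that the image in $\mathrm{sl}(V_{\overline{\mathbb{Q}}})$ is determined by its restriction to $\mathrm{sl}(V_\sigma)\times\mathrm{sl}(V_\tau)$.
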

\begin{proof}
The same argument from \cite[575]{mz-4-folds} can be applied; we fill in more details for reader's convenience. The images of the semisimple Lie algebra $\liegdrbssmath(A)_{\qbar}$ under $\rho_{\sigma}$ and $\rho_{\tau}$ are semisimple Lie subalgebras of $\mathrm{sl}(V_{\sigma})$ and $\mathrm{sl}(V_{\tau})$; see \cite[Corollary 5.3]{humphreys2012introduction}. But $\mathrm{dim}(V_{\sigma})= \mathrm{dim}(V_{\tau})=2$, hence their images are either $\{0\}$ or the entire $\mathrm{sl}(2)$. By Lemma \ref{lemmaeigenspaceirr}, $\rho_{\sigma}$ and $\rho_{\tau}$ are irreducible representations. Hence their images are the entire $\mathrm{sl}(V_{\sigma})$ and $\mathrm{sl}(V_{\tau})$.

Denoting $\rho_{\sigma} \oplus \rho_{\tau}$ by $\iota$, we have the following commutative diagram
    $$\begin{tikzcd}
                                                                                                                                          & \mathrm{sl}(V_{\tau})   &                                                                                                               \\
\liegdrbssmath(A)_{\qbar} \arrow[rr,"\iota"] \arrow[ru, "\rho_{\tau}=p_2\circ \iota", two heads] \arrow[rd, "\rho_{\sigma}=p_1\circ \iota"', two heads] &                         & \mathrm{sl}(V_{\sigma})\times\mathrm{sl}(V_{\tau}) \arrow[ld, "p_1", two heads] \arrow[lu, "p_2"', two heads] \\
                                                                                                                                          & \mathrm{sl}(V_{\sigma}) &                                                                                                              
\end{tikzcd}$$
By the Lie algebra version of Goursat's Lemma \cite[Section 3.1]{moonen1999hodge} and the fact that $\mathrm{sl}(2)$ is a simple Lie algebra, we have that $\iota(\liegdrbssmath(A)_{\qbar})$ is either $\mathrm{sl}(V_{\sigma}) \times \mathrm{sl}(V_{\tau})$ or the graph of a Lie algebra isomorphism between $\mathrm{sl}(V_{\sigma})$ and $\mathrm{sl}(V_{\tau})$. However, Lemma \ref{lemmaeigenspaceirr} gives the isomorphism of representations $\rho_{\overline{\sigma}}\cong \rho_{\sigma}^{*}$ and  $\rho_{\overline{\tau}}\cong \rho_{\tau}^{*}$. Therefore the image of $\liegdrbssmath(A)_{\qbar}$ in $\mathrm{sl}(V_{\qbar})$ is completely determined by its restriction to $\mathrm{sl}(V_{\sigma}) \times \mathrm{sl}(V_{\tau})$. Hence $\liegdrbssmath(A)$ is either isomorphic to a $\qbar/\mathbb{Q}$ form of $\mathrm{sl}(2)$ or isomorphic to a $\qbar/\mathbb{Q}$ form of $\mathrm{sl}(2) \times \mathrm{sl}(2)$.
\end{proof}

We now exclude Case (\ref{case1}) of Lemma \ref{deg4repclassification} where $\liegdrbssmath(A)$ is a $\qbar/\mathbb{Q}$-form of $\mathrm{sl}(2)$. Such forms are classified by the quaternion algebras over $\mathbb{Q}$. Recall a quaternion algebra $\mathbb{Q}(a,\lambda)$ has basis $\{1,i,j,k\}$ whose ring structure is given by $i^2=a,\: j^2=\lambda,\: ij=-ji=k$ where $a,\lambda\in\mathbb{Q}^{\times}$. We denote by $\mathbb{Q}(a,\lambda)^{\circ}$ the $\mathbb{Q}$-vector subspace spanned by $\{i,j,k\}$. We can also put a Lie bracket structure on $\mathbb{Q}(a,\lambda)$ by defining $[x,y]=xy-yx$, making $\mathbb{Q}(a,\lambda)^{\circ}$ a Lie subalgebra.
\begin{lemma}[\cite{serre1979galois}, III.1.4]\label{sl2qformlemma}
Every $\qbar/\mathbb{Q}$-form of $\mathrm{sl}(2)$ is isomorphic to the Lie algebra $\mathbb{Q}(a,\lambda)^{\circ}$ for some $a,\lambda \in \mathbb{Q}^{\times}$. 
\end{lemma}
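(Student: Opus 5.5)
The plan is the standard Galois-cohomological classification of forms. A $\qbar/\mathbb{Q}$-form $\mathfrak{g}$ of $\mathrm{sl}(2)$ is classified by $\mathrm{H}^1(\absgalois,\mathrm{Aut}(\mathrm{sl}(2)_{\qbar}))$. Since $\mathrm{sl}(2)$ has no outer automorphisms (its Dynkin diagram $A_1$ has no symmetries), we have $\mathrm{Aut}(\mathrm{sl}(2)_{\qbar})=\mathrm{PGL}_2(\qbar)$ via the adjoint action. I would therefore first identify forms of $\mathrm{sl}(2)$ with classes in $\mathrm{H}^1(\mathbb{Q},\mathrm{PGL}_2)$.

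Next, I would invoke the same cohomology set as classifying central simple $\mathbb{Q}$-algebras of degree $2$, namely quaternion algebras; this is again twisting, because $\mathrm{Aut}(\mathrm{M}_2(\qbar))=\mathrm{PGL}_2(\qbar)$ by Skolem–Noether. The two classifications are compatible under the functor $D\mapsto D^{\circ}:=\{x\in D:\mathrm{trd}(x)=0\}$ with bracket $[x,y]=xy-yx$. This functor sends $\mathrm{M}_2$ to $\mathrm{sl}(2)$, and it is $\mathrm{PGL}_2$-equivariant, since conjugation on $\mathrm{M}_2$ restricts to the adjoint action on $\mathrm{sl}(2)$. Hence twisting $\mathrm{M}_2(\mathbb{Q})$ by a cocycle $c$ and then taking trace-zero elements yields the twist of $\mathrm{sl}(2)_{\mathbb{Q}}$ by the same $c$. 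It follows that every form of $\mathrm{sl}(2)$ is isomorphic to $D^{\circ}$ for some quaternion algebra $D$.

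Concretely, the bijection is realised as follows. Given $\mathfrak{g}$, its universal enveloping algebra modulo the ideal making the two-dimensional representation faithful descends to a quaternion algebra. Alternatively, $\mathrm{End}$ of the (Galois-twisted) standard representation gives $D$, with $\mathfrak{g}\cong D^{\circ}$.

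Finally, the standard fact that every quaternion algebra over $\mathbb{Q}$ (characteristic $\neq 2$) has the form $\mathbb{Q}(a,\lambda)$ does the rest. To see this, pick a pure quaternion $i$ with $i^2=a\in\mathbb{Q}^{\times}$. The map $x\mapsto ixi^{-1}$ is then an involution with a $-1$-eigenvector $j$, and $j^2=\lambda\in\mathbb{Q}^{\times}$. The pure part $D^{\circ}$ is exactly $\mathrm{span}\{i,j,k\}=\mathbb{Q}(a,\lambda)^{\circ}$, which gives the lemma.

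The main obstacle is the compatibility step: checking that the twisting correspondence commutes with $D\mapsto D^{\circ}$. This reduces to the equivariance noted above, together with the injectivity $\mathrm{Aut}(\mathrm{M}_2)\to\mathrm{Aut}(\mathrm{sl}(2))$. Both are routine once written down.
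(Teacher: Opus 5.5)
Your argument is correct and is the standard Galois-cohomological twisting argument: forms of $\mathrm{sl}(2)$ are classified by $\mathrm{H}^1(\mathbb{Q},\mathrm{PGL}_2)$, and this same set classifies quaternion algebras. That is exactly what the paper defers to by citing Serre, III.1; its own proof only adds the explicit $\mathrm{sl}(2)$-triple \eqref{sl2explicitiso} in $\mathbb{Q}(a,\lambda)^{\circ}\otimes\qbar$, which it needs later to track complex conjugation.

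Two small points need tightening.
\begin{itemize}
\item To show that every form arises as some $D^{\circ}$, you need the map $\mathrm{Aut}(\mathrm{M}_2)\to\mathrm{Aut}(\mathrm{sl}(2))$ to be an isomorphism, since that is what makes the induced map on $\mathrm{H}^1$ surjective. Injectivity alone is not enough.
\item In the split case, a nonzero element anticommuting with $i$ can be nilpotent. So you must choose $j$ invertible in that eigenspace. This is possible because the reduced norm form restricted to that eigenspace is nondegenerate.
\end{itemize}
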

\begin{proof}
For the proof, see Section III.1 of \cite{serre1979galois}. Given a quaternion algebra $\mathbb{Q}(a,\lambda)$, we fix an explicit isomorphism of Lie algebras $\mathbb{Q}(a,\lambda)^{\circ} \otimes \qbar \cong \mathrm{sl}(2)$ as follows
\begin{equation}\label{sl2explicitiso}
  h=i\otimes\frac{1}{\sqrt{a}};\: x=\frac{1}{2\lambda}(j+k\otimes\frac{1}{\sqrt{a}});\: y=\frac{1}{2}(j-k\otimes\frac{1}{\sqrt{a}})
\end{equation} where we fix a square root of $a$ in $\qbar$ and denote it by $\sqrt{a}$. Note that $h,x,y$ indeed define a $\mathrm{sl}(2)$-triple in the sense of \cite[Chapter 7]{humphreys2012introduction}: $[x,y]=h, [h,x]=2x, [h,y]=-2y$.
\end{proof}
The complex conjugation fixes $\sqrt{a}$ if $a>0$, and sends 
$\sqrt{a}$ to $-\sqrt{a}$ if $a<0$, which yields:

\begin{corollary} \label{relations between y and x}
With the explicit basis fixed in isomorphism (\ref{sl2explicitiso}), one has $\overline{h}=-h$; $\lambda\overline{x}=y$ if $a<0$ and $\overline{h}=h$; $\overline{x}=x$; $\overline{y}=y$ if $a>0$.
\end{corollary}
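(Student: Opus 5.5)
The plan is to compute directly how complex conjugation acts on the three elements $h,x,y$ of \eqref{sl2explicitiso}. Here conjugation means the automorphism of $\mathbb{Q}(a,\lambda)^{\circ}\otimes\qbar$ given by $1\otimes c$, where $c$ is complex conjugation restricted to $\qbar$ through the fixed embedding $\qbar\hookrightarrow\mathbb{C}$. Every element of $\mathbb{Q}(a,\lambda)^{\circ}\otimes\qbar$ is a $\qbar$-linear combination of the $\mathbb{Q}$-rational elements $i,j,k$. Conjugation fixes $i,j,k$ and acts only on the scalar coefficients. Since $\sqrt{a}$ is the only irrational scalar appearing in \eqref{sl2explicitiso}, everything reduces to how $c$ acts on $\sqrt{a}$, and hence on $1/\sqrt{a}$.

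First I treat the case $a>0$. Then $\sqrt{a}$ is real, so $c$ fixes $1/\sqrt{a}$, and every coefficient in the formulas for $h,x,y$ lies in $\mathbb{Q}(\sqrt{a})\subset\mathbb{R}$. Hence $\overline{h}=h$, $\overline{x}=x$ and $\overline{y}=y$.

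Next I treat the case $a<0$. Now $\sqrt{a}$ is purely imaginary, so $c(1/\sqrt{a})=-1/\sqrt{a}$. This gives $\overline{h}=i\otimes(-1/\sqrt{a})=-h$. For $x$ we get $\overline{x}=\frac{1}{2\lambda}\bigl(j-k\otimes\frac{1}{\sqrt{a}}\bigr)$, because $\lambda\in\mathbb{Q}$ is fixed by $c$. Multiplying by $\lambda$ yields $\lambda\overline{x}=\frac12\bigl(j-k\otimes\frac{1}{\sqrt{a}}\bigr)=y$, which is the claimed relation.

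There is no real obstacle here; the only care needed is bookkeeping. One must read the three relations in the statement as two cases: the relations $\overline{h}=-h$ and $\lambda\overline{x}=y$ hold for $a<0$, and the relations $\overline{h}=h$, $\overline{x}=x$, $\overline{y}=y$ hold for $a>0$. One must also check that $\lambda$ is rational, so that the conjugation passes through the factor $1/(2\lambda)$ unchanged. For $a<0$ one can additionally record $\lambda\overline{y}$ by the same computation, which gives $\overline{y}=\lambda x$. This is consistent with the relation above, since conjugation is an involution.
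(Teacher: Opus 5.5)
Your proposal is correct and follows the same argument as the paper: $i,j,k$ and $\lambda$ are rational, so everything reduces to complex conjugation fixing $\sqrt{a}$ when $a>0$ and sending it to $-\sqrt{a}$ when $a<0$. Your explicit computation of $\overline{x}$, and the consistency check $\overline{y}=\lambda x$, spell out the computation the paper leaves implicit.
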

Assume Case (\ref{case1}) from Lemma \ref{deg4repclassification}. We then have a morphism of $\mathbb{Q}$-Lie algebras $\eta: \liegdrbssmath(A)\cong\mathbb{Q}(a,\lambda)^{\circ} \rightarrow \mathrm{gl}(V)$ such that its $\qbar$-linear extension decomposes into a direct sum of standard representations of $\mathrm{sl}(2)$
$$\eta_{\qbar}:\mathrm{sl}(2)\rightarrow \mathrm{gl}(V_{\sigma})\oplus \mathrm{gl}(V_{\overline{\sigma}})\oplus \mathrm{gl}(V_{\tau})\oplus \mathrm{gl}(V_{\overline{\tau}}).$$ Because $K$ is a CM-field, we also have that $\overline{V_{\sigma}}=V_{\overline{\sigma}}$ and $\overline{V_{\tau}}=V_{\overline{\tau}}$. 
\begin{lemma} \label{conjugate and weight vector V(1)}
 With the above setup and isomorphism (\ref{sl2explicitiso}), let $v_{m} \in V_{\iota}$ ($\iota \in \{\sigma,\sigmabar,\tau,\overline{\tau}\}$) be an eigenvector of $\rho_{\iota}(h)$ of weight (eigenvalue) $m$ ($m \in \{-1,1\}$). If $a<0$, then $\overline{v_{m}} \in V_{\overline{\iota}}$ remains an eigenvector of $\rho_{\overline{\iota}}(h)$ but has weight $-m$. If $a>0$, then $\overline{v_{m}} \in V_{\overline{\iota}}$ remains an eigenvector of $\rho_{\overline{\iota}}(h)$ with the same weight $m$.
\end{lemma}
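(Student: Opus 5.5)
The proof compares how $\eta(i)$ and complex conjugation interact. Write $\mathbb{Q}(a,\lambda)^{\circ}$ for the $\mathbb{Q}$-structure, so that $\eta$ is defined over $\mathbb{Q}$. Its $\qbar$-linear extension $\eta_{\qbar}=\eta\otimes\qbar$ is then equivariant for complex conjugation $c$ acting on both $\liegdrbssmath(A)_{\qbar}=\mathbb{Q}(a,\lambda)^{\circ}\otimes\qbar$ and $V_{\qbar}=V\otimes\qbar$ through the second factor. Concretely, for $l\in\liegdrbssmath(A)_{\qbar}$ and $v\in V_{\qbar}$ we have
\[
\overline{\eta_{\qbar}(l)v}=\eta_{\qbar}(\overline{l})\,\overline{v}.
\]

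First I apply this with $l=h$ and $v=v_m\in V_\iota$, so that $\eta_{\qbar}(h)v_m=\rho_\iota(h)v_m=mv_m$. Since $m=\pm1$ is rational, conjugating gives
\[
\eta_{\qbar}(\overline{h})\,\overline{v_m}=m\,\overline{v_m}.
\]

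Next I identify $\overline{h}$ using Corollary \ref{relations between y and x}. We have $h=i\otimes\frac{1}{\sqrt a}$, and $c$ fixes $i\in\mathbb{Q}(a,\lambda)^{\circ}$. It sends $\frac{1}{\sqrt a}$ to $-\frac{1}{\sqrt a}$ if $a<0$ and fixes it if $a>0$. Hence $\overline{h}=-h$ when $a<0$ and $\overline{h}=h$ when $a>0$. Substituting into the displayed identity gives $\eta_{\qbar}(h)\overline{v_m}=-m\,\overline{v_m}$ when $a<0$, and $\eta_{\qbar}(h)\overline{v_m}=m\,\overline{v_m}$ when $a>0$.

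Finally I locate $\overline{v_m}$. The eigenspaces satisfy $\overline{V_\iota}=V_{\overline\iota}$, as recorded just before the lemma, because $K$ is a CM-field and $k\in K$ acts on $V_\iota$ by $\iota(k)$. Conjugating $kv=\iota(k)v$ gives $k\overline v=\overline{\iota(k)}\,\overline v$, so $\overline{v_m}\in V_{\overline\iota}$. The action of $\eta_{\qbar}(h)$ on $V_{\overline\iota}$ is $\rho_{\overline\iota}(h)$, which yields both claims. Also $\overline{v_m}\neq0$, so it is a genuine eigenvector.

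The only delicate point is the first step. One has to keep $c$ acting only on the $\qbar$-factor, and not on any chosen basis, for both the Lie algebra and $V$. Otherwise the equivariance of $\eta_{\qbar}$ is not automatic; with this convention it is just the fact that $\eta$ is $\mathbb{Q}$-linear.
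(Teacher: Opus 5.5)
Your proof is correct and follows essentially the same route as the paper: conjugate the eigen-equation $\eta(i)\otimes\frac{1}{\sqrt{a}}\,v_m=m\,v_m$, using that $\eta(i)$ is defined over $\mathbb{Q}$ and that complex conjugation negates or fixes $\sqrt{a}$ according to the sign of $a$. Your explicit checks that $\overline{v_m}$ lies in $V_{\overline{\iota}}$ and is nonzero are small additions that the paper takes for granted from $\overline{V_{\iota}}=V_{\overline{\iota}}$.
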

\begin{proof}
 By isomorphism (\ref{sl2explicitiso}), an eigenvector $v_{m} \in V_{\iota}$ of weight $m$ satisfies $$\rho_{\iota}(h) \circ v_{m}=\eta_{\qbar}(h) \circ v_{m}=\eta(i)\otimes \frac{1}{\sqrt{a}} \circ v_{m}=mv_{m}.$$  
 Now suppose $a<0$. Note that $\eta(i)$ has entries in $\mathbb{Q}$ with respect to a $\mathbb{Q}$-basis of $V$, therefore applying the complex conjugation to the above equation, $\eta(i)$ remains unchanged. Thus we have
\begin{equation*}
   \overline{\rho_{\iota}(h) \circ v_{m}}=\eta(i) \otimes \overline{\frac{1}{\sqrt{a}}} \circ \overline{v_{m}}=-\eta(i) \otimes \frac{1}{\sqrt{a}} \circ \overline{v_{m}}=-\rho_{\overline{\iota}}(h) \circ \overline{v_{m}}=m\overline{v_{m}}
\end{equation*}
Hence $\overline{v_{m}}$ remains an eigenvector of $h$ but has weight $-m$.
The case where $a>0$ follows similarly.
\end{proof}

Finally, observe that any polarization form on $V$ is preserved by the image of $\liegdrbssmath(A)$ infinitesimally. Therefore the assumption that $\liegdrbssmath(A)$ were isomorphic to $\mathbb{Q}(a,\lambda)^{\circ}$ would actually impose linear relations on values of the polarization form. This is made precise in Lemma \ref{deg4Polarisation properties k merged}.

\begin{lemma} \label{deg4Polarisation properties k merged} 
Suppose there exists an abelian fourfold $A$ defined over $\qbar$ with $\mathrm{End}^{\circ}(A)= K$ where $K$ is a quartic CM-field, and furthermore assume the hypothetical scenario where $\liegdrbssmath(A) \cong \mathbb{Q}(a,\lambda)^{\circ}$ with $a,\lambda \neq 0$. Fix the $\mathrm{sl}(2)$-triple $(h,x,y)$ for $\mathbb{Q}(a,\lambda)^{\circ}\otimes\qbar \cong \mathrm{sl}(2)$ as in isomorphism (\ref{sl2explicitiso}). Then for any polarization form $\phi:V \times V \rightarrow \mathbb{Q}$, we can find a basis $\{v_{-1},v_1\}$ for $V_{\sigma}$, $\{w_{-1},w_1\}$ for $V_{\overline{\sigma}}$, $\{v'_{-1},v'_{1}\}$ for $V_{\tau}$ and $\{w'_{-1},w'_{1}\}$ for $V_{\overline{\tau}}$ such that the following conditions are satisfied:
\begin{enumerate}
    \item\label{eigenvaluecond} Each $v_{m}$, $w_{m}$, $v'_{m}$ and $w'_{m}$ is of weight $m$.
    \item\label{eigenvectorconjugate} If $a < 0$ then $w_{m}=\overline{v_{-m}}$ and $w'_{m}=\overline{v'_{-m}}$; if $a > 0$ then $w_{m}=\overline{v_{m}}$ and $w'_{m}=\overline{v'_{m}}$.
   
    \item\label{polarizationeigenvector1} Each $V_{\sigma}$, $V_{\overline{\sigma}}$, $V_{\tau}$ and $V_{\overline{\tau}}$ is totally isotropic for $\phi_{\qbar}$, and \begin{equation}\label{sameweight0}\phi_{\qbar}(v_{m},w_{m})=\phi_{\qbar}(v'_{m},w'_{m})=0\qquad m\in\{-1,1\}\end{equation}
    \item\label{polarizationeigenvector2} The remaining pairings satisfy the following relations depending on the sign of $a$:
    \begin{itemize}
        \item If $a < 0$: $\phi_{\qbar}(v_{-1},w_1)+\lambda\phi_{\qbar}(v_1,w_{-1})=0; \phi_{\qbar}(v'_{-1},w'_1)+\lambda\phi_{\qbar}(v'_1,w'_{-1})=0$
       
        \item If $a > 0$: $\phi_{\qbar}(v_{-1},w_1)+\phi_{\qbar}(v_1,w_{-1})=0;
        \phi_{\qbar}(v'_{-1},w'_1)+\phi_{\qbar}(v'_1,w'_{-1})=0$
       \end{itemize}
\end{enumerate} 
\end{lemma}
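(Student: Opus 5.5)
The plan is to build the bases from the weight decomposition of the $\mathrm{sl}(2)$-triple fixed in (\ref{sl2explicitiso}), then derive the vanishing and the linear relations from the infinitesimal invariance of $\phi$ under $\eta(\liegdrbssmath(A))$, combined with complex conjugation through Corollary \ref{relations between y and x}.

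\emph{Choosing the bases.} By Lemma \ref{deg4repclassification}, in Case (\ref{case1}) each $V_\iota$ is the standard representation of $\mathrm{sl}(2)$. So $\rho_\iota(h)$ is diagonalizable on $V_\iota$ with weights $1,-1$. We choose weight vectors $v_{\pm1}\in V_\sigma$ and $v'_{\pm1}\in V_\tau$. Next we define $w_m:=\overline{v_{-m}}$ when $a<0$ and $w_m:=\overline{v_m}$ when $a>0$, and likewise for $w'_m$. Since $\overline{V_\sigma}=V_{\overline\sigma}$ and complex conjugation is a semilinear bijection, these are bases of $V_{\overline\sigma}$ and $V_{\overline\tau}$. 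Lemma \ref{conjugate and weight vector V(1)} shows that $w_m$ and $w'_m$ have weight $m$. This gives conditions (\ref{eigenvaluecond}) and (\ref{eigenvectorconjugate}).

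\emph{Vanishing statements.} Lemma \ref{phicompatiblewithE} says that $\phi_{\qbar}(V_\iota,V_{\iota'})=0$ unless $\iota'=\overline\iota$, so each eigenspace is totally isotropic. For (\ref{sameweight0}) we use $h$-invariance: $\phi(\rho(h)u,u')+\phi(u,\rho(h)u')=0$, hence $(m+m')\phi(u,u')=0$ for weight vectors of weights $m,m'$. When $m=m'=\pm1$ this forces $\phi_{\qbar}(v_m,w_m)=0$, and the same holds for the primed vectors. The only possibly nonzero pairings are therefore $\phi(v_{-1},w_1)$ and $\phi(v_1,w_{-1})$.

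\emph{Linear relations.} This step is the crux, and it will need careful normalization. We use invariance under $x$ or $y$. On the standard representation we have $\rho(x)v_{-1}=c\,v_1$ with $c\neq0$, and a matching constant for $\rho(y)v_1$. We normalize $v_1$ so that $\rho_\sigma(x)v_{-1}=v_1$ and $\rho_\sigma(y)v_1=v_{-1}$, which is possible because $[x,y]=h$. We then need to compute how $x$ and $y$ act on the $w$'s, and for this we conjugate. Corollary \ref{relations between y and x} gives $\overline{x}=\lambda^{-1}y$ when $a<0$, and $\overline x=x$ when $a>0$. Since $\eta$ is defined over $\mathbb Q$, we get $\rho_{\overline\sigma}(\overline{x})\overline{u}=\overline{\rho_\sigma(x)u}$.

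For $a<0$, applying this to $\rho_\sigma(y)v_1=v_{-1}$ gives $\rho_{\overline\sigma}(\overline y)\overline{v_1}=\overline{v_{-1}}$, that is, $\lambda\,\rho(x)w_{-1}=w_{1}$, using $\overline y=\lambda x$. Invariance under $x$ gives $\phi(\rho(x)v_{-1},w_{-1})+\phi(v_{-1},\rho(x)w_{-1})=0$. Substituting yields $\phi(v_1,w_{-1})+\lambda^{-1}\phi(v_{-1},w_1)=0$, which is exactly the stated relation. For $a>0$ the same computation with $\overline x=x$ gives the coefficient $1$. The $\tau$-part is identical.

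The obstacle I expect is bookkeeping: keeping the normalization of the triple and the factors of $\lambda$ consistent, and making sure that conjugation acts on $\eta_{\qbar}(x)$ exactly as Corollary \ref{relations between y and x} states. I will check the constants directly against (\ref{sl2explicitiso}).
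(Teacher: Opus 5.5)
Your proposal is correct and follows essentially the paper's proof. The paper also sets $v_{-1}=\rho_\sigma(y)v_1$, which is exactly your normalization since $\rho_\sigma(x)v_{-1}=v_1$ then holds automatically. It defines the $w$'s by conjugation, obtains condition (3) from $h$-invariance and Lemma~\ref{phicompatiblewithE}, and obtains condition (4) from Corollary~\ref{relations between y and x}. The only cosmetic difference is that the paper applies $y$-invariance to the pair $(v_1,w_1)$, whereas you apply $x$-invariance to $(v_{-1},w_{-1})$; both give the same relation, provided you fix the normalization of $v_{\pm1}$ before conjugating to define the $w_m$.
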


\begin{proof}
We fix a weight 1 eigenvector $v_1$ of $\rho_{\sigma}(h)$ in $V_{\sigma}$, and fix $v'_1$ a weight 1 eigenvector of $\rho_{\tau}(h)$ in $V_{\tau}$. The remaining basis elements of $V_{\sigma}$ and $V_{\tau}$ are
$$v_{-1}=\rho_{\sigma}(y)\circ v_1\in V_{\sigma}, \quad v'_{-1}=\rho_{\tau}(y)\circ v'_1\in V_{\tau}.$$
To define the basis elements in $V_{\overline{\sigma}}$ and $V_{\overline{\tau}}$, we split into two cases based on the sign of $a$:
\begin{itemize}
    \item If $a < 0$, let 
    $w_{-1}=\overline{v_1} \in V_{\overline{\sigma}},w_1=\overline{v_{-1}} \in V_{\overline{\sigma}}; w'_{-1}=\overline{v'_1}\in V_{\overline{\tau}},w'_1=\overline{v'_{-1}} \in V_{\overline{\tau}}$

    \item If $a > 0$, let $
    w_{-1}=\overline{v_{-1}}\in V_{\overline{\sigma}},w_1=\overline{v_{1}} \in V_{\overline{\sigma}}; w'_{-1}=\overline{v'_{-1}} \in V_{\overline{\tau}},w'_{1}=\overline{v'_1}\in V_{\overline{\tau}}$
\end{itemize}
By Lemma \ref{conjugate and weight vector V(1)}, Condition (\ref{eigenvaluecond}) and (\ref{eigenvectorconjugate}) in the lemma are satisfied by construction. 

For the third condition, because any polarization form is preserved infinitesimally by the image of $\liegdrbssmath(A)_{\qbar}$, for any $v,w\in V_{\qbar}$ and $l\in\liegdrbssmath(A)_{\qbar}$ we have
$$\phi_{\qbar}(l\circ v,w)+\phi_{\qbar}(v,l\circ w)=0$$ Letting $l=h$ and swapping in $v\in V_{\iota}$ and $w\in V_{\overline{\iota}}$, we obtain Condition (\ref{polarizationeigenvector1}). Moreover, Lemma \ref{phicompatiblewithE} already implies that each eigenspace is totally isotropic with respect to $\phi_{\qbar}$. 

For the final Condition (\ref{polarizationeigenvector2}), we let $l=y$ and obtain that
\begin{equation}\label{fourthrelation}
\phi_{\qbar}(y \circ v_1,w_1)+\phi_{\qbar}(v_1,y\circ w_1)=0;\quad \phi_{\qbar}(y \circ v'_1,w'_1)+\phi_{\qbar}(v'_1,y\circ w'_1)=0.
\end{equation}
If $a < 0$, we apply the relation $\lambda\overline{x}=y$ from Corollary \ref{relations between y and x} to obtain the last set of relations of the lemma. If $a > 0$, we instead use the fact that $\overline{y}=y$ to obtain them.
\end{proof}

\begin{proposition}\label{deg4nosuchabelian4fold}
There does not exist an abelian fourfold $A$ with $\mathrm{End}^{\circ}(A)= K$ where $K$ is a quartic CM-field such that $\liegdrbssmath(A)_{\qbar} \cong \mathrm{sl}(2)$. Thus Case (\ref{case1}) of Lemma \ref{deg4repclassification} does not occur.
\end{proposition}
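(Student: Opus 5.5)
The plan is to play the linear relations of Lemma \ref{deg4Polarisation properties k merged} off against the positivity of a polarization, using the Hodge multiplicities of Lemma \ref{lm2011}.

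\textbf{Setup.} Fix a polarization form $\phi$ on $V=\betti$ and the associated Hermitian form $H(v,w):=\sqrt{-1}\,\phi_{\mathbb{C}}(v,\overline{w})$ on $V_{\mathbb{C}}$, with the sign normalized so that $H$ is positive definite on $V^{1,0}$. Since $\phi_{\mathbb C}(V^{1,0},V^{1,0})=0$, the form $H$ is then negative definite on $V^{0,1}$, and $V^{1,0}$ and $V^{0,1}$ are $H$-orthogonal.

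\textbf{Sign pattern of $H$ on the eigenspaces.} By Lemma \ref{lm2011} the multiplicities of $\mathrm{Hom}(K,\qbar)$ are $\{2,0,1,1\}$. After relabelling $\sigma\leftrightarrow\overline\sigma$, we may assume $V_{\sigma}\otimes\mathbb{C}\subset V^{1,0}$ and that $V_\tau\otimes\mathbb C$ meets $V^{1,0}$ and $V^{0,1}$ in lines each. Each $V_\iota\otimes\mathbb C$ is a Hodge-stable subspace because $K$ acts by Hodge endomorphisms. Consequently:
\begin{itemize}
\item $H$ restricted to $V_\sigma\otimes\mathbb C$ is positive definite;
\item $H$ restricted to $V_\tau\otimes\mathbb C$ is nondegenerate of signature $(1,1)$.
\end{itemize}
Since our fixed embedding $\qbar\subset\mathbb C$ makes the conjugation used in Lemma \ref{deg4Polarisation properties k merged} the restriction of complex conjugation, the values $\phi_{\qbar}(v,\overline{v'})$ for $v,v'\in V_\iota$ compute $H(v,v')$ up to the factor $\sqrt{-1}$.

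\textbf{Case $a>0$.} Take the bases of Lemma \ref{deg4Polarisation properties k merged}, which give $w_m=\overline{v_m}$. Condition \eqref{sameweight0} then reads $\phi_{\qbar}(v_1,\overline{v_1})=0$, so $H(v_1,v_1)=0$ with $v_1\neq 0$ in $V_\sigma$. This contradicts the definiteness of $H$ on $V_\sigma\otimes\mathbb C$.

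\textbf{Case $a<0$, Gram matrices.} Here $w_1=\overline{v_{-1}}$ and $w_{-1}=\overline{v_1}$, and similarly for the primed vectors. Condition \eqref{sameweight0} gives $\phi_{\qbar}(v_1,\overline{v_{-1}})=\phi_{\qbar}(v_{-1},\overline{v_1})=0$, so $\{v_1,v_{-1}\}$ is an $H$-orthogonal basis of $V_\sigma$. Likewise $\{v'_1,v'_{-1}\}$ is an $H$-orthogonal basis of $V_\tau$. Condition (4) gives
\[
H(v_{-1},v_{-1})+\lambda H(v_1,v_1)=0,\qquad H(v'_{-1},v'_{-1})+\lambda H(v'_1,v'_1)=0 .
\]
Because the bases are orthogonal and $H$ is nondegenerate on each eigenspace, all four diagonal values are nonzero real numbers.

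\textbf{Case $a<0$, sign contradiction.} Positive definiteness on $V_\sigma$ makes $H(v_{\pm1},v_{\pm1})$ both positive, so the first relation forces $\lambda<0$. Signature $(1,1)$ on $V_\tau$, read off the diagonal Gram matrix, makes $H(v'_1,v'_1)$ and $H(v'_{-1},v'_{-1})$ of opposite signs, so the second relation forces $\lambda>0$. The two conclusions contradict each other.

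Hence neither sign of $a$ is possible. By Lemma \ref{sl2qformlemma} every $\qbar/\mathbb{Q}$-form of $\mathrm{sl}(2)$ is some $\mathbb{Q}(a,\lambda)^\circ$ with $a,\lambda\neq0$, so Case (\ref{case1}) of Lemma \ref{deg4repclassification} cannot occur.

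\textbf{Main obstacle.} I expect the delicate point to be the bookkeeping of conjugation and signs. One must check that the conjugates $\overline{v_m}$ in Lemma \ref{deg4Polarisation properties k merged} are genuinely the complex conjugates under the fixed embedding $\qbar\subset\mathbb C$. One must also confirm that the relabelling making $V_\sigma$ the multiplicity-two eigenspace is compatible with the lemma's symmetric treatment of $\sigma$ and $\tau$; this holds because the lemma holds for any labelling. Finally, the factor $\sqrt{-1}$ and the overall sign convention for $H$ must cancel uniformly across the two relations, which they do since both relations are linear with the same coefficients.
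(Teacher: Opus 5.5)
Your proposal is correct and follows essentially the same route as the paper. You split on the sign of $a$; you use the multiplicity-two eigenspace $V_\sigma\subset V^{1,0}$ with the relations of Lemma \ref{deg4Polarisation properties k merged} to force $\lambda<0$, or for $a>0$ the vanishing of $\phi_{\mathbb{C}}(v_1,\overline{v_1})$ against positivity; and you then obtain the contradiction from the multiplicity-one eigenspace $V_\tau$.

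The differences are only in packaging. Your signature-$(1,1)$ Hermitian form on $V_\tau\otimes\mathbb{C}$ condenses the paper's separate positivity checks on $V_\tau^{1,0}$ and $V_{\overline{\tau}}^{1,0}$. Your $a>0$ shortcut is the special case $(x_{-1},x_1)=(0,1)$ of the paper's observation that the positivity expression vanishes on real vectors.
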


By Lemma \ref{lm2011}, the multiplicities associated with 
$\mathrm{Hom}(K,\mathbb{C})$ are $\{2,0,1,1\}$. Without loss of generality, 
assume $m_{\sigma}=2$, $m_{\overline{\sigma}}=0$, and 
$m_{\tau}=m_{\overline{\tau}}=1$. For simplicity of notation, we still write $V_{\iota}$ for $V_{\iota}\otimes_{\qbar}\mathbb{C}$ for each $\iota\in\{\sigma,\overline{\sigma},\tau,\overline{\tau}\}$. Then Definition \ref{multiplicitydefn} yields the decompositions:
\begin{equation}\label{deg4hodgedecomp}
    V^{1,0} = V_{\sigma}^{1,0}\oplus V_{\overline{\sigma}}^{1,0}\oplus 
    V_{\tau}^{1,0} \oplus V_{\overline{\tau}}^{1,0}; \qquad   V^{0,1} = V_{\sigma}^{0,1}\oplus V_{\overline{\sigma}}^{0,1}\oplus 
    V_{\tau}^{0,1} \oplus V_{\overline{\tau}}^{0,1}
\end{equation}
where $\mathrm{dim}(V_{\sigma}^{1,0}) = \mathrm{dim}(V_{\overline{\sigma}}^{0,1}) = 2$, 
$\mathrm{dim}(V_{\overline{\sigma}}^{1,0}) = \mathrm{dim}(V_{\sigma}^{0,1}) = 0$, 
and $V_{\tau}^{1,0}, V_{\tau}^{0,1}, V_{\overline{\tau}}^{1,0}, V_{\overline{\tau}}^{0,1}$ all have dimension 1. Furthermore, Lemma \ref{phicompatiblewithE} implies that 
(\ref{deg4hodgedecomp}) satisfies the orthogonality condition $V_{\overline{\tau}}^{1,0}=(V_{\tau}^{1,0})^{\perp}\cap V_{\overline{\tau}}$. 

Recall the polarization form of a polarizable Hodge structure satisfies the positivity condition. We follow the convention of \cite[Definition 3.1.6]{huybrechts2016lectures}: 
\begin{lemma}[Positivity Condition for Polarization Form]\label{positivitylemma}
Let $(V,\phi)$ be a polarized weight one Hodge structure, then for any $v\in V^{1,0}-\{0\}$ we have
\begin{equation}\label{deg4pos}
\phi_{\mathbb{R}}(v+\overline{v},\sqrt{-1}v-\sqrt{-1}\overline{v})=-2\sqrt{-1}\phi_{\mathbb{C}}(v,\overline{v})>0
\end{equation}
\end{lemma}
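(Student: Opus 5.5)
The statement to prove is Lemma~\ref{positivitylemma}, the positivity condition for a polarized weight-one Hodge structure. It is essentially a reformulation of the Hodge--Riemann bilinear relations in the convention of \cite[Definition 3.1.6]{huybrechts2016lectures}. The plan is to unwind that definition and compute both sides directly.

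First, recall the convention. For a polarized weight-one Hodge structure $(V,\phi)$, with $\phi$ alternating, the Hodge--Riemann relations say two things:
\begin{itemize}
\item $V^{1,0}$ is isotropic for $\phi_{\mathbb{C}}$;
\item the Hermitian form $u\mapsto \sqrt{-1}^{\,p-q}\phi_{\mathbb{C}}(u,\overline{u})$ is positive definite on $V^{p,q}$ (up to the normalizing sign fixed in Huybrechts).
\end{itemize}
Equivalently, the real symmetric form $\phi_{\mathbb{R}}(x,Cx)$ is positive definite on $V_{\mathbb{R}}$, where $C$ is the Weil operator acting by $\sqrt{-1}$ on $V^{1,0}$ and by $-\sqrt{-1}$ on $V^{0,1}$. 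I would adopt this second formulation as the defining property.

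Second, take $v\in V^{1,0}\setminus\{0\}$ and set $x=v+\overline{v}\in V_{\mathbb{R}}$, which is nonzero. Then
\[
Cx=\sqrt{-1}v-\sqrt{-1}\overline{v}.
\]
Positivity of $\phi_{\mathbb{R}}(x,Cx)$ gives the left-hand inequality in \eqref{deg4pos} at once.

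Third, to obtain the middle expression, expand bilinearly over $\mathbb{C}$:
\[
\phi_{\mathbb{C}}(v+\overline{v},\sqrt{-1}v-\sqrt{-1}\overline{v})
=\sqrt{-1}\phi(v,v)-\sqrt{-1}\phi(v,\overline{v})+\sqrt{-1}\phi(\overline{v},v)-\sqrt{-1}\phi(\overline{v},\overline{v}).
\]
The terms $\phi(v,v)$ and $\phi(\overline{v},\overline{v})$ vanish, either because $\phi$ is alternating or because $V^{1,0}$ and $V^{0,1}$ are isotropic. Antisymmetry then gives $\phi(\overline{v},v)=-\phi(v,\overline{v})$, so the expression equals $-2\sqrt{-1}\phi_{\mathbb{C}}(v,\overline{v})$. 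Here $\phi_{\mathbb{R}}$ and $\phi_{\mathbb{C}}$ agree on real vectors, since both are scalar extensions of $\phi$.

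The only delicate point is the sign convention, namely whether the polarization is positive for $\phi(x,Cx)$ or for $\phi(Cx,x)$, since the two differ by a sign for alternating $\phi$. I would settle it by checking against Huybrechts' Definition~3.1.6 and, as a sanity check, against an elliptic curve $\mathbb{C}/(\mathbb{Z}+\tau\mathbb{Z})$ with its standard form. If the convention turns out to be the opposite one, the lemma holds after replacing $\phi$ by $-\phi$, which is again a polarization in that convention. Either way the statement is taken as the normalization used in what follows.
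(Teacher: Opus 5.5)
Your proposal is correct and is essentially the paper's approach. The paper gives no separate argument: it takes the inequality as the convention of Huybrechts' Definition 3.1.6, namely positivity of $\phi_{\mathbb{R}}(x,Cx)$, so evaluating at $x=v+\overline{v}$ and expanding bilinearly, as you do, is exactly the content of the lemma. Your sign caveat is warranted: the Hermitian condition $\sqrt{-1}^{\,p-q}\phi_{\mathbb{C}}(u,\overline{u})>0$ in your first bullet, which is the cup-product normalization, corresponds to $\phi_{\mathbb{R}}(Cx,x)=2\sqrt{-1}\phi_{\mathbb{C}}(v,\overline{v})>0$, the opposite sign, so the two formulations are not literally equivalent; you correctly adopt the $\phi_{\mathbb{R}}(x,Cx)$ one.
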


\begin{proof}[Proof of Proposition \ref{deg4nosuchabelian4fold}]
 
Assume such $A$ in the statement of the proposition exists. First suppose that $\liegdrbssmath(A)\cong\mathbb{Q}(a,\lambda)^{\circ}$ with $a<0$. Then there exists a weight 1 Hodge structure on $V$ satisfying decomposition (\ref{deg4hodgedecomp}), polarized by a bilinear form $\phi$ whose values satisfy relations from Lemma \ref{deg4Polarisation properties k merged}. By Lemma \ref{positivitylemma}, for any $v \in V^{1,0}-\{0\}$, the inequality (\ref{deg4pos}) needs to hold. Fix a basis for $V_{\sigma}$, $V_{\overline{\sigma}}$, $V_{\tau}$ and $V_{\overline{\tau}}$ as in Lemma \ref{deg4Polarisation properties k merged}. Recall that we have assumed in decomposition (\ref{deg4hodgedecomp}) that $\mathrm{dim}(V_{\sigma}^{1,0})=2$. Then for any $v_{\sigma}^{1,0}=x_{-1}v_{-1}+x_{1}v_{1}\in V_{\sigma}^{1,0}-\{0\}=V_{\sigma}-\{0\}$, the positivity condition (\ref{deg4pos}) of the polarization form is equivalent to
\begin{equation*}
-2\sqrt{-1}\phi_{\mathbb{C}}(x_{-1}v_{-1}+x_{1}v_{1},\overline{x_{-1}}w_1+\overline{x_1}w_{-1})>0; \qquad \forall (x_{-1},x_{1})\in\mathbb{C}^2-\{0\}.
\end{equation*} 
 Denote $\sqrt{-1}\phi_{\mathbb{C}}(v_1,w_{-1})$ by $M$, using Lemma \ref{deg4Polarisation properties k merged}, the above inequality simplifies to
\begin{equation}
    -2M(x_1\overline{x_{1}}-\lambda x_{-1}\overline{x_{-1}})>0; \qquad \forall (x_{-1},x_{1})\in\mathbb{C}^2-\{0\}.
\end{equation}  Thus we deduce that $\lambda \in \mathbb{Q}_{<0}$.

Formula (\ref{deg4pos}) also needs to hold for elements in $V_{\tau}^{1,0}$ and $V_{\overline{\tau}}^{1,0}$. By decomposition (\ref{deg4hodgedecomp}), $\mathrm{dim}(V_{\tau}^{1,0})=1$. We now choose a $\mathbb{C}$-basis $v_{\tau}^{1,0}=y_{-1}v'_{-1}+y_{1}v'_{1}\in V_{\tau}-\{0\}$ for $V_{\tau}^{1,0}$ where $(y_{-1},y_1)\in\mathbb{C}^2-\{0\}$.
Then formula (\ref{deg4pos}) for $v_{\tau}^{1,0}$ is equivalent to
\begin{equation*}
-2\sqrt{-1}\phi_{\mathbb{C}}(y_{-1}v'_{-1}+y_{1}v'_{1},\overline{y_{-1}}w'_1+\overline{y_1}w'_{-1})>0.
\end{equation*}
Denoting $\sqrt{-1}\phi_{\mathbb{C}}(v'_1,w'_{-1})$ by $N$ and using relations from Lemma \ref{deg4Polarisation properties k merged}, this simplifies to \begin{equation}\label{deg4vtau10}
    -2N(y_1\overline{y_{1}}-\lambda y_{-1}\overline{y_{-1}})>0.
\end{equation} 
Moreover, for any element $y'_{-1}w'_{-1}+y'_{1}w'_{1}\in V_{\overline{\tau}}^{1,0}-\{0\}$, formula (\ref{deg4pos}) is equivalent to

\begin{equation}\label{vbartau10}
2N(-\lambda y'_1\overline{y'_{1}}+y'_{-1}\overline{y'_{-1}})>0.
\end{equation}
Since $\lambda<0$, inequalities (\ref{deg4vtau10}) and (\ref{vbartau10}) give a clear contradiction.

Next assume $\liegdrbssmath(A)\cong\mathbb{Q}(a,\lambda)^{\circ}$ with $a>0$. Then for any non-zero $v_{\sigma}^{1,0}=x_{-1}v_{-1}+x_{1}v_{1}\in V_{\sigma}^{1,0}=V_{\sigma}$ with $(x_{-1},x_{1})\in\mathbb{C}^2-\{0\}$, formula (\ref{deg4pos}) is equivalent to
\begin{equation*}
-2\sqrt{-1}\phi_{\mathbb{C}}(x_{-1}v_{-1}+x_{1}v_{1},\overline{x_{-1}}w_{-1}+\overline{x_1}w_{1})>0.
\end{equation*}
Using the relations from Lemma \ref{deg4Polarisation properties k merged}, and denoting $\sqrt{-1}\phi_{\mathbb{C}}(v_1,w_{-1})$ by $M'$, this simplifies to
\begin{equation}
    -2M'(x_{1}\overline{x_{-1}}-x_{-1}\overline{x_{1}})>0;\qquad \forall (x_{-1},x_{1})\in \mathbb{C}^2-\{0\}.
\end{equation} But when $(x_{-1},x_1)\in\mathbb{R}^2-\{0\}$, the above expression is equal to zero, a contradiction.

\end{proof}

\begin{proof}[Proof of Theorem \ref{deg4theorem}]
 Proposition \ref{twocentressame} gives $\mathrm{Z}(\mathrm{mt}(A))=\mathrm{Z}(\liegdrbmath(A))$. Now Proposition \ref{deg4nosuchabelian4fold} has excluded Case (\ref{case1}) of Lemma \ref{deg4repclassification}. Therefore we can conclude that $\liegdrbssmath(A)_{\qbar}\cong\mathrm{sl}(2)\times\mathrm{sl}(2)$. Thus by (7.5) of \cite{mz-4-folds}, one can see that $\liegdrbssmath(A)_{\qbar}=\mathrm{mt}(A)^{\mathrm{ss}}_{\qbar}$. Recall that $\liegdrbmath(A) \subset \mathrm{mt}(A)$ as a $\mathbb{Q}$-Lie subalgebra. Therefore we obtain that $\liegdrbmath(A)=\mathrm{mt}(A)$. Because $\mathrm{MT}(A)$ is a connected algebraic group, we can conclude that $\gdrbmath(A)=\mathrm{MT}(A)$.
\end{proof}
\renewcommand*{\bibfont}{\footnotesize}
\printbibliography[
heading=bibintoc,
title={References}
]

\end{document}